\newcounter{qcounter}
\define\isoto{\xrightarrow{\sim}}
\define\onto{\twoheadrightarrow}
\define\coker{\mathrm{coker}}
\DeclareMathOperator{\Spec}{Spec}
\define\Ch{\mathrm{Char}_\Lambda}
\define\E{\mathcal{E}}
\define\cC{\mathcal{C}}
\define\zinfdm{\{0,\infty\}_{DM}}
\newcommand{\dia}[1]{\langle #1 \rangle}
\newcommand{\ttmat}[4]{\left( \begin{array}{cc}
#1 & #2 \\
#3 & #4
\end{array}
\right)}
\newcommand{\Z}{\mathbb{Z}}
\newcommand{\Q}{\mathbb{Q}}
\newcommand{\F}{\mathbb{F}}
\newcommand{\fn}{\mathfrak{n}}
\newcommand{\fH}{\mathfrak{H}}
\newcommand{\h}{\mathfrak{h}}
\newcommand{\fP}{\mathcal{P}}
\newcommand{\sO}{\mathcal{O}}
\newcommand{\tH}{\tilde{H}}
\newcommand{\Lam}{\Lambda}
\newcommand{\I}{\mathcal{I}}
\newcommand{\p}{\mathfrak{p}}
\newcommand{\m}{\mathfrak{m}}
\newcommand{\cP}{\mathcal{P}}
\newcommand{\cJ}{\mathcal{J}}
\newcommand{\U}{\Upsilon}
\newcommand{\chii}{{\chi^{-1}}}
\newcommand{\cL}{\mathcal{L}}
\newcommand{\Hom}{\mathrm{Hom}}
\newcommand{\Gal}{\mathrm{Gal}}
\newcommand{\Aut}{\mathrm{Aut}}
\newcommand{\Ext}{\mathrm{Ext}}
\newcommand{\End}{\mathrm{End}}
\newcommand{\Fr}{\mathrm{Fr}}
\newcommand{\HDM}{\tH_{DM}}
\newcommand{\X}{\mathfrak{X}}
\newcommand{\zinf}{\{0,\infty\}}
\newcommand{\lb}{{[\![}}
\newcommand{\rb}{{]\!]}}
\newcommand{\red}{\mathrm{red}}
\newcommand{\Lamf}{\Lambda_{(f_\chi)}}
\newcommand{\Lamfnochi}{\Lambda_{(f)}}
\define\cA{\mathcal{A}}
\define\Pic{{\mathrm{Pic}}}
\define\ord{{\mathrm{ord}}}
\define\Cl{{\mathrm{Cl}}}
\define\GL{{\mathrm{GL}}}
\define\kcyc{\kappa_{\mathrm{cyc}}}
\define\EL{\mathrm{E}_\Lambda}
\define\Hc{H_{(c)}}
\newtheorem{thm}{Theorem}[subsection] 
\newtheorem*{thm*}{Theorem}
\newtheorem*{thmA}{Theorem A}
\newtheorem*{corB}{Corollary B}
\newtheorem*{corC}{Corollary C}
\newtheorem*{thmD}{Theorem D}
\newtheorem*{thmE}{Theorem E}
\newtheorem{cor}[thm]{Corollary}
\newtheorem{prop}[thm]{Proposition}
\newtheorem{lem}[thm]{Lemma}
\newtheorem{conj}[thm]{Conjecture}
\theoremstyle{definition}
\newtheorem{defn}[thm]{Definition}
\newtheorem{exmp}[thm]{Example}
\newtheorem{exmps}[thm]{Examples}
\theoremstyle{remark}
\newtheorem{rem}[thm]{Remark}
\newcommand{\ra}{\rightarrow}
\newcommand{\lra}{\longrightarrow}
\newcommand{\lrisom}{\buildrel\sim\over\lra}
\newcommand{\risom}{\buildrel\sim\over\ra}
\newcommand{\rinj}{\hookrightarrow}
\newcommand{\rsurj}{\twoheadrightarrow}
\newcommand{\PsR}{{\mathrm{PsR}}}
\newcommand{\bF}{\mathbb{F}}
\newcommand{\bQ}{\mathbb{Q}}
\newcommand{\bZ}{\mathbb{Z}}
\newcommand{\bT}{\mathbb{T}}
\newcommand{\cE}{\mathcal{E}}
\newcommand{\cO}{\mathcal{O}}
\newcommand{\Db}{{\bar D}}
\newcommand{\mDb}{{\mathfrak m_{\bar D}}}
\newcommand{\Ann}{\mathrm{Ann}}
\newcommand{\Fitt}{\mathrm{Fitt}}
\newcommand{\Jac}{\mathrm{Jac}}
\newcommand{\EcL}{\mathrm{E}_{\cL}}
\DeclareMathOperator{\Tr}{\mathrm{Tr}}
\newcommand{\bro}{{\bar\rho}}
\let\c@equation\c@thm
\numberwithin{equation}{subsection}
\title{Pseudo-modularity and Iwasawa theory}
\author{Preston Wake}
\address{Department of Mathematics, UCLA, Los Angeles, CA 90095-1555, USA}
\email{wake@math.ucla.edu}
\author{Carl Wang-Erickson}
\address{Department of Mathematics, Imperial College London \\
	London SW7 2AZ, UK}
\email{c.wang-erickson@imperial.ac.uk}
\begin{document}

\begin{abstract}
We prove, assuming Greenberg's conjecture, that the ordinary eigencurve is Gorenstein at an intersection point between the Eisenstein family and the cuspidal locus. As a corollary, we obtain new results on Sharifi's conjecture. This result is achieved by constructing a universal ordinary pseudodeformation ring and proving an $R = \bT$ result. 
\end{abstract}

\maketitle

\tableofcontents

\section{Introduction}

In their proof of the Iwasawa Main Conjecture, Mazur and Wiles \cite{MW1984}, following ideas of Ribet \cite{ribet1976}, used the geometry of modular curves to study class groups of cyclotomic fields. Sharifi \cite{sharifi2011} has formulated remarkable conjectures that refine the Main Conjecture. Roughly, Sharifi's conjecture states that very fine information about the arithmetic of cyclotomic fields can be captured by the geometry of modular curves. Recently, Fukaya and Kato \cite{FK2012} have proven partial results on Sharifi's conjecture.

We prove new results on Sharifi's conjecture by building upon developments in the deformation theory of Galois representations of Bella\"iche and Chenevier \cite{BC2009, bellaiche2012, chen2014} and one of us \cite{WE2017}. The ordinary eigencurve $\cC^\ord$ of $p$-adic eigenforms of tame level $N$ provides a setting in which both the results and the technique may be discussed. The Hida Hecke algebra $\fH$ and its cuspidal quotient $\h$ are canonical integral models for $\cC^\ord = \Spec \fH[1/p]$ and its cuspidal locus $\cC^{\ord,0}:= \Spec \h[1/p]$. We are interested in the singularity that occurs at an intersection point $\p$ of $\cC^{\ord,0}$ and the Eisenstein family, which we call an \emph{Eisenstein intersection point}. The respective local rings at such a point will be written $\fH_\p$ and $\h_\p$. We were motivated by a conjecture of one of us proposing that $\fH_\p$ and $\h_\p$ are Gorenstein \cite[Conj.\ 1.2]{wake2}. Indeed, the Gorensteinness of $\fH_\p$ is equivalent to certain weakened versions of Sharifi's conjecture and Greenberg's conjecture \cite[Thm.\ 1.3]{wake2}. 

Our main result is that this Gorenstein conjecture is equivalent to a certain case of Greenberg's conjecture. In geometric terms, we also show that a certain case of Greenberg's conjecture is equivalent to the singularity of $\fH$ at $\p$ being a plane singularity (see \S\S \ref{subsec:plane singularity}, \ref{subsec:eigencurve}). We deduce from this a new result about Sharifi's conjecture, conditional only upon Greenberg's conjecture; we also prove an $R = \bT$ result. 

In order to apply the deformation theory of Galois representations to $\cC^\ord$, we consider the Galois modules arising from the cohomology of modular curves. The ordinary parabolic part $H$ of this cohomology is a finitely generated $\h$-module with a $\h$-linear $G_\bQ$-action, but is not necessarily locally free on $\cC^{\ord,0}$. While the generic rank of $H$ is 2, its localizations $H_\p$ at Eisenstein intersection points are locally free $\h_\p$-modules if and only if $\h_\p$ is Gorenstein. This raises a significant obstacle: the usual deformation theory of Galois representations only addresses locally free modules with Galois action, so we must know that $\h_\p$ is Gorenstein in advance to apply this theory. Consequently, we resort to the deformation theory of Galois \emph{pseudorepresentations}. We construct a \emph{universal ordinary pseudodeformation ring} and use it to control $\fH_\p$. Indeed, even if we were to assume that $H_\p$ is free, it is more natural to study pseudorepresentations because there exists a $G_\bQ$-pseudorepresentation over $\cC^\ord$, while the $G_\Q$-action only gives a representation valued in $\h_\p$, not $\fH_\p$. Moreover, our technique allows us to make weaker assumptions about class groups than in \cite{SW1997}. 

After stating the main results in \S\ref{subsec: main results}, we introduce in \S\ref{subsec:OP} our notion of ordinary pseudorepresentation, which is of independent interest. In particular, our formulation is \emph{not} ``a global pseudorepresentation is ordinary if its restriction to a decomposition group at $p$ is ordinary.''

\subsection{Setup} 
\label{subsec:setup}

In order to state our results, we introduce the main objects of study.

Let $p \ge 5$ be a prime number, and let $N$ be an integer such that $p \nmid N\phi(N)$. Let
$$
\theta: (\Z/Np\Z)^\times \to \overline{\Q}_p^\times
$$
be an even character. Let $\chi=\omega^{-1}\theta$, where 
$$
\omega: (\Z/Np\Z)^\times \to (\Z/p\Z)^\times \to \Z_p^\times
$$
is the Teichm\"uller character. Our assumption on $N$ implies that each of these characters is a Teichm\"uller lift of a character valued in a finite extension $\bF$ of $\bF_p$ generated by the lifts. Abusing notation, we also use $\theta,\chi,\omega$ to refer to these characters. 

We assume that $\theta$ satisfies the following conditions,
\begin{list}{$\bullet$}{}
\item $\theta$ is primitive,
\item if $\chi|_{(\Z/p\Z)^\times}=1$, then $\chi|_{(\Z/N\Z)^\times}(p) \ne 1$, and 
\item if $N=1$, then $\theta \ne \omega^2$.
\end{list}
These assumptions, along with $p \nmid N \phi(N)$, allow us to apply the work of Ohta \cite{ohta2003}. 

A subscript $\theta$ or $\chi$ on a module refers to the eigenspace for an action of $(\Z/Np\Z)^\times$. A superscript $\pm$ will denote the $\pm 1$-eigenspace for complex conjugation. Let $S$ denote the set of primes dividing $Np$ along with the infinite prime, and let $G_{\bQ,S}$ be the unramified outside $S$ Galois group of $\bQ$.

Let $\Lambda=\Z_p\lb\Z_{p,N}^\times\rb_\theta$, a regular local ring with residue field $\bF$, where $\Z_{p,N}=\Z_p \times (\Z/N\Z)$. Fix a system $(\zeta_{Np^r})$ of primitive $Np^r$-th roots of unity such that $\zeta_{Np^{r+1}}^p=\zeta_{Np^r}$ for all $r$, let $\Q_\infty = \Q(\zeta_{Np^\infty})$. Let $\Gamma = \Gal(\Q_\infty/\Q(\zeta_{Np}))$, $\X=\Gal(M/\Q_\infty)$ and $X=\Gal(L/\Q_\infty)$ where $M$ and $L$ are the maximal abelian pro-$p$ extensions of $\Q_\infty$ that are, respectively, unramified outside $Np$ and unramified. Let $\xi_\chi\in \Lambda$ be a characteristic power series for $X_\chi(1)$ (it is related to the Kubota-Leopoldt $p$-adic $L$-function for $\chi$), and assume that $\xi_\chi \not \in \Lambda^\times$.

Let $\fH = \fH_\theta$ and $\h = \h_\theta$ be the Eisenstein components of the Hida Hecke algebras  for modular forms and cusp forms, respectively. Let $\I$ and $I$ be their respective Eisenstein ideals. Let $I_\fH$ be the preimage of $I$ in $\fH$.  Let 
\[
H = \varprojlim_r H^1(X_1(Np^r),\Z_p)_\theta^\ord
\]
be the cohomology group on which $\h$ acts.

\subsection{Main results}
\label{subsec: main results}
We now introduce the weakly Gorenstein conjecture. It is known that $\h/I \cong \Lambda/\xi_\chi$. Let $f_\chi$ be a prime divisor of $\xi_\chi$, and let $\p \subset \h$ be the kernel of the composite $\h \to \Lambda/\xi_\chi \to \Lambda/f_\chi$; note that $\p$ has height $1$, that $I \subset \p$, and that $p \not \in \p$. By abuse of notation, we let $\p \subset \fH$ be the inverse image of $\p$ in $\fH$. One of us has conjectured that $\h_\p$ and $\fH_\p$ are Gorenstein \cite[Conj.\ 1.2]{wake2}. We call $\fH$ (resp.\ $\h$) \emph{weakly Gorenstein} when this is true for all such $\p$. 

An important consequence of our main result is the following. 

\begin{thmA}
Assume that $X_{\theta,(f_\chi)}=0$. Then $\fH_\p$ is Gorenstein.
\end{thmA}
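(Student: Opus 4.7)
The plan is to prove an $R = \bT$ theorem for a universal ordinary pseudodeformation ring $R$ and then deduce that $R \cong \fH_\p$ is Gorenstein using the hypothesis. First, following the pseudodeformation theory the paper sets up, I would build a universal ring $R$ parameterizing ordinary $\Lambda$-deformations of the residual reducible pseudorepresentation $\mathbf{1} \oplus \chi$ of $G_{\bQ,S}$. The trace of the $G_{\bQ,S}$-action on Hida's cohomology $H$ defines a pseudorepresentation valued in $\fH$ itself, even though no genuine Galois representation over $\fH$ exists (as the introduction emphasizes). By the universal property this produces a ring map $R \to \fH$, and surjectivity follows because its image contains all Frobenius traces, hence every Hecke operator $T_\ell$ for $\ell \nmid Np$. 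After localizing at $\p$ this yields a surjection $R_\p \onto \fH_\p$.

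Next, I would analyze $R_\p$ via its reducibility ideal $J$. Standard pseudodeformation theory (Bella\"iche--Chenevier, Wang Erickson) controls the structure of $R_\p$ via two ordinary Ext-groups $\Ext^1_{G_{\bQ,S},\mathrm{ord}}(\mathbf{1},\chi)$ and $\Ext^1_{G_{\bQ,S},\mathrm{ord}}(\chi,\mathbf{1})$. By Kummer theory, Poitou--Tate duality, and the Iwasawa Main Conjecture for $\chi$, these Ext-groups can be expressed in terms of $X_\theta$ on one side and $\Lambda/\xi_\chi$ on the other. The hypothesis $X_{\theta,(f_\chi)} = 0$ then makes one of these groups vanish (or become cyclic of the right length) at $\p$, which via a numerical-criterion argument (comparing Fitting ideals, or showing directly that $J_\p$ is principal) forces $R_\p$ to be a complete intersection, and hence Gorenstein. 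Since $R_\p \onto \fH_\p$ and both rings have the same quotient $\Lambda_\p/\xi_\chi$ modulo their reducibility ideals, the surjection is in fact an isomorphism; Gorensteinness therefore transfers to $\fH_\p$.

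The main obstacle is the correct formulation of the ordinary pseudodeformation problem and its interplay with the Galois cohomology computation. As the introduction explicitly warns, ``ordinary pseudorepresentation'' is \emph{not} defined by ``the restriction to a decomposition group at $p$ is ordinary''; one must introduce a more subtle global condition so that (i) the Hida pseudorepresentation on $H$ satisfies it, making $R \to \fH$ well defined, and (ii) the resulting tangent/obstruction spaces pick out exactly the Selmer-type Ext-groups that admit the translation into $X_\theta$ and the quantities controlled by $\xi_\chi$. A secondary technical difficulty is performing the Iwasawa-theoretic translation at the height-one prime $(f_\chi) \subset \Lambda$ rather than generically, where one must carefully handle pseudo-null discrepancies in the Main Conjecture; the assumption $p \notin \p$ is essential here because it makes $\Lambda_\p$ a discrete valuation ring and keeps the localization tractable.
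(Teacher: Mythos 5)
Your proposal follows essentially the same route as the paper: construct a universal ordinary pseudodeformation ring via ordinary Cayley--Hamilton (GMA) structures, obtain a surjection onto the Hecke algebra from the pseudorepresentation carried by Hida's cohomology, translate the reducibility ideal's cotangent space into Iwasawa-theoretic Ext-groups, and close the argument with the Wiles--Lenstra numerical criterion to get an isomorphism of complete intersections. Two small inaccuracies worth flagging: (i) the residual pseudorepresentation the paper actually deforms is $\psi(\omega^{-1}\oplus\theta^{-1})$, which is a twist of your $\mathbf{1}\oplus\chi$ --- harmless but must be kept straight when matching determinants and local conditions; and (ii) the trace of the Galois action on $H$ is valued a priori in $\h$, not $\fH$, and the extension of the pseudorepresentation to $\fH$ is a genuine (if short) step: the paper uses $\fH\cong\h\times_{\Lambda/\xi_\chi}\Lambda$ together with the Eisenstein pseudorepresentation $\psi(\kcyc^{-1}\oplus\langle-\rangle^{-1})$ to glue, so one cannot simply invoke ``values in $\fH$ itself.'' Likewise, ``both rings have the same quotient $\Lambda_\p/\xi_\chi$ modulo their reducibility ideals'' should read $\fH_\p/\I_\p\cong\Lamf$ (the reducible/Eisenstein locus for $\fH$), while $\h_\p/I_\p\cong\Lamf/\xi_\chi$; the actual isomorphism comes from the length comparison $\ell(\cJ_\m/\cJ_\m^2)\ge r=\ell(\Lamf/\xi_\chi)$, not from matching reducible quotients.
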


\begin{rem}
\label{rem: Xtheta=0 is weak}
Note that $X_{\theta,(f_\chi)}=0$ is a relatively weak assumption: it is conjectured by Greenberg that $X^+$ (which contains $X_\theta$) has finite cardinality (see Conjecture \ref{conj: greenberg}), and so any localization of $X_\theta$ at a characteristic $0$ prime of $\Lam$ is $0$. Moreover, it is known that the support of $X_\theta$ is contained in the support of $X^\#_\chii(1)$, which is $\Spec \Lam/^*\xi_{\chii}$ for some element $^*\xi_\chii \in \Lambda$. For $X_{\theta,(f_\chi)}$ to be non-zero, $X_\theta$ would have to be infinite \emph{and} $f_\chi$ would have to be a common divisor of $\xi_\chi$ and $^*\xi_\chii$. 
\end{rem}

\begin{rem}
\label{rem: Xtheta=0 is necessary}
The main result of \cite{wake2} is that $X_{\theta,(f_\chi)}=0$ is a necessary condition for $\fH_\p$ to be Gorenstein, and that it is also sufficient if Sharifi's conjecture is true. Our main result says that $X_{\theta,(f_\chi)}=0$ is sufficient without assuming Sharifi's conjecture. In fact, we use our result to prove (a weak version of) Sharifi's conjecture.
\end{rem}

Along with Theorem A, we show that if Greenberg's conjecture holds, then $\h_\p$ is also Gorenstein, and hence that the weakly Gorenstein conjecture of \cite{wake2} follows from Greenberg's conjecture. In fact, we show that if Greenberg's conjecture holds, then both $\fH_\p$ and $\h_\p$ are complete intersection rings, and that their Eisenstein ideals are principal. See Theorem \ref{thm: eigencurve equivalences}.

Theorem A has consequences for Sharifi's conjecture. Sharifi has constructed a map
\[
\Upsilon: X_\chi(1) \lra H^-/I H^-
\]
which he conjectures to be an isomorphism. He also constructed a map 
\[
\varpi:  H^-/IH^- \lra  X_\chi(1)
\]
in the opposite direction, which he conjectures to be the inverse of $\Upsilon$. (See \S\ref{subsec: sharifi's conj} for references and more discussion.) Theorem A directly implies 

\begin{corB}
\begin{enumerate}[leftmargin=2em]
\item If $X_{\theta,(f_\chi)}=0$, then the localization of $\Upsilon$ at $(f_\chi)$ is an isomorphism. 
\item If $X_\theta \otimes_\Lambda (\Lambda/\xi_\chi)$ is finite, then $\U$ is injective and has finite cokernel.
\end{enumerate}
\end{corB}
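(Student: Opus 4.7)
The plan is to derive the corollary formally from Theorem A via the equivalence of \cite[Thm.\ 1.3]{wake2} invoked in the introduction: the Gorensteinness of $\fH_\p$ is equivalent to the localization $\U_{(f_\chi)}$ being an isomorphism. Under this equivalence, part (1) is immediate: Theorem A gives that $\fH_\p$ is Gorenstein whenever $X_{\theta,(f_\chi)} = 0$, and then $\U_{(f_\chi)}$ is an isomorphism.

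For part (2), the strategy is to reduce to part (1) at every prime divisor $f_\chi$ of $\xi_\chi$ and then globalize. For any such $f_\chi$ (taken distinguished after Weierstrass preparation), $(f_\chi) \subset \Lambda$ is a height-$1$ prime not containing $p$, so $\Lambda_{(f_\chi)}$ is a DVR in which $p$ is a unit. Localizing the finite module $X_\theta \otimes_\Lambda (\Lambda/\xi_\chi)$ at $(f_\chi)$ produces a $p$-power-torsion module over $\Lambda_{(f_\chi)}$, which must vanish; Nakayama applied to $X_{\theta,(f_\chi)}$, finitely generated over $\Lambda_{(f_\chi)}$ with $\xi_\chi \in (f_\chi)$, then yields $X_{\theta,(f_\chi)} = 0$. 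Invoking part (1) shows that $\U_{(f_\chi)}$ is an isomorphism for every prime divisor $f_\chi$ of $\xi_\chi$. Since $X_\chi(1)$ and $H^-/IH^-$ are finitely generated torsion $\Lambda$-modules annihilated by $\xi_\chi$, the kernel and cokernel of $\U$ are then supported only in codimension $\ge 2$, and hence are finite. Injectivity of $\U$ will follow from the classical Iwasawa-theoretic fact that, for the odd character $\chi$ (odd because $\theta$ is even), the module $X_\chi$ has no nonzero finite $\Lambda$-submodule, so neither does $X_\chi(1)$.

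All the real work is packaged into Theorem A; the present corollary is a clean formal consequence once the equivalence \cite[Thm.\ 1.3]{wake2} is in hand. The only mildly subtle step is the passage from ``pseudo-isomorphism'' to ``injective with finite cokernel,'' which rests on the absence of finite $\Lambda$-submodules in $X_\chi$, and this is standard.
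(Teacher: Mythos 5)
Your treatment of part (1) has a gap. You attribute to \cite[Thm.\ 1.3]{wake2} the equivalence ``$\fH_\p$ is Gorenstein $\Leftrightarrow$ $\U_{(f_\chi)}$ is an isomorphism,'' and then invoke Theorem A to get Gorensteinness, but the paper only extracts the one-sided implication \emph{Gorenstein $\Rightarrow$ $\coker(\U_{(f_\chi)}) = 0$} from the \emph{proof} of that theorem. (If the equivalence you cite were really available in \cite{wake2}, Corollary~\ref{cor: gor iff} would not need the new deformation-theoretic machinery of this paper.) So surjectivity of $\U_{(f_\chi)}$ you get for free, but injectivity does not follow formally; the paper supplies it by a length count: $\ell\bigl(X_{\chi,(f_\chi)}(1)\bigr)=\ell\bigl(\Lamf/(\xi_\chi)\bigr)$, and a Fitting-ideal argument parallel to Lemma~\ref{lem: length of I/I2} shows $\ell\bigl(H^-/IH^- \otimes_\Lam \Lamf\bigr)\ge \ell\bigl(\Lamf/(\xi_\chi)\bigr)$, whence a surjection between finite-length $\Lamf$-modules of the same length is an isomorphism. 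You cannot instead appeal to the finiteness-of-kernel argument you give in (2) to close this gap, since your (2) already invokes (1) at every prime divisor of $\xi_\chi$.

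Your part (2) matches the paper's route: localize the finiteness hypothesis at each $(f_\chi)\mid\xi_\chi$ and use Nakayama to get $X_{\theta,(f_\chi)}=0$ (you spell this out more carefully than the paper does, correctly noting that $p$ is a unit in $\Lamf$); deduce $\coker(\U)$ is supported only at the maximal ideal hence finite; and conclude injectivity from the absence of finite $\Lambda$-submodules in $X_\chi(1)$. The last step is the content of \cite[Prop.\ 7.4]{wake2}, which the paper cites directly, but your justification via Ferrero--Washington ($p$-torsion-freeness of $X_\chi(1)$) is the same underlying fact. So part (2) is fine modulo the dependency on part (1).
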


This corollary provides strong evidence for Sharifi's conjecture -- it implies that the map $\U$ is an isomorphism (up to $p$-torsion) under the assumption of Greenberg's conjecture. Fukaya and Kato \cite{FK2012} have also made great progress towards Sharifi's conjecture, by entirely different methods, proving parts of the conjecture under different assumptions. Unlike our Corollary B, they can also prove results about the map $\varpi$. Combining the results of \cite{FK2012} with ours, we can prove that both maps of Sharifi's conjecture are isomorphism (up to $p$-torsion) under the assumption of Greenberg's conjecture. 

\begin{corC}
Assume Greenberg's Conjecture \ref{conj: greenberg}. Then the induced maps
$$
\Upsilon: X_\chi(1) \lra (H^-/I H^-)/\mathrm{(tor)}, \ \  \varpi: ( H^-/IH^-)/\mathrm{(tor)} \lra  X_\chi(1)
$$
are isomorphisms. Here $\mathrm{(tor)} \subset H^-/I H^-$ is the $p$-power-torsion subgroup.
\end{corC}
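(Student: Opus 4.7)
The plan is to derive Corollary C from Corollary B(2) together with results of Fukaya and Kato \cite{FK2012} on the companion map $\varpi$.

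First, we verify the hypothesis of Corollary B(2) under Greenberg's Conjecture \ref{conj: greenberg}. Since $\theta$ is even, the $\theta$-isotypic component $X_\theta$ lies in the plus-eigenspace $X^+$. Greenberg's conjecture that $X^+$ is finite then forces $X_\theta$ to be finite, and in particular $X_\theta \otimes_\Lambda \Lambda/\xi_\chi$ is finite. By Corollary B(2), the map $\Upsilon : X_\chi(1) \to H^-/IH^-$ is injective with finite cokernel. Because the cokernel is finite, it is contained in the $p$-power-torsion subgroup $(\mathrm{tor})$, so $\Upsilon$ is surjective after passing to the quotient by $(\mathrm{tor})$. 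To upgrade this to the claimed isomorphism
\[
\Upsilon : X_\chi(1) \isoto (H^-/IH^-)/(\mathrm{tor}),
\]
we additionally need $X_\chi(1)$ to be $p$-power-torsion free, which should follow from the absence of finite $\Lambda$-submodules in the relevant minus part of the Iwasawa module under our hypotheses (leveraging Greenberg's conjecture on the plus part together with reflection-type arguments).

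For the companion map $\varpi$, we appeal to Fukaya and Kato's comparison theorem relating $\Upsilon$ and $\varpi$---namely, that one of the compositions $\varpi \circ \Upsilon$ or $\Upsilon \circ \varpi$ coincides with the identity (at least on the relevant tor-free quotient) under hypotheses subsumed by ours. Combined with the isomorphism for $\Upsilon$ just established, this identifies $\varpi$ as the inverse of $\Upsilon$ on $(H^-/IH^-)/(\mathrm{tor})$, and therefore as an isomorphism onto $X_\chi(1)$.

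The principal technical obstacle is twofold: verifying the absence of $p$-power torsion in $X_\chi(1)$ under Greenberg's conjecture, and carefully matching the precise form of Fukaya and Kato's comparison identity to our setting (compatibility of their hypotheses on the tame level, the prime, and the character with ours, and compatibility of their composition identity with the passage to $(H^-/IH^-)/(\mathrm{tor})$). Once these points are settled, the remainder is a formal diagram chase.
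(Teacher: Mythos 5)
The proposal shares the paper's broad strategy (deduce $\Upsilon$ from Corollary~B, then transfer to $\varpi$ via Fukaya--Kato), but two steps are genuinely flawed.

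First, the claim that finiteness of $\coker(\Upsilon)$ forces $\Upsilon$ to become surjective onto $(H^-/IH^-)/(\mathrm{tor})$ is not correct. An injection of finitely generated $\bZ_p$-modules with finite cokernel need not be surjective modulo torsion: the map $\bZ_p \xrightarrow{\ \cdot p\ } \bZ_p$ is injective with cokernel $\bZ/p$, both sides are torsion-free, and the map is still not surjective. In the paper the isomorphism of $\Upsilon$ after inverting $p$ is instead established prime-by-prime (Corollary~\ref{cor: Upsilon iso}(1)): at each height-$1$ prime $(f)$ dividing $\xi_\chi$, Theorem~\ref{thm: psmod} gives that $\fH_\p$ is Gorenstein, the localization $\U_{(f)}$ is shown to be a surjection of $\Lambda_{(f)}$-modules of the same finite length, and hence an isomorphism. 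That length count, not the finiteness of the cokernel, is what makes $\Upsilon$ an isomorphism modulo torsion.

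Second, the appeal to Fukaya--Kato is stated too loosely. Their Theorem~\ref{thm:f-k} does not assert that $\Upsilon \circ \varpi$ (or $\varpi\circ\Upsilon$) is the identity; it asserts only that $\xi_\chi'\,(\Upsilon\circ\varpi)=\xi_\chi'$. When $\xi_\chi$ has a repeated prime factor, $\xi_\chi'$ is a zero-divisor in $\Lambda/\xi_\chi$, so this identity alone says nothing about $\Upsilon\circ\varpi$ being a unit. The argument of Corollary~\ref{cor: corC stronger} supplies exactly the missing ingredient: Greenberg's conjecture gives pseudocyclicity of $X^-$, hence $X_{\chi,(f)}(1)$ is cyclic, so $\Upsilon_{(f)}\circ\varpi_{(f)}$ is multiplication by some $x$, and the Fukaya--Kato relation forces $1-x$ to be annihilated by $\xi_\chi'$, hence nilpotent, hence $x$ a unit. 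You need that cyclicity input explicitly; the proposal never verifies or uses it. In short: the overall route is right, but both steps you describe as routine are precisely where the paper does real work through the Gorenstein and cyclicity hypotheses.
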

\begin{rem}
Since $X_\chi(1)$ is $p$-torsion-free by Ferrero-Washington \cite{FW1979}, Sharifi's conjecture would imply that $\mathrm{(tor)}=0$.
\end{rem}

This is the first result to show that both maps of Sharifi's conjecture are isomorphisms (up to $p$-torsion) under the assumption of a well-established conjecture. For a statement of Corollary C involving the precise form of Greenberg's conjecture we need, see Corollary \ref{cor: corC stronger}. Corollary B appears as Corollary \ref{cor: Upsilon iso main}.

\subsection{Ordinary pseudorepresentations} 
\label{subsec:OP}
We prove Theorem A by developing new techniques in Galois deformation theory. In this part of the introduction, we discuss these techniques, which are of independent interest. Since the reader interested in Iwasawa theory and Sharifi's conjecture may be unfamiliar with deformation theory and pseudorepresentation theory, we provide context in \S\ref{subsec: summary of deformation theory}. Such a reader may like to consult \S\ref{subsec: summary of deformation theory} before proceeding.

We prove Theorem A by constructing a universal ordinary pseudodeformation ring and using the numerical criterion of Wiles \cite[Appendix]{wiles1995} to prove an $R=\mathbb{T}$ theorem comparing the ordinary pseudodeformation ring to $\fH_\p$. We use a version of the numerical criterion due to Lenstra \cite{lenstra1993}.

Recall that a representation of $G_{\Q_p} := \Gal(\overline{\bQ}_p/\bQ_p)$ on a $2$-dimensional $p$-adic vector space $V$ is {\em ordinary} if there exists a $1$-dimensional quotient representation $V \onto W$ such that $W(1)$ is unramified. A representation $\rho$ of $G_{\bQ,S}$ is {\em ordinary} if the restriction to a decomposition group $\rho\vert_{G_{\Q_p}}$ is ordinary. (This notion of ``ordinary'' is restrictive compared to some other contexts where the same term is used.) 

A \emph{pseudorepresentation} $D:G \to A$ of a group $G$ is a collection of polynomials over a commutative ring $A$, one for each group element, that satisfy the same algebraic conditions that the collection of characteristic polynomials of an $A$-linear representation of $G$ must (see Definition \ref{defn:psrep} for the precise definition). Given a representation $V$ of $G$, the actual collection of characteristic polynomials gives a pseudorepresentation which we denote by $\psi(V)$. When $A$ is a field, the pseudorepresentation $\psi(V)$ depends only on the semi-simplification $V^{ss}$ of $V$. Consequently, it may initially appear that pseudorepresentations are too crude to capture the ordinary condition, which depends on the order of the composition factors. However, extensions between $G_{\Q_p}$-characters often become visible in $G_{\bQ,S}$-pseudorepresentations, and this allows us to impose the ordinary condition on $G_{\bQ,S}$-pseudorepresentations. 

Our definition of ordinary pseudorepresentation of $G_{\bQ,S}$ may be be thought of, when the coefficient ring is a field, to be the na\"ive definition, ``a pseudorepresentation for which there exists an ordinary representation inducing it.'' Indeed, with field-valued coefficients $F$, every pseudorepresentation $D: G \ra F$ is associated to a unique semi-simple representation $\rho^{ss}_D$ after a finite extension of $F$ (Theorem \ref{thm:PsR_ss}). The following example explains our definition of ordinary pseudorepresentation in the field-valued case. Proofs of the statements in the example can be found in \S\ref{sec: ord C-H}.

\begin{exmp}
Let $F/\bQ_p$ be a finite extension, and let $D: G_{\bQ,S} \to F$ be a $2$-dimensional pseudorepresentation realized by the representation $\rho^{ss}_D: G_\bQ \ra \GL_2(F)$. Then $D$ is ordinary in our sense if and only if $\rho^{ss}_D$ is ordinary. When $\rho^{ss}_D$ is reducible, this means that at least one of the two composition factors of $\rho^{ss}_D(1)$ is unramified at $p$, and this can be seen by $D\vert_{G_{\Q_p}}$. When $\rho^{ss}_D$ is irreducible, it retains all information about composition series of $\rho_D^{ss}\vert_{G_{\Q_p}}$, even though $D\vert_{G_{\Q_p}}$ only knows Jordan-H\"older factors of $\rho_D^{ss}\vert_{G_{\Q_p}}$. 
\end{exmp}

Another serious obstacle arises when the coefficient ring is not a field, which we must address because we want to deform to rings such as $F[\varepsilon]/(\varepsilon^2)$. In this generality, not every pseudorepresentation is induced by some representation, rendering the na\"ive definition useless. We solve this problem by broadening the category of representations to include what are called \emph{Cayley-Hamilton representations}, which we learned from \cite[\S1.22]{chen2014} and which was adapted for use with representations of profinite groups in \cite[\S3.2]{WE2017}. They are defined in Definition \ref{defn:C-H_rep}. Every pseudorepresentation arises from a Cayley-Hamilton representation. We show in \S\ref{sec: ord C-H} that the ``ordinary'' condition can be reasonably imposed on Cayley-Hamilton representations (using a generalized matrix algebra structure in the sense of \cite[\S1]{BC2009}), after which we can define ordinary pseudorepresentations to be those for which there exist ordinary Cayley-Hamilton representations inducing them. 

The following theorem is an important case of what we prove in \S\ref{subsec:RDb}.
\begin{thmD} 
Let $\F$ be a field, and let
$
\chi_1, \  \chi_2: G_{\bQ,S} \to \F^\times
$ 
be characters such that $\chi_1\omega$ is unramified at $p$ and $\chi_2\omega$ is not. Let $\bar{D}$ be the pseudorepresentation associated to $\chi_1 \oplus \chi_2$. Then the ordinary deformation functor of $\bar{D}$ is representable by a quotient $R_{\bar{D}}^{\ord}$ of the universal pseudodeformation ring $R_{\bar{D}}$.
\end{thmD}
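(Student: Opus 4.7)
The plan is to realize $R_{\bar D}^{\ord}$ as an explicit quotient of $R_{\bar D}$ obtained by imposing the ordinary condition on the universal Cayley-Hamilton $\bar D$-representation. Recall that $R_{\bar D}$ carries a universal Cayley-Hamilton representation $\rho^u: G_{\bQ,S} \to E^u$ inducing the universal pseudodeformation $D^u$, and that an ordinary structure was defined in \S\ref{sec: ord C-H} for Cayley-Hamilton representations. By definition, a pseudodeformation $D$ to $A$ is ordinary if and only if some Cayley-Hamilton representation inducing $D$ admits an ordinary structure; since $E^u$ maps to every such algebra, this is equivalent to $E^u \otimes_{R_{\bar D}} A$ itself admitting an ordinary structure.

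The first step is to exploit $\chi_1 \ne \chi_2$ --- forced by $\chi_1\omega$ being unramified at $p$ while $\chi_2\omega$ is not --- to equip $E^u$ with a generalized matrix algebra (GMA) structure in the sense of \cite[\S1]{BC2009}:
\[
E^u \cong \begin{pmatrix} R_{\bar D} & B \\ C & R_{\bar D} \end{pmatrix},
\]
with orthogonal idempotents $e_1, e_2$ recovering the residual characters $\chi_1, \chi_2$ and with diagonal entries giving characters $\chi_i^u: G_{\bQ,S} \to R_{\bar D}^\times$ deforming $\chi_i$. A crucial consequence of our hypothesis is a \emph{rigidity}: the residual ordinary quotient is forced to be the $\chi_1$-line (since only $\chi_1\omega$ is unramified at $p$), so an ordinary structure on any Cayley-Hamilton representation lifting $\bar D$ is unique if it exists. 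This uniqueness is what permits the ordinary subfunctor to be representable rather than merely fibered in groupoids.

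With uniqueness in hand, I would translate ordinariness of $E^u \otimes_{R_{\bar D}} A$ into explicit closed conditions on $\Spec A$. Using the GMA, this decomposes into two conditions: (i) the character $\chi_1^u \cdot \kcyc$ restricted to the inertia subgroup $I_p \subset G_{\Q_p}$ is trivial, and (ii) the ``wrong-direction'' off-diagonal block --- the one governing extensions with $\chi_1$ as subobject rather than quotient on restriction to $G_{\Q_p}$ --- vanishes upon restriction to $I_p$. Condition (i) gives an ideal $J_1 \subset R_{\bar D}$ generated by $\chi_1^u(g)\kcyc(g) - 1$ for $g$ ranging over a topological generating set of $I_p$; condition (ii) gives an ideal $J_2$ as the image of an $R_{\bar D}$-linear map out of (a presentation of) the offending block. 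Setting $J = J_1 + J_2$, the quotient $R_{\bar D}^\ord := R_{\bar D}/J$ is the required universal ordinary pseudodeformation ring.

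The main obstacle is precisely the translation in the previous paragraph: making rigorous how the intrinsic ordinary condition on Cayley-Hamilton algebras (which involves the existence of a one-dimensional quotient of the restriction to $G_{\Q_p}$ whose cyclotomic twist is unramified) is encoded by concrete vanishing conditions on the GMA entries $B, C$. This requires a careful analysis, following Bellaïche-Chenevier, of how $B$ and $C$ parametrize extensions among the residual characters after restriction to $G_{\Q_p}$, together with verification that the specific idempotent decomposition arising from the residual ordering of $\chi_1, \chi_2$ is compatible with the unique ordinary structure. Once this encoding is in place, representability follows formally, and the surjection $R_{\bar D} \twoheadrightarrow R_{\bar D}^\ord$ expresses $R_{\bar D}^\ord$ as a quotient of the universal pseudodeformation ring as claimed.
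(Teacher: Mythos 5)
Your proposal follows the paper's own route to Theorem~D: endow the universal Cayley-Hamilton algebra $E_{\bar D}$ with its canonical type-$(1,1)$ GMA structure (uniqueness of the lift of idempotents is Remark~\ref{rem:no_extn}), encode ordinariness as vanishing conditions on the GMA entries restricted to $G_{\bQ_p}$ (Definition~\ref{defn:ord_C-H}), form the two-sided ideal cutting out those conditions (Definition~\ref{defn:univ_ord_objs}), and take the quotient $R_{\bar D}^\ord = R_{\bar D}/J^{\ord}_{R_{\bar D}}$ (Theorem~\ref{thm:ord_psdef_ring}). The ``main obstacle'' you flag is precisely where the paper's work lies: one must verify that the universal pseudorepresentation actually descends to the quotient, which is done via Chenevier's kernel formalism in Lemma~\ref{lem:ord_setup}, and then prove the universal property in Proposition~\ref{prop:ord_C-H}.

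One place your formulation diverges from the paper's in a way that needs an argument: you impose vanishing of the $(1,2)$-block ``upon restriction to $I_p$,'' whereas Definition~\ref{defn:ord_C-H}(1) demands $\rho_{1,2}(G_{\bQ_p}) = 0$ --- the full decomposition group, which is the natural stability condition for an ordinary filtration. Under the standing hypothesis that $\chi_1\chi_2^{-1}$ is ramified at $p$, the two conditions happen to be equivalent: from the multiplicative relation $\rho_{1,2}(\tau\sigma) = \rho_{1,1}(\tau)\rho_{1,2}(\sigma) + \rho_{1,2}(\tau)\rho_{2,2}(\sigma)$ and normality of $I_{\bQ_p}$ in $G_{\bQ_p}$, one gets $\bigl(\rho_{1,1}(\tau) - \rho_{2,2}(\sigma^{-1}\tau\sigma)\bigr)\rho_{1,2}(\sigma) \in (\rho_{1,2}(I_{\bQ_p}))$ for $\tau \in I_{\bQ_p}$; choosing $\tau$ with $\chi_1(\tau) \ne \chi_2(\tau)$ makes the scalar a unit and forces $\rho_{1,2}(\sigma)$ into the ideal. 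But this is a lemma you would have to prove if you keep the $I_p$ version; the paper sidesteps it by building $J^{\ord*}$ directly from $\rho^u_{1,2}(G_{\bQ_p})$. Also, a pseudodeformation being ordinary is not literally equivalent to $E_{\bar D} \otimes_{R_{\bar D}} A$ itself being ordinary (the image of $\rho^u_{1,2}(G_{\bQ_p})$ in the tensor need not vanish); rather, the map from $E_{\bar D} \otimes A$ to some witnessing ordinary Cayley-Hamilton algebra factors through $E^\ord_{\bar D} \otimes_{R^\ord_{\bar D}} A$, which is the content of Proposition~\ref{prop:ord_C-H}. These are refinements, not a change of approach.
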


\subsection{Pseudo-modularity} 

From work of Ohta and Sharifi \cite{ohta2005, sharifi2011}, we deduce that $\End_\h(H)$ has a natural Cayley-Hamilton algebra structure, with $G_{\bQ,S}$-action making the Cayley-Hamilton representation $G_{\bQ,S} \to \Aut_{\h_\p}(H_\p)$ ordinary. The resulting $G_{\bQ,S}$-pseudorepresentation $D_\p:G_{\bQ,S} \to \h_\p$ is then ordinary by definition, with residual pseudorepresentation $\bar{D}_\p$. 

Write $R_\p^\ord$ for ordinary pseudodeformation ring for the residual pseudorepresentation $\bar{D}_\p$. By construction, there is a canonical surjection $R_{\p}^\ord \onto \hat\h_\p$ onto the completion of $\h_\p$ corresponding to $D_\p$, from which we also construct a surjective map $R_{\p}^\ord \onto \hat\fH_\p$. 

We show that the tangent space of $R_{\p}^\ord$ can be controlled in terms of Galois cohomology. Using the Iwasawa Main Conjecture, and under the assumption $X_{\theta,(f_\chi)}=0$, we are able to verify the conditions of the numerical criterion of Wiles and Lenstra \cite{wiles1995,lenstra1993}. This pseudo-modularity theorem is our main result. It implies Theorem A.

\begin{thmE}
Assume $X_{\theta,(f_\chi)}=0$. Then the map 
$$
R_\p^\ord \onto \hat \fH_\p
$$
is an isomorphism and both rings are complete intersection. Moreover, the image of $\hat \h_\p[G_{\bQ,S}]$ in $\End_{\hat\h_\p}(\hat H_\p)$ is the universal ordinary cuspidal Cayley-Hamilton algebra.
\end{thmE}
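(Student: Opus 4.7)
The plan is to apply the Wiles-Lenstra numerical criterion to the canonical surjection $R_\p^\ord \onto \hat\fH_\p$, relative to a common Eisenstein augmentation $R_\p^\ord \onto \hat\fH_\p \onto \hat\h_\p \onto \Lamf/\xi_\chi$ obtained via the identification $\h/I \cong \Lambda/\xi_\chi$. The congruence module on the Hecke side has length $v_{f_\chi}(\xi_\chi)$, computable from Hida's theory of $\Lambda$-adic Eisenstein series. By the numerical criterion, to obtain both the isomorphism $R_\p^\ord \isoto \hat\fH_\p$ and the complete intersection property simultaneously, it suffices to bound the cotangent space of the augmentation ideal $\p_R \subset R_\p^\ord$ from above by $v_{f_\chi}(\xi_\chi)$.

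The main task is therefore to bound this cotangent space via Galois cohomology. Since $R_\p^\ord$ represents the ordinary pseudodeformation functor by Theorem D, first-order deformations at the augmentation are analyzed through the generalized matrix algebra structure on the universal ordinary Cayley-Hamilton lift of $\bar D_\p$. The tangent directions split into contributions coming from deforming the diagonal characters $\chi_1,\chi_2$ (controlled by $\Lam$-theoretic data) and from the two reducibility ideals $B, C$ of the GMA, which are governed by ordinary-condition cut-outs of $H^1(G_{\bQ,S}, \chi_1\chi_2^{-1})$ and $H^1(G_{\bQ,S}, \chi_2\chi_1^{-1})$. Global duality combined with the Mazur-Wiles Iwasawa Main Conjecture identifies the first contribution, after localization at $(f_\chi)$, as having length exactly $v_{f_\chi}(\xi_\chi)$, matching the congruence module. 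The second contribution is controlled by $X_\theta$, and the hypothesis $X_{\theta,(f_\chi)}=0$ is precisely what forces it to vanish; without this input, $R_\p^\ord$ would in general be strictly larger than $\hat\fH_\p$. The hardest step is this cohomological bookkeeping, in particular translating the ordinary Cayley-Hamilton formalism of Theorem D into local Selmer conditions compatible with the duality pairing used in the IMC, and carefully accounting for the fact that the ordinary condition is imposed through the Cayley-Hamilton/GMA framework rather than directly on genuine representations.

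Once these bounds match $v_{f_\chi}(\xi_\chi)$, the numerical criterion yields $R_\p^\ord \isoto \hat\fH_\p$ and both rings are complete intersection. For the final assertion about the universal ordinary cuspidal Cayley-Hamilton algebra, the Cayley-Hamilton algebra $\End_{\hat\h_\p}(\hat H_\p)$ obtained from the Ohta-Sharifi structure, together with its $G_{\bQ,S}$-action inducing $D_\p$, is a cuspidal ordinary Cayley-Hamilton lift of $\bar D_\p$, and hence a quotient of the universal such algebra. Combining this with the $R=\bT$ isomorphism and its cuspidal image on $\hat\h_\p$, and using that $\hat\h_\p[G_{\bQ,S}]$ already generates $\End_{\hat\h_\p}(\hat H_\p)$ as a Cayley-Hamilton $\hat\h_\p$-algebra, forces this quotient to be an isomorphism, identifying the image as the universal ordinary cuspidal Cayley-Hamilton algebra.
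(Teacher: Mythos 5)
Your approach to the first assertion (the $R = \bT$ isomorphism) is essentially the paper's: set up the Wiles--Lenstra numerical criterion for $R^\ord_\p \onto \hat\fH_\p$ over the Eisenstein augmentation, and bound the cotangent space of the augmentation ideal using the GMA structure and Galois cohomology, with the Iwasawa Main Conjecture feeding in $\xi_\chi$ as the characteristic ideal of $X_\chi(1)$. Two small points of imprecision: the augmentation should land in the DVR $\Lamf$ via the Eisenstein quotient $\hat\fH_\p \to \hat\fH_\p/\I_\p \cong \Lamf$, not via the chain through $\hat\h_\p$ with target $\Lamf/\xi_\chi$ (which is not a DVR and so cannot serve as the base of the criterion). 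And the role of $X_{\theta,(f_\chi)}=0$ is not that some ``contribution vanishes,'' but that the $C$-side module of extensions (the $(2,1)$-coordinate of the reducible ordinary GMA) becomes free of rank $1$ over $\Lamf$; the obstruction to this is $H^2$, which is exactly $X_{\theta,(f_\chi)}$. With that freeness, the multiplication $B \otimes C \onto \cJ_\m$ yields a surjection $X_{\chi,(f_\chi)}(1) \onto \cJ_\m/\cJ_\m^2$, giving the upper bound $\le v_{f_\chi}(\xi_\chi)$ needed for the criterion. So the first half of your argument is on track, modulo these fixable imprecisions.

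The second assertion, however, has a genuine gap. You write that ``$\hat\h_\p[G_{\bQ,S}]$ already generates $\End_{\hat\h_\p}(\hat H_\p)$ as a Cayley--Hamilton $\hat\h_\p$-algebra,'' and use this to conclude. That is false: the image $E_{\h_\p}$ of $\hat\h_\p[G_{\bQ,S}]$ is a \emph{proper} subalgebra of $\End_{\hat\h_\p}(\hat H_\p)$. In the GMA coordinates $E_{\h_\p} = \left(\begin{smallmatrix}\h_\p & B_{\h_\p} \\ C_{\h_\p} & \h_\p\end{smallmatrix}\right)$, the off-diagonal entries are ideals with $B_{\h_\p} \cdot C_{\h_\p} = I_\p$, and in fact $C_{\h_\p} = I_\p \neq \h_\p$; they are not all of $\h_\p$. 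The universal ordinary cuspidal Cayley--Hamilton algebra $E^\ord_\m \otimes_{R^\ord_\m} \h_\p$ surjects onto $E_{\h_\p}$, and the content of the claim is that this surjection is an isomorphism. That cannot be deduced from a ``generation'' statement about the codomain; one must show the off-diagonal modules $B^\ord_{\h_\p}$ and $C^\ord_{\h_\p}$ of the universal object are no larger than $B_{\h_\p}$ and $C_{\h_\p}$. The paper does this by proving $B^\ord, C^\ord$ are cyclic over $R^\ord_\m$ (via Nakayama, from the surjection $B^\red \otimes C^\red \cong X_{\chi,(f_\chi)}(1) \onto \cJ_\m/\cJ_\m^2$ together with cyclicity of $X_{\chi,(f_\chi)}(1)$) and that $B_{\h_\p}, C_{\h_\p}$ are free of rank $1$; a surjection of a cyclic module onto a free rank-$1$ module is then forced to be an isomorphism. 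Note in particular that this half of the theorem needs the additional input that $X_{\chi,(f_\chi)}(1)$ is cyclic over $\Lamf$ (see condition~(1) of Theorem~\ref{thm: eigencurve equivalences} and Theorem~\ref{thm: GMA R=T}), which your argument does not invoke and which does not follow from $X_{\theta,(f_\chi)}=0$ alone.
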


For the precise statement of second statement in Theorem E, see Theorem \ref{thm: GMA R=T}. See \S\ref{subsec: comm alg} for a discussion of Gorenstein and complete intersection rings.

\begin{rem}
\label{rem:no taylor-wiles}
By Remark \ref{rem: Xtheta=0 is necessary}, if $X_{\theta,(f_\chi)} \ne 0$, then $\fH_\p$ is not Gorenstein, so it cannot be complete intersection. Since the numerical criterion for proving $R=\mathbb{T}$ proves complete intersection as a byproduct, it seems unlikely that we can remove the assumption $X_{\theta,(f_\chi)}=0$. 

In particular, while we use the numerical criterion of Wiles, we do not use the Taylor-Wiles method, and we do not believe that the Taylor-Wiles method can be used to improve this result. Of course, we expect that $X_{\theta,(f_\chi)}=0$ always (see Remark \ref{rem: Xtheta=0 is weak}).  
\end{rem}

\subsection{Outline}
\label{subsec: outline}
In \S\ref{sec: iwasawa}, we review some classical Iwasawa theory related to the conditions in the main theorems. In \S\ref{sec: modular}, we discuss known results on the structure of the cohomology group $H$, with a view towards using $H$ to produce an ordinary pseudorepresentation. In \S\ref{sec: gorenstein}, we prove some sufficient conditions for the weakly Gorenstein conjecture. We begin \S\ref{sec: ord C-H} with a review of Galois deformation theory for non-experts. Then we introduce the notation of ordinary Cayley-Hamilton representation and ordinary pseudorepresentation, and construct universal objects in these categories to prove Theorem D. In \S\ref{sec:galois_cohom}, we compute Galois cohomology. In \S\ref{sec: modularity}, we compare universal ordinary objects with modular objects, using the numerical criterion to prove Theorem A and Theorem E. In \S\ref{sec:sharifi}, we deduce our results towards Sharifi's conjecture, Corollary B and Corollary C, and also our results on the geometry of the ordinary eigencurve.

\subsection{Acknowledgements} 
We would like to thank the organizers of the Glennfest conference at Boston University for providing the environment where this collaboration was initiated, and, in particular, Jo\"el Bella\"iche and Rob Pollack for suggesting that we collaborate. We would also like to recognize the hospitality of the mathematics departments at the University of Chicago, Brandeis University, and UCLA. We thank Patrick Allen, Jo\"el Bella\"iche, Frank Calegari, Brian Conrad, Matt Emerton, Takako Fukaya, Kazuya Kato, Matsami Ohta, and Romyar Sharifi for helpful conversations. We are grateful to the anonymous referee for supplying many helpful comments, and we also recognize Eric Urban for comments that helped us to improve the main result.

P.W.~was supported by the National Science Foundation under the Mathematical Sciences Postdoctoral Research Fellowship No.~1606255. C.W.E.~would like to thank the AMS and the Simons Foundation for support for this project in the form of an AMS-Simons Travel Grant.

\subsection{Notation and conventions} 
If $K$ is a field, let $G_K$ denote the absolute Galois group of $K$. In particular, we fix $\overline{\bQ} \rinj \overline{\bQ}_p$, allowing us to  treat $G_{\bQ_p}$ as a decomposition group for $p$ with a map $G_{\bQ_p} \ra G_\bQ$. If $\rho$ is a representation, we let $\psi(\rho)$ denote the associated pseudorepresentation. 

Unless remarked otherwise, ``ring" means commutative ring, and ``algebra" means associative (but not necessarily commutative) algebra. Rings and algebras are assumed to be unital, and morphisms are assumed to preserve the unit element.

If $R$ is a ring, $M$ is an $R$-module, and $r \in R$, we let $_r M$ denote the $r$-torsion submodule of $M$. We let $M\{r\} = \displaystyle \cup_{n\ge 1} (_{r^n} M)$. We sometimes use $M/r$ to denote $M/rM$ to save space. We let $Q(R)$ denote the total quotient ring of $R$ -- that is, the localization $S^{-1}R$ where $S$ is the set of non-zero divisors. 

In general, we reserve the notation $M \cong N$ for a \emph{canonical} isomorphism between objects $M$ and $N$. If there is no canonical choice of isomorphism, we write $M \simeq N$ instead.

\section{Preliminaries on Iwasawa theory}
\label{sec: iwasawa}
In this section, we discuss Iwasawa theory for cyclotomic fields. Recall the notations for class groups and cyclotomic fields established in \S\ref{subsec:setup}.

Let $\Cl(\Q(\zeta_{Np^r}))$ be the class group. By class field theory, there is an isomorphism
\[
X \cong \varprojlim \Cl(\Q(\zeta_{Np^r}))\{p\}.
\]
There is a continuous action of $\Gamma$ on $X$.

For $z \in \Z_{p,N}^\times$, let $[z] \in \Z_p\lb\Z_{p,N}^\times\rb$ denote the corresponding group-like element. Notice that the action on $(\zeta_{Np^r})$ gives an isomorphism $\Gamma \simeq \ker(\Z_p^\times \to (\Z/p\Z)^\times)$. 

Note that $\Z_p\lb\Z_{p,N}^\times\rb$ is a semilocal ring, while $\Lam \simeq \sO\lb\Gamma\rb \simeq \sO\lb T\rb$ where $\sO$ is the valuation ring of the extension of $\Q_p$ generated by the values of $\theta$. This makes it easier to work with $\Lam$ rather than $\Z_p\lb\Z_{p,N}^\times\rb$.

Let $M \mapsto M^\#$ and $M \mapsto M(r)$ be the functors on $\Z_p\lb\Z_{p,N}^\times\rb$-modules as defined in \cite[\S2.1.3]{wake2}.  Namely, $M^\#$ and $M(r)$ are identical as $\bZ_p$-modules to $M$, and $z \in \Z_{p,N}^\times$ acts on $M^\#$ as $z^{-1}$ acts on $M$. Such $z$ acts on $M(r)$ as $\bar z^r[z]$ acts on $M$, where $\bar z$ denotes the projection of $z$ to $\bZ_p^\times$. Especially when using duality, we are forced to consider $\Z_p\lb\Z_{p,N}^\times\rb$-modules that are isotypical for characters other than $\theta$, but we use these functors to make the actions factor through $\Lam$ so we can treat all modules uniformly. 

We define $\xi_\chi, ^*\xi_\chii \in \Lam$ to be generators of the principal ideals $\Ch(X_\chi(1))$ and $\Ch(X_\chii^\#(1))$, respectively. By the Iwasawa Main Conjecture, these may be taken to be power series associated to Kubota-Leopoldt $p$-adic $L$-functions. By the theory of Iwasawa adjoints, $\Ch(\X_\theta)=(^*\xi_\chii)$ (see, for example, \cite[Prop.\ 2.2]{wake2}).

\subsection{Units and finiteness conjectures}
\label{subsec:greenberg}

We explain the relationship between $X$ and $\X$, as well as their relationships to groups of units. We then recall some conjectures about the finiteness and cyclicity of class groups. 

There is a quotient map $\X \onto X$, and, by class field theory, the kernel measures the difference between local units and global units. More precisely, for a primitive character $\psi$ of $(\Z/Np\Z)^\times$, there is an exact sequence
\begin{equation}\label{eq: euxx}
0 \lra \E_\psi \lra U_\psi \lra \X_\psi \lra X_\psi \lra 0, 
\end{equation}
where $U$ is the pro-$p$ part of $\varprojlim (\Z[\zeta_{Np^r}]\otimes \Z_p)^\times$, and $\E$ is closure of the image of the global units in $U$.

The structure of the local units $U$ was studied by Iwasawa, and later by Coleman, using the theory of Coleman power series. The following result is the known as Iwasawa's theorem. For the original work of Iwasawa, see \cite{iwasawa1964}; for Coleman power series, see \cite{coleman1979}; for the abelian number field case see, for example, \cite{greither1992}. Note that we use the Main Conjecture to relate $p$-adic $L$-functions to $^*\xi_\chii$.

\begin{thm}
There is an isomorphism 
\[
U_\theta \lrisom \Lambda.
\]
It sends the group of circular units $C_\theta \subset \E_\theta$ to the ideal $(^*\xi_\chii) \subset \Lambda$.
\end{thm}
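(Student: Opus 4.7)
The plan is to invoke Coleman's theory of power series for norm-coherent local units, and then to trace the image of the explicit cyclotomic-unit system through that theory. This is essentially an unfolding of the classical construction of the Kubota--Leopoldt $p$-adic $L$-function via cyclotomic units.

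First I would set up the Coleman isomorphism. For a norm-compatible system $u = (u_r)_r \in U$, Coleman's theorem produces a unique power series $g_u \in \mathbb{Z}_p[[T]]$ (appropriately refined to accommodate the tame level $N$) that interpolates the $u_r$ at the uniformizers $\zeta_{p^r}-1$. Passing to the logarithmic derivative $(1 - \varphi/p)\log g_u$, viewed through the Amice transform as a measure on $\mathbb{Z}_{p,N}^\times$, yields a $\mathbb{Z}_p[[\mathbb{Z}_{p,N}^\times]]$-linear homomorphism $U \to \mathbb{Z}_p[[\mathbb{Z}_{p,N}^\times]]$. Projecting to the $\theta$-eigenspace gives a $\Lambda$-linear map $U_\theta \to \Lambda$. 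Injectivity follows from the uniqueness clause in Coleman's theorem together with the injectivity of $\log$ on the relevant pro-$p$ part. For surjectivity, one checks that both sides are free $\Lambda$-modules of rank one (the assumption $p \nmid N\phi(N)$ makes the tame decomposition clean), and then argues that the cokernel is $\Lambda$-torsion-free, hence zero. I would follow Greither's treatment \cite{greither1992} of the abelian number field case for this step.

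Second I would identify the image of $C_\theta$. The circular units are by definition generated over $\Lambda$ by (Galois conjugates of) the norm-compatible cyclotomic element $c = (1 - \zeta_{Np^r})_r$. The Coleman power series of $c$ is essentially $g_c(T) = (1+T)\zeta^{-1} - 1$ up to the tame-level twist, and applying $(1-\varphi/p)\log$ to this explicit series and taking Amice transforms recovers the cyclotomic measure whose $\psi$-components interpolate Dirichlet $L$-values $L_p(\psi,s)$. Projecting to the $\theta$-eigenspace and matching characters with the definition $\xi_{\chi^{-1}} \in \Lambda = \mathbb{Z}_p[[\mathbb{Z}_{p,N}^\times]]_\theta$ in \S 2 then yields $(\xi_{\chi^{-1}})$ up to a unit; this is Iwasawa's fundamental computation.

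The main obstacle, and the only real work beyond invoking Coleman--Iwasawa, is the character bookkeeping. The logarithmic derivative converts the multiplicative $\theta$-action on units into an action twisted by the cyclotomic character, so the $\theta$-isotypic component on the units side corresponds, after unwinding the $(\cdot)^\#$ and Tate twists from \S 2, to the $\chi^{-1}$-component of the cyclotomic measure rather than to the $\theta$- or $\chi$-component. Since $\chi = \omega^{-1}\theta$ is odd while $\theta$ is even, this cross-wiring is precisely what produces the non-vanishing $\xi_{\chi^{-1}}$ (as opposed to a unit). Writing out this dictionary carefully, the theorem then reduces to the standard Coleman--Iwasawa calculation.
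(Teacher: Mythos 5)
Your outline is a correct reconstruction of the standard Coleman--Iwasawa argument, which is exactly what the paper relies on: the theorem is stated without proof and cited to Coleman~\cite{coleman1979} and, for the tame-level generalization, to Greither~\cite{greither1992}. Your flagging of the character bookkeeping as the delicate point (the logarithmic-derivative step twists the $\theta$-eigenspace of $U$ into the power series $\xi_{\chi^{-1}}$) is precisely right and is consistent with the paper's convention that $\mathrm{Char}_\Lambda(\mathfrak{X}_\theta)=(\xi_{\chi^{-1}})$.
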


We will use the following corollary, which is well-known.

\begin{cor}
\label{cor: units}
There is an exact sequence of $\Lambda$-modules 
\[
0 \lra \E_\theta/C_\theta \lra \Lambda/^*\xi_\chii \lra \X_\theta \lra X_\theta \lra 0.
\]
In particular, $\Ch(\E_\theta/C_\theta) = \Ch(X_\theta)$.

For any height $1$ prime $(f) \subset \Lambda$, the $\Lambda_{(f)}$-module $\E_{\theta,(f)}$ is cyclic.
\end{cor}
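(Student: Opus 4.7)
The plan is to derive everything from the four-term exact sequence \eqref{eq: euxx} in the $\theta$-eigenspace, together with the isomorphism $U_\theta \cong \Lambda$ that carries $C_\theta$ onto $(\xi_\chii)$.

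First, I would split \eqref{eq: euxx} into two short exact sequences by introducing $Y := U_\theta/\E_\theta$, so that
\[
0 \lra \E_\theta \lra U_\theta \lra Y \lra 0, \qquad 0 \lra Y \lra \X_\theta \lra X_\theta \lra 0.
\]
Since $C_\theta \subset \E_\theta \subset U_\theta$, quotienting the first by $C_\theta$ gives
\[
0 \lra \E_\theta/C_\theta \lra U_\theta/C_\theta \lra Y \lra 0,
\]
and by the theorem the middle term is canonically $\Lambda/\xi_\chii$. Splicing this with the second short exact sequence yields the desired four-term sequence
\[
0 \lra \E_\theta/C_\theta \lra \Lambda/\xi_\chii \lra \X_\theta \lra X_\theta \lra 0.
\]

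For the statement on characteristic ideals, I would use multiplicativity of $\Ch_\Lambda$ in short exact sequences of finitely generated torsion $\Lambda$-modules (all four terms are torsion since $\Lambda/\xi_\chii$ is torsion). From the two short exact sequences above one gets
\[
\Ch(\Lambda/\xi_\chii) = \Ch(\E_\theta/C_\theta)\cdot \Ch(Y), \qquad \Ch(\X_\theta) = \Ch(Y)\cdot \Ch(X_\theta).
\]
Since $\Ch(\Lambda/\xi_\chii) = (\xi_\chii) = \Ch(\X_\theta)$ by the remark recalled just before the subsection (from \cite[Prop.\ 2.2]{wake2}), we may cancel $\Ch(Y)$ (it is a nonzero principal ideal in the UFD $\Lambda$) to conclude $\Ch(\E_\theta/C_\theta) = \Ch(X_\theta)$.

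Finally, for cyclicity of $\E_{\theta,(f)}$, I would localize \eqref{eq: euxx} at the height $1$ prime $(f)$. Localization is exact, and $U_{\theta,(f)} \cong \Lambda_{(f)}$ is a discrete valuation ring. Then $\E_{\theta,(f)}$ embeds into $\Lambda_{(f)}$, and every $\Lambda_{(f)}$-submodule of $\Lambda_{(f)}$ is either zero or generated by a power of a uniformizer, hence cyclic. I do not foresee a serious obstacle: the only subtlety is making sure the short-exact-sequence splitting and the image identification $C_\theta \mapsto (\xi_\chii)$ are applied in the correct order, and that $\Ch(Y)$ is non-zero so that the cancellation in the characteristic-ideal computation is legitimate.
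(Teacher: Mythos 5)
Your proof is correct and follows essentially the same route as the paper's (which is stated much more tersely): the four-term sequence is obtained by quotienting the $\theta$-component of \eqref{eq: euxx} by $C_\theta$ and invoking the Coleman isomorphism $U_\theta \cong \Lambda$, the characteristic-ideal equality follows from multiplicativity of $\Ch_\Lambda$ together with $\Ch(\X_\theta) = (\xi_\chii)$, and cyclicity of $\E_{\theta,(f)}$ follows because it embeds in the rank-one free $\Lambda_{(f)}$-module $U_{\theta,(f)}$ over the DVR $\Lambda_{(f)}$. Your splitting into two short exact sequences via $Y = U_\theta/\E_\theta$ and the explicit cancellation of $\Ch(Y)$ are just a more detailed rendering of what the paper leaves implicit.
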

\begin{proof}
The first statement is immediate from the theorem and the sequence (\ref{eq: euxx}). The second statement follows from this since $\Ch(\X_\theta)=(^*\xi_\chii)$. For the final statement, notice that $\E_{\theta,(f)} \subset U_{\theta,(f)}$, which is a free $\Lambda_{(f)}$-module of rank $1$ by the theorem. The statement follows since $\Lambda_{(f)}$ is a PID.
\end{proof}

Recall that two $\Lam$-modules $M$ and $M'$ are said to be {\em pseudoisomorphic} if there is a morphism $M \to M'$ with finite kernel and cokernel. Pseudoisomorphism is an equivalence relation on torsion $\Lambda$-modules. A $\Lam$-module $M$ is said to be {\em pseudocyclic} if $M$ is pseudoisomorphic to a cyclic module. 

\begin{conj}[Kummer-Vandiver]
Assume $N=1$. Then $X^+=0$.
\end{conj}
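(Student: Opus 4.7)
The Kummer--Vandiver conjecture has been open since the 19th century; rather than attempting to settle it I will reformulate it using the tools introduced so far and pinpoint what is currently missing. For $N = 1$, the assertion $X^+ = 0$ decomposes into the vanishing $X_\theta = 0$ for every even character $\theta$ of $(\Z/p\Z)^\times$. By Corollary~\ref{cor: units} we have the exact sequence
\[
0 \to \E_\theta/C_\theta \to \Lam/\xi_\chii \to \X_\theta \to X_\theta \to 0,
\]
together with the characteristic-ideal identity $\Ch(\E_\theta/C_\theta) = \Ch(X_\theta)$. Thus $X_\theta = 0$ is equivalent to the surjectivity of the map $\Lam/\xi_\chii \to \X_\theta$, and the identity of characteristic ideals then forces $\E_\theta/C_\theta$ to be pseudo-null as well.

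The strategy I would try proceeds in three steps. First, invoke the Iwasawa Main Conjecture to identify $\Ch(\X_\theta) = (\xi_\chii)$ analytically via the Kubota--Leopoldt $p$-adic $L$-function, so that both $\X_\theta$ and $\Lam/\xi_\chii$ have the same characteristic ideal. Second, produce enough explicit global units in $\E_\theta$---for instance by a Kolyvagin-type Euler-system descent on cyclotomic units---to force $\E_\theta/C_\theta$ to be pseudo-null, which via the exact sequence implies $X_\theta$ is pseudo-null as well; this is Greenberg's conjecture. Third, upgrade ``finite'' to ``zero'', presumably by combining Ferrero--Washington's $\mu = 0$ theorem with a reflection principle (Leopoldt's Spiegelungssatz) that transfers the excellent explicit control available on the minus side of the cyclotomic tower over to the elusive plus side.

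The main obstacle is Step~3, which is precisely where Kummer--Vandiver outstrips the already-open Greenberg conjecture. On the minus side, Iwasawa theory supplies a canonical explicit source of units---cyclotomic units map onto $p$-adic $L$-values---and this is what makes the main conjecture tractable there; no analogous explicit construction is known on the plus side. Even granting the first two steps, eliminating any residual finite $p$-part of $X_\theta$ appears to require a genuinely new global arithmetic ingredient. This is precisely why the body of the paper works conditionally on Greenberg's conjecture rather than targeting Kummer--Vandiver itself: with present techniques, the passage from ``finite'' to ``zero'' on the plus side remains the true barrier.
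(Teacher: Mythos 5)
This statement is the Kummer--Vandiver conjecture, which the paper records in a \verb|conj| environment precisely because it is an open problem; the paper offers no proof, and neither should you. You correctly recognize this and decline to claim a proof, and your discussion of the reduction via Corollary~\ref{cor: units}, the role of the Main Conjecture and cyclotomic units, and the gap between Greenberg's ``finite'' and Kummer--Vandiver's ``zero'' is an accurate summary of why the conjecture remains out of reach and why the paper works conditionally on Greenberg's conjecture instead.
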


By Corollary \ref{cor: units}, we see that if $X^+=0$, then $\Lam/\xi_\chii \to \X_\theta$ is an isomorphism. Using Iwasawa adjunction, this implies that $X_\chii$ is cyclic. For general $N>1$, we don't expect that $X_\chii$ is cyclic, but we do expect it to be pseudocyclic. This expectation follows from the following statement, which is known as Greenberg's conjecture.

\begin{conj}[{\cite[Conj.\ 3.4]{greenberg2001}}]
\label{conj: greenberg}
For any $N$, $X^+$ is of finite cardinality. In particular, $X^-$ is pseudocyclic.  
\end{conj}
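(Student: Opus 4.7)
The first assertion is Greenberg's open conjecture, so no proof from scratch is on offer; my plan is to sketch why the ``in particular'' clause is a formal consequence, i.e., why $X^+$ finite $\Rightarrow X^-$ pseudocyclic. Pseudocyclicity can be checked one character component at a time under the block decomposition of $\bZ_p\lb\Z_{p,N}^\times\rb$, so it suffices to show that for each odd character $\psi$, the component $X_\psi$ is pseudocyclic as a module over the corresponding regular local ring.

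Fix an odd $\psi$, and pair it with the even character $\theta$ defined by $\chi := \omega^{-1}\theta = \psi^{-1}$, so that $\chii = \psi$. The hypothesis $X^+$ finite forces $X_\theta$ to be finite. Feeding this into the four-term exact sequence of Corollary~\ref{cor: units},
\[
0 \lra \E_\theta/C_\theta \lra \Lambda/\xi_\chii \lra \X_\theta \lra X_\theta \lra 0,
\]
multiplicativity of characteristic ideals together with the identity $\Ch(\E_\theta/C_\theta) = \Ch(X_\theta) = (1)$ (the latter by finiteness) yields that $\E_\theta/C_\theta$ is finite, and hence that $\X_\theta$ is pseudoisomorphic to the cyclic module $\Lambda/\xi_\chii$. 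In particular $\X_\theta$ is pseudocyclic.

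It remains to transfer pseudocyclicity from the even-component module $\X_\theta$ to the odd-component module $X_\chii = X_\psi$. For this I would invoke Iwasawa's adjoint theorem, which provides a canonical pseudoisomorphism $E(\X_\theta) \sim X_\chii^{\#}(1)$ (where $E$ denotes the Iwasawa adjoint and $\#$, $(1)$ are the twisting functors recalled in \S\ref{sec: iwasawa}). The adjoint $E$ preserves the full list of elementary divisors, not merely the characteristic ideal, so it preserves pseudocyclicity; the functors $\#$ and $(r)$ likewise preserve pseudocyclicity, being merely reindexings of the $\Lambda$-action on a fixed underlying abelian group. Combining, $X_\psi$ is pseudocyclic for each odd $\psi$, whence $X^-$ is pseudocyclic.

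The main obstacle is the module-level invocation of Iwasawa adjoints: the characteristic-ideal equality $\Ch(\X_\theta) = \Ch(X_\chii^{\#}(1)) = (\xi_\chii)$ recorded in \cite[Prop.\ 2.2]{wake2} must be upgraded to a bona fide pseudoisomorphism of $\Lambda$-modules, i.e., to an equality of elementary divisors. Once this is in hand, the unit-side input (Corollary~\ref{cor: units}) and the hypothesis on $X^+$ combine cleanly; the first clause of the statement, Greenberg's conjecture proper, remains entirely beyond the scope of any strategy I could suggest.
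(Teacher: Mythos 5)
Your proposal is correct and follows the same route the paper intends: the paper's one-line justification (``The second sentence follows from the first by Corollary \ref{cor: units}, as above'') is exactly the chain $X^+$ finite $\Rightarrow$ $\E_\theta/C_\theta$ finite $\Rightarrow$ $\X_\theta$ pseudoisomorphic to $\Lambda/\xi_\chii$ $\Rightarrow$ $X_\chii$ pseudocyclic via Iwasawa adjunction, which you spell out. Your closing remark that the module-level pseudoisomorphism $E(\X_\theta)\sim X_\chii^\#(1)$ (not merely an equality of characteristic ideals) is the operative input is a fair observation, but this is standard: the Iwasawa adjoint of a finitely generated torsion $\Lambda$-module preserves elementary divisors up to pseudoisomorphism, so it indeed carries the pseudocyclicity of $\X_\theta$ to that of $X_\chii^\#(1)$, and the twists $\#$, $(1)$ obviously preserve pseudocyclicity.
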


The second sentence follows from the first by Corollary \ref{cor: units}, as above. There is the following relation with multiple roots of the $p$-adic zeta function.

\begin{lem}\label{multi roots and cyclicity lem}
If $\xi_\chi$ has no prime factors that occur with multiplicity greater than $1$, then $X_\chi(1)$ is pseudocyclic. If $f$ is a prime factor of $\xi_\chi$ that occurs with multiplicity $1$, then $X_\chi(1) \otimes_{\Lam} Q(\Lam/(f))$ has dimension $1$.
\end{lem}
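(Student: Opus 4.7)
The plan is to deduce both statements from the structure theorem for finitely generated torsion $\Lambda$-modules, combined with the identity $\Ch(X_\chi(1)) = (\xi_\chi)$ noted above (via the Iwasawa Main Conjecture, as $\xi_\chi$ was defined precisely as a generator of this characteristic ideal). By the structure theorem, there is a pseudoisomorphism
\[
X_\chi(1) \sim \bigoplus_{i=1}^k \Lambda/(g_i^{e_i})
\]
for some (not necessarily distinct) irreducible $g_i \in \Lambda$, and $\prod_i g_i^{e_i}$ agrees with $\xi_\chi$ up to a unit.

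For the first assertion, I would argue that the hypothesis forces $e_i = 1$ for all $i$ and the $g_i$ to be pairwise non-associate primes in the UFD $\Lambda$. Then I would consider the natural ``CRT'' map
\[
\phi : \Lambda/(\xi_\chi) \lra \bigoplus_{i=1}^k \Lambda/(g_i),\qquad x \longmapsto (x \bmod g_i)_i,
\]
and show it is a pseudoisomorphism. Injectivity is immediate: if $g_i \mid x$ for each $i$, then $\prod g_i \mid x$ since the $g_i$ are pairwise coprime in the UFD $\Lambda$, so $x \in (\xi_\chi)$. For the cokernel I would localize at an arbitrary height-one prime $(h)$: if $(h) = (g_j)$, both sides become $\Lambda_{(g_j)}/(g_j)$ and $\phi$ is an isomorphism there; if $(h)$ is not one of the $(g_j)$, both sides vanish. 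Hence $\mathrm{coker}(\phi)$ is supported only at the maximal ideal of $\Lambda$, so it is finite, establishing pseudocyclicity.

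For the second assertion, I would localize $X_\chi(1)$ at the height-one prime $(f)$. Since $\Lambda_{(f)}$ is a DVR, the structure theorem gives
\[
X_\chi(1)_{(f)} \cong \bigoplus_{j} \Lambda_{(f)}/(f)^{n_j},
\]
and $\sum_j n_j$ equals the multiplicity of $f$ in $\Ch(X_\chi(1)) = (\xi_\chi)$, which is $1$ by hypothesis. Therefore there is exactly one summand with $n_1 = 1$, giving $X_\chi(1)_{(f)} \cong \Lambda_{(f)}/(f) = Q(\Lambda/(f))$. Tensoring with $Q(\Lambda/(f))$ over $\Lambda$ gives a one-dimensional $Q(\Lambda/(f))$-vector space, as required.

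The argument is essentially mechanical once one has the Iwasawa Main Conjecture in hand; the only step requiring a small check is the pseudoisomorphism property of $\phi$, and specifically that its cokernel has no support at height-one primes. This is the ``hard'' point (really the only point of substance), and it follows cleanly from the local computation above because the $g_i$ are pairwise coprime in the UFD $\Lambda$.
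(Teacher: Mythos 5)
Your proof is correct and follows essentially the same route as the paper: invoke the structure theorem for finitely generated torsion $\Lambda$-modules, use $\Ch(X_\chi(1)) = (\xi_\chi)$, and localize at $(f)$ for the second part. The one place you add genuine content is worth noting: the paper cites the (elementary-divisor) structure theorem to get a pseudoisomorphism $X_\chi(1) \sim \bigoplus_i \Lambda/f_i$ and then asserts that when $\xi_\chi$ is squarefree the right-hand side ``is a cyclic $\Lambda$-module.'' Read literally with $f_i$ distinct primes, $\bigoplus_i \Lambda/(f_i)$ is \emph{not} cyclic over the local ring $\Lambda$ when there is more than one summand (its residue $\bigoplus_i \Lambda/(f_i,\m)$ has dimension equal to the number of summands), but it \emph{is} pseudocyclic. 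Your CRT-style map $\Lambda/(\xi_\chi) \to \bigoplus_i \Lambda/(g_i)$, with injectivity from the UFD property and finiteness of the cokernel from the localization-at-height-one-primes check, supplies exactly the argument the paper leaves implicit, so your write-up is a bit more careful at this step without being a different proof. The localization argument for the second assertion matches the paper's and is also fine; in particular you correctly do not invoke the squarefreeness hypothesis there.
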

\begin{proof}
By the structure theorem for Iwasawa modules \cite[Thm.\ 5.3.8, pg.\ 292]{NSW2008}, there is a pseudoisomorphism
\[
X_\chi(1) \lra \bigoplus_i \Lam/f_i
\]
with the property that $\prod_i f_i = \xi_\chi$. If $\xi_\chi$ has no prime factors that occur with multiplicity greater than $1$, then the right hand side is a cyclic $\Lam$-module, so this gives the first statement. If $f$ is a prime factor of $\xi_\chi$ that occurs with multiplicity $1$, then $f$ divides exactly one of the $f_i$ exactly once, and the second statement follows.
\end{proof}

\section{Modular forms and Hecke algebras}
\label{sec: modular}
We review the results of Hida, Ohta, Sharifi, and Fukaya-Kato on the structure of the ordinary integral cohomology of modular curves in preparation to match their Hecke module and Galois module structures with universal ordinary Galois modules. 

The exact form of the results of this section depend on several choices -- the model of the modular curve, Hecke operators versus dual Hecke operators, etc.\ -- and every author seems to make a different set of choices. We choose to follow the choices made by Fukaya-Kato in \cite{FK2012}, since that paper has a very thorough treatment of the subject. We refer the reader to \cite[\S1]{FK2012} for further details.

\subsection{Preliminaries on Hecke algebras and modular forms}
\label{subsec: H prelims}
Let $Y_1(Np^r)$ be the moduli space of elliptic curves together with a point of order $Np^r$. It is a smooth curve over $\Q$ (note that this is a different model for the modular curve than the one used in the works of Ohta cited below). Let $X_1(Np^r)$ be the compactification by adding cusps. 

Let $\tH', H'$ be the ordinary parts of the \'etale cohomology of the modular curves
\begin{equation}
\label{eq:H_limit}
\tH' = \varprojlim H^1_{\text{\'et}}(Y_1(Np^r) \otimes_\bQ \overline{\bQ},\Z_p)_\theta^{\ord}, \qquad H' = \varprojlim H^1_{\text{\'et}}(X_1(Np^r) \otimes_\bQ \overline{\bQ},\Z_p)_\theta^{\ord}
\end{equation}
where the $\theta$-eigenspace is taken for the action of the diamond operators. Let $\fH'$ and $\h'$ be the corresponding algebras of dual Hecke operators.

There is a unique maximal ideal containing the Eisenstein ideals of $\fH'$ and $\h'$ respectively. Let $\fH$ and $\h$ be the localizations at each Eisenstein maximal ideal. Let $\tH = \tH'\otimes_{\fH'} \fH$ and $H=H' \otimes_{\h'} \h$. Let $\I \subset \fH$ and $I \subset \h$ be the Eisenstein ideals.

Let $M_2(Np^r)_{\Z_p}$ and $S_2(Np^r)_{\Z_p}$  be the spaces of modular and cuspidal forms, respectively, of weight $2$ and level $Np^r$ with coefficients in $\Z_p$. Let
\begin{equation}
\label{eq:MF_limits}
M_\Lambda = (\varprojlim M_2(Np^r)^\ord_{\Z_p}) \otimes_{\fH'} \fH \text{ and } S_\Lambda = (\varprojlim S_2(Np^r)^\ord_{\Z_p}) \otimes_{\h'} \h,
\end{equation}
which we think of as ordinary $\Lambda$-adic forms. Here the inverse limit is over the trace maps on modular forms.

There is a homomorphism $\Lam \to \fH$ induced by sending $a \in \Z_{p,N}^\times$ to the diamond operator $\dia{a}$. This homomorphism and the composite $\Lam \to \fH \to \h$ are injective, and we sometimes think of $\Lambda \subset \h, \fH$ as the the subalgebra of diamond operators. 

Hida's control theorem \cite[\S3]{hida1986a}, \cite[Thm.\ 3.1]{hida1986} states that $\fH$, $\h$, $H$, $\tH$, $S_\Lambda$, or $M_\Lambda$ are all free $\Lambda$-modules of finite rank, and that the finite level versions may be recovered using the $\Lambda$-action. Hida's duality theorem \cite[\S2]{hida1986a} states that the maps
\[
\h \times S_\Lambda \lra \Lambda, \qquad \fH \times M_\Lambda \lra \Lambda
\]
given by $(T,f) \mapsto a_1(Tf)$ are perfect pairings of $\Lambda$-modules. (Note that our assumptions on the character $\theta$ in \S\ref{subsec:setup} imply that the Eisenstein series in $M_\Lambda$ has constant coefficient in $\Lambda$, so there is no need for denominators in the statement of the duality.) For a $\Lambda$-module $M$, let $M^\vee = \Hom_\Lambda (M,\Lambda)$ -- we have  $S_\Lambda \cong \h^\vee$ and $M_\Lambda \cong \fH^\vee$.

The following theorem is due to Ohta {\cite{ohta2000}} and can be thought of as a $p$-adic version of the Eichler-Shimura isomorphism from the complex setting. Ohta's statement is slightly different since he uses a different model of modular curve; the exact statement below can be found in \cite[\S1.7]{FK2012}.

\begin{thm}
\label{e-s theorem}
There are canonical exact sequences of $\fH[G_{\Q_p}]$-modules
\[
0 \lra \tH_{sub} \lra \tH \lra \tH_{quo} \lra 0
\]
and
\[
0 \lra H_{sub} \lra H \lra H_{quo} \lra 0 
\]
characterized by the following properties:
\abcs
\item The actions of $G_{\Q_p}$ on $\tH_{quo}(1)$ and $H_{quo}(1)$ are unramified.
\item There are isomorphisms $\tH_{quo} \simeq M_\Lambda \cong \fH^\vee$, $H_{quo} \simeq S_\Lambda \cong \h^\vee$, and $H_{sub} \simeq \h$ of $\fH$-modules. 
\endabcs
Moreover, the inclusion map $H_{sub} \to \tH_{sub}$ is an isomorphism.
\end{thm}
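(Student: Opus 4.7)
The plan is to establish these statements at each finite level $r$ and then pass to the inverse limit along the trace maps, invoking Hida's control theorem to extract the $\Lam$-adic structure.

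At finite level $r$, the geometric input is the ordinary part of the Néron model of $J_1(Np^r)$ over $\Z_p$, viewed as a $p$-divisible group. Its connected-étale exact sequence reads
\[
0 \lra \sG^0_r \lra \sG^{\ord}_r \lra \sG^{\text{\'et}}_r \lra 0,
\]
with $\sG^0_r$ of multiplicative type and $\sG^{\text{\'et}}_r$ étale over $\Z_p$. Identifying $H^1_{\text{\'et}}(X_1(Np^r) \otimes \overline{\bQ}, \Z_p)$ with $\Hom(T_p J_1(Np^r), \Z_p)$, taking ordinary parts, and localizing at the Eisenstein maximal ideal, this dualizes to a short exact sequence of $\Z_p[G_{\Q_p}]$-modules: the quotient is dual to the multiplicative part $T_p \sG^0_r$ and hence becomes unramified after a Tate twist by $1$. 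The Hecke algebra acts across this sequence because the Néron model is functorial under modular correspondences.

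For the identifications: in the inverse limit, the quotient piece $H_{quo}$ is canonically identified with the space of $\Lam$-adic cusp forms $S_\Lam$ via Deligne's $p$-adic Eichler--Shimura isomorphism, integralized in the ordinary setting by Ohta \cite{ohta2000}; composing with Hida duality yields $H_{quo} \cong \h^\vee$. For $H_{sub} \simeq \h$: the Weil pairing on $J_1(Np^r)$ induces at each level a perfect pairing between the sub and the Tate-twisted quotient over $\h$, so in the limit $H_{sub} \cong \Hom_\Lam(H_{quo}(1), \Lam) \simeq \h$. For the case of $\tH$ and the isomorphism $H_{sub} \lrisom \tH_{sub}$: the cohomology $\tH$ differs from $H$ by the cuspidal contribution, which arises from the toric part of the generalized Jacobian of $Y_1(Np^r)$. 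Since this toric part is of multiplicative type, its dual lies in the Tate-twisted unramified direction, i.e., entirely in the quotient part of the ordinary filtration. This forces $H_{sub} \lrisom \tH_{sub}$, and Hida duality applied to modular (rather than cuspidal) forms gives $\tH_{quo} \simeq M_\Lam \cong \fH^\vee$.

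The main obstacle is the compatibility of all these identifications with the full Hecke action (including the diamond and $U_p$ operators) and with the trace maps used to form the inverse limit. This is precisely where Ohta's integral refinement of $p$-adic Eichler--Shimura does the crucial work, and where careful bookkeeping of Tate-twist normalizations is essential. The hypotheses on $N$ and $\theta$ imposed in \S\ref{subsec:setup} ensure that Hida duality over $\Lam$ holds without denominators, matching the clean form of the isomorphisms above.
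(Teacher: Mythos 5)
The paper does not prove this theorem; it is imported as a citation (Ohta \cite{ohta2000} for the $p$-divisible group argument, Fukaya--Kato \cite[\S1.7]{FK2012} for the translation to the specific model and conventions used here). So what is really being asked is whether your sketch is a faithful reconstruction of the cited argument, and it is broadly in the right spirit — the Mazur--Wiles/Tilouine/Ohta approach via the ordinary idempotent applied to the $p$-divisible group of the N\'eron model, connected-\'etale sequences, Hida's control theorem and duality, and the generalized Jacobian for the passage from $H$ to $\tH$. The inverse-limit strategy, the Eichler--Shimura identification of $H_{quo}$ with $S_\Lambda$, the duality argument for $H_{sub}$, and the ``cuspidal torus is multiplicative, hence lands in the quotient'' step are all standard and correct in outline.

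However, there is a genuine gap in the local analysis, and it is exactly the point the paper flags when it says ``Ohta's statement is slightly different since he uses a different model.'' You posit a connected-\'etale sequence of the ordinary $p$-divisible group over $\Z_p$ with $\sG^{\text{\'et}}_r$ \emph{\'etale over $\Z_p$}. Dualizing, this would make $H_{sub}$ unramified. But the paper, immediately after Lemma \ref{lem: det and tr of H}, asserts that inertia acts on $H_{sub}$ through $\langle - \rangle^{-1}$, which is \emph{not} unramified: the $p$-part of the diamond character is ramified. Likewise, your claim that the Weil pairing gives $H_{sub} \cong \Hom_\Lambda(H_{quo}(1),\Lambda)$ as $G_{\Q_p}$-modules would again force $H_{sub}$ to be unramified; the actual pairing (Theorem \ref{poincare thm}) carries the twist $\det(\rho_H) = \kcyc^{-1}\langle - \rangle^{-1}$, not just $\kcyc^{-1}$, so the correct statement is $H_{sub} \cong \Hom_\Lambda(H_{quo},\Lambda^{\langle - \rangle}(-1))$. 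The underlying issue is that for the $\Gamma_1$-arithmetic model $Y_1(Np^r)$ (``point of order $Np^r$'') used here following \cite{FK2012}, $J_1(Np^r)$ does not have good reduction at $p$ for $r\geq 1$, so the ordinary $p$-divisible group does not have a connected-\'etale sequence over $\Z_p$ with \'etale part \'etale over $\Z_p$. Ohta's result applies directly to the $\mu$-model, where the sub really is unramified; transporting it to the present model introduces the $\langle - \rangle^{-1}$-twist, and this model comparison is precisely the nontrivial content of \cite[\S1.7]{FK2012}. Saying ``careful bookkeeping of Tate-twist normalizations'' undersells this: it is a diamond-character twist, not a cyclotomic one, and your sketch as written produces a $G_{\Q_p}$-module structure on $H_{sub}$ that is incompatible with the theorem as the paper uses it. You should either work in the $\mu$-model and explicitly twist at the end, or carry out the connected-\'etale analysis over $\Z_p[\zeta_{p^r}]$ and record the resulting descent datum as the diamond twist.
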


In particular, $H \otimes_\h Q(\h)$ is free of rank $2$ over $Q(\h)$. The Galois action on $H \otimes_\h Q(\h)$ gives a representation $\rho_H: G_\Q \to \GL_2(Q(\h))$. For a prime $q \nmid Np$, $\rho_H$ is unramified at $q$, and we have the usual formula
\begin{equation}
\label{eq:modular_psr}
\det(1-\mathrm{Fr}_q^{-1}t)=1-\dia{q}T^*(q)t+q \dia{q}t^2
\end{equation}
for $\mathrm{Fr}_q \in G_\Q$ an arithmetic Frobenius (see \cite[\S1.7.14]{FK2012}). In other words, we have $\det(\rho_H(\mathrm{Fr}_q))=q^{-1}\dia{q}^{-1}$ and $\Tr(\rho_H(\mathrm{Fr}_q))=q^{-1}T^*(q)$.

Let $\kcyc$ denote the $p$-adic cyclotomic character, and let $\dia{-}: G_\Q \to \h^\times$ be the character $\dia{\sigma}=\dia{a_\sigma}$, where $a_\sigma \in \Z_{p,N}^\times$ is defined by $\sigma(\zeta_{Np^r})=\zeta_{Np^r}^{a_\sigma}$. By the Chebotarev density theorem and the formulas for the trace and determinant of $\rho_H$ on Frobenius elements, we have the following lemma.
\begin{lem}
\label{lem: det and tr of H} 
The determinant of $\rho_H$ is $\det(\rho_H)=\kcyc^{-1}\dia{-}^{-1}$, and it is valued in the subalgebra $\Lambda \subset \h \subset Q(\h)$ of diamond operators. The trace of $\rho_H$ is valued in the subalgebra $\h \subset Q(\h)$.
\end{lem}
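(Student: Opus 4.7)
The plan is to verify both statements by evaluating on Frobenius elements, which are dense in $G_{\Q,S}$ by Chebotarev, and then extending by a continuity argument.

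For the determinant, I would first observe that $\kcyc^{-1}\dia{-}^{-1}$ is a continuous character $G_{\Q,S} \to \Lambda^\times$: both $\kcyc$ and $\dia{-}$ are unramified outside $S$ and take values in $\Lambda^\times$. Evaluating at a Frobenius $\Fr_q$ with $q \nmid Np$: by definition of the cyclotomic character $\kcyc(\Fr_q) = q$, and by definition of $\dia{-}$ we have $\dia{\Fr_q} = \dia{q}$ since $\Fr_q$ raises $Np^r$-th roots of unity to the $q$-th power. Thus $(\kcyc^{-1}\dia{-}^{-1})(\Fr_q) = q^{-1}\dia{q}^{-1}$, matching $\det(\rho_H(\Fr_q))$ from \eqref{eq:modular_psr}. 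Viewing $\det(\rho_H)$ as a continuous character $G_{\Q,S} \to Q(\h)^\times$, agreement on the dense subset of Frobenii forces equality everywhere; this simultaneously proves the formula and the $\Lambda$-valuedness of $\det(\rho_H)$.

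For the trace the argument is parallel: \eqref{eq:modular_psr} gives $\Tr(\rho_H(\Fr_q)) = q^{-1} T^*(q)$, which lies in $\h$ (since $q$ is a unit in $\Z_p \subset \h$ for $q \ne p$ and $T^*(q) \in \h$). Since $\Tr(\rho_H): G_{\Q,S} \to Q(\h)$ is continuous and $\h$ is closed in $Q(\h)$, taking values in $\h$ on the dense set of Frobenii forces the trace to be $\h$-valued on all of $G_{\Q,S}$.

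The main obstacle is the topological setup: $Q(\h)$ does not carry an obvious profinite topology in which $\Lambda, \h \subset Q(\h)$ are closed and in which $\det(\rho_H)$ and $\Tr(\rho_H)$ are continuous. I would sidestep this by working intrinsically with $H$ itself, which is a finitely generated $\Lambda$-free module carrying a continuous $G_{\Q,S}$-action that commutes with the $\h$-action. Since $H$ is generically free of rank $2$ over $\h$, one can manufacture from this module-theoretic data alone well-defined continuous functions $G_{\Q,S} \to \h$ and $G_{\Q,S} \to \Lambda$ that recover $\Tr(\rho_H)$ and $\det(\rho_H)$ after inverting non-zero-divisors in $\h$; these are precisely the trace and determinant components of a pseudorepresentation $D_H: G_{\Q,S} \to \h$ (anticipating the Cayley-Hamilton formalism of \S\ref{sec: ord C-H}). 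With this in hand, Chebotarev finishes the job: the Frobenius formulas pin down $D_H$ uniquely, giving the claimed identifications.
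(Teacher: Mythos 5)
Your outline matches the paper's one-sentence argument: read off $\det(\rho_H)$ and $\Tr(\rho_H)$ on Frobenii from \eqref{eq:modular_psr} and propagate by Chebotarev density. You are also right to flag the weak point in a naive version: $Q(\h)$ carries no natural topology in which $\h$ and $\Lambda$ are closed and the trace and determinant are continuous, so ``agreement on a dense set of Frobenii'' does not by itself propagate. The problem is that your proposed fix restates the conclusion rather than proving it. The assertion that ``one can manufacture from this module-theoretic data alone well-defined continuous functions $G_{\Q,S}\to\h$ and $G_{\Q,S}\to\Lambda$ that recover $\Tr(\rho_H)$ and $\det(\rho_H)$'' is precisely the content of the lemma, and it is false at the level of generality you invoke. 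If $M$ is a $\Lambda$-free module with $\h$-linear Galois action and $M\otimes_\h Q(\h)$ free of rank $2$, but $M$ is not $\h$-free, the $Q(\h)$-trace of an $\h$-linear endomorphism of $M$ need not lie in $\h$: e.g.\ for $M\simeq\h\oplus\mathfrak a$ with $\mathfrak a$ a nonprincipal ideal, the $(2,2)$-corner of $\End_\h(M)$ is $\{t\in Q(\h):t\mathfrak a\subset\mathfrak a\}$, which can be strictly larger than $\h$. So ``generically rank $2$ over $\h$'' is not enough structure, and invoking the Cayley--Hamilton formalism of \S\ref{sec: ord C-H} does not help by itself, since that formalism only produces an $\h$-valued pseudorepresentation once an $\h$-GMA structure on $\End_\h(H)$ has been supplied.

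What supplies it is structural input about $H$ already recorded in \S\ref{sec: modular}. One route is Ohta's $\h$-equivariant perfect pairing (Theorem \ref{poincare thm}): perfectness and $\Lambda$-bilinearity make $\det(\rho_H)$ a continuous character valued in the profinite group $\Lambda^\times$ (so Chebotarev applies there), and the adjoint $\rho_H(\sigma)^*=\det(\rho_H(\sigma))\,\rho_H(\sigma)^{-1}$ lies in $\End_\h(H)$, whence $\Tr(\rho_H(\sigma))\cdot\mathrm{id}=\rho_H(\sigma)+\rho_H(\sigma)^*$ is a scalar of $Q(\h)$ preserving $H$; since it commutes with complex conjugation and $H^+\simeq\h$ (Corollary \ref{cor:H-structure}), that scalar lies in $\h$. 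The other route — the one you gesture at — is to use the $\h$-module decomposition $H\simeq\h\oplus\h^\vee$ from Theorem \ref{e-s theorem} and Corollary \ref{cor:H-structure} to exhibit $\End_\h(H)$ as a GMA $\left(\begin{smallmatrix}\h&\h^\vee\\ \Hom_\h(\h^\vee,\h)&\h\end{smallmatrix}\right)$, whose trace is manifestly a continuous $\h$-valued function; this is how the paper itself recovers the pseudorepresentation in Theorem \ref{thm:H_ord}. Either input makes the density argument run, but your proposal omits it.
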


From this and Theorem \ref{e-s theorem}, we see that the inertia subgroup of $G_{\Q_p}$ acts through the character $\dia{-}^{-1}$ on $H_{sub}$.

The usual Poincar\'e duality on $H$ interchanges the Hecke operators with the dual Hecke operators. Ohta has constructed a twisted pairing which is equivariant for the Hecke action. 

\begin{thm}[{\cite[Cor.\ 4.2.8]{ohta1995}}]
\label{poincare thm}
There is a perfect pairing of free $\Lambda$-modules
\begin{equation}
\label{eq:ohta_pairing}
( \ , \ )_\Lambda: H \times H \to \Lambda
\end{equation}
satisfying the following for all $x, y \in H$
\abcs
\item For all $T \in \h$, $(Tx,y)_\Lambda=(x,Ty)_\Lambda$.
\item For all $\sigma \in G_\Q$,  $(\sigma x,\sigma y)_\Lambda=\det(\rho_H(\sigma))(x,y)_\Lambda$.
\endabcs
\end{thm}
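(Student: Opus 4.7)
The plan is to follow Ohta's construction in \cite[Ch.\ 4]{ohta1995}: build the pairing from classical Poincar\'e duality at each finite level, twist by an Atkin--Lehner involution to convert the dual Hecke action into the Hecke action, and then pass to the inverse limit along the trace maps.

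First I would write down, for each $r$, the cup-product Poincar\'e duality pairing on the smooth projective curve $X_1(Np^r)$,
$$ ( \, , \, )_r : H^1_{\text{\'et}}(X_1(Np^r)_{\overline\bQ}, \Z_p) \times H^1_{\text{\'et}}(X_1(Np^r)_{\overline\bQ}, \Z_p) \lra H^2_{\text{\'et}}(X_1(Np^r)_{\overline\bQ}, \Z_p(1)) \cong \Z_p,$$
which is perfect (in particular on the ordinary parts), and which, once the Tate twist $\Z_p(1)$ is identified with $\Z_p$, is $G_\bQ$-equivariant up to a twist by $\kcyc^{-1}$. Under this pairing the adjoint of a Hecke operator $T$ is the dual Hecke operator $T^*$, so the pairing is not yet $\h$-linear as required. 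I would correct this by twisting with the Atkin--Lehner involution $w_{Np^r}$ normalized using our fixed root of unity $\zNr$, setting $\langle x, y\rangle_r := (x, w_{Np^r} y)_r$. Since $w_{Np^r} T w_{Np^r}^{-1} = T^*$ for the relevant Hecke operators, property (a) follows at each finite level.

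Property (b) comes from combining two twists. The factor $\kcyc^{-1}$ is already present in $( \, , \, )_r$; the extra factor $\dia{-}^{-1}$ arises because $w_{Np^r}$ is not defined over $\bQ$ but intertwines $\sigma \in G_\bQ$ with multiplication by $\dia{\sigma}^{-1}$, owing to its dependence on $\zNr$. The combined twist on $\langle \, , \, \rangle_r$ is thus $\kcyc^{-1}\dia{-}^{-1} = \det(\rho_H)$ by Lemma \ref{lem: det and tr of H}. To pass to the inverse limit defining $H$ in \eqref{eq:H_limit}, I would verify that the $\langle \, , \, \rangle_r$ are compatible with the trace maps, after the appropriate normalization of the $w_{Np^r}$; this is the technical heart of Ohta's construction. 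Restricting to the $\theta$-eigenspace, localizing at the Eisenstein maximal ideal, and taking the limit yields the desired pairing. Perfection over $\Lambda$ then follows from perfection at each finite level together with Hida's control theorem, which guarantees $H$ is $\Lambda$-free of finite rank and the finite-level modules are recovered from $H$ by reduction.

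The main obstacle is the compatibility of the Atkin--Lehner involutions $w_{Np^r}$ with the trace maps between levels: the naive involutions do not fit together coherently in the tower, and a careful normalization (the real content of Ohta's work) is needed to ensure that the limiting pairing lands in $\Lambda$ rather than some localization thereof. Once this compatibility is in place, tracking the exact form of the Galois twist so that it coincides with $\det(\rho_H)$ is a bookkeeping matter involving the diamond operators and the sign conventions fixed in \S\ref{subsec:setup}.
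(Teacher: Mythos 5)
The paper offers no proof of its own here, simply citing \cite[Cor.~4.2.8]{ohta1995}, and your sketch faithfully reconstructs Ohta's argument: cup-product Poincar\'e duality at finite level, an Atkin--Lehner twist $(x,y)\mapsto(x,w_{Np^r}y)$ to trade the dual Hecke action for the Hecke action and to produce the extra $\dia{-}^{-1}$ in the Galois equivariance, a coherent normalization of the $w_{Np^r}$ against the trace maps in the tower, and Hida's control theorem to obtain perfection over $\Lambda$. One small point you leave implicit: the model $Y_1(Np^r)$ used in this paper differs from Ohta's (as flagged in \S\ref{subsec: H prelims}), so a complete write-up citing his Corollary 4.2.8 would also have to track the translation between the two modular-curve conventions, just as the paper does via Fukaya--Kato for Theorem \ref{e-s theorem}.
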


\subsection{The cusp group} Ohta has analyzed the structure of the cusp group \cite[Thm.\ 1.5.5]{ohta2003}. We summarize his result in the following theorem. 

\begin{thm}[Ohta]
There is an exact sequence
\begin{equation}
\label{HtHL}
0 \lra H \lra \tH \buildrel\partial\over\lra \Lambda \lra 0.
\end{equation}
There is a canonical element $\zinf \in \tH$ such that $\partial(\zinf)=1$.

Under the Eichler-Shimura isomorphisms of Theorem \ref{e-s theorem}, the sequence (\ref{HtHL}) gives rise to the exact sequence
\begin{equation}\label{SML}
0 \lra S_\Lambda \lra M_\Lambda \lra \Lambda \lra 0
\end{equation}
where the map $M_\Lambda \to \Lambda$ is $f \mapsto a_0(f)$. 
\end{thm}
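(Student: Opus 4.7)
The plan is to derive the exact sequence (\ref{HtHL}) from the standard excision sequence for $H^1$ of the open modular curve versus its compactification, then identify the quotient with $\Lambda$ after applying the ordinary idempotent and the $\theta$-projection. At each finite level $r$, removing the cusps from $X_1(Np^r)$ gives a long exact sequence
\[
0 \lra H^1(X_1(Np^r),\Z_p) \lra H^1(Y_1(Np^r),\Z_p) \lra \bigoplus_{\text{cusps } c} \Z_p \lra H^2(X_1(Np^r),\Z_p),
\]
where the last arrow is induced by the cycle class / degree map. The first step is to show that after applying Hida's ordinary projector and passing to the $\theta$-eigenspace for the diamond operators, the cohomology of the cusps contributes a single copy of $\Lambda$ in the projective limit in $r$.

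For this, I would separate the cusps into those lying over $0$ and those lying over $\infty$ under the natural map $X_1(Np^r) \to X_0(Np^r)$. The first family (unramified cusps in Ohta's terminology) is permuted simply transitively by the diamond operators of the appropriate type, so it contributes a free module of rank one over $\Z_p\lb \Z_{p,N}^\times\rb$, which becomes $\Lambda$ after the $\theta$-projection. The second family is killed by either the ordinary idempotent or by the $\theta$-projection under the hypotheses on $\theta$ collected in \S\ref{subsec:setup} (primitivity and the conditions on $\theta$ vs.\ $\omega^2$ and on $\chi|_{(\Z/p\Z)^\times}$); this is exactly what Ohta verifies in \cite[Thm.\ 1.5.5]{ohta2003}. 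The cokernel map into $H^2$ disappears in the limit because the degree pairs off with the single remaining family, leaving the short exact sequence (\ref{HtHL}).

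The canonical element $\{0,\infty\}$ I would construct as follows: the modular symbol $\{0\}-\{\infty\}$ gives a well-defined class in the integral homology of $X_1(Np^r)$ relative to the cusps, hence, via Poincaré-Lefschetz duality, a class in $H^1(Y_1(Np^r),\Z_p)$ whose boundary is the difference of the two distinguished cusps. Choosing the cusp above $\infty$ so that it corresponds to the trivial element under the parametrization of the previous step, its boundary maps to $1 \in \Lambda$ in the limit; this compatibility is independent of $r$ because trace maps on homology send $\{0\}-\{\infty\}$ to itself up to the correct normalization.

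For the Eichler-Shimura compatibility with (\ref{SML}), I would invoke Theorem \ref{e-s theorem}, which supplies canonical isomorphisms $\tH_{quo} \simeq M_\Lambda$ and $H_{quo} \simeq S_\Lambda$ and identifies $H_{sub} \to H$ with $\tH_{sub} \to \tH$. Passing to the quotient gives a diagram whose rows are (\ref{HtHL}) and (\ref{SML}); its commutativity reduces to the assertion that the boundary map $\partial$ corresponds, on the quotient side, to the constant-term functional $a_0$. This is the classical compatibility between the cusp at $\infty$ and the $q$-expansion, and is checked at finite level $r$ using the explicit description of the Eichler-Shimura map on $q$-expansions before passing to the limit. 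The main obstacle in the whole argument is the first step — tracking which cusp components survive the ordinary and $\theta$-projections and recognizing the surviving component as $\Lambda$ with its tautological Iwasawa structure — since this is where the hypotheses on $\theta$ are essential and where the statement could fail without them.
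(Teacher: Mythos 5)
The paper does not actually prove this theorem: in \S3.2 it states ``Ohta has analyzed the structure of the cusp group [Thm.\ 1.5.5, ohta2003]. We summarize his result in the following theorem,'' so (\ref{HtHL}) and (\ref{SML}) are imported wholesale from Ohta. Your task was therefore really to reconstruct Ohta's argument, and your sketch does capture the right strategy (Gysin/excision at finite level, then ordinary and $\theta$-projection, then the limit). A few points of your write-up are imprecise in a way that matters. First, the \'etale Gysin sequence puts a Tate twist on the boundary term: the map is $H^1(Y_1(Np^r)) \to H^0(\text{cusps})(-1)$, and this twist is not cosmetic --- it is exactly what produces the correct Galois action on the quotient, which elsewhere in the paper is used via $\det(\rho_H) = \kcyc^{-1}\langle-\rangle^{-1}$. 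Second, your reason for the vanishing of the $H^2$ obstruction is not quite right: it is not that ``the degree pairs off with the single remaining family'' but rather that diamond operators are automorphisms of the curve, hence act trivially on $H^2 \cong \Z_p(-1)$, and therefore the $\theta$-projection kills $H^2$ outright once $\theta$ is nontrivial (which the running hypotheses guarantee). Third, the statement ``killed by either the ordinary idempotent or the $\theta$-projection'' is where all of Ohta's technical work lives: for $N>1$ there are more than two diamond-orbits of cusps, and it is precisely the primitivity hypothesis and the exclusions of \S1.1 that ensure exactly one orbit survives $e^*$ and the $\theta$-projection; you rightly flag this as the main obstacle but a full argument would have to track eigenvalues of $T^*(p)$ (and of Hecke operators at $\ell\mid N$) on each orbit. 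These gaps are filled by Ohta's Theorem 1.5.5, which the paper cites rather than reproving, so your proposal is an honest outline of the cited proof rather than a self-contained derivation.
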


There is a $\Lambda$-adic Eisenstein series $E_\Lambda \in M_\Lambda$ with $a_0(E_\Lambda)=\xi_\chi$. This form is an eigenform for all Hecke operators, so the induced map $\fH \to \Lambda$ is a ring homomorphism. We define $\I = \ker(\fH \to \Lambda) = \mathrm{Ann}_\fH(E_\Lambda)$. We let $I$ denote the image of $\I$ in $\h$.

From the sequence \eqref{SML}, we see that, for any $f \in M_\Lambda$, there exist $g \in S_\Lambda$ and $a \in \Lambda$ such that $\xi_\chi f= g+ aE_\Lambda$. In particular, 
\begin{equation}\label{eq: ann eis}
\mathrm{Ann}_\fH(S_\Lambda) \cap \I=\mathrm{Ann}_\fH(M_\Lambda) = 0.
\end{equation}
We can now compare the Hecke algebras $\h$ and $\fH$.

\begin{prop}
\label{prop: fH and h}
The quotient rings maps from $\fH$ to $\fH/\I \cong \Lambda$ and $\h$ lie in a commutative diagram of $\fH$-modules with exact rows
\[
\xymatrix{
0 \ar[r] & \I \ar[r] \ar[d]^\wr & \fH \ar@{->>}[d] \ar[r] & \Lambda \ar@{->>}[d] \ar[r] & 0 \\
0 \ar[r] & I \ar[r] & \h \ar[r] & \Lambda/\xi_\chi \ar[r] & 0. 
}
\]
The map $\fH \to \Lambda$ is given by $T \mapsto a_1(TE_\Lambda)$.
\end{prop}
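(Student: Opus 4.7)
The plan is to unpack the definitions and apply Hida duality to the cuspidal exact sequence \eqref{SML} to pin down the cokernel structure. First I would identify the map $\fH \to \Lambda$ as the Hecke eigenvalue map for $E_\Lambda$: since $E_\Lambda$ is an eigenform with $a_1(E_\Lambda) = 1$ (the standard normalization implicit in the setup), any $T \in \fH$ satisfies $T E_\Lambda = \lambda_T E_\Lambda$ with $\lambda_T = a_1(T E_\Lambda)$, and the kernel of $T \mapsto \lambda_T$ is $\Ann_\fH(E_\Lambda) = \I$ by definition. This is the top row.

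For the left vertical arrow, observe that $\h = \fH/\Ann_\fH(S_\Lambda)$ by definition and that $I$ is the image of $\I$ in $\h$. The identity \eqref{eq: ann eis}, $\Ann_\fH(S_\Lambda) \cap \I = 0$, then shows that $\I \hookrightarrow \fH \onto \h$ is injective with image $I$, so $\I \xrightarrow{\sim} I$. Commutativity of the left square is immediate from this construction.

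For the right vertical arrow, I would apply Hida duality to \eqref{SML}. Since $\Lambda$ is free, \eqref{SML} splits as a sequence of $\Lambda$-modules, so applying $\Hom_\Lambda(-,\Lambda)$ yields an exact sequence $0 \to \Lambda \to \fH \to \h \to 0$, whose first map sends $1 \in \Lambda$ to the unique $T_0 \in \fH$ characterized via the Hida pairing by $a_1(T_0 f) = a_0(f)$ for every $f \in M_\Lambda$. Evaluating at $f = E_\Lambda$ gives $a_1(T_0 E_\Lambda) = a_0(E_\Lambda) = \xi_\chi$, so the image of $\Ann_\fH(S_\Lambda) = \Lambda \cdot T_0$ under $\fH \onto \fH/\I = \Lambda$ is the principal ideal $(\xi_\chi)$. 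This simultaneously identifies the cokernel $\h/I$ with $\Lambda/\xi_\chi$ and shows the right square commutes.

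The only genuinely substantive step is the duality computation pinning down $\ker(\fH \to \h)$ as a principal $\Lambda$-ideal generated by an element $T_0$ mapping to $\xi_\chi$; the remainder is a formal diagram chase once \eqref{eq: ann eis} and Hida's freeness and duality results are in hand.
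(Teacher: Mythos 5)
Your proof is correct and follows essentially the same route as the paper's: dualize the cuspidal sequence \eqref{SML} via Hida duality to see that $\ker(\fH \to \h)$ is free of rank one generated by the element $T_0$ with $a_1(T_0 f) = a_0(f)$, use \eqref{eq: ann eis} to get $\I \isoto I$, and evaluate $T_0$ on $E_\Lambda$ to identify the image of $T_0$ in $\Lambda$ with $\xi_\chi$. The paper packages the first two steps as separate lemmas and then checks that $\fH \to \Lambda/\xi_\chi$ kills $T_0$ to produce the bottom-right map; you reach the same conclusion by a diagram chase, but the mathematical content is identical.
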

\begin{proof}
We break the proof into two lemmas.
\begin{lem}\label{lem: ker h to fh}
The ideal $\ker(\fH \onto \h)$ of $\fH$ is principal. It is generated by the unique element $T_0 \in \fH$ satisfying $a_1(T_0f)=a_0(f)$ for all $f \in M_\Lambda$.
\end{lem}
\begin{proof}
The dual of the map $\fH \onto \h$ under Hida duality is the natural inclusion $S_\Lambda \to M_\Lambda$. The result follows from the exact sequence \eqref{SML}.
\end{proof}

\begin{lem}\label{I=I cor}
The natural map $\I \onto I$ is an isomorphism of $\fH$-modules.
\end{lem}
\begin{proof}
Indeed, any element of the kernel is a multiple of $T_0$, so it must annihilate $S_\Lambda$. But it is also an element of $\I$, so it is zero by \eqref{eq: ann eis}.
\end{proof}

We now claim that the composite map $\fH \to \Lambda \to \Lambda/\xi_\chi$ factors through $\h$. Since $\h= \fH/T_0 \fH$, it suffices to show that $T_0$ is sent to $0$ in $\Lambda/\xi_\chi$. But this is true since $T_0$ is sent to $a_1(T_0 E_\Lambda) = a_0(E_\Lambda) = \xi_\chi$. This completes the proof.
\end{proof}
\begin{rem}
The first result of this type was proven by Mazur and Wiles \cite{MW1984}. Our proof closely follows the one given by Emerton \cite{emerton1999}, which has been generalized recently by Lafferty \cite{lafferty2015}. 
\end{rem}

This proposition can be restated using the pullback in the category of commutative rings, which will be useful in \S\ref{subsec:modCHrep}. If $f: A \to C$ and $g: B \to C$ are homomorphisms of commutative rings, then 
$$
A \times_C B = \{(a,b) \in A \times B\ | \ f(a)=g(b)\}.
$$
\begin{lem}
\label{lem:H_pullback}
The natural surjections $\fH \rsurj \h$ and $\fH \rsurj \Lambda$ induce an isomorphism $\fH \risom \h \times_{\Lambda/\xi_\chi} \Lambda$. In particular, $\Ann_{\fH}(\I)=\ker(\fH \to \h)$ and $\Ann_{\fH}(\ker(\fH \to \h))=\I$.
\end{lem}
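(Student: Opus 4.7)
The plan is to first establish the fiber product isomorphism $\fH \risom \h \times_{\Lambda/\xi_\chi} \Lambda$, and then read off the two annihilator identities by direct inspection of that description. Throughout I will write elements of the fiber product as pairs $(h, a)$ with $h \in \h$ and $a \in \Lambda$ sharing a common image in $\Lambda/\xi_\chi$; in this notation, $\I$ corresponds to $I \times \{0\}$ and $\ker(\fH \to \h)$ corresponds to $\{0\} \times (\xi_\chi)$.

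For the isomorphism, let $\phi : \fH \to \h \times_{\Lambda/\xi_\chi} \Lambda$ be the natural map induced by the two surjections in Proposition \ref{prop: fH and h}. For injectivity, note that $\ker(\phi) = \I \cap \ker(\fH \to \h) = \I \cap (T_0)$, using Lemma \ref{lem: ker h to fh}. If $y \in \I \cap (T_0)$, write $y = T_0 z$; then $y$ maps to zero in $\h$, but $y \in \I$ and by Lemma \ref{I=I cor} the natural map $\I \to I \subset \h$ is injective, forcing $y = 0$. For surjectivity, given $(h, a)$, lift $h$ to some $\tilde h \in \fH$ and let $a' \in \Lambda$ be its image; since $h$ and $a$ have the same image in $\Lambda/\xi_\chi$, we have $a - a' = c \xi_\chi$ for a unique $c \in \Lambda$. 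The element $\tilde h + c T_0 \in \fH$ still maps to $h$ in $\h$ (since $T_0 \in \ker(\fH \to \h)$) and maps to $a' + c \xi_\chi = a$ in $\Lambda$ (since $T_0 \mapsto \xi_\chi$ by the defining property of $T_0$ applied to $E_\Lambda$), so it lies over $(h, a)$.

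For the annihilator identities, I read them off from the fiber product description. If $(h, a)$ annihilates every $(0, b)$ with $b \in (\xi_\chi)$ then $a \xi_\chi = 0$ in the domain $\Lambda$, forcing $a = 0$ and hence $h \in I$; this gives $\Ann_\fH(\ker(\fH \to \h)) = \I$. For $\Ann_\fH(\I)$, an element $(h, a)$ annihilates every $(i, 0)$ with $i \in I$ precisely when $h \in \Ann_\h(I)$, so the claim reduces to showing $\Ann_\h(I) = 0$. This last fact holds because $\xi_\chi \in \Lambda \subset \h$ lies in $I$ (the map $\h \onto \Lambda/\xi_\chi$ restricts to the canonical quotient on $\Lambda$, so $\xi_\chi$ is killed) and $\xi_\chi$ is a non-zero-divisor in $\h$ (since $\h$ is $\Lambda$-free by Hida's control theorem), whence $\Ann_\h(I) \subset \Ann_\h(\xi_\chi) = 0$.

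The main subtlety I expect is the injectivity step: it is not tautological that $\I \cap (T_0) = 0$, and what makes it work is the cuspidal injectivity $\I \hookrightarrow \h$ from Lemma \ref{I=I cor} combined with the fact that $T_0$ vanishes in $\h$. The remaining verifications are essentially bookkeeping once the fiber product isomorphism is in place.
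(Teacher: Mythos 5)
Your proof is correct and takes essentially the same route as the paper: the paper simply says the pullback statement follows from chasing the diagram of Proposition \ref{prop: fH and h} and that the annihilator claims are a formal consequence, and you have spelled out exactly that chase (using Lemmas \ref{lem: ker h to fh} and \ref{I=I cor} for injectivity and the $T_0$-lifting for surjectivity). The one place where the paper's ``formal consequence'' is not purely formal is $\Ann_{\h}(I)=0$, and your observation that $\xi_\chi \in I$ is a non-zero-divisor in the $\Lambda$-free ring $\h$ supplies precisely the missing content.
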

\begin{proof}
The pullback statement follows from chasing the diagram in the proposition. The statement about annihilators is a formal consequence.
\end{proof}

\subsection{Drinfeld-Manin modification}
\label{subsec:dm modification}
For a $\fH$-module $M$, let $M_{DM}=M \otimes_\fH \h$. This name comes from a relation with the Drinfeld-Manin splitting (see \cite[Lem.\ 4.1]{sharifi2011} or \cite[Lem.\ 6.2.3]{FK2012}), which is not used here. 

Using the isomorphisms $\Lambda \cong \fH/\I$ and $\h/I \cong \Lambda/\xi_\chi$, we see that $\Lambda_{DM}\cong \Lambda/\xi_\chi$. Tensoring (\ref{HtHL}) with $\h$, we obtain an exact sequence
\begin{equation}\label{HHDML}
0 \lra H \lra \HDM \lra \Lambda/\xi_\chi \lra 0.
\end{equation}
(Exactness on the left follows from \eqref{eq: ann eis}.) In particular, if we let $\zinfdm \in \HDM$ denote the image of $\zinf$, then $\xi_\chi \zinfdm \in H$, and, for any $T \in I$, we have $T\zinfdm \in H$.

\subsection{Global Galois action} 

The following theorem essentially follows from Sharifi's study of the global Galois action on $H$ \cite[Thm.\ 4.3]{sharifi2011}. It appears as stated in \cite[\S6.3]{FK2012}, where there is a simple and self-contained proof. 
\begin{thm}
\label{thm:H/I_sub}
The subgroup $H^-/IH^- \subset H/IH$ is a $\h[G_\Q]$-submodule, and the natural map $H^- \to H_{quo}$ is an isomorphism. The Galois group $G_\Q$ acts on $H^-/IH^-$ through the character $\kcyc^{-1}$.
\end{thm}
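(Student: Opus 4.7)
The plan is to track complex conjugation $c \in G_\Q$ across the Eichler--Shimura filtration of Theorem~\ref{e-s theorem}, using the evenness of $\theta$ together with Eisenstein congruences modulo $I$.

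First, since $H_{sub}\cong\h$ and $H_{quo}\cong\h^\vee$ are cyclic $\h$-modules and $c$ commutes with the Hecke action, $c$ acts on each as multiplication by an element of $\h$ squaring to $1$; as $\h$ is local with $p>2$, these elements lie in $\{\pm 1\}$, giving signs $\epsilon_s,\epsilon_q$. Because $\theta$ is even, $\dia{c}=\dia{-1}=1$ on the $\theta$-eigenspace, so Lemma~\ref{lem: det and tr of H} yields $\det\rho_H(c)=\kcyc(c)^{-1}\dia{c}^{-1}=-1$, forcing $\epsilon_s\epsilon_q=-1$.

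Next, to pin down the signs, I would reduce $\rho_H$ modulo $I$. Formula~(\ref{eq:modular_psr}) gives $\Tr\rho_H(\Fr_q)=q^{-1}T^*(q)$; combined with the Eisenstein congruence $T^*(q)\equiv 1+q\dia{q}^{-1}\pmod I$ and $\det\rho_H=\kcyc^{-1}\dia{-}^{-1}$, Chebotarev identifies the characters of the semisimplification of $\rho_H\bmod I$ as $\kcyc^{-1}$ and $\dia{-}^{-1}$. By Theorem~\ref{e-s theorem}, the $G_{\Q_p}$-inertia acts on $H_{quo}$ via $\kcyc^{-1}|_{I_p}$; since $\dia{-}^{-1}|_{I_p}\neq\kcyc^{-1}|_{I_p}$, the global character on the cyclic quotient $H_{quo}/IH_{quo}$ must be $\kcyc^{-1}$. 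Evaluating at $c$ gives $\epsilon_q=-1$, hence $\epsilon_s=+1$.

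With $\epsilon_s=+1$ we have $H_{sub}\subseteq H^+$, and the quotient $H^+/H_{sub}$ embeds in $H_{quo}$ as a $c$-fixed subspace, which vanishes since $c=-1$ on $H_{quo}$. Thus $H^+=H_{sub}$, and the decomposition $H=H^+\oplus H^-$ (available since $p$ is odd) turns $H^-\hookrightarrow H\twoheadrightarrow H_{quo}$ into an $\h$-linear isomorphism. Combined with the character analysis above, this proves that $G_\Q$ acts on $H^-/IH^-\cong H_{quo}/IH_{quo}$ via $\kcyc^{-1}$.

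The main technical obstacle I anticipate is showing that $H^-/IH^-$ is genuinely $\h[G_\Q]$-stable inside $H/IH$: a priori $H^-$ is only stable under $\langle c\rangle$, and distinct complex conjugations need not have the same action on $H$. I would handle this via the Cayley--Hamilton identity $(\sigma-\kcyc(\sigma)^{-1})(\sigma-\dia{\sigma}^{-1})=0$ acting on $H/IH$ for all $\sigma\in G_\Q$, combined with the fact that the two semisimple characters of $\rho_H\bmod I$ are distinct on $c$. This forces the $c$-decomposition of $H/IH$ to coincide with the common generalized eigenspace decomposition for all of $\rho_H(G_\Q)\bmod I$, which is $G_\Q$-stable by construction.
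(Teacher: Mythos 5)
Your argument breaks at the very first step. The Eichler--Shimura exact sequence $0 \to H_{sub} \to H \to H_{quo} \to 0$ of Theorem~\ref{e-s theorem} is a sequence of $\fH[G_{\Q_p}]$-modules, not of $G_\Q$-modules. Complex conjugation $c$ does not lie in the decomposition group at $p$, so it has no reason to preserve $H_{sub}$ and does not act on $H_{sub}$ or $H_{quo}$ at all; in fact, if $c$ did stabilize $H_{sub}$ then $\rho_H$ would be globally reducible, which is false. Consequently the signs $\epsilon_s,\epsilon_q$ you introduce are not defined, the equation $\epsilon_s\epsilon_q = -1$ has no content, and the deduction $H_{sub}\subseteq H^+$ (and hence $H^-\risom H_{quo}$) is unsupported. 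A secondary error: you assert that $H_{quo}\cong\h^\vee$ is a cyclic $\h$-module, but $\h^\vee$ is the dualizing module of $\h$ and is cyclic over $\h$ precisely when $\h$ is Gorenstein --- which is one of the paper's conclusions (under hypotheses), not a standing assumption. So even the claim that $c$ would have to act by a scalar on $H_{quo}$ is circular.

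The observation in your closing paragraph is the one closest to the actual mechanism: on $H/IH$ the Cayley--Hamilton identity from the $\h/I$-valued pseudorepresentation reads $(\sigma-\kcyc^{-1}(\sigma))(\sigma-\dia{\sigma}^{-1})=0$, and since the two characters take distinct values at $c$ (namely $-1$ and $+1$, using $\theta$ even so $\dia{-1}=1$), the $c$-eigenspace decomposition of $H/IH$ agrees with the joint generalized eigenspace decomposition for $\rho_H(G_\Q)\bmod I$, which is $G_\Q$-stable. But as you note, that only delivers the mod-$I$ stability and the character; it does not by itself give the second assertion that $H^-\to H_{quo}$ is an isomorphism, and the part of your argument meant to supply it is the broken sign computation. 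The paper itself does not reprove the statement; it cites \cite[\S 6.3]{FK2012}, whose self-contained argument goes through Ohta's twisted pairing (Theorem~\ref{poincare thm}) --- using $\det\rho_H(c)=-1$ to show $(\,,\,)_\Lambda$ puts $H^+$ and $H^-$ in perfect duality --- together with the ordinarity of $H_{quo}$, rather than through any $c$-action on the local filtration.
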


As a complement to Theorem \ref{thm:H/I_sub}, we sum up the conclusions of \S\ref{sec: modular} about $H$.
\begin{cor} 
\label{cor:H-structure}
The short exact sequence of $\h[G_{\bQ_p}]$-modules
$$
0 \lra H_{sub} \lra H \lra H_{quo} \lra 0
$$
of Theorem \ref{e-s theorem} splits as $\h$-modules, and there are isomorphisms 
\[
H^- \cong H_{quo} \simeq \h^\vee \quad \text{ and } \quad H^+ \simeq H_{sub} \simeq \h.
\]
Moreover, the $\h[G_{\bQ_p}]$-quotient $H_{quo}(1)$ is unramified at $p$. 
\end{cor}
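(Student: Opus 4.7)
The plan is to assemble the corollary directly from Theorem~\ref{e-s theorem} and Theorem~\ref{thm:H/I_sub}, with the only nontrivial input being the compatibility between the $\pm$-decomposition under complex conjugation and the Eichler--Shimura filtration.

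First I would observe that the last claim, that $H_{quo}(1)$ is unramified at $p$, is literally part of Theorem~\ref{e-s theorem}(1), so nothing needs to be proved there. Likewise the $\h$-module isomorphisms $H_{sub} \cong \h$ and $H_{quo} \cong \h^\vee$ are part of Theorem~\ref{e-s theorem}(2).

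Next, Theorem~\ref{thm:H/I_sub} asserts that the composite $H^- \hookrightarrow H \twoheadrightarrow H_{quo}$ is an isomorphism of $\h$-modules. Inverting this composite produces an $\h$-linear section of $H \twoheadrightarrow H_{quo}$, which exhibits the Eichler--Shimura sequence of Theorem~\ref{e-s theorem} as split in the category of $\h$-modules and identifies $H_{quo}$ with $H^-$ as $\h$-submodules of $H$; this gives the first claimed isomorphism and the splitting claim. (The splitting is only $\h$-linear, not $G_{\bQ_p}$-linear, because complex conjugation is not an element of the decomposition group $G_{\bQ_p}$.)

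Then I would use that $p \ge 5$ is odd so that $H = H^+ \oplus H^-$ as $\h$-modules. Combining this with the decomposition $H = H_{sub} \oplus H^-$ obtained in the previous step, both $H^+$ and $H_{sub}$ are $\h$-module complements of $H^-$ in $H$; projecting along $H^-$ therefore gives an $\h$-linear isomorphism $H^+ \xrightarrow{\sim} H_{sub}$, which composed with Theorem~\ref{e-s theorem} identifies $H^+ \simeq H_{sub} \cong \h$. The only mildly delicate point, and the one I would flag explicitly in the write-up, is that $H^+$ and $H_{sub}$ need not coincide as submodules of $H$ (the $\simeq$ in the statement is not a canonical equality), since the Eichler--Shimura filtration is controlled by $G_{\bQ_p}$ while the sign decomposition is controlled by a complex conjugation living in $G_\bQ \setminus G_{\bQ_p}$. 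No other obstacles arise; the corollary is essentially a bookkeeping consequence of the two cited theorems.
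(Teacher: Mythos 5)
Your proof is correct and follows essentially the same route as the paper's (which merely writes: the splitting is given by the isomorphism $H_{quo}\risom H^-\subset H$, and the rest comes from Theorem~\ref{e-s theorem}). You have usefully unpacked the implicit step $H^+\simeq H_{sub}$ by comparing the two direct-sum decompositions $H=H^+\oplus H^-$ (since $p$ is odd) and $H=H_{sub}\oplus H^-$, which is exactly the intended argument.
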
	
\begin{proof}
The splitting can be given by the isomorphism $H_{quo} \risom H^- \subset H$. The remaining content comes from Theorem \ref{e-s theorem}. 
\end{proof}

\section{Gorenstein and weakly Gorenstein Hecke algebras}
\label{sec: gorenstein}
In this section, we discuss ring-theoretic conditions on Hecke algebras. In particular, we recall the definition of weakly Gorenstein, and recall the weakly Gorenstein conjecture of \cite{wake2}. We also discuss an equivalent formulation of the conjecture in terms of the principality of the Eisenstein ideal. 

\subsection{Some commutative ring theory} 
\label{subsec: comm alg}
We recall some results from commutative algebra; a good reference for this is \cite{BH1993}. These statements are well-known, and we use them freely below, often without comment. We include references here for completeness. The proofs are relatively elementary, and the reader unfamiliar with the statements may like to derive their own proofs.

Let $R$ be a Cohen-Macaulay local ring of dimension $n$ with maximal ideal $\m$ (for example, $R$ is Cohen-Macaulay if it is finite flat over a regular subring \cite[Prop.\ 2.2.11, pg.\ 67]{BH1993}). Let $\hat{R}$ denote the $\m$-adic completion of $R$. An $R$-module $M$ is said to be a {\em dualizing module} (or \emph{canonical module}) if 
\[
\dim_{R/\m}\Ext^i_R(R/\m,M)=\delta_{in}
\]
(i.e. ~ it is $0$ for all $i\ne n$ and $1$ for $i=n$). A dualizing module is unique up to isomorphism if it exists \cite[Thm.\ 3.3.4, pg.\ 108]{BH1993}. Assume now that a dualizing module $M$ exists, and let $\p \subset R$ be a prime ideal and $\mathbf{x}=(x_1,\dots,x_m)$ be an $R$-regular sequence in $\m$.

The dualizing module $M$ enjoys the following permanence properties \cite[Thm.\ 3.3.5, pg.\ 109]{BH1993}: $M_\p$ is a dualizing module for $R_\p$; $M/\mathbf{x}M$ is a dualizing module for $R/\mathbf{x}R$; $\hat{M}$ is a dualizing module for $\hat{R}$. If $R \to S$ is a finite and flat (hence local) homomorphism of Cohen-Macaulay local rings, then $\Hom_R(S,M)$ is a dualizing module for $S$ \cite[Thm.\ 3.3.7, pg.\ 111]{BH1993}.

The ring $R$ is said to be {\em Gorenstein} if a dualizing module exists and is free of rank $1$ as an $R$-module. It follows from the above properties of dualizing modules that: if $R$ is Gorenstein, then $R_\p$ is Gorenstein; $R$ is Gorenstein if and only if $R/\mathbf{x}R$ is Gorenstein; $R$ is Gorenstein if and only if $\hat{R}$ is Gorenstein.

The ring $R$ is said to be \emph{complete intersection} if $\hat{R}$ is a quotient of a regular local ring by a regular sequence. Regular local rings are complete intersection, and $R$ is complete intersection if and only if $\hat{R}$ is complete intersection. The ring $R$ is complete intersection if and only if $R/\mathbf{x}R$ is complete intersection \cite[Thm.\ 2.3.4, pg.\ 75]{BH1993}. By the above properties of dualizing modules, complete intersection rings are Gorenstein.

For our purposes, an important consequence of the above is the following. If $R$ is a local ring, finite flat over a regular local subring $S$, then $R$ is Gorenstein if and only if $\Hom_S(R,S) \simeq R$ as $R$-modules.

We also use the following lemma, which gives a criterion for certain rings to be complete intersection. We thank Masami Ohta for pointing out the utility of this lemma.

\begin{lem}
\label{lem: LCI if I principal}
Let $(R,\m_R,k_R)$ be a local ring, and assume that there is be a regular local subring $(S,\m_S,k_S)$ of $R$ such that $R$ is a finite, free $S$-module. Let $I$ be an ideal of $R$ such that $\m_R=\m_SR+I$. If $I$ is principal, then $R$ is complete intersection.
\end{lem}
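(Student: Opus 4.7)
The plan is to reduce modulo a regular sequence coming from $S$ so that we land in a zero-dimensional local ring whose maximal ideal is principal, and then invoke a structural description of such rings.

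First, I would choose a regular system of parameters $x_1, \dots, x_n$ for the regular local ring $S$, where $n = \dim S$. These form an $S$-regular sequence. Since $R$ is a finite free $S$-module, in particular $S$-flat, the sequence $x_1, \dots, x_n$ is also $R$-regular. Let $\bar{R} := R/(x_1, \dots, x_n)R = R/\m_S R$. By the permanence property recalled in \S\ref{subsec: comm alg} (namely, $R$ is complete intersection if and only if $R$ modulo a regular sequence is complete intersection), it suffices to prove that $\bar{R}$ is complete intersection.

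Next I would describe $\bar{R}$. Since $R$ is finite over $S$, we have $\dim R = \dim S = n$, so $\bar R$ is zero-dimensional (hence Artinian, hence already equal to its own completion). The maximal ideal of $\bar R$ is $\m_R / \m_S R = (\m_S R + I)/\m_S R$, which is the image of $I$ in $\bar R$. Because $I$ is principal, say $I = (f)$, the maximal ideal $\m_{\bar R}$ is principal, generated by the image $\bar f$ of $f$.

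Finally I would show that any such $\bar R$ is complete intersection. The embedding dimension $e = \dim_{k_R}(\m_{\bar R}/\m_{\bar R}^2)$ is at most $1$. By Cohen's structure theorem applied to the complete local ring $\bar R$, one can write $\bar R \cong T/J$ where $T$ is a complete regular local ring of dimension $e$. If $e = 0$, then $\bar R$ is a field, trivially complete intersection. If $e = 1$, then $T$ is a DVR with uniformizer $\pi$; since $\bar R$ is zero-dimensional, $J$ is a nonzero proper ideal of $T$, and so $J = (\pi^m)$ for some $m \geq 1$. This ideal is generated by one non-zero-divisor, hence by a regular sequence of length $1$, so $\bar R$ is complete intersection. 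Combined with the reduction in the first step, this gives that $R$ is complete intersection.

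There is not really a serious obstacle here; the argument is a routine assembly of the permanence facts of \S\ref{subsec: comm alg} together with Cohen's structure theorem. The only point needing a touch of care is handling the mixed-characteristic case of the last step, but the DVR description of $T$ covers both equal and mixed characteristic uniformly.
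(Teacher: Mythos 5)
Your proof is correct and follows essentially the same route as the paper: reduce modulo $\m_S R$ (using that $R$ is finite free, hence flat, over the regular local ring $S$, so $R$ is complete intersection iff $\bar R = R/\m_S R$ is), then observe that $\bar R$ is Artinian local with principal maximal ideal. The only divergence is in the last step: the paper builds an explicit surjection $k_S\lb X\rb \to \bar R$ sending $X$ to a generator of the image of $I$ and notes the kernel is a principal nonzero ideal in a DVR, whereas you invoke Cohen's structure theorem directly; your formulation is marginally more robust in that it does not implicitly require the residue field of $R$ to coincide with that of $S$.
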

\begin{proof}
Let $\bar{R}=R/\m_S R$. Since $R$ is finite and flat over $S$, we have that $\dim(R)=\dim(S)$, and that any $S$-regular sequence in $S$ is an $R$-regular sequence. Since $S$ is regular, $\m_S$ is generated by an $S$-regular sequence, and so $\dim(\bar{R})=0$ and $R$ is complete intersection if and only if $\bar{R}$ is complete intersection. We will show that, if $I$ is principal, then $\bar{R}$ is complete intersection.

We know that $\bar{R}$ is an Artinian local ring that contains the field $k_S$, and that the image $\bar{I}$ of $I$ in $\bar{R}$ is the maximal ideal of $\bar{R}$. If $I$ is principal, then $\bar{R}$ must be complete intersection. Indeed, if $T$ is a generator of $\bar{I}$, then the homomorphism
\[
k_S\lb X\rb \to \bar{R}
\]
sending $X$ to $T$ is surjective, and the kernel is principal and so must be generated by a regular sequence.
\end{proof}

\subsection{Weakly Gorenstein} 
\label{subsec: weak gorenstein}
In \cite{wake1}, it is proven that $\fH$ is not Gorenstein in general. However, there is a weaker condition, called weakly Gorenstein, that both Hecke algebras $\fH, \h$ are conjectured to satisfy. The weakly Gorenstein condition was introduced in \cite{wake2}. 

Let $I_\fH$ be the kernel of the composite arrow $\fH \to \h \to \h/I$. Note that $\I \subsetneq I_\fH$. Let $\fP_\fH$ denote the set of height $1$ primes of $\fH$ that contain $I_\fH$. Let $\fP_\h$ denote the set of height $1$ primes of $\h$ that contain $I$. 

We say that $\fH$ (resp.\ $\h$) is {\em weakly Gorenstein} if $\fH_\p$ (resp.\ $\h_\p$) is Gorenstein for each $\p \in \fP_\fH$ (resp.\ $\p \in \fP_\h$).

The following conjecture is \cite[Conj.\ 1.2]{wake2}.

\begin{conj}\label{conj: weakly gorenstein}
Both $\h$ and $\fH$ are weakly Gorenstein.
\end{conj}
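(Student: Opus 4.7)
The plan is to prove the conjecture by establishing an $R = \bT$ theorem tailored to the Eisenstein intersection points, using ordinary pseudodeformation theory. The essential obstruction to a direct attack is that $H_\p$ is not known to be locally free over $\h_\p$ (indeed, its freeness is equivalent to $\h_\p$ being Gorenstein, which is exactly what we want to prove), so the usual deformation theory of Galois representations is unavailable. We therefore work with pseudorepresentations, which naturally lift to the full Hecke algebra $\fH_\p$ and give access to the whole local ring, not just its cuspidal quotient.

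The first task is to construct a universal ordinary pseudodeformation ring $R_\p^\ord$ for the residual pseudorepresentation $\bar D_\p = \psi(\chi_1 \oplus \chi_2)$ at the Eisenstein intersection (the content of Theorem D). Because a pseudorepresentation at a reducible point over a field only remembers Jordan–H\"older factors on a decomposition group at $p$, ``ordinary'' cannot be defined merely as a condition on $\bar D_\p|_{G_{\bQ_p}}$; instead one imposes the ordinary condition at the level of Cayley–Hamilton representations (in the sense of Bella\"iche–Chenevier) equipped with a generalized matrix algebra structure, then descends to pseudorepresentations. Using Ohta's $p$-adic Eichler–Shimura theorem (Theorem \ref{e-s theorem}) and Sharifi's description of the Galois action on $H^-/IH^-$ (Theorem \ref{thm:H/I_sub}), the algebra $\End_{\h_\p}(H_\p)$ carries a canonical ordinary Cayley–Hamilton structure, yielding compatible surjections $R_\p^\ord \onto \hat\h_\p$ and $R_\p^\ord \onto \hat\fH_\p$ (the latter via the pullback description of Lemma \ref{lem:H_pullback}).

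The heart of the argument is then the numerical criterion of Wiles–Lenstra. On the deformation side, one bounds the tangent space of $R_\p^\ord$ above by an ordinary Selmer group for the adjoint of $\bar D_\p$; this is a Galois cohomology calculation that translates, via duality and the Iwasawa main conjecture, into a bound controlled by $\xi_\chi$, provided $X_{\theta,(f_\chi)} = 0$. On the Hecke side, one bounds the congruence module of $\fH_\p$ below in terms of the Eisenstein ideal and the Drinfeld–Manin modification of \S\ref{subsec:dm modification}, recovering the same invariant from the sequence \eqref{HHDML} together with Hida duality. Matching these two bounds activates the Wiles–Lenstra criterion to conclude that $R_\p^\ord \isoto \hat\fH_\p$ and that both rings are complete intersection; complete intersection implies Gorenstein, so the conjecture holds for $\fH$.

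For $\h$, the $R = \bT$ conclusion forces the Eisenstein ideal $\I_\p \subset \hat\fH_\p$ to be principal; via the isomorphism $\I \isoto I$ of Lemma \ref{I=I cor}, the cuspidal Eisenstein ideal $I_\p \subset \hat\h_\p$ is then also principal, and Lemma \ref{lem: LCI if I principal} shows $\hat\h_\p$ is complete intersection as well. The main obstacle throughout is the precise formulation of the ordinary condition on pseudorepresentations so that the resulting tangent space can be compared against the Iwasawa-theoretic invariants; both the definition and the matching calculation must be arranged so that the hypothesis $X_{\theta,(f_\chi)} = 0$ translates cleanly into the required bound. Remark \ref{rem:no taylor-wiles} warns that if $X_{\theta,(f_\chi)} \neq 0$ then $\fH_\p$ fails even to be Gorenstein, so the numerical criterion here is essentially sharp, and Taylor–Wiles patching is not available as a substitute; consequently, proving the conjecture unconditionally ultimately requires proving Greenberg's finiteness conjecture.
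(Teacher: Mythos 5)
Your proposal tracks the paper's approach: construct the ordinary pseudodeformation ring (Theorem D), use Ohta and Sharifi to give $\End_{\h_\p}(H_\p)$ an ordinary Cayley--Hamilton/GMA structure and obtain a surjection $R^\ord_\m \onto \hat\fH_\p$, then apply the Wiles--Lenstra numerical criterion using the Iwasawa-theoretic computation of the reducibility ideal. The $\fH$ conclusion is sound and is exactly Theorem \ref{thm: psmod} (equivalently Corollary \ref{cor: gor iff}): under $X_{\theta,(f_\chi)}=0$, $\hat\fH_\p$ is a complete intersection, hence Gorenstein.

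The step from $\fH_\p$ to $\h_\p$, however, has a gap. You assert that ``the $R=\bT$ conclusion forces the Eisenstein ideal $\I_\p\subset\hat\fH_\p$ to be principal,'' but that does not follow from $X_{\theta,(f_\chi)}=0$ alone. What the pseudo-modularity theorem actually delivers (Proposition \ref{prop: X=J=I}) is a chain of isomorphisms $X_{\chi,(f_\chi)}(1)\simeq\cJ_\m/\cJ_\m^2\cong\I_\p/\I_\p^2\cong I_\p/I_\p^2$, all of $\Lamf$-length $r$. To pass to principality of $\I_\p$ via Nakayama you additionally need $X_{\chi,(f_\chi)}(1)$ to be \emph{cyclic} over $\Lamf$, and this is a genuinely separate hypothesis: Theorem \ref{thm: eigencurve equivalences} bundles it together with $X_{\theta,(f_\chi)}=0$ in condition (1) and records the pair as equivalent to principality of the Eisenstein ideals and to Gorensteinness of both $\fH_\p$ and $\h_\p$. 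Nor can you extract principality purely from the ring-theoretic output of the numerical criterion, since a one-dimensional local complete intersection need not have embedding dimension two. The missing cyclicity does follow from Greenberg's conjecture (via Corollary \ref{cor: units}: pseudocyclicity of $X^-$ forces $X_{\chi,(f_\chi)}(1)$ cyclic after localization), which you ultimately invoke anyway, so the conditional conclusion is salvageable --- but your argument for $\h_\p$ Gorenstein is incomplete at the point where you pass from complete intersection of $\fH_\p$ to principality of $\I_\p$, and needs that pseudocyclicity explicitly.
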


We now introduce some conditions that imply this conjecture.

\begin{lem}
The maps $\Spec \fH \to \Spec \Lambda$ and $\Spec \h \to \Spec \Lambda$ given by the inclusion of $\Lambda$ as diamond operators induce bijections
\[
\fP_\fH \lrisom \fP_\Lambda; \quad \fP_\h \lrisom \fP_\Lambda
\]
where $\fP_\Lambda$ is the set of height 1 prime divisors of $\xi_\chi$. 
\end{lem}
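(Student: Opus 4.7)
The plan is to reduce everything to the Spec of the common quotient $\Lambda/\xi_\chi$ and then control the heights using flatness over $\Lambda$. Three ingredients are needed. First, by Hida's control theorem (\S\ref{subsec: H prelims}), both $\h$ and $\fH$ are finite free as $\Lambda$-modules. Since $\Lambda$ is a $2$-dimensional regular local ring, it is a UFD, so its height $1$ primes containing $\xi_\chi$ are precisely the prime divisors of $\xi_\chi$, i.e., the set $\fP_\Lambda$. Second, by Proposition \ref{prop: fH and h}, the natural maps produce isomorphisms of $\Lambda$-algebras $\h/I \isoto \Lambda/\xi_\chi$ and $\fH/I_\fH \isoto \Lambda/\xi_\chi$. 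Third, for a finite flat extension of Noetherian rings, going-up, going-down, and incomparability all hold, so for every prime $\p$ of $\h$ (resp.~$\fH$) we have $\mathrm{ht}(\p) = \mathrm{ht}(\p \cap \Lambda)$.

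Combining the first two ingredients, the surjection $\h \onto \h/I \cong \Lambda/\xi_\chi$ puts the primes of $\h$ containing $I$ in bijection with the primes of $\Lambda$ containing $\xi_\chi$, via the assignment $\p \mapsto \p \cap \Lambda$; the same applies to $\fH$ with $I$ replaced by $I_\fH$. The third ingredient then shows that this bijection preserves height: $\p$ has height $1$ in $\h$ (or in $\fH$) if and only if $\p \cap \Lambda$ has height $1$ in $\Lambda$. Restricting the bijection to height $1$ primes yields $\fP_\h \lrisom \fP_\Lambda$ and $\fP_\fH \lrisom \fP_\Lambda$.

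There is no real obstacle: the lemma is essentially a diagram chase between Proposition \ref{prop: fH and h} and the standard commutative algebra of finite flat extensions. The only point that requires a sentence of verification is that the map $\Spec \fH \to \Spec \Lambda$ of the lemma agrees with the map $\p \mapsto \p \cap \Lambda$ coming from the inclusion $\Lambda \subset \fH$ as diamond operators, and this is immediate from the definition of the structure map in Proposition \ref{prop: fH and h}.
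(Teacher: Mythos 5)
Your proof is correct and takes essentially the same approach as the paper: the paper's one-line proof simply invokes the isomorphisms $\fH/I_\fH \cong \h/I \cong \Lambda/\xi_\chi$ (your second ingredient) and leaves implicit the verification, which you spell out, that finite flatness of $\h$ and $\fH$ over $\Lambda$ makes $\p \mapsto \p\cap\Lambda$ preserve heights.
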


\begin{proof}
The canonical maps $\Lambda \to \fH$ and $\Lambda \to \h$ induce isomorphisms $\fH/I_\fH \cong \h/I \cong \Lambda/\xi_\chi$, giving bijections between the prime ideals containing $I_\fH$, $I$, and $\xi_\chi$, respectively. Since $\fH$ and $\h$ are Cohen-Macaulay (because they are finite flat over the regular local subring $\Lambda$), these bijections induce bijections on the height $1$ subsets (see \cite[Corollary 2.1.4]{BH1993}).
\end{proof}

Throughout this section, we may abuse notation and use the same letter $\p$ for an element of $\fP_\h$ or the corresponding element of $\fP_\fH$. Note that, by the Ferrero-Washington theorem \cite{FW1979}, if $\p \in \fP_\h$, then $p \not \in \p$. Similarly, if $\p \in \fP_\fH$, then $p \not \in \p$. 

\begin{lem}
\label{lem: weakly free over Lam}
Let $\p \in \fP_\h$, and let $(f)$ be the corresponding element of $\fP_\Lambda$. Then $\h_\p$ and $\fH_\p$ are free $\Lambda_{(f)}$-modules of finite rank.
\end{lem}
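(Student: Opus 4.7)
The plan is to combine Hida's control theorem with the observation that $\p$ is the unique prime of $\h$ (respectively $\fH$) lying over $(f) \subset \Lambda$; once this is established, the localization $\h_\p$ will coincide with $\h \otimes_\Lambda \Lambda_{(f)}$, which is manifestly free of finite rank over $\Lambda_{(f)}$ because $\h$ is free of finite rank over $\Lambda$. The first step is thus to invoke Hida's control theorem (recalled in the discussion after Theorem~\ref{e-s theorem}), giving that $\h$ and $\fH$ are free $\Lambda$-modules of finite rank; tensoring with $\Lambda_{(f)}$ then makes $\h \otimes_\Lambda \Lambda_{(f)}$ and $\fH \otimes_\Lambda \Lambda_{(f)}$ free of finite rank over $\Lambda_{(f)}$, and in particular finite semilocal $\Lambda_{(f)}$-algebras.

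The central step is to show that $\p$ is the unique prime of $\h$ contracting to $(f)$. For any prime $\p' \subset \h$ with $\p' \cap \Lambda = (f)$, flatness of $\h$ over the regular ring $\Lambda$ combined with the going-down theorem gives $\mathrm{ht}(\p') = 1$, so $\h/\p'$ is a one-dimensional integral domain finite over $\Lambda/(f)$. The ring $\h/(\p' + I)$ is simultaneously a quotient of the one-dimensional domain $\h/\p'$ and, via the isomorphism $\h/I \cong \Lambda/\xi_\chi$ of Proposition~\ref{prop: fH and h}, a quotient of the one-dimensional ring $\Lambda/\xi_\chi$. Comparing dimensions forces $I \subset \p'$, whereupon the bijection $\fP_\h \isoto \fP_\Lambda$ from the preceding lemma yields $\p' = \p$. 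The argument for $\fH$ runs identically, using $\fH/I_\fH \cong \Lambda/\xi_\chi$ in place of $\h/I \cong \Lambda/\xi_\chi$, and in particular the finite semilocal ring $\fH \otimes_\Lambda \Lambda_{(f)}$ collapses to its unique local factor $\fH_\p$.

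The main obstacle will be the dimension comparison just indicated: one must exclude the degenerate case in which $\p' + I$ equals the maximal ideal $\m_\h$, for then $\h/(\p' + I)$ is zero-dimensional and no direct comparison between $\h/\p'$ and $\Lambda/\xi_\chi$ is available. I expect that ruling this out requires invoking the structure of the Eisenstein ideal $I$, either by exhibiting enough explicit generators or, more robustly, by demonstrating directly that the generic fiber $\h \otimes_\Lambda \kappa(f)$ is a local $\kappa(f)$-algebra (for example through the Galois pseudorepresentation carried by the module $H$, whose specialization at $(f)$ controls how $\Spec \h$ sits over $\Spec \Lambda$ near $(f)$).
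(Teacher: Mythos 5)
The paper's proof takes a different, more terse route: Hida's control theorem gives that $R\in\{\h,\fH\}$ is $\Lambda$-free of finite rank, hence $f$ acts injectively on $R$ and, by exactness of localization, on $R_\p$, so $R_\p$ is torsion-free over the DVR $\Lambda_{(f)}$; combined with finite generation of $R_\p$ over $\Lambda_{(f)}$ (which the paper says ``follows immediately''), freeness follows from the structure theory for modules over a DVR. The paper does not attempt to show that $\p$ is the unique prime of $R$ lying over $(f)$.

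Your proposal is organized around establishing exactly that uniqueness, and you correctly observe that it would immediately give $R_\p = R\otimes_\Lambda\Lambda_{(f)}$ and hence the result. But the decisive step is missing, as you yourself flag: the dimension comparison for $\h/(\p'+I)$ does not force $I\subset\p'$, because $\p'+I$ may be $\m_\h$-primary, in which case $\h/(\p'+I)$ is a nonzero Artinian ring that is compatibly a quotient of the one-dimensional $\h/\p'$ and of $\Lambda/\xi_\chi$ with no containment of ideals implied. The suggested repair via the pseudorepresentation carried by $H$ is only gestured at. So the proposal is not a complete proof.

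It is worth noting that the obstacle you hit is not an artifact of your route. Finite generation of $R_\p$ over $\Lambda_{(f)}$ is equivalent to the identification $R_\p = R\otimes_\Lambda\Lambda_{(f)}$, i.e.\ to $\p$ being the only prime of $R$ over $(f)$; in general, a localization of the semilocal ring $R\otimes_\Lambda\Lambda_{(f)}$ at one of several maximal ideals is not integral over $\Lambda_{(f)}$. For instance, $\Z_p\lb x\rb$ is a local ring, free of rank two over $\Z_p\lb x^2\rb$, but $\Z_p\lb x\rb_{(x-1)}$ contains $1/(x+1) = (x-1)/(x^2-1)$ and is not finite over $\Z_p\lb x^2\rb_{(x^2-1)}$. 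So the paper's ``follows immediately'' is itself relying on the uniqueness that your proposal set out to prove; in the downstream applications (e.g.\ Lemma~\ref{lem: LCI if I principal} and \S\ref{subsec:intersect}) the issue can be avoided by passing to the completions, since $\hat\Lambda_{(f)}$ is Henselian and $R\otimes_\Lambda\hat\Lambda_{(f)}$ decomposes into a product of complete local rings, each unconditionally finite and free over $\hat\Lambda_{(f)}$. Your instinct to make the uniqueness claim explicit was sound; the proof of it, however, is not supplied.
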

\begin{proof}
Let $R$ be either $\h$ or $\fH$. This essentially follows from Hida's theorem that $R$ is $\Lambda$-free of finite rank. The finite generation follows immediately. To see the freeness, note that by the previous lemma we have $\p \cap \Lambda = (f)$. Then the composite map $\Lambda \to R \to R_\p$ factors through $\Lambda_{(f)}$, and so $R_\p$ is $\Lambda_{(f)}$-module. Since $R$ is $\Lambda$-free, $f$ acts injectively on $R$ and thus on $R_\p$. So $R_\p$ is $\Lambda_{(f)}$-torsion free, and therefore $\Lambda_{(f)}$-free since $\Lambda_{(f)}$ is a DVR.
\end{proof}

We now introduce an equivalent formulation of Conjecture \ref{conj: weakly gorenstein} based on the Eisenstein ideal.

\begin{prop}\label{prop: both weak gorenstein}
The following are equivalent:
\abcs
\item Both $\fH_\p$ and $\h_\p$ are Gorenstein.
\item The ideal $I_\p \subset \h_\p$ is generated by a single non-zero divisor.
\item Both ideals $I_\p \subset \h_\p$ and $\I_\p \subset \fH_\p$ are principal.
\item Both $\fH_\p$ and $\h_\p$ are complete intersection.
\endabcs
\end{prop}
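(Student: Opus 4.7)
The plan is to prove the cycle $(1) \Rightarrow (3) \Rightarrow (4) \Rightarrow (1)$, together with $(2) \Leftrightarrow (3)$. By Lemma \ref{lem: weakly free over Lam}, both $\h_\p$ and $\fH_\p$ are one-dimensional local Cohen--Macaulay rings, finite free over the DVR $\Lambda_{(f)}$. Two preliminary structural observations will be used throughout. First, the isomorphism $\h/I \cong \Lambda/\xi_\chi$ yields $\m_{\h_\p} = I_\p + f\h_\p$; the analogous identification $\fH/I_\fH \cong \Lambda/\xi_\chi$, together with the congruence $T_0 \equiv \xi_\chi \pmod{\I}$ in $\fH$ (which holds because $T_0$ maps to $a_1(T_0 E_\Lambda) = a_0(E_\Lambda) = \xi_\chi$ under $\fH \to \Lambda$), upgrades this to $\m_{\fH_\p} = \I_\p + f\fH_\p$. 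Second, by Lemma \ref{I=I cor} the quotient $\fH \to \h$ induces an isomorphism $\I \risom I$ of $\fH$-modules; in particular, $\I_\p$ is a principal ideal of $\fH_\p$ if and only if $I_\p$ is a principal ideal of $\h_\p$.

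The easy directions go as follows. $(4) \Rightarrow (1)$ is classical. For $(2) \Leftrightarrow (3)$, a principal generator $T$ of $I_\p$ is automatically a non-zero divisor, because $\h_\p/I_\p$ has finite $\Lambda_{(f)}$-length while $\h_\p$ has full $\Lambda_{(f)}$-rank, forcing $\Ann_{\h_\p}(T) = 0$ by a rank comparison. For $(3) \Rightarrow (4)$, I invoke Lemma \ref{lem: LCI if I principal} twice with $S = \Lambda_{(f)}$: first for $(R, I) = (\h_\p, I_\p)$, and then for $(R, I) = (\fH_\p, \I_\p)$, using the maximal-ideal descriptions above to verify the hypothesis $\m_R = \m_S R + I$ in each case.

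The crux is $(1) \Rightarrow (3)$, for which the main input is Hida duality: $S_\Lambda \cong \h^\vee$ and $M_\Lambda \cong \fH^\vee$, with $(-)^\vee = \Hom_\Lambda(-, \Lambda)$. Since $\h_\p$ and $\fH_\p$ are finite flat over the regular local ring $\Lambda_{(f)}$, their Gorensteinness amounts to $\h_\p^\vee \cong \h_\p$ and $\fH_\p^\vee \cong \fH_\p$ respectively (\S\ref{subsec: comm alg}), yielding $\fH_\p$-module isomorphisms $S_{\Lambda,\p} \cong \h_\p$ and $M_{\Lambda,\p} \cong \fH_\p$. Transporting the localization of (\ref{SML}) through these isomorphisms produces an exact sequence of $\fH_\p$-modules
$$
0 \lra \h_\p \lra \fH_\p \lra \Lambda_{(f)} \lra 0
$$
in which $\Lambda_{(f)}$ carries the canonical $\fH_\p$-module structure via $\fH_\p \twoheadrightarrow \fH_\p/\I_\p \cong \Lambda_{(f)}$ (inherited from the $\fH$-linearity of (\ref{SML})). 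Any $\fH_\p$-linear surjection $\fH_\p \twoheadrightarrow \Lambda_{(f)}$ with this action coincides with the canonical quotient up to multiplication by a unit of $\Lambda_{(f)}$, so its kernel must be $\I_\p$. Hence $\I_\p \cong \h_\p$ as $\fH_\p$-modules; since $\h_\p$ is cyclic as an $\fH_\p$-module (generated by $1$ via $\fH_\p \twoheadrightarrow \h_\p$), so is $\I_\p$, proving (3).

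The most delicate step is identifying the kernel in the exact sequence above. The concern is that transport through Hida duality only gives isomorphisms of modules, not of morphisms; however, since the original sequence (\ref{SML}) is $\fH$-linear with $\fH$ acting on $\Lambda$ through its Eisenstein quotient, the transported sequence preserves this action on the quotient, which pins down the kernel as $\I_\p$ up to the ambiguity of a unit in $\Lambda_{(f)}$.
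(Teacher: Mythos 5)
Your proof is correct, but the hardest implication is reached by a different route than the paper's. The paper proves $(1) \Rightarrow (2) \Rightarrow (3) \Rightarrow (4) \Rightarrow (1)$, with $(1) \Rightarrow (2)$ obtained by citing \cite[Lem.\ 2.4]{wake1} to show that Gorensteinness of $\fH_\p$ forces the map $T \mapsto T\zinfdm$ to give an isomorphism $I_\p \risom H^-_\p$, after which Gorensteinness of $\h_\p$ identifies $H^-_\p \simeq \h_\p$. You instead prove $(1) \Rightarrow (3)$ directly: you localize \eqref{SML}, transport it through the duality isomorphisms $(S_\Lambda)_\p \simeq \h_\p$ and $(M_\Lambda)_\p \simeq \fH_\p$ supplied by the two Gorenstein hypotheses, and then observe that an $\fH_\p$-linear surjection onto the Eisenstein module $\Lambda_{(f)}$ must coincide with the canonical quotient up to a unit, pinning down the kernel as $\I_\p$. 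Both routes ultimately rest on Hida duality; yours is self-contained within the paper, avoiding the Drinfeld--Manin element $\zinfdm$ and the external reference, while the paper's has the advantage of making the eventual generator of $I_\p$ visible. The implications $(3) \Rightarrow (4) \Rightarrow (1)$ and $(2) \Leftrightarrow (3)$ agree with the paper's, and your added argument for $(3) \Rightarrow (2)$ is sound --- though you should make explicit that the rank-zero conclusion forces $\Ann_{\h_\p}(T) = 0$ because $\Ann_{\h_\p}(T)$ sits inside the $\Lambda_{(f)}$-free module $\h_\p$ (Lemma \ref{lem: weakly free over Lam}) and is therefore torsion-free.
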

\begin{proof} The proof of $(1) \Rightarrow (2)$ is based on \cite[Thm.\ 3.3.8]{ohta2005}. By the same argument as \cite[Lem.\ 2.4]{wake1}, we see that if $\fH_\p$ is Gorenstein, then the map $I_\p \to H^-_\p$ given by $T \mapsto T\zinfdm$ is an isomorphism (recall the definition of $\zinfdm$ from \S \ref{subsec:dm modification}). If $\h_\p$ is also Gorenstein, then we have
$$
I_\p \lrisom H^-_\p \lrisom \h_\p
$$
which gives (2). 
 
The implication $(2) \Rightarrow (3)$ follows from Lemma \ref{I=I cor}. The implication $(3) \Rightarrow (4)$ follows from Lemma \ref{lem: LCI if I principal}. Indeed, by Lemma \ref{lem: weakly free over Lam}, $\h_\p$ and $\fH_\p$ are finite free over the regular subring $\Lambda_{(f)}$,  and since $\h_\p/I_\p \cong \Lambda_{(f)}/\xi_\chi\Lambda_{(f)}$ and $\fH_\p/\I_\p \cong \Lambda_{(f)}$, we see that the maximal ideal of $\h_\p$ (resp.\ $\fH_\p$) is $(f)+I_\p$ (resp.\ $(f)+\I_\p$).

The implication $(4) \Rightarrow (1)$ is a fact about commutative rings (see \S \ref{subsec: comm alg}).
\end{proof}

Finally, we will show that Conjecture \ref{conj: weakly gorenstein} takes on a particularly simple form when $\xi_\chi$ has no multiple roots. First, we need the following
\begin{lem}
\label{I_p max lem}
Let $\p \in \fP_\h$, and let $(f)$ be the corresponding element of $\fP_\Lambda$. Then the length of $\h_\p/I_\p$ over $\Lambda_{(f)}$ is equal to the multiplicity of $(f)$ in the prime factorization of $\xi_\chi$. In particular, the ideal $I_\p \subset \h_\p$ is maximal if and only if $(f)$ occurs with multiplicity $1$.
\end{lem}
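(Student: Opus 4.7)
The plan is to reduce to a direct computation using the identification $\h/I \cong \Lambda/\xi_\chi$ recalled in \S\ref{subsec: main results}, together with the standard structure of height-$1$ localizations of the regular ring $\Lambda$.

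First I would localize the isomorphism $\h/I \cong \Lambda/\xi_\chi$ at $\p$. Since the inclusion $\Lambda \subset \h$ maps $(f)$ to $\p$ via the bijection of \S\ref{subsec: weak gorenstein}, localization at $\p$ on the $\h$-side corresponds to localization at $(f)$ on the $\Lambda$-side, so we obtain
\[
\h_\p/I_\p \;\cong\; \Lambda_{(f)}/\xi_\chi \Lambda_{(f)}.
\]
Because $\Lambda$ is a regular local ring of dimension $2$ and $(f)$ has height $1$, the localization $\Lambda_{(f)}$ is a DVR with uniformizer $f$. Writing $\xi_\chi = u \cdot f^e \cdot g$ in $\Lambda$ with $e$ the multiplicity of $(f)$ in $\xi_\chi$, $u$ a unit, and $g$ coprime to $f$, the element $g$ becomes a unit in $\Lambda_{(f)}$. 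Hence $\xi_\chi \Lambda_{(f)} = (f^e)$, and consequently
\[
\lth_{\Lambda_{(f)}}\bigl(\h_\p/I_\p\bigr) \;=\; \lth_{\Lambda_{(f)}}\bigl(\Lambda_{(f)}/(f^e)\bigr) \;=\; e,
\]
which is exactly the claimed multiplicity. This establishes the first statement.

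For the second statement, note that $\h_\p/I_\p \cong \Lambda_{(f)}/(f^e)$ is a field if and only if $e = 1$, because $\Lambda_{(f)}/(f^e)$ is an Artinian local ring whose maximal ideal is generated by the class of $f$ and which is a field precisely when that class vanishes. Since $\h_\p$ is a local ring, $I_\p$ is maximal if and only if the quotient $\h_\p/I_\p$ is a field, so the two conditions are equivalent. This gives the ``in particular'' clause.

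The argument is essentially a one-line calculation, so there is no serious obstacle; the only point requiring care is justifying that $\Lambda_{(f)}$ is a DVR (from the regularity of $\Lambda$) and that the prime-bijection of \S\ref{subsec: weak gorenstein} makes the localization compatible on both sides of $\h/I \cong \Lambda/\xi_\chi$.
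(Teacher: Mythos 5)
Your proof is correct and takes essentially the same approach as the paper: localize $\h/I \cong \Lambda/\xi_\chi$ at $\p$ to get $\Lambda_{(f)}/\xi_\chi\Lambda_{(f)}$, then use that $\Lambda_{(f)}$ is a DVR with uniformizer $f$ to read off the length as the multiplicity. You merely spell out a couple of details (why $\Lambda_{(f)}$ is a DVR and the ``in particular'' step) that the paper leaves implicit.
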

\begin{proof}
We have
$$
\h_\p/I_\p = (\h/I)_\p \cong (\Lambda/\xi_\chi)_\p = (\Lambda/\xi_\chi)_{(f)} = \Lambda_{(f)}/\xi_\chi \Lambda_{(f)}.
$$
If $(f)$ occurs with multiplicity $r$, we have $\xi_\chi \Lambda_{(f)} = f^r \Lambda_{(f)}$.
\end{proof}

When $(f)$ occurs with multiplicity $1$, $I_\p$ is the maximal ideal of $\h_\p$, so it is principal if and only if $\h_\p$ is regular. 

\subsection{Plane singularity}
\label{subsec:plane singularity}
 One way to measure the ``badness" of a singular point $P$ on variety $X$ is to ask what is the minimal dimension $d$ needed in order to (locally around $P$) embed $X$ in a smooth variety of dimension $d$. Clearly $d \ge\dim(X)$ with equality if and only if $P$ is a regular point. The larger $d$ is, the ``worse" the singularity at $P$ is. We can state this formally as follows.

\begin{defn}
Let $X$ be a Noetherian scheme with $x \in X$ a closed point, and let $\hat{\sO}_{X,x}$ denote the complete local ring at $x$. The \emph{embedding dimension} $\mathrm{embdim}(x)$ of $x$ is the minimal $d$ such that $\Spec \hat{\sO}_{X,x}$ can be embedded in a regular scheme of dimension $d$. Equivalently, $d$ is the minimum dimension among all regular local rings that surject onto $\hat{\sO}_{X,x}$. If $\dim(X)=1$, we say $X$ has a \emph{plane singularity} at $x$ if $\mathrm{embdim}(x)=2$.

If $(R, \m)$ is a Noetherian local ring, then $\mathrm{embdim}(R)$ is the minimum dimension among all regular local rings that surject onto the completion of $R$. If $\dim(R)=1$, we say $R$ has a \emph{plane singularity} if $\mathrm{embdim}(R)=2$.
\end{defn}

Note that, by the Cohen Structure Theorem (\cite[Thm.\ A.21, pg.\ 373]{BH1993}), there is a Noetherian regular local ring that surjects onto $\hat{\sO}_{X,x}$, and so the embedding dimension is well-defined. As discussed in \cite[pg.~ 72-73]{BH1993}, the embedding dimension of a local ring $(R,\m)$ is $\dim(\m/\m^2)$, or, in other words, the minimal number of generators of $\m$. We now relate embedding dimension to the weak Gorenstein conjecture.

\begin{lem}
Let $\p \in \fP_\fH$. Then $\I_\p$ is principal if and only if $\mathrm{embdim}(\fH_\p)=2$.
\end{lem}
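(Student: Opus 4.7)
The plan is to compute $\mathrm{embdim}(\fH_\p) = \dim_k(\m/\m^2)$, where $\m$ is the maximal ideal of $\fH_\p$ and $k$ its residue field, by relating it to the image of $\I_\p$ in $\m/\m^2$. Since $\fH_\p/\I_\p \cong \Lambda_{(f)}$ by Proposition \ref{prop: fH and h}, and $\Lambda_{(f)}$ is a DVR with maximal ideal $(f)$, we have $\m = \I_\p + (f)$, and the cotangent exact sequence
\[
0 \lra \overline{\I_\p} \lra \m/\m^2 \lra \m_{\Lambda_{(f)}}/\m_{\Lambda_{(f)}}^2 \lra 0
\]
(with $\overline{\I_\p}$ the image of $\I_\p$ in $\m/\m^2$) gives $\mathrm{embdim}(\fH_\p) = 1 + \dim_k \overline{\I_\p}$.

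I would next establish the unconditional lower bound $\mathrm{embdim}(\fH_\p) \ge 2$. Set $J := \ker(\fH \onto \h)$; by Lemma \ref{lem:H_pullback}, $\Ann_\fH(\I) = J$ and $\Ann_\fH(J) = \I$, so in particular $\I \cdot J = 0$. Since $\p \supset I_\fH = \I + J$, both $\I$ and $J$ lie in $\p$, so both of their annihilators do as well, forcing $\I_\p \ne 0$ and $J_\p \ne 0$. Combined with $\I_\p \cdot J_\p = 0$, this produces zero divisors in $\fH_\p$, so $\fH_\p$ is not a DVR and $\mathrm{embdim}(\fH_\p) \ge 2$.

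Granting these preliminaries, the forward direction is immediate: if $\I_\p = (T)$, then $\m = (f, T)$ is $2$-generated, so $\mathrm{embdim}(\fH_\p) \le 2$, and the lower bound forces equality. For the converse, assume $\mathrm{embdim}(\fH_\p) = 2$ and pass to the completion $\hat\fH_\p$, which has the same embedding dimension and the same minimal number of generators of $\I_\p$ (by faithful flatness of completion on finitely generated modules). Cohen's structure theorem lets me write $\hat\fH_\p \cong S/J'$ for a complete regular local ring $S$ of dimension $2$. The composition $S \onto \hat\fH_\p \onto \hat\Lambda_{(f)}$ is a surjection onto a $1$-dimensional complete DVR, so its kernel $P$ is a prime of height $1$ in $S$. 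Because a $2$-dimensional regular local ring is a UFD, $P = (x)$ is principal, and the image of $x$ in $\hat\fH_\p$ generates $\hat\I_\p$. Hence $\mu(\I_\p) = \mu(\hat\I_\p) = 1$, and $\I_\p$ is principal.

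The main obstacle is the converse direction, where principality requires passing to the completion and invoking the UFD property of $2$-dimensional regular local rings; a direct argument inside $\fH_\p$ would require the non-obvious identity $\m^2 \cap \I_\p = \m \I_\p$ (equivalently, that the natural surjection $\I_\p/\m \I_\p \onto \overline{\I_\p}$ is an isomorphism), which the completion argument neatly circumvents.
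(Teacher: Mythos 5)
Your proof is correct, but the converse direction takes a genuinely different route from the paper's. The paper stays entirely inside $\fH_\p$: the sequence $0 \to \I_\p \to \fH_\p \to \Lambda_{(f)} \to 0$ from Proposition \ref{prop: fH and h} splits as $\Lambda_{(f)}$-modules, giving $\fH_\p \cong \I_\p \oplus \Lambda_{(f)}$, from which one reads off both $f\fH_\p \cap \I_\p = f\I_\p$ and $f \notin \m^2$. Writing $\m = (f,g)$, the second isomorphism theorem identifies $\m/f\fH_\p$ with $\I_\p/(f\fH_\p\cap\I_\p) = \I_\p/f\I_\p$, which is then cyclic (generated by the image of $g$), and Nakayama gives that $\I_\p$ is principal. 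You instead pass to the completion, invoke Cohen's structure theorem to write $\hat\fH_\p$ as a quotient of a $2$-dimensional regular local ring $S$, and use that $S$ is a UFD (Auslander--Buchsbaum) to conclude that the height-one prime $\ker(S \onto \hat\Lambda_{(f)})$ is principal, hence so is its image $\hat\I_\p$. Both arguments are valid; the paper's is more elementary and self-contained, while yours packages the key step as a portable general fact (any surjection from a $2$-dimensional regular local ring onto a DVR has principal kernel) and has the added merit of making the lower bound $\mathrm{embdim}(\fH_\p) \ge 2$ explicit via the nonzero ideals $\I_\p, J_\p$ with $\I_\p J_\p = 0$, a point the paper leaves implicit. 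One small inaccuracy in your closing aside: a direct argument does not in fact require the identity $\m^2 \cap \I_\p = \m\I_\p$ — the paper gets by with the easier $f\fH_\p \cap \I_\p = f\I_\p$ (and in any case both identities fall out of the $\Lambda_{(f)}$-module splitting).
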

\begin{proof}
Let $(f)=\Lam \cap \p$ and let $\m$ denote the maximal ideal of $\fH_\p$. We first note that, by Proposition \ref{prop: fH and h}, there is are isomorphisms $\fH_\p \cong \I_\p \oplus \Lamfnochi$ and $\m \cong \I_\p \oplus f \Lamfnochi$ of $\Lamfnochi$-modules. From the first isomorphism it follows that $f\fH_\p \cap \I_\p= f \I_\p$ and from the second that $\m^2 \subset \I_p \oplus f^2\Lamfnochi$ and so $f \not \in \m^2$.

Since $\m$ is generated by $\I_p$ and $f$, if $\I_\p$ is principal, then $\m$ is generated by $2$ elements. Conversely, suppose that $\m$ can be generated by 2 elements. Since $f \not \in \m^2$, we can assume that $\m= f \fH_\p + g \fH_\p$ for some $g \in \fH_\p$. Then, since $f \fH_\p \cap \I_\p= f\I_\p$, we have
\[
\fH_\p \onto \m/f\fH_\p  = (\I_p + f\fH_\p)/f\fH_\p \isoto \I_\p / (f \fH_\p \cap \I_\p)= \I_\p/  f \I_\p
\]
where the first map is $1 \mapsto g$. By Nakayama's lemma, $\I_\p$ is principal.
\end{proof}

\subsection{Summary} We summarize the discussion of this section as a theorem.

\begin{thm}
\label{thm:simple_equiv}
Let $\p \in \fP_\h$. Consider the following conditions:
\abcs
\item $\h_\p$ is regular.
\item The ideal $I_\p \subset \h_\p$ is generated by a single non-zero divisor.
\item Both ideals $I_\p \subset \h_\p$ and $\I_\p \subset \fH_\p$ are principal.
\item $\mathrm{embdim}(\fH_\p)=2$.
\item Both $\fH_\p$ and $\h_\p$ are Gorenstein.
\item Both $\fH_\p$ and $\h_\p$ are complete intersection.
\endabcs
Then $(2)-(6)$ are equivalent; also, $(1) \Rightarrow (2)$. Moreover, if $(f) = \p \cap \Lambda$ occurs with multiplicity $1$ in the factorization of $\xi_\chi$, then $(2) \Rightarrow (1)$ and so $(1)-(6)$ are equivalent. 
\end{thm}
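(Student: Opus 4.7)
The plan is to combine the three preceding results of this section into a single chain of equivalences. Most of the work has already been done: Proposition \ref{prop: both weak gorenstein} directly supplies the equivalence of (2), (3), (5), and (6) (it is stated exactly in these terms, up to renumbering). The remaining tasks are to splice (4) into that chain and to handle the implications involving (1).

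First I would establish (3) $\Leftrightarrow$ (4). The unnamed lemma in \S\ref{subsec:plane singularity} gives $\I_\p$ principal $\Leftrightarrow \mathrm{embdim}(\fH_\p)=2$. So it suffices to show that $\I_\p$ is principal in $\fH_\p$ if and only if both $\I_\p$ and $I_\p$ are principal. One direction is trivial: if $\I_\p$ is principal, its surjective image $I_\p$ under $\fH_\p \onto \h_\p$ is principal. For the other, I would use Lemma \ref{I=I cor}: the natural surjection $\I \onto I$ is an $\fH$-module isomorphism, so lifting a generator of $I_\p$ (as an $\h_\p$-ideal) to an element of $\I_\p$ gives an $\fH_\p$-module generator of $\I_\p$, which is thus principal as an ideal of $\fH_\p$.

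Next I would prove (1) $\Rightarrow$ (2). Since $\h$ is finite and free over the two-dimensional regular ring $\Lambda$ and $\p$ has height $1$, the localization $\h_\p$ is a one-dimensional local ring; if it is regular it is a DVR. The ideal $I_\p$ is nonzero (a rank count forces $\h_\p \ne \h_\p/I_\p = \Lambda_{(f)}/\xi_\chi$) and proper, so in a DVR it is principal and generated by a non-zero divisor. For the partial converse under the multiplicity-one hypothesis, Lemma \ref{I_p max lem} tells us that $I_\p$ is precisely the maximal ideal of $\h_\p$; if (2) holds, the maximal ideal is generated by a single non-zero divisor, and hence $\h_\p$ is a one-dimensional regular local ring, giving (1).

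The main obstacle is essentially bookkeeping rather than content: one must be careful that ``principal as an $\fH_\p$-ideal'' and ``principal as an $\h_\p$-ideal'' transfer correctly across the surjection $\fH_\p \onto \h_\p$, and that Lemma \ref{I=I cor} is used in the correct direction to lift generators. Once that is handled, the statement is a formal consequence of Proposition \ref{prop: both weak gorenstein}, the embedding-dimension lemma of \S\ref{subsec:plane singularity}, and Lemma \ref{I_p max lem}.
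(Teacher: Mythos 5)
Your proposal is correct and takes the same route as the paper, which presents this theorem explicitly as a summary of Proposition \ref{prop: both weak gorenstein} (giving the equivalence of (2), (3), (5), (6)), the unnamed lemma of \S\ref{subsec:plane singularity} (giving (3)~$\Leftrightarrow$~(4), since (3) already contains ``$\I_\p$ is principal'' you do not actually need the lift via Lemma~\ref{I=I cor}), and Lemma~\ref{I_p max lem} together with the remark following it (giving the multiplicity-one converse). Your argument for (1)~$\Rightarrow$~(2) --- that $\h_\p$ is a one-dimensional local ring finite free over $\Lambda_{(f)}$ so regularity makes it a DVR in which the nonzero proper ideal $I_\p$ is automatically generated by a non-zero divisor --- is the intended one.
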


In particular, if $\xi_\chi$ has no multiple roots, then Conjecture \ref{conj: weakly gorenstein} is equivalent to the statement that $\h_\p$ is regular for all $\p \in \fP_\h$.

\section{Ordinary Pseudorepresentations}
\label{sec: ord C-H}

In this section, our goal is to construct and control an ordinary pseudodeformation ring. This is crucial to the proof of Theorem E, and we believe that it is the most novel part of this paper. 

This paper may be viewed as introducing new techniques in deformation theory of Galois representations to the study of Iwasawa theory. The reader interested in Iwasawa theory may not be familiar with deformation theory, so we begin with summary of this theory in \S \ref{subsec: summary of deformation theory}, which may provide context. 

\subsection{Summary of the deformation-theoretic approach}
\label{subsec: summary of deformation theory}
This is a deep and complex theory, and it is not our intention to provide an introduction to the subject in any technical way (many such introductions exist, for example \cite{mazur1989, mazur1997, bockle2013}). Instead, we give only the most simplified overview of the subject, so that the reader can see the parallels between our work and existing literature, and also see what is new to our theory.

\subsubsection{Modularity}
Deformation theory of $2$-dimensional Galois representations arises naturally in the study of the Langlands correspondence for $GL_2$. Very roughly, the Langlands correspondence establishes a bijection
\[
\{\text{Modular forms } f\} \longleftrightarrow \{\text{Galois representations } \rho: G_\Q \to \GL_2(\overline{\Q}_p)\}
\]
such that the $L$-function of $f$ matches with the $L$-function of the corresponding $\rho$. (Strictly speaking, we should add extra adjectives to both sides of the correspondence, but we ignore this here for simplicity of exposition.)

Work of Eichler, Shimura, Deligne, Serre and others established a map $f \mapsto \rho_f$ that is injective and respects $L$-functions. Galois representations in the image are called \emph{modular}. The proof of the Langlands correspondence, then, comes down to showing that all Galois representations $\rho$ (satisfying some conditions) are modular.

The strategy to do this is to break the problem into pieces depending on the reduction modulo $p$ of $\rho$. That is, any $\rho: G_\Q \to \GL(V)$ will fix a $\overline{\Z}_p$-lattice $T \subset V$, and the resulting representation $T \otimes \overline{\F}_p$ is called the \emph{residual representation} of $\rho$. By the Brauer-Nesbitt theorem, the semi-simplification of the residual representation does not depend on the choice of $T$. Then to prove modularity, one has to show that, for any fixed semi-simple representation $\bar{\rho}$ over $\overline{\F}_p$, all the representations $\rho$ whose residual semi-simplification is $\bar{\rho}$ are modular. This is where deformation theory comes in.

\subsubsection{Deformation functors} 
\label{subsub:def}

Now fix a representation $\bar{\rho}:G_{\bQ,S} \lra \GL(V_\bro) \simeq \GL_2(\bF)$, where $\bF$ is a finite extension of $\F_p$. We want to consider representations that are residually isomorphic to $\bro$, so we consider deformations of $V_\bro$ with coefficients in objects of $\hat\cC_{W(\F)}$, the category of local Noetherian $W(\bF)$-algebras $(A, \m_A)$ with residue fields $A/\m_A \cong \bF$. Then the \emph{deformation functor} of $\bro$ is
\[
\mathrm{Def}_\bro: \hat\cC_{W(\F)} \to \mathrm{Sets}; ~ A \mapsto \{(V_A, \rho_A) \text{ with an isomorphism } V_A \otimes_A \F \risom V_\bro\}/ \sim,
\]
where $(V_A,\rho_A)$ is a free $A$-module with an $A$-linear $G_{\bQ,S}$-action, and the equivalence relation $(V_A,\rho_A) \sim (V'_A,\rho'_A)$ is an isomorphism of $A[G_{\bQ,S}]$-modules preserving the isomorphism of $\F[G_{\bQ,S}]$-modules to $V_\bro$. An element of $\mathrm{Def}_\bro(A)$ is called a \emph{deformation} of $\bro$ to $A$. 

For applications to arithmetic, one often considers subfunctors of $\mathrm{Def}_\bro$ where one insists that the deformations $\rho_A$ satisfy certain additional local conditions. For example, one may consider 
\[
\mathrm{Def}_\bro^\ord(A)= \{\rho_A \in \mathrm{Def}_\bro(A) ~  | ~ \rho_A \text{ is ordinary}\}.
\]
Recall that we call $\rho_A$ ordinary if $\rho_A|_{G_{\Q_p}}$ has a $A$-rank $1$ quotient representation $\eta$ such that the Tate twist $\eta \otimes_{\Z_p} \Z_p(1)$ is unramified.

\subsubsection{The residually irreducible case} 

Deformation theory is best understood when $\bro$ is irreducible. In this case, Mazur \cite{mazur1989} has proven that $\mathrm{Def}_\bro$ is represented by a ring $R_\bro$, and $\mathrm{Def}_\bro^\ord$ is represented by a quotient $R^\ord_\bro$ of $R_\bro$.

If, in addition, $\bro$ is a residual representation of a modular representation, then there is a component of the Hida Hecke algebra $\h'$ denoted $\bT_\bro$ (in the deformation theorists' notation) such that
\[
\Spec \mathbb{T}_\bro \cong \{\text{Ordinary modular forms } f ~ | ~ \rho_f \otimes \F \simeq \bro\}.
\]
Under mild conditions, there exists a rank $2$ representation $\rho_{\bT_\bro}: G_\Q \to \GL_2(\bT_{\bro})$ coming from Hida theory with $\rho_{\bT_\bro} \in \mathrm{Def}_\bro^\ord(\bT_\bro)$. It interpolates the Galois representations on \'etale cohomology of modular curves over the limit on level as in \eqref{eq:H_limit}. This representation reflects the fact that there is a Galois representations $\rho_f$ attached to an ordinary modular form $f$. 

The fact that $R^\ord_\bro$ represents $\mathrm{Def}_\bro^\ord$ then implies that there is a ring homomorphism $\varphi: R_\bro^\ord \to \bT_\bro.$
The corresponding map on spectra is
\[
\varphi^*: \Spec \mathbb{T}_\bro  \lra \Spec R_\bro^\ord \cong \{\text{Ordinary deformations of $\bro$}\}; \quad  f \mapsto \rho_f
\]
The fact that $R_\bro^\ord \onto \bT_\bro$ is surjective reflects the fact that $f \mapsto \rho_f$ may be thought of as injective. If $R_\bro^\ord \risom \bT_\bro$, the isomorphism reflects that every ordinary deformation of $\bro$ is modular. Hence the modularity of deformations of $\bro$ is reduced to the injectivity of $R_\bro^\ord \onto \bT_\bro$, a problem that can be attacked using methods of commutative algebra.

\subsubsection{Tangent spaces and Wiles' numerical criterion}

To overview the numerical criterion accurately, we now replace $R_\bro^\ord$ and $\bT_\bro$ with their restriction to a fixed level (in the limit on levels as in \eqref{eq:MF_limits}) and weight $2$. The rings $R_\bro^\ord$, $\bT_\bro$ are both complete local $W(\F)$-algebras, and $\bT_\bro$ is a finite $W(\F)$-module. 

To analyze the map $\varphi: R_\bro^\ord \onto \bT_\bro$, one fixes a modular form $f$ with coefficients in $W(\F)$ such that $\rho_f$ deforms $\bro$. This induces a map $\pi : \bT_\bro \rsurj W(\F)$, and the map $\pi\circ \varphi : R_\bro^\ord \ra W(\F)$ corresponds to $\rho_f : G_\bQ \ra \GL_2(W(\F))$. The one studies the ideals $\wp_R := \ker (\pi \circ \varphi) \subset R_\bro^\ord$ and $\wp_\bT := \ker (\pi) \subset \bT_\bro$ and the maximal ideals $\m_R = (\wp_R,p) \subset R^\ord_\bro$ and $\m_\bT = (\wp_\bT, p) \subset \bT_\bro$. 
An initial deformation-theoretic interpretation of these ideals is that there is a canonical identification
\[
\Hom_{\F}(\m_R/(\m_R^2,p), \F) \cong \mathrm{Def}_\bro^\ord(\F[\epsilon]/(\epsilon)^2)
\]
An element $\rho \in \mathrm{Def}_\bro^\ord(\F[\epsilon]/(\epsilon)^2)$ can be written in the form $\rho=\bro+\epsilon \phi$ for a map $\phi:G_\Q \to \End_\F(V_\bro)$. The condition that $\rho$ be a homomorphism implies that $\phi$ is a cocycle, and the isomorphism class of $\rho$ is identified with the cohomology class of $\phi$. Moreover, the condition that $\rho$ is ordinary translates to a Selmer condition on $\phi$. In this way, one sees that $\m_R/(\m_R^2,p)$ is dual to a Selmer-type Galois cohomology group. 

This implies that $R_\bro^\ord$ has a presentation of the form
\[
R_\bro^\ord \simeq W(\F) \lb x_1, \dots, x_n \rb / (f_1, \dots , f_m)
\]
where $n$ is the dimension of a certain Selmer-type Galois cohomology group. One can show that $m$ is given by the dimension of a Galois $H^2$, and using global duality, one sees that, in fact, $n=m$. In other words, $R_\bro^\ord$ looks like a finite, complete intersection over $W(\bF)$, except one doesn't know that $(f_1, \dots , f_n)$ is a regular sequence, or, equivalently, that $R_\bro^\ord$ is finite over $W(\bF)$.

On the other hand, one does know a priori that $\bT_\bro$ is finite over $W(\F)$. Consequently, if $R_\bro^\ord \onto \bT_\bro$ is an isomorphism, that forces $R^\ord_\bro$ to be finite, and hence complete intersection. It is then natural to try to prove that $R_\bro^\ord$ and $\bT_\bro$ are complete intersection at the same time as proving that $R_\bro^\ord \onto \bT_\bro$ is an isomorphism. Along these lines, Wiles developed an ingenious ``numerical criterion'' for $R_\bro^\ord \onto \bT_\bro$ to be an isomorphism by comparing $\wp_R/\wp_R^2$ and $\wp_\bT/\wp_\bT^2$ \cite[Appendix]{wiles1995}. In this paper, we use a strengthening of his criterion due to Lenstra \cite{lenstra1993}. (In fact, to apply the criterion in Wiles's case, an additional tool, known as the Taylor-Wiles method, is needed. Since it is not used in this paper, we do not discuss this further (see Remark \ref{rem:no taylor-wiles}).) We remark that in the notation of \S\ref{subsec:intersect}, the numerical criterion is applied where ``$\cJ_\m$'' plays the role of $\wp_R$ and an Eisenstein ideal ``$\I_\p$'' plays the role of $\wp_\bT$. 

\subsubsection{Difficulties in the residually reducible case} 

We now turn to the case where $\bro$ is reducible, and let $\bar\eta_1$ and $\bar \eta_2$ denote the characters appearing in the Jordan-H\"older series $0 \ra \bar\eta_2 \ra \bro \ra \bar\eta_1 \ra 0$ of $\bro$. In this case, there arise problems in the approach outlined above. These problems are not just technical issues, but, as the following points illustrate, fundamental flaws that cause the whole strategy to break down. 
\begin{enumerate}[leftmargin=2em]
\item
\label{item: not representable}
If $\bro$ is reducible and semi-simple, then the deformation problem $\mathrm{Def}_\bro$ is not representable by a ring, that is, there exist no rings $R_\bro$ and $R_\bro^\ord$ with the appropriate properties. 
\item
\label{item: well-def up to ss}
 If $\bro$ is reducible but not semi-simple (and hence indecomposable), then Mazur's theory still applies such that the rings $R_\bro$ and $R_\bro^{\ord}$ exist. However, it is not clear that these are natural rings to work with because the residual representation is only well-defined (independently of the choice of lattice) up to semi-simplification. 
\item 
\label{item: not free}
 On the modular side, the Hecke algebra $\bT_\bro$ is replaced by the Eisenstein Hecke algebra $\h$ of \S \ref{sec: modular}. The natural Hecke module $H$ that parameterizes modular forms with residual Galois representation $\bro^{ss}$ is not free of rank $2$ over $\h$ in general. In fact, as described in \S \ref{sec: modular}, $H$ is free if and only if the Hecke algebra $\h$ is Gorenstein. Sharifi's conjecture predicts that $\h$ is not Gorenstein exactly when $X_\chi(1)$ is non-cyclic (c.f. ~ \S \ref{subsec: sharifi's conj}). In fact, it is known in some cases that $\h$ is not Gorenstein when $X_\chi(1)$ is non-cyclic \cite{kurihara1993}.
\item
\label{item: other extensions}
 Even if a lattice $H'$ that is free over $\h$ can be found, there may be modular forms $f$ such that $\rho_f \otimes_\h \F$ has the same semi-simplification as $\bro$, but such that $\rho_f \otimes_\h \F \not \simeq \bro$, reflecting issue \eqref{item: well-def up to ss}.
\end{enumerate}

Some of these difficulties can be worked around, as in the works of Skinner and Wiles \cite{SW1997}, \cite{SW1999}.  In \cite{SW1997}, the authors show that $R_\bro^\ord = \bT_\bro$ when $\Ext_{G_\Q}^1(\bar\eta_1,\bar \eta_2)$ is assumed to be spanned by the class of $\bro$. In the language of this paper, this assumption equates to $X_\chii = 0$, which is equivalent to $\X_\theta = 0$. 

In \cite{SW1999}, they do not assume that certain class groups are small, but instead they consider many possible $\bro$ with $\bro^{ss} \simeq \bar\eta_1 \oplus \bar\eta_2$. They prove a very general modularity result, but do not prove that any $R_\bro^\ord$ is isomorphic to $\h$. Indeed, items  \eqref{item: not representable} and \eqref{item: other extensions} suggest that there is no such isomorphism (see also \cite[\S1]{SW1997}). 

In this paper, we are interested in Sharifi's Conjecture, which deals with the natural lattice $H$; also, we investigate the Gorenstein property of the Hecke algebras $\h$ and $\fH$. For these purposes, we cannot work around these difficulties and must address them head on. Indeed, Sharifi's Conjecture suggests that the failure of $H$ to be free has great arithmetic significance, so we do not prefer a lattice $H'$ as in item \eqref{item: other extensions}. By item \eqref{item: not free}, we know that $H$ is free if and only if $\h$ is Gorenstein, so, if we want to prove Gorensteinness, we cannot assume $H$ is free. Finally, we feel that, in some sense, Sharifi's Conjecture is stating that the object $H$ is the universal object for some kind of Galois deformation problem with $\bro$ semi-simple (c.f. \S \ref{subsec: sharifi's conj}). To make sense of this, we must deal with item \eqref{item: not representable}.

\subsubsection{Pseudorepresentations} Sometimes an $A$-linear representation $\rho$ will have the property that the characteristic polynomial $\chi(\rho):G \to A[t]$ factors through $B[t] \subset A[t]$ for a proper subring $B \subset A$, even if $\rho$ itself does not have such a factorization. For some applications, a $\rho$ with this property may be used as a replacement for a truly $B$-valued representation. Wiles \cite{wiles1988} invented the notion of \emph{pseudorepresentations} (for 2-dimensional representations) to systematically exploit this property. The idea is that a pseudorepresentation is a collection of central functions that satisfy the same algebraic properties that the coefficients of the characteristic polynomial of a representation do. In particular, the coefficients of $\chi(\rho)$ as above, thought of as functions $G \to B$, will constitute a pseudorepresentation. In this way, one can see that the module $H$, defined in \S\ref{sec: modular}, gives a pseudorepresentation valued in $\h$, even if $H$ is not free over $\h$.

``Pseudorepresentation'' may seem, at first, to be an ad hoc notion. However, as their theory been developed and simplified (by Taylor \cite{taylor1991}, Bella\"iche-Chenevier \cite{BC2009}, Chenevier \cite{chen2014}, and others), it has become clear that pseudorepresentations are a central object in the theory of Galois representations and, in particular, deformation theory.  

There are two properties that explain why pseudorepresentations are crucial for understanding deformations of reducible representations. Let $\bro$ be as in the previous section, and let  $\Db: G_\Q \to \F$ be the associated pseudorepresentation. The first property is that the pseudodeformation functor $\mathrm{PsDef}_\Db$ (defined analogously to $\mathrm{Def}_\bro$ above), is always representable by a ring $R_\Db$ (see Theorem \ref{thm:chen-PsR}, in contrast to item \eqref{item: not representable} above). The second property has to to with the natural transformation $ \psi: \mathrm{Def}_\bro \to \mathrm{PsDef}_\Db$ defined by ``take the associated pseudorepresentation.'' The functor $\mathrm{Def}_\bro$ is a local ring in an ambient stack of all Galois representations with residual pseudorepresentation $\Db$, and $\psi$ identifies $\mathrm{PsDef}_\Db$ as the coarse moduli scheme of this stack \cite[Thm.\ A]{WE2017}. 

Roughly, these properties are saying that $\mathrm{PsDef}_\Db$ is the scheme that is ``closest to representing" the stacky moduli of all Galois representations with residual pseudorepresentation $\Db$. The $R=\bT$ approach relates the Hecke ring $\bT$ to moduli of Galois representations, and if this moduli problem is not represented by a ring, it is natural to instead consider the ``closest" ring, i.e.\ $R_\Db$. Another reason that it is natural to expect that $\bT$ can be related to a pseudodeformation ring is that automorphic representations are often characterized by their Satake parameters at unramified places, and the Satake data corresponds to the characteristic polynomial of the Frobenius action on the associated Galois representation. 

\subsubsection{Our strategy}
Using the fact that the module $H$ defined in \S \ref{sec: modular} gives rise to a pseudorepresentation, we obtain a map $R_\Db \to \fH$, where $R_\Db$ is the pseudodeformation ring of the residual pseudorepresentation $\Db$ (more precisely, we produce a map $R_\Db \to \h$ using $H$ and then extend it to $R_\Db \to \fH$). We can show the map is surjective using the same method as above, but it is far from injective. Indeed, in order for it to have any hope of being injective, we have to first impose the ordinary condition. The main goal of this section (indeed, the main new ingredient in this paper) is to develop a adequate notion of ``ordinary pseudorepresentation,'' and show that it is represented by a quotient $R_\Db^\ord$ of $R_\Db$. For an introduction to this problem, see \S \ref{subsec:OP}.

\subsection{Pseudorepresentations}
\label{subsec:psrep}
We introduce the notion of pseudorepresentation that we use, and recall some of the basic properties. We follow \cite{chen2014} and \cite{WE2017}.

\begin{defn}[{\cite{roby1, roby2}, \cite[\S1.5]{chen2014}}]
\label{defn:psrep}
Let $A$ be a commutative ring and let $E,S$ be associative $A$-algebras. A \emph{polynomial law} over $A$ from $E$ to $S$, written as $D : E \ra S$, is the data of a function
\begin{equation}
\label{eq:psrep-map}
D_B: E \otimes_A B \lra S \otimes_A B
\end{equation}
for every commutative $A$-algebra $B$, such that $D_B$ is functorial in $A$-algebras. 

We say $D$ is \emph{multiplicative} if, for every commutative $A$-algebra $B$,  $D_B(1)=1$ and $D_B(xy)=D_B(x)D_B(y)$ for all $x,y \in E \otimes_A B$.

We say $D$ has  \emph{degree} $d$ if, for every commutative $A$-algebra $B$, $D_B$ is homogeneous of degree $d$ in $B$, i.e.
\[
\forall \ b \in B, \forall\ x \in E \otimes_A B, \quad D_B(bx) = b^d D_B(x).
\]
\end{defn} 

\begin{defn}
A \emph{pseudorepresentation of dimension $d$} of $E$ over $A$ a degree $d$ multiplicative polynomial law from $E$ to $A$.
\end{defn}

\begin{exmps}
\label{exmps: pseudo basic exmps}
These important examples illustrate the idea of pseudorepresentations. We also use them to introduce some notation and basic facts.
\begin{enumerate}[leftmargin=2em]
\item The fundamental example is that, for any ring homomorphism $\rho: E \to M_d(A)$, the functions $E \otimes_A B \to B$ given by $\det \circ (\rho \otimes_A B)$ constitute a pseudorepresentation. We denote this pseudorepresentation by $\psi(\rho)$ and call it the \emph{associated} or \emph{induced} pseudorepresentation of $\rho$. This example explains the notation ($D$ is for ``determinant"). 
\item This is a variant of the first example. A representation $\rho: G \to \GL_d(A)$ of a group $G$ gives rise to a ring homomorphism $A[G] \to M_d(A)$ and we also call the associated pseudorepresentation $\psi(\rho)$. In the case of a group algebra $E=A[G]$ we denote a pseudorepresentation $D: A[G] \to A$ by $D:G \to A$.
\item If $D: E \to S$ is a polynomial law over $A$, and $E' \ra E$ is an $A$-algebra homomorphism, then there is a polynomial law $D':E' \to S$ over $A$, where $D_B'$ is the composite of $E'\otimes_A B \to E \otimes_A B$ with $D_B$. If $D$ is multiplicative or degree $d$, then so is $D'$.
\item If the map $A \to S$ is surjective, then there is only one degree $d$ multiplicative polynomial law $D: A \to S$ over $A$, namely the $d$-power map $D_B(x)=x^d$.
\item If $B$ is a commutative $A$-algebra, a degree $d$ multiplicative polynomial law $D: E \to B$ over $A$ induces a canonical $d$ dimensional pseudorepresentation of $E\otimes_A B$ over $B$ (see \cite[Rem.\ 1.4]{chen2014}). We denote this pseudorepresentation by $D \otimes_A B: E \otimes_A B \to B$. 
\item If the structure homomorphisms $A \to E$ and $A \to S$ both factor through a commutative ring homomorphism $A \to A'$ making $E$ and $S$ into $A'$-algebras, then a polynomial law $D: E \to S$ over $A$ induces a canonical polynomial law $E \to S$ over $A'$ using the same functions $D_B$. We will also denote this polynomial law over $A'$ by $D:E \to S$.
\end{enumerate}
\end{exmps}

We define $\PsR^d_E(B)$ to be the set of $d$-dimensional pseudorepresentations $D: E \otimes_A B \ra B$. It is evident that $\PsR^d_E$ is a functor on $A$-algebras. 

When $E=A[G]$, the $A$-algebra structure map $A \to E$ is injective. In fact, this is true whenever there is a pseudorepresentation $D: E \to A$, as the next lemma shows.
\begin{lem}
\label{lem:str-inj}
Given a $d$-dimensional pseudorepresentation $D : E \ra A$, the $A$-algebra structure map $A \ra E$ is necessarily injective.
\end{lem}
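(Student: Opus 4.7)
My approach is to restrict the pseudorepresentation $D$ along the structure map $i : A \to E$ and apply Example~\ref{exmps: pseudo basic exmps}(4). By Example~\ref{exmps: pseudo basic exmps}(3), the composition $D \circ i : A \to A$ is a multiplicative polynomial law over $A$ of degree $d$. Since the codomain structure map $A \to A$ is the identity, hence surjective, Example~\ref{exmps: pseudo basic exmps}(4) forces $D \circ i$ to coincide with the $d$-th power law: concretely, $D_B(b \cdot 1_E) = b^d$ for every commutative $A$-algebra $B$ and every $b \in B$.

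Next, suppose $a \in A$ lies in the kernel of $i$, so $a \cdot 1_E = 0$ in $E$. This relation persists after base change to any $A$-algebra $B$, so by degree-$d$ homogeneity $D_B(a \cdot 1_E) = D_B(0) = 0$, and combining with the previous formula yields $a^d = 0$ in $A$. A sharper variant over $B = A[t]$, using the identity $(1+at) \cdot 1_E = 1_E + at \cdot 1_E = 1_E$ in $E \otimes_A A[t]$, gives $(1+at)^d = D_{A[t]}(1_E) = 1$ in $A[t]$, which forces the binomial torsion relations $\binom{d}{k}a^k = 0$ for $1 \le k \le d$.

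The hard step is to promote these torsion relations on $a$ to the conclusion $a = 0$ rather than merely nilpotence. I would attempt to finish this either by extracting finer polarization identities---evaluating $D$ on linear combinations $\sum_i t_i e_i$ that involve $a \cdot 1_E$ together with other elements of $E$ over $A[t_1, \ldots, t_n]$, and using multiplicativity to pull out explicit factors of $a$---or by invoking a Cayley--Hamilton-type identity for pseudorepresentations in the sense of \cite{chen2014}, in order to isolate $a$ itself within the polynomial law data. The main obstacle is that the degree-$d$ homogeneity by itself only detects $a$ up to $d$-th power torsion, so the finishing argument must genuinely use the full functorial (natural) structure of $D$ to see $a$ linearly.
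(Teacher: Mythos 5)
Your restriction of $D$ along $i : A \ra E$ and the resulting $d$-power identity from Example~\ref{exmps: pseudo basic exmps}(4) mirror the first step of the paper's proof, which restricts $D$ to the image $A/I \subset E$ and invokes the representing ring $\Gamma^d_A(A/I)$. The binomial relations $\binom{d}{k}a^k = 0$ you extract are the correct content of that restriction, and in fact your argument is already complete in every situation where the paper invokes this lemma: the $k=1$ relation gives $d\,a = 0$, and since the coefficient rings in this paper lie in $\hat\cC_{W(\F)}$ or $\cC_F$ with $p \ge 5$ while $d = 2$, the integer $d$ is a unit in $A$, forcing $a = 0$. You do not need the polarization or Cayley--Hamilton detours you contemplate, and neither would help here: once $a$ lies in the kernel, $a \cdot 1_E$ is zero in $E \otimes_A B$ for every $B$, so $a$ never appears linearly in any evaluation of $D$; and $D$ is not assumed Cayley--Hamilton.

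Your instinct that the binomial relations alone are insufficient in unrestricted generality is in fact correct for a concrete reason: the lemma as stated is false over arbitrary commutative rings. With $A = \bZ/4\bZ$, $E = \bZ/2\bZ$, $d = 2$, the rule $D_B : B/2B \ra B$, $\bar x \mapsto x^2$ is a well-defined $2$-dimensional pseudorepresentation (well-defined because $4B = 0$, and visibly multiplicative, degree-$2$ homogeneous, and functorial), yet the structure map $\bZ/4\bZ \ra \bZ/2\bZ$ has kernel $(2)$. The paper's appeal to $\Gamma^d_A(A/I) \cong A/I$ is subject to the same caveat: for a principal ideal $I = (a_0)$ one computes $\Gamma^d_A(A/I) \cong A/\bigl(\binom{d}{k}a_0^k : 1 \le k \le d\bigr)$, which recovers $A/I$ exactly when the $k=1$ term $d\,a_0$ already generates $I$, for instance when $d \in A^\times$. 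Both proofs thus implicitly use that $d$ is a unit; your $d$-power computation makes that hypothesis visible, which is a genuine virtue, whereas the paper's argument buries it inside the cited structure of $\Gamma^d_A$. The correct finishing move, and the one you were missing, is simply to observe $d \in A^\times$ and divide.
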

\begin{proof}
Let $I=\ker(A \to E)$ and identify the image of $A$ in $E$ with $A/I$. The restriction of $D$ to $A/I$ is a $d$-dimensional pseudorepresentation $D': A/I \ra A$. By \cite[Prop.\ 1.6]{chen2014}, this corresponds to an $A$-algebra map $\Gamma^d_A(A/I) \ra A$. Here, when $S$ is an $A$-algebra, $\Gamma^d_A(S)$ is the $A$-algebra defined in \cite{roby2} (cf.\ the discussion in \cite[\S1.1]{chen2014}). As an $A$-module, it is the $d$th graded piece of the free divided power algebra on the $A$-module $S$, and its multiplication law is determined in \cite{roby2}. One can calculate that $\Gamma^d_A(A/I)$ is isomorphic as an $A$-algebra to $A/I$, cf.\ \cite[Ex.\ 2.5]{chen2014}. Then $D'$ induces an $A$-algebra homomorphism $A/I \to A$, which implies $I = 0$ as desired. 
\end{proof}

\subsection{Characteristic polynomials and kernels}
Let $D: E \to S$ of degree $d$ be a multiplicative polynomial law of degree $d$ over $A$. The \emph{characteristic polynomial} $\chi^D(r,t) \in S[t]$ of $D$ is given by $\chi^D(r,t)=D_{A[t]}(t-r)$, 
and is written
\begin{equation}
\label{eq:chi_poly}
\chi^D(r,t) = t^d - \Lambda_{1}^D(r)t^{d-1} + \dotsm + (-1)^d \Lambda_d^D(r) = t^d + \sum_{i=1}^d (-1)^i\Lambda_i^D(r) t^{d-i}.
\end{equation}
Then each $\Lambda_i^D$ is a polynomial law of degree $i$. We note that $\Lambda_d^D=D$. We call $\Lambda_1^D: E \ra S$ the \emph{trace} of $D$, denote it by $\Tr_D$, and note that it is $A$-linear (see \cite[Example 1.2 (i)]{chen2014}).

The polynomials $\chi^D(r,t)$ for $r \in E$ uniquely characterize $D$ \cite[Lem.\ 1.12(ii)]{chen2014}. Consequently, a pseudorepresentation $D: E \ra A$ may be thought of as an ensemble of polynomials with coefficients in $A$, one for each element of $E$, satisfying compatibility properties as if they arose as characteristic polynomials of a representation of $E$. In fact, the original definition of pseudorepresentation by Wiles \cite{wiles1988} and the refined version of Taylor \cite{taylor1991} are closer to this formulation. 

It is well-known that the characteristic polynomials of a field-valued representation remember precisely the Jordan-H\"older factors of the representation. The converse is true in the following way.

\begin{thm}[{\cite[Thm.\ A]{chen2014}, \cite[Cor.\ 2.1.10]{WE2017}}]
\label{thm:PsR_ss}
Let $k$ be a perfect field and let $E$ be an associative $k$-algebra. Given a $d$-dimensional pseudorepresentation $D: E \ra k$, there exists a semi-simple representation $\rho^{ss}_D: E \otimes_k k' \ra M_d(k')$, unique up to isomorphism, such that $\psi(\rho^{ss}_D) = D \otimes_k k'$ and $k'/k$ is an extension of degree at most $d$. In particular, when $k$ is algebraically closed, we have a natural bijective correspondence between isomorphism classes of $d$-dimensional semi-simple representations with coefficients in $k$ and $d$-dimensional pseudorepresentations valued in $k$.
\end{thm}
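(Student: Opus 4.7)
The plan is to factor $D$ through a finite-dimensional $k$-algebra and then apply Wedderburn's structure theorem. First I would define $\ker D \subset E$ to be the set of $r \in E$ with $\chi^D(xr,t) = t^d$ for every $x \in E$, equivalently $\Lambda_i^D(xr) = 0$ for all $x \in E$ and all $i \ge 1$. Using the multilinear polarizations of the $\Lambda_i^D$ and the polynomial-law axioms (in positive characteristic, working inside Roby's divided-power algebra $\Gamma^d_k(E)$), one checks that $\ker D$ is a two-sided ideal, so $D$ descends to a pseudorepresentation $\bar D : \bar E \to k$ where $\bar E := E/\ker D$, and by construction $\ker \bar D = 0$.

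The technical core is to show that $\bar E$ is finite-dimensional over $k$ with $\dim_k \bar E \le d^2$. Extend scalars to the algebraic closure $\bar k$: the symmetric $\bar k$-bilinear form $(x,y) \mapsto \Tr_D(xy)$ on $\bar E \otimes_k \bar k$ has trivial radical, because any $y$ in its radical satisfies $\Tr_D(xy) = 0$ for all $x$, and from the Newton-type identities (or their divided-power analogues) this forces $y \in \ker \bar D \otimes \bar k = 0$. Non-degeneracy of the pairing bounds $\dim_{\bar k}(\bar E \otimes \bar k) \le d^2$. The Jacobson radical $J$ of $\bar E \otimes \bar k$ consists of nilpotent elements on which every $\Lambda_i^D$ vanishes, hence $J \subset \ker(\bar D \otimes \bar k) = 0$, so $\bar E \otimes \bar k$ is already semisimple, and Artin-Wedderburn gives $\bar E \otimes \bar k \simeq \prod_{i=1}^r M_{n_i}(\bar k)$.

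On each factor $M_{n_i}(\bar k)$, any pseudorepresentation valued in $\bar k$ must be a power of the determinant (direct computation on matrix units). Matching the restrictions of $\bar D \otimes \bar k$ yields nonnegative integers $m_i$ with $\sum_i m_i n_i = d$, and the representation
\[
\rho^{ss}_{D,\bar k} := \bigoplus_{i=1}^r (\mathrm{std}_{n_i})^{\oplus m_i},
\]
where $\mathrm{std}_{n_i}$ is the standard $n_i$-dimensional module for the $i$th Wedderburn block, satisfies $\psi(\rho^{ss}_{D,\bar k}) = D \otimes_k \bar k$. A Galois-descent argument, combined with the fact that each irreducible constituent is defined over a finite separable extension of $k$ whose degree is bounded by its dimension (using perfectness of $k$), produces a single extension $k'/k$ of degree at most $d$ over which the whole representation descends.

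Uniqueness follows from the classical Brauer-Nesbitt theorem: two semisimple $E \otimes_k k'$-representations with identical characteristic polynomials on every element have the same Jordan-H\"older constituents, and semisimplicity then forces them to be isomorphic. The principal obstacle is proving finite-dimensionality of $\bar E$ in arbitrary characteristic; in characteristic zero the trace-pairing argument is classical, but for general perfect $k$ one must replace symmetric powers by divided powers and work carefully with $\Gamma_k^d(E)$, which is the technical heart of the arguments in \cite{chen2014} and \cite{WE2015}.
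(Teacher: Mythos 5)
The paper does not prove Theorem \ref{thm:PsR_ss}; it simply cites \cite{chen2014} and \cite{WE2015}. Your overall skeleton is nonetheless the right one (pass to $\bar E = E/\ker D$, show $\bar E \otimes_k \bar k$ is finite-dimensional and semisimple, apply Artin--Wedderburn, identify $D$ on each block, and descend), but the central technical step has a genuine gap, and it is exactly the step that Chenevier's theory was invented to handle.

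You claim the trace pairing $(x,y) \mapsto \Tr_D(xy)$ on $\bar E \otimes_k \bar k$ has trivial radical, deducing this ``from the Newton-type identities.'' Newton's identities recover $\Lambda_i^D$ from traces only after dividing by $i!$, so they are unavailable when $\mathrm{char}\,k \le d$, and the claim is simply false there. Concretely take $k = \bF_2$, $E = k$, $d = 2$, $D(x) = x^2$ (this is $\psi$ of the scalar representation $x \mapsto x \cdot I_2$). Then $\chi^D(x,t) = t^2 + x^2$, so $\Tr_D \equiv 0$; yet $\Lambda_2^D(x) = x^2$ vanishes only at $x = 0$, so $\ker D = 0$ and $\bar E = k$. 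The trace form is identically zero on $\bar E \neq 0$, so its radical is everything, and no dimension bound follows. Your subsequent semisimplicity argument ($J$ nil, hence $J \subset \ker\bar D$) also presupposes finite-dimensionality of $\bar E \otimes \bar k$ in order to know that $J$ is nilpotent, so it falls with the pairing. The gesture at ``divided-power analogues'' does not repair this, because for $i \ge 2$ the maps $\Lambda_i^D$ are not linear, so they do not furnish a bilinear pairing whose radical one can take. What you have reproduced is essentially the Taylor--Rouquier argument for trace pseudorepresentations, which is correct when $\mathrm{char}\,k = 0$ or $\mathrm{char}\,k > d$ but not in general; Chenevier's proof of finite-dimensionality and semisimplicity of $\bar E$ proceeds by a genuinely different route through the structure of $\Gamma^d_k(E)$, and a complete proof must substitute that argument for the trace-pairing one.
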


Following Chenevier, we define the \emph{kernel} $\ker(D)$ of a multiplicative polynomial law $D: E \to S$ as follows: 
\begin{equation}
\label{eq:ker_defn}
\ker(D) = \{ x \in E \mid \forall B, \Lambda_{i,B}^D(xy)=0 \text{ for all }y \in E \otimes_A B, i \ge 1\}
\end{equation}
where $B$ varies over all commutative $A$-algebras. The name kernel is justified by the following lemma.
\begin{lem}[{\cite[Lem.\ 1.19]{chen2014}}]
\label{lem: ker of pseudo}
Let $D:E \to S$ be a multiplicative polynomial law of degree $d$. Then $\ker(D)$ is a two-sided ideal of $E$ that is maximal among two-sided ideals $J$ of $E$ such that there is a multiplicative polynomial law $\tilde{D}: E/J \to S$ of degree $d$ with $D = \tilde{D} \circ \pi$, where $\pi: E \to E/J$ is the quotient map.
\end{lem}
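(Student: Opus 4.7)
The plan is to prove three things: that $\ker(D)$ is closed under addition and under left and right multiplication by $E$; that $D$ descends to a multiplicative polynomial law $\tilde D : E/\ker(D) \to S$ of degree $d$; and that any two-sided ideal $J \subseteq E$ with $D = \tilde D' \circ \pi_J$ for some multiplicative polynomial law $\tilde D': E/J \to S$ of degree $d$ is contained in $\ker(D)$.

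First I would handle the ideal property. Closure under right multiplication is formal: if $x \in \ker(D)$ and $a \in E$, then for every $y \in E \otimes_A B$ one has $\Lambda^D_{i,B}((xa)y) = \Lambda^D_{i,B}(x(ay)) = 0$, because $ay \in E \otimes_A B$. Closure under left multiplication uses the cyclic identity $\Lambda^D_i(uv) = \Lambda^D_i(vu)$, which is a standard property of multiplicative polynomial laws, obtained by comparing the characteristic polynomials $\chi^D(uv,t)$ and $\chi^D(vu,t)$ (compare \cite[Lem.\ 1.12]{chen2014}). Applying cyclicity gives $\Lambda^D_i((ax)y) = \Lambda^D_i(a(xy)) = \Lambda^D_i((xy)a) = \Lambda^D_i(x(ya)) = 0$.

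For additive closure and for descent of $D$, I would use a polarization argument. Given $x_1, x_2 \in \ker(D)$ and $y \in E \otimes_A B$, the coefficient of each monomial $s^j t^{i-j}$ in $\Lambda^D_{i,B[s,t]}((sx_1 + tx_2)y) \in B[s,t]$ is a universal $A$-polynomial expression in values of the $\Lambda^D_k$'s on words of the form $x_{\varepsilon_1} y x_{\varepsilon_2} y \cdots$ containing at least one factor $x_{\varepsilon_j}$; cyclic invariance rearranges each such word so that some $x_{\varepsilon_j}$ sits on the left, after which the kernel condition forces the value to vanish. Specialising $s = t = 1$ gives $\Lambda^D_{i,B}((x_1+x_2)y) = 0$ for all $i \ge 1$. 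A parallel polarization, applied now to $\Lambda^D_{i,B[s]}(y - sx)$ expanded as a polynomial in $s$ of degree at most $d$, shows that for $x \in \ker(D)$ one has $\Lambda^D_{i,B}(y - x) = \Lambda^D_{i,B}(y)$ for every $y$ and every $i$; equivalently, $\chi^D(y-x,t) = \chi^D(y,t)$. This last identity furnishes a well-defined multiplicative polynomial law $\tilde D : E/\ker(D) \to S$ of degree $d$ with $D = \tilde D \circ \pi$, its functoriality in $B$ and multiplicativity being inherited from $D$.

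The maximality claim is then direct: if $J$ is a two-sided ideal with $D = \tilde D' \circ \pi_J$, then for $x \in J$ and $y \in E \otimes_A B$ one has $xy \in J \otimes_A B$, so $\pi_{J,B}(xy) = 0$ and therefore $\Lambda^D_{i,B}(xy) = \Lambda^{\tilde D'}_{i,B}(0) = 0$ for all $i \ge 1$, giving $x \in \ker(D)$. The main obstacle is making the polarization arguments of the previous paragraph fully rigorous, i.e.\ proving that for a multiplicative polynomial law of degree $d$, expanding $D_{B[s,t]}(sa + tb)$ in $s^j t^{d-j}$ produces coefficients that are universal $A$-polynomial expressions in the $\Lambda^D_i$'s evaluated on words in $a$ and $b$. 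This pseudorepresentation analogue of polarization can be handled cleanly via the equivalence between degree $d$ polynomial laws out of $E$ and $A$-algebra homomorphisms out of the divided-power algebra $\Gamma^d_A(E)$ used in the proof of Lemma \ref{lem:str-inj}, since on $\Gamma^d_A(E)$ the cyclic and linearization identities become formal manipulations of divided powers.
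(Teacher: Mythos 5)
The paper does not prove this lemma; it cites Chenevier \cite[Lem.\ 1.19]{chen2014}, so there is no in-paper argument to compare against. Your strategy — closure under left/right multiplication, closure under addition, descent of $D$, and maximality — is the natural one and matches what Chenevier does. The right-multiplication closure, the left-multiplication closure via the cyclic identity $\Lambda^D_i(uv) = \Lambda^D_i(vu)$ (which one can prove, e.g., by base-changing to $B[s]$, using the algebraic identity $(1+(a+s)b)(a+s) = (a+s)(1+b(a+s))$, applying $D$, and cancelling the non-zero-divisor $D(a+s)$), and the maximality argument are all correct as written.

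The substantive content is the additivity and descent steps, which you base on the claim that the coefficients of $s^j t^{i-j}$ in $\Lambda^D_{i,B[s,t]}(s\alpha + t\beta)$ are universal $A$-polynomial expressions in the $\Lambda^D_k$ evaluated on noncommutative words in $\alpha,\beta$. This is essentially Amitsur's formula for multiplicative polynomial laws, and you assert rather than prove it; you correctly flag it as the main obstacle and correctly point to the $\Gamma^d_A(E)$ perspective (Roby's correspondence between degree-$d$ polynomial laws and $A$-algebra maps out of $\Gamma^d_A(E)$) as the clean way to establish it. As a proof sketch this is sound, but the polarization identity is a genuine piece of work — it is not a formality, and a complete proof would need to carry it out (or cite it precisely). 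Note also a small simplification available at that point: once one has the equivalent characterization that $x \in \ker(D)$ iff $\chi^D(z + xy,t) = \chi^D(z,t)$ for all $B$ and all $y,z \in E\otimes_A B$ (which is what the polarization buys you), additivity follows immediately by writing $\chi^D(z + (x_1+x_2)y,t) = \chi^D((z + x_2 y) + x_1 y, t) = \chi^D(z+x_2y,t) = \chi^D(z,t)$, with no further polarization needed; this is cleaner than re-polarizing for the sum.
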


\subsection{Continuous pseudorepresentations and deformations}

Chenevier also discusses continuous pseudorepresentations of profinite groups $G$; Theorem \ref{thm:PsR_ss} also holds in this topological setting \cite[\S2.30]{chen2014}, giving us an understanding of field-valued pseudorepresentations. We will concern ourselves with continuous pseudorepresentations valued in coefficient rings in $\hat\cC_{W(\F)}$ and $\cC_F$, which are defined as follows. Given a finite field $\F$, we let $\hat\cC_{W(\F)}$ be the category of complete local Noetherian $W(\bF)$-algebras $(A, \m_A)$ with residue field $A/\m_A \cong \bF$. For a finite extension $F/\bQ_p$, we let $\cC_F$ denote the category of local Artinian $F$-algebras with residue field $F$. A pseudorepresentation $D: G \ra A$ is \emph{continuous} when each of the characteristic polynomial coefficients $\Lambda^D_{i,A} : A[G] \ra A$ is continuous, cf.\ \cite[\S2.30]{chen2014}. The term ``pseudorepresentation'' will be used to refer to continuous pseudorepresentations without further comment. 

We now discuss deformation theory, in analogy with the case of representations discussed in \S\ref{subsub:def}.

Fix a pseudorepresentation $\Db: G \ra \bF$ valued in a finite field $\bF$. Its deformation functor $\mathrm{PsDef}_{\Db} : \hat\cC_{W(\F)} \ra \mathrm{Sets}$ is 
\begin{equation}
\label{eq:PsR_functor}
A \mapsto \{\text{continuous } D_A : A[G] \ra A \text{ such that } D_A \otimes_A \bF \simeq \Db\}.
\end{equation}

Likewise, given a finite extension $F/\bQ_p$ and a continuous pseudorepresentation $D: G \ra F$, we define the deformation functor $\mathrm{PsDef}_D : \cC_F \ra \mathrm{Sets}$ just as in \eqref{eq:PsR_functor}.

\begin{defn}
Elements $D_A$ of the set $\mathrm{PsDef}_{\Db}(A)$ are called \emph{pseudodeformations} of $\Db$, $\Db$ is called the \emph{residual pseudorepresentation} of $D_A$, and $\mathrm{PsDef}_{\Db}$ is the \emph{pseudodeformation functor}. 
\end{defn}

We say that $G$ satisfies the \emph{$\Phi_p$-finiteness condition} (see \cite[\S1.1]{mazur1989}) if the maximal pro-$p$ quotient of any finite index subgroup of $G$ is topologically finitely generated. For example the groups $G = G_{\bQ,S}$ and $G = G_{\bQ_p}$ satisfy this condition. 

\begin{thm}[{\cite[Cor.\ 3.14 and \S4.1]{chen2014}}]
\label{thm:chen-PsR} 
Assume that $G$ satisfies the $\Phi_p$-finiteness condition. 
\begin{enumerate}[leftmargin=2em]
\item Let $\Db : G \ra \bF$ be a pseudorepresentation. Then the functor $\mathrm{PsDef}_\Db$ is representable by a complete Noetherian local $W(\bF)$-algebra $(R_\Db, \mDb)$. 
\item Let $F/\bQ_p$ be a finite extension and let $D : G \ra F$ be a continuous pseudorepresentation. Then the functor $\mathrm{PsDef}_D$ is pro-represented by a complete Noetherian local $F$-algebra $R_D$. 
\item There exists a factorization of $D$ through the ring of integers $\cO_F \subset F$. Denote the residual representation of this factorization by $\Db$, so that there is a map $R_\Db \ra O_F$. Writing $\m \subset R_\Db[1/p]$ for the maximal ideal arising as the kernel of $R_\Db[1/p] \ra F$, $k(\m)$ for the residue field of $\m$, and $(-)^\wedge_\m$ for $\m$-adic completion, there is then a canonical isomorphism 
\begin{equation}
\label{eq:residual_to_0}
R_D \lrisom R_\Db[1/p]^\wedge_\m \otimes_{k(\m)} F. 
\end{equation}
\end{enumerate}
\end{thm}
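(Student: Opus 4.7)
The plan is to verify Schlessinger's criteria for $\mathrm{PsDef}_{\Db}$ to deduce (1), and then to obtain (2) and (3) by a generic-fibre argument. The first step is to observe that the Schlessinger Mayer--Vietoris--type conditions follow almost formally from Definition~\ref{defn:psrep}: a pseudorepresentation is a coherent system $\{D_B\}$ of functions indexed by commutative base-change algebras, so given pseudodeformations $D_1, D_2$ of $\Db$ to $A_1, A_2 \in \hat\cC_{W(\F)}$ with matching image in a common quotient $A$, their componentwise union defines a pseudorepresentation valued in the fibre product $A_1 \times_A A_2$. Multiplicativity and the degree condition pass termwise, because each $\Lambda_i^D$ is a natural, polynomial operation; in fact the natural map $\mathrm{PsDef}_{\Db}(A_1 \times_A A_2) \to \mathrm{PsDef}_{\Db}(A_1) \times_{\mathrm{PsDef}_{\Db}(A)} \mathrm{PsDef}_{\Db}(A_2)$ is a bijection, which covers all of Schlessinger's axioms except finite-dimensionality. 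So the real crux of (1) is to show that the tangent space $t := \mathrm{PsDef}_{\Db}(\F[\varepsilon])$ has finite $\F$-dimension.

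To control $t$, I would invoke Theorem~\ref{thm:PsR_ss} to enlarge $\F$ so that $\Db = \psi(\rho^{ss}_{\Db})$ for a semisimple representation $\rho^{ss}_{\Db}: G \ra \GL_d(\F)$. A first-order pseudodeformation is determined by its characteristic polynomial coefficients $\Lambda_i^D: G \ra \F[\varepsilon]$, each lifting the corresponding coefficient of $\Db$. Writing $\Lambda_i^D = \Lambda_i^{\Db} + \varepsilon\, \lambda_i$, the perturbations $\lambda_i$ are continuous central functions $G \ra \F$ constrained by the linearizations of the multiplicativity and Amitsur/Newton-type relations satisfied by the $\Lambda_i^{\Db}$. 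These constraints cut out $t$ as a closed subspace of a space of continuous class functions $G \ra \F$; under the $\Phi_p$-finiteness hypothesis such a space is finite-dimensional. Alternatively, one equips the Cayley--Hamilton quotient $\F[G]/\ker(\Db)$ with a generalized matrix algebra structure in the sense of \cite[\S1]{BC2009} and identifies $t$ with a Galois $H^1$ whose coefficients are a finite-dimensional subquotient of $\mathrm{Ad}(\rho^{ss}_{\Db})$, finite under $\Phi_p$ by Mazur's standard argument. Combined with the Schlessinger conditions this yields a representing ring $(R_{\Db}, \mDb) \in \hat\cC_{W(\F)}$.

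Part (2) is proved by the same line of reasoning applied inside $\cC_F$, with $D$ playing the role of residual datum; pro-representability in the sense of Grothendieck delivers a complete Noetherian local $F$-algebra $R_D$. For (3), the essential input is that a continuous pseudorepresentation $D: G \ra F$ of a profinite group has image inside a compact subring of $F$ (since each $\Lambda_i^D$ is continuous with compact source), and hence factors as $D: G \ra \cO_F \ra F$; reducing modulo the maximal ideal of $\cO_F$ yields the required $\Db$, and the resulting classifying map $R_{\Db} \ra \cO_F$ identifies $\ker(R_{\Db}[1/p] \onto F)$ with the maximal ideal $\m$. To verify that $R_{\Db}[1/p]^\wedge_\m \otimes_{k(\m)} F$ represents $\mathrm{PsDef}_D$, I would show that any pseudodeformation $D': G \ra A' \in \cC_F$ factors through the complete local $\cO_F$-subalgebra $\cO_{A'} \subset A'$ consisting of those elements whose residue in $F$ lies in $\cO_F$ (again by compactness of the image), producing a map $R_{\Db} \ra \cO_{A'}$, and then passing to the generic fibre and the $\m$-adic completion to obtain a map out of $R_{\Db}[1/p]^\wedge_\m \otimes_{k(\m)} F$; the reverse direction is tautological.

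The main obstacle, I expect, is the finite-dimensionality of $t$. Because pseudorepresentations are defined as abstract polynomial laws rather than as matrix-valued representations, the passage to a concrete Galois-cohomological description of first-order deformations (which is what makes the $\Phi_p$-finiteness hypothesis usable) is non-obvious and requires the Cayley--Hamilton/generalized matrix algebra machinery; this is the technical heart of the argument in \cite[\S3]{chen2014}.
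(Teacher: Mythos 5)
The paper does not prove this statement; it cites Chenevier \cite[Cor.\ 3.14 and \S4.1]{chen2014}. Measured against that reference, your overall outline---Schlessinger's criteria, with the observation that pseudorepresentations, as polynomial laws, respect fibre products; reduction to finite-dimensionality of the tangent space; and a generic-fibre argument for parts (2) and (3)---is broadly consistent with the actual proof. The fibre-product observation in your first paragraph is correct: the global moduli functor of $d$-dimensional pseudorepresentations of $G$ is representable by an affine scheme, so the deformation functor carries fibre products in $\hat\cC_{W(\F)}$ to fibre products of sets, which delivers Schlessinger's axioms apart from finiteness.

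The genuine gap is the finite-dimensionality of $t = \mathrm{PsDef}_\Db(\F[\varepsilon])$, and one of your two arguments for it is wrong as stated. The sentence ``These constraints cut out $t$ as a closed subspace of a space of continuous class functions $G \to \F$; under the $\Phi_p$-finiteness hypothesis such a space is finite-dimensional'' cannot refer to the ambient space: for $G = \bZ_p$ (which satisfies $\Phi_p$), the continuous $\F$-valued functions on $G$ form an infinite-dimensional vector space, and since $G$ is abelian they are all class functions. If ``such a space'' means $t$ itself, the sentence is merely asserting what needs proving, with no indication of how $\Phi_p$ enters. Your alternative route through $\Ad(\rho^{ss}_\Db)$-valued $H^1$ points in the right direction, but for pseudorepresentations the tangent space is not literally such a cohomology group; making that identification precise---or, as Chenevier does, proving directly that the universal Cayley--Hamilton algebra $E_\Db$ is a finite $R_\Db$-module (cf.\ Proposition~\ref{prop:univ_C-H}) and that $R_\Db$ is generated over $W(\F)$ by trace functions evaluated along a topological generating set---is exactly the technical heart that you defer to the source. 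As a sketch of the \emph{structure} of Chenevier's proof your proposal is fair, but the step on which everything turns is not actually closed.
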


The ring $R_\Db$ (resp.\ $R_D$) is known as the \emph{pseudodeformation ring} for $\Db$ (resp.\ $D$). The universal pseudodeformation of $\Db$ will be denoted $D^u_\Db: R_\Db [G] \ra R_\Db$. 

\begin{rem}
In light of Theorem \ref{thm:chen-PsR}(3), given a $A$-valued pseudorepresentation $D: G \ra A$ for $A \in \cC_F$ and $F/\bQ_p$ a finite extension, there is a finite field valued pseudorepresentation $\Db : G \ra \F$ that deserves to be called the residual pseudorepresentation of $D$. Namely, one first reduces $D$ modulo the maximal ideal of $A$, producing an $F$-valued pseudorepresentation. Then there is an associated residual pseudorepresentation by part (3). Notice that the residual field of $F$ may not be precisely the field of definition $\F$ of $\Db$. One only knows that $F$ is a finite extension of $W(\F)[1/p]$. 
\end{rem}

\subsection{The universal Cayley-Hamilton algebra} 

We will now introduce Cayley-Hamilton algebras and introduce background on representations of a profinite group valued in Cayley-Hamilton algebras, called Cayley-Hamilton representations. We are motivated by the desire to study the $G_{\bQ,S}$-action on $H$ of \S\ref{sec: modular}, which is a Cayley-Hamilton representation even though $H$ is not necessarily a free $\h$-module. 

\begin{defn}[{\cite[\S1.17]{chen2014}}]
\label{defn:CH_alg_rep}
Let $E$ be an $A$-algebra and let $D: E \ra A$ be a $d$-dimensional pseudorepresentation. 
\begin{itemize}[leftmargin=2em]
\item We call $D$ \emph{Cayley-Hamilton} provided that for every commutative $A$-algebra $B$ and every element $x \in E \otimes_A B$, the characteristic polynomial $\chi^D(x, t) \in B[t]$ satisfies $\chi^D(x,x) = 0$.
\item A \emph{Cayley-Hamilton $A$-algebra} of dimension $d$ is the pair $(E,D)$ consisting of an $A$-algebra and a $d$-dimensional Cayley-Hamilton pseudorepresentation. A morphism $f : (E,D) \ra (E',D')$ of Cayley-Hamilton $A$-algebras is an $A$-algebra map $f: E \ra E'$ such that $D' \circ f = D$. 
\item Let $B$ be a commutative $A$-algebra. A \emph{Cayley-Hamilton representation of $(E,D)$ over $B$} is a Cayley-Hamilton $B$-algebra $(E',D')$ and a morphism of Cayley-Hamilton $B$-algebras $f: (E \otimes_A B, D \otimes_A B) \ra (E',D')$. 
\end{itemize}
\end{defn}

For example, a matrix algebra $M_d(A)$ along with the determinant pseudorepresentation $\det: M_d(A) \ra A$ is Cayley-Hamilton, by the Cayley-Hamilton Theorem. A Cayley-Hamilton algebra has a characteristic polynomial, trace, etc.\ from \eqref{eq:chi_poly}, and the Cayley-Hamilton theorem holds. However, unlike matrix algebras, it need not be the case that $A$ is precisely the center of $E$.

We formulate Galois representations valued in a Cayley-Hamilton algebra as follows, where $F/\bQ_p$ is a finite extension. 

\begin{defn}[{\cite[\S1.22]{chen2014}}]
\label{defn:C-H_rep}
A \emph{Cayley-Hamilton representation of $G$ with residual pseudorepresentation $\Db$} is a triple $(A, (E,D), \rho)$ where $A \in \hat\cC_{W(\F)}$ or $A \in \cC_F$, $(E,D)$ is a Cayley-Hamilton $A$-algebra, and $\rho: G \ra E^\times$ is a homomorphism such that $D \circ \rho$ is a continuous $A$-valued pseudorepresentation with residual pseudorepresentation $\Db$.
\end{defn}

For example, a representation $(\rho,V)$ of $G$ on an $F$-vector space $V$ with residual pseudorepresentation $\Db$ amounts to the data of a Cayley-Hamilton representation $(F, (\End_F(V), \det), \rho)$. 

For a profinite group $G$ and a residual pseudorepresentation $\Db: G \to \F$, there exists a \emph{universal Cayley-Hamilton representation} in the context of deformations of $\Db$, in the following sense.

\begin{prop}[{\cite[Prop.\ 1.23]{chen2014}, \cite[Prop.\ 3.2.2, Thm.\ 3.2.3]{WE2017}}]
\label{prop:univ_C-H}
Let $G$ be a profinite group satisfying the $\Phi_p$-finiteness condition, and let $\Db: G  \ra \bF$ be a pseudorepresentation. Then there exists a maximal quotient algebra $E(G)_\Db$ of $R_\Db[G]$ with the properties that $D^u_\Db$ factors through $E(G)_\Db$ and $(E(G)_\Db, D^u_\Db)$ is a Cayley-Hamilton algebra. Moreover,
\begin{enumerate}[leftmargin=2em]
\item There exists a universal Cayley-Hamilton representation of $G$ with residual pseudorepresentation $\Db$, namely 
\[
(E_\Db, (E(G)_\Db, D^u_\Db), \rho^{u}: G \ra E(G)_\Db^\times).
\]
In other words, for every Cayley-Hamilton representation $(A, (E,D), \rho)$ of $G$ with residual pseudorepresentation $\Db$ (where $A \in \hat\cC_{W(\F)}$ or $A \in \cC_F$), there exists a unique map $g: R_\Db \ra A$ and a unique morphism of Cayley-Hamilton $A$-algebras 
\[
f: (E(G)_\Db \otimes_{R_\Db, g} A, D^u_\Db \otimes_{R_\Db, g} A) \lra (E, D)
\]
such that $\rho = f \circ \rho^u$. 
\item $E(G)_\Db$ is finite as a $R_\Db$-module.
\item $E(G)_\Db$ is a continuous quotient of $R_\Db[G]$, where its topology is the pullback under the natural map $R_\Db[G] \ra R_\Db\lb G\rb$ of the standard profinite topology on $R_\Db\lb G\rb$ of a completed group algebra over a profinite ring. 
\end{enumerate}
(By abuse of notation, we have written $D^u_\Db$ for the factorization of $D^u_\Db : R_\Db [G] \ra R_\Db$ through $E(G)_\Db$.)
\end{prop}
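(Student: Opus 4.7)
The plan is to construct $E(G)_\Db$ as the quotient of $R_\Db[G]$ by the ``Cayley-Hamilton ideal'' of the universal pseudorepresentation $D^u_\Db : R_\Db[G] \to R_\Db$, and then verify the three enumerated properties. Concretely, I would let $J \subset R_\Db[G]$ be the smallest closed two-sided ideal such that the pseudorepresentation induced by $D^u_\Db$ on $R_\Db[G]/J$ is Cayley-Hamilton in the sense of Definition \ref{defn:CH_alg_rep}; equivalently, $J$ is generated by the coefficients (as functions on appropriate test rings) of $\chi^{D^u_\Db}(x,x)$ for $x$ ranging over points of $R_\Db[G]$. Set $E(G)_\Db := R_\Db[G]/J$; by construction $D^u_\Db$ factors through a Cayley-Hamilton pseudorepresentation on $E(G)_\Db$, and $E(G)_\Db$ is maximal among such quotients by the definition of $J$. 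The homomorphism $\rho^u : G \to E(G)_\Db^\times$ is then the composition of $G \to R_\Db[G]^\times$ with the quotient, which is automatically continuous.

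For claim (1), given a Cayley-Hamilton representation $(A, (E,D), \rho)$ of $G$ with residual pseudorepresentation $\Db$, the composition $D \circ \rho : G \to A$ is a pseudodeformation of $\Db$, so Theorem \ref{thm:chen-PsR} produces a unique morphism $g : R_\Db \to A$ with $D \circ \rho = g \circ D^u_\Db$. Extending $\rho$ linearly gives an $A$-algebra homomorphism $R_\Db[G] \otimes_{R_\Db, g} A \to E$; since $(E,D)$ is Cayley-Hamilton and the defining property of $J$ is functorial under base change, this map kills the image of $J$ in $R_\Db[G] \otimes_{R_\Db, g} A$, so factors uniquely through $E(G)_\Db \otimes_{R_\Db, g} A$ as required.

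The main obstacle is claim (2), finiteness of $E(G)_\Db$ over $R_\Db$. The strategy is to first prove that the residual algebra $\bar{E} := E(G)_\Db / \mDb E(G)_\Db$ is finite-dimensional over $\F$, then to lift this to finite generation over $R_\Db$ via a topological Nakayama argument. For the residual step, $\bar{E}$ is the universal Cayley-Hamilton $\F$-algebra with pseudorepresentation $\Db$; extracting the semi-simple representation $\rho_\Db^{ss}$ over a finite extension $\F'/\F$ via Theorem \ref{thm:PsR_ss}, together with the generalized matrix algebra structure theory of \cite[\S1]{BC2009}, one shows that $\bar{E} \otimes_\F \F'$ embeds into a finite product of matrix algebras of total $\F'$-dimension at most $d^2$, hence $\dim_\F \bar E < \infty$. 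To lift, choose $x_1,\dots,x_n \in E(G)_\Db$ lifting an $\F$-basis of $\bar E$ and let $M \subset E(G)_\Db$ be the closed $R_\Db$-submodule they generate; the $\Phi_p$-finiteness condition ensures that at each finite stage the quotient $E(G)_\Db / \mDb^k E(G)_\Db$ is a continuous quotient of a finite group algebra $R_\Db[G/U_k]$ for an open subgroup $U_k$, so Nakayama applied stage-by-stage through the $\mDb$-adic filtration gives $M = E(G)_\Db$.

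Claim (3) is then formal: finite generation over the complete local Noetherian ring $R_\Db$ endows $E(G)_\Db$ with a canonical complete Hausdorff $\mDb$-adic topology, and continuity of the quotient $R_\Db[G] \twoheadrightarrow E(G)_\Db$ combined with finite generation forces agreement with the profinite topology inherited from $R_\Db[G]$. The hardest point will be the residual finite-dimensionality of $\bar E$; making the generalized matrix algebra argument precise in the residually reducible case (which is the case most relevant to this paper, where $\Db = \psi(\chi_1 \oplus \chi_2)$) requires careful handling of the idempotent decomposition of $\bar{E}$ matching the Jordan-H\"older factors of $\Db$, and in particular controlling the ``off-diagonal'' Ext-groups that encode the non-semisimple lifts.
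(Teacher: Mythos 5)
The paper does not prove this proposition: it is imported verbatim from Chenevier \cite[Prop.\ 1.23]{chen2014} and \cite[Prop.\ 3.2.2, Thm.\ 3.2.3]{WE2015}, and the manuscript offers no proof of its own. So there is no ``paper's proof'' to match yours against; I can only assess the mathematical content of your sketch against what the cited sources must supply.

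Your overall architecture (take the quotient of $R_\Db[G]$ by the closed Cayley--Hamilton ideal, verify the universal property by base-change functoriality of that ideal, then establish finiteness residually plus Nakayama, and finally identify the topology) is the correct one and matches the route in the references. The universal-property step (1) is fine as sketched. But the finiteness step has two genuine gaps. First, your appeal to GMA theory to bound $\dim_\F \bar E$ is circular: Theorem \ref{thm:CH_is_GMA} (Chenevier's Thm.\ 2.22, WE's Thm.\ 2.4.10) already \emph{assumes} $E$ is finitely generated over $A$, which is exactly what you are trying to prove. What the GMA/structure theory gives you unconditionally is only a bound on $\bar E / \ker(\bar D)$; the ideal $\ker(\bar D)$ is nilpotent but not obviously finite-dimensional, and controlling it is precisely where the work lies. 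Second, your claim that $E(G)_\Db/\mDb^k E(G)_\Db$ is a quotient of a finite group algebra $(R_\Db/\mDb^k)[G/U_k]$ does not follow merely from continuity of the pseudorepresentation: if $g \in U_k$, one learns only that $\chi^{D}(g,t) = (t-1)^d$, hence $(g-1)^d = 0$ in the Cayley--Hamilton quotient, not that $g$ maps to $1$. So the image of $U_k$ is unipotent, not trivial, and one has to argue further (this is where $\Phi_p$-finiteness really does its work, controlling the topologically finitely generated pro-$p$ unipotent piece; it is not the soft factoring argument your sketch suggests). Relatedly, your stage-by-stage Nakayama implicitly uses that the native topology on $E(G)_\Db$ is $\mDb$-adic, which is claim (3); but you derive claim (3) from claim (2), so (2) and (3) must be established together rather than in sequence. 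These are exactly the points on which \cite[Prop.\ 3.2.2, Thm.\ 3.2.3]{WE2015} expends its effort.
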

We will consistently use the following perspective, which is justified by part (1) of the proposition: Cayley-Hamilton representations of $G$ with residual pseudorepresentation deforming $\Db$ are equivalent to Cayley-Hamilton representations of $E(G)_\Db$ (as defined in Definition \ref{defn:CH_alg_rep}). 

\subsection{Generalized matrix algebras} 
\label{subsec:GMA}

In the applications we will pursue, the Galois actions on modular forms are Cayley-Hamilton representations such that the induced residual pseudorepresentation $\Db$ is multiplicity-free. We call $\Db$ \emph{multiplicity-free} when the semi-simple representation $\rho^{ss}_\Db \otimes_\F \overline{\bF}$ associated to $\Db$ by Theorem \ref{thm:PsR_ss} has no multiplicity among its Jordan-H\"older factors. When this condition holds, we see in \cite[Cor.\ 2.9(2), Def.\ 3.4]{WE2017} that there is a bijective correspondence between irreducible factors of $\rho^{ss}_\Db$ and $\rho^{ss}_\Db \otimes_\F \overline{\F}$. This gives us access to an additional structure on Cayley-Hamilton representations with residual pseudorepresentation $\Db$, namely that of a \emph{generalized matrix algebra}. 

\begin{defn}[{\cite[\S1.3]{BC2009}}]
\label{defn:GMA}
Let $A$ be a commutative ring and let $E$ be an $A$-algebra. A \emph{generalized matrix $A$-algebra} or $A$-\emph{GMA} structure of type $(d_1, \dotsc, d_r)$ on $E$ is the data $\cE$ of 
\begin{enumerate}[leftmargin=2em]
\item a set of $r$ orthogonal idempotents $e_1, \dotsc, e_r$ with sum $1$, and
\item a set of $r$ isomorphisms of $A$-algebras $\phi_i : e_i E e_i \risom M_{d_i}(A)$,
\end{enumerate}
such that the trace map $\Tr_\cE : E \ra A$ defined by
\[
\Tr_\cE(x) := \sum_{i=1}^r \Tr \phi_i(e_ixe_i)
\]
is a central function, i.e.~$\Tr_\cE(xy) = \Tr_\cE(yx)$ for all $x,y \in E$. 

We call $\cE= (\{e_i\}, \{\phi_i\})$ the \emph{data of idempotents} of $E$ and write $(E, \cE)$ for a GMA.
\end{defn}

It follows from the definition that a generalized matrix algebra has a direct sum decomposition as an $A$-module of the form
\begin{equation}
\label{eq:GMA_form}
E \lrisom \begin{pmatrix}
M_{d_1}(\cA_{1,1}) & M_{d_1 \times d_2}(\cA_{1,2}) & \dotsm & M_{d_1 \times d_r}(\cA_{1,r}) \\
M_{d_2 \times d_1}(\cA_{2,1}) & M_{d_2}(\cA_{2,2}) & \dotsm & M_{d_2 \times d_r}(\cA_{2,r}) \\
\vdots & \vdots & \vdots & \vdots \\
M_{d_r \times d_1}(\cA_{r,1}) & M_{d_r \times d_2}(\cA_{r,2}) & \dotsm & M_{d_r}(\cA_{r,r}) \\
\end{pmatrix},
\end{equation}
equipped with $A$-module morphisms $\varphi_{i,j,k}: \cA_{i,j} \otimes_A \cA_{j,k} \ra \cA_{i,k}$ satisfying properties of \cite[\S1.3.2]{BC2009} so that the ensemble $\{\varphi_{i,j,k}\}$ induces the multiplication map $E \otimes_A E \ra E$. One of these properties is that $\cA_{i,i} \cong A$ as an $A$-algebra. 

For an $A$-GMA $(E,\cE)$, there is a canonical pseudorepresentation $D_\cE: E \ra A$ characterized as follows. The characteristic polynomial $\chi^{D_\cE}$ is computed using \eqref{eq:GMA_form} just as one computes the characteristic polynomial of a matrix (using the maps $\{\varphi_{i,j,k}\}$ to multiply entries) \cite[Prop.\ 2.4.5]{WE2017}. This canonical pseudorepresentation $D_\cE$ is Cayley-Hamilton, making $(E, D_\cE)$ a Cayley-Hamilton algebra. In certain cases, there is a converse to this statement: 

\begin{thm}[{\cite[Thm.\ 2.22(ii)]{chen2014}, \cite[Thm.\ 2.4.10]{WE2017}}]
\label{thm:CH_is_GMA}
Given a Cayley-Hamilton pseudorepresentation $D: E \ra A$ where $A$ is a Henselian Noetherian local ring with residue field $F$, $E$ is finitely generated over $A$, and the residual pseudorepresentation $\Db := D \otimes_A F$ is multiplicity-free, $E$ admits the structure of a generalized matrix algebra $(E,\cE)$ such that $D = D_\cE$. In particular, $\Tr_\cE = \Tr_D$. 
\end{thm}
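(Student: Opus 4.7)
The overall plan is to produce the GMA structure on $E$ by lifting orthogonal idempotents from the residual algebra, using the Henselian hypothesis, and then to match the resulting canonical pseudorepresentation $D_\cE$ with $D$ via their Cayley-Hamilton characters. Throughout I will assume (after enlarging $A$ to a suitable Henselian local cover or equivalently assuming $\rho^{ss}_\Db$ is absolutely irreducible at each Jordan-Hölder factor) that all simple factors of $\rho^{ss}_\Db$ are defined over the residue field $\bF := A/\m_A$; a descent argument handles the general case.

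The first step is to analyze the residue $\bar E := E/\m_A E$ with its induced Cayley-Hamilton pseudorepresentation $D \otimes_A \bF = \Db$. Because $\Db$ is multiplicity-free, Theorem \ref{thm:PsR_ss} gives $\rho^{ss}_\Db = \bigoplus_{i=1}^r \rho_i$ with the $\rho_i$ pairwise non-isomorphic absolutely irreducible of dimensions $d_i$. By the Wedderburn-style structure for Cayley-Hamilton algebras over a field (a consequence of Lemma \ref{lem: ker of pseudo} applied to $\Db$), the quotient map $\bar E \twoheadrightarrow \bar E/\ker(\Db) \cong \prod_{i=1}^r M_{d_i}(\bF)$ has nilpotent kernel. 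Pulling back the standard diagonal matrix units $\bar e_{ij}$ (for $i=1,\dotsc,r$ and $j=1,\dotsc,d_i$) and successively lifting across this nilpotent ideal gives a complete orthogonal system of idempotents in $\bar E$ summing to $1$.

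The second step is to lift these idempotents to $E$. Since $E$ is finite over the Henselian local ring $A$, it is $\m_A$-adically complete, and the standard non-commutative Hensel's Lemma for finite modules over a Henselian base lifts the orthogonal idempotents $\{\bar e_{ij}\}$ to orthogonal idempotents $\{e_{ij}\} \subset E$ summing to $1$. Set $e_i := \sum_{j=1}^{d_i} e_{ij}$. We next upgrade these to matrix units. Each corner $e_{ij}Ee_{ij}$ is a finite $A$-algebra whose residue modulo $\m_A$ is a local ring with residue field $\bF$ (the $(j,j)$ entry of $M_{d_i}(\bF)$), and the restriction of $\Tr_D$ to $e_{ij}Ee_{ij}$ is the degree-$1$ character extracted from the fact that $\Tr_D(e_{ij})=1$; Cayley-Hamilton forces $e_{ij}Ee_{ij}\cong A$ via the trace. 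The off-diagonal corners $e_{ij}Ee_{ik}$ are rank-$1$ free $A$-modules with composition coming from the multiplication on $E$, and by choosing a unit $x \in e_{i1}Ee_{ij}$ with $yx = e_{ij}$ for $y \in e_{ij}Ee_{i1}$ (which exists because the corresponding residual classes are non-zero in the matrix algebra $M_{d_i}(\bF)$) one builds the matrix unit system producing an $A$-algebra isomorphism $\phi_i : e_iEe_i \isoto M_{d_i}(A)$.

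Finally, one must check that the resulting trace $\Tr_\cE$ is central (this follows from centrality of $\Tr_D$ together with the fact that $\Tr_\cE$ and $\Tr_D$ agree on each corner $e_{ij}Ee_{ij}$ and hence on all of $E$ by linearity and the block decomposition) and that $D_\cE = D$ as pseudorepresentations. For the latter, both $D$ and $D_\cE$ are Cayley-Hamilton of the same dimension with equal traces; their characteristic polynomial coefficients $\Lambda_k^D$ and $\Lambda_k^{D_\cE}$ are determined by Amitsur-type universal identities from $\Tr_D$ and $\Tr_{D_\cE}$ (via the Newton identities, which work integrally here because the corners $e_{ij}Ee_{ij}\cong A$ give enough ``rank-$1$ projectors'' to carry out the GMA computation), so they coincide.

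The principal obstacle is the off-diagonal part of the matrix-unit construction in the third step: producing $A$-linear isomorphisms $e_{i1}Ee_{ij} \isoto A$ and $e_{ij}Ee_{i1} \isoto A$ whose compositions realize the idempotents. This is where absolute irreducibility of the $\rho_i$ (hence $e_{i1}\bar Ee_{ij}$ being one-dimensional over $\bF$) together with Nakayama's lemma and the multiplicity-free hypothesis are essential; without these, one might land in a non-trivial Brauer class over $A$ rather than the split matrix algebra $M_{d_i}(A)$. The remaining bookkeeping (associativity of the $\varphi_{i,j,k}$, matching of $D_\cE$ with $D$ on the full decomposition \eqref{eq:GMA_form}) is formal once the matrix units are in place.
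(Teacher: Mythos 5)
The paper does not prove this statement; it is cited directly from Chenevier \cite[Thm.\ 2.22(b)]{chen2014} and Wang-Erickson \cite[Thm.\ 2.4.10]{WE2015}. Your sketch follows the same overall strategy as those references: reduce modulo $\m_A$, use multiplicity-freeness and Theorem \ref{thm:PsR_ss} to identify $\bar E/\ker(\Db) \cong \prod_i M_{d_i}(\bF)$ with nilpotent kernel, lift the idempotents to $E$ using the Henselian hypothesis, identify each diagonal block $e_iEe_i$ with $M_{d_i}(A)$, and finally match $D$ with $D_\cE$. That is the right skeleton.

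Two things need repair, however. First, a wording error: being finite over a Henselian local ring does \emph{not} make $E$ $\m_A$-adically complete (Henselian local rings are generally not complete). What is true, and is exactly the point of the Henselian hypothesis, is that idempotents lift along the surjection $E \rsurj E/\m_A E$ when $E$ is module-finite over a Henselian local $A$; that is the statement to invoke, not completeness. Second, and more substantively, the final identification $D = D_\cE$ via ``Newton identities'' does not go through. Newton's identities recover the coefficients $\Lambda_k^D$ from the power-sum traces $\Tr_D(x^k)$ only after dividing by factorials, so they fail integrally in positive residue characteristic -- precisely the setting of this paper where $A \in \hat\cC_{W(\F)}$ -- and in general a pseudorepresentation is \emph{not} determined by its trace. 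The correct route, which is what Chenevier actually does, is to first use the Cayley--Hamilton property to show that $e_i$ has ``rank $d_i$'' (meaning $\chi^{D}(e_i,t) = t^{d-d_i}(t-1)^{d_i}$), then invoke the restriction result for Cayley--Hamilton algebras along an idempotent (Chenevier's Lemma 2.4) to produce a $d_i$-dimensional Cayley--Hamilton pseudorepresentation on $e_iEe_i$ whose residual is the absolutely irreducible $\psi(\rho_i)$; by Chenevier's Theorem 2.22(a) (Rouquier/Nyssen) this forces $e_iEe_i \cong M_{d_i}(A)$, and the matching $D = D_\cE$ is then obtained by an explicit computation of characteristic polynomials via the block decomposition, not via symmetric-function identities. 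You should also be aware that the diagonal corners $e_{ij}Ee_{ij}$ reducing to a local ring over $\bF$ with residue field $\bF$ is not by itself enough to conclude $e_{ij}Ee_{ij} \cong A$ -- one needs the Cayley--Hamilton constraint that $e_{ij}$ has rank $1$ to rule out a nonzero nilpotent tail.
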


We sketch the proof in order to introduce Definition \ref{defn:LoI}.
\begin{proof}
By \cite[Lem.\ 2.10]{chen2014}, the preimage of $\ker(\Db) \subset E \otimes_A F$ in $E$ is the Jacobson radical $\Jac(E)$. Since $\Db$ is multiplicity-free, $E/\Jac(E)$ is a product of matrix algebras $\prod_{i=1}^r M_{d_i}(F)$ where $\sum_{i=1}^r d_i = d$. In the proof of \cite[Thm.\ 2.22(b)]{chen2014}, Chenevier shows that idempotents lift over $\Jac(E)$, i.e.\ that $E$ is semiperfect, in the sense of \cite[Defn.~2.7.16, pg.~217]{rowen1988}. It follows that a complete set of primitive (orthogonal) idempotents of $E/\Jac(E)$ lifts to a complete set of primitive idempotents of $E$ and, moreover, certain sums $e_i$ over subsets of these idempotents have the property that $e_i E e_i \simeq M_{d_i}(A)$. The summands of $e_i$ determine coordinates on $e_i E e_i$, i.e.\ an isomorphism $\phi_i : e_i E e_i \risom M_{d_i}(A)$. This makes $\cE = (\{e_i\}, \{\phi_i\})$ a data of idempotents for $E$, making $(E, \cE)$ an $A$-GMA. In \cite[Thm.\ 2.4.10]{WE2017} it is verified that $D = D_\cE$. 
\end{proof}

\begin{defn}
	\label{defn:LoI}
	With assumptions as in Theorem \ref{thm:CH_is_GMA} and a choice of a complete set of primitive idempotents of $E/\ker(\Db)$, a complete set of primitive lifts of these idempotents will be called a \emph{lift of idempotents}. 
\end{defn}

When a Cayley-Hamilton representation is valued in a Cayley-Hamilton algebra equipped with the structure of a GMA, we call it a \emph{GMA representation.}

\begin{exmp}
\label{exmp:2x2GMA}
In this paper, we will only consider GMA structures of type $(1,1)$ (i.e.~ the case $r=2$ and $d_1=d_2=1$ of Definition \ref{defn:GMA}). 

\textit{Data of idempotents and lifts of idempotents.} In the case that $d_i = 1$ for all $i$, there is a single possible choice of the isomorphisms $\phi_i$ in a data of idempotents, so we refer to the idempotents $\{e_i\}$ as a data of idempotents $\cE$, dropping the $\phi_i$. Likewise, the $e_i$ are primitive, so a lift of idempotents is a data of idempotents in this case. In particular, note that the residual data of idempotents is canonical (up to ordering) because $E/\Jac(E) \simeq \prod_{i=1}^r F$. 

\textit{Coordinate decomposition.} 
Given a data of idempotents $\cE$ of type $(1,1)$, the decomposition of $E$ in \eqref{eq:GMA_form} has the simple form
\begin{equation}
\label{eq:E-decomp}
E \cong \begin{pmatrix} A & B \\ C & A \end{pmatrix},
\end{equation}
where we write $B$ for $\cA_{1,2}$ and $C$ for $\cA_{2,1}$. This is a convenient way to record several facts: $E$ admits a direct sum decomposition into the constituent $A$-modules $A \oplus B \oplus C \oplus A$, and the multiplication map $E \times E \ra E$ decomposes into coordinates in the conventional manner of matrix algebras. Namely, one has the multiplication map $A \times A \ra A$, the $A$-module structure maps $A \times B \ra B$ and $A \times C \ra C$, and a multiplication map $m = \varphi_{1,2,1} : B \otimes_A C \ra A$ to multiply off-diagonal entries into the $(1,1)$-coordinate. Here $m$ is a morphism of $A$-modules. The ``commutativity'' GMA axiom called ``(COM)'' in \cite[\S1.3.2]{BC2009} implies that the multiplication map $\varphi_{2,1,2} : C \otimes_A B \ra A$ to the $(2,2)$-coordinate is symmetric to $m$. Even when we have isomorphisms of $A$-modules $A \simeq B \simeq C$, it is still possible that $m = 0$! 
\end{exmp}

\begin{rem}
\label{rem:mult_dot}
In the sequel, when we have a decomposition of the form \eqref{eq:E-decomp}, we will use ``$\cdot$'' to denote the multiplication map $m: B \otimes_A C \ra A$. In other words, $b \cdot c := m(b \otimes c)$ for $b \in B$, $c \in C$. 
\end{rem}

We record this basic fact about lifts of idempotents in semiperfect algebras.
\begin{lem}
\label{lem:lifts_conj}
With assumptions as in Theorem \ref{thm:CH_is_GMA} and a choice of a complete set of primitive idempotents of $E/\ker(\Db)$, for any two lifts of idempotents $\{e_i\}$, $\{e'_i\}$ in $E$, there exists a unit $u \in E^\times$ such that $e_i'=u^{-1}e_iu$.
\end{lem}
\begin{proof}
This follows the Krull-Schmidt theorem -- see, for example, \cite[Thm.\ 2.9.18(iii), pg.~242]{rowen1988}. (As noted in the proof of Theorem \ref{thm:CH_is_GMA}, Chenevier has shown that $E$ is semiperfect in the sense of \emph{loc.~cit}.)
\end{proof}

\subsection{Reducibility and the reducibility ideal}
For a field-valued pseudorepresentation $D:G \to F$, the associated semi-simple representation $\rho_D^{ss}$ may or may not be reducible, and one could use the reducibility of $\rho_D^{ss}$ to define reducibility for $D$. We will need a notion of reducibility for more general coefficients, which has been developed in \cite[\S1.5]{BC2009}. For simplicity, we will restrict ourselves to the case of residually multiplicity-free pseudorepresentations with two factors, i.e.\ $r = 2$ in Definition \ref{defn:GMA}. 

\begin{defn}
Let $\Db: G \to \bF$ be a residual pseudorepresentation, and assume that $\rho^{ss}_\Db = \bar\rho_1 \oplus \bar\rho_2$ where $\bro_1, \bro_2$ are irreducible. For $A \in \hat\cC_{W(\F)}$ or $A \in \cC_F$, a pseudodeformation $D: G \ra A$ of $\Db$ is called \emph{reducible} if $D = \psi(\rho_1 \oplus \rho_2)$ for deformations $\rho_i$ of $\bar\rho_i$ to $A$, $i = 1,2$. Otherwise, call $D$ \emph{irreducible}.

Similarly, when $D : E \ra A$ is a pseudorepresentation and $A$ is a local ring with residue field $F$ such that $\Db = D \otimes_A F$ arises as $\psi(\bar\rho_1 \oplus \bar\rho_2)$ where $\bro_1, \bro_2$ are irreducible, $D$ is called \emph{reducible} if $D = \psi(\rho_1 \oplus \rho_2)$ for deformations $\rho_i$ of $\bro_i$ to $A$, $i = 1,2$. Otherwise, call $D$ \emph{irreducible}. 

Given a deformation $D : G \ra A$ (resp.\ $D : E \ra A$) of $\Db : G \ra \bF$ (resp.\ $D: E \otimes_A F \ra F$) to $A$, its associated \emph{reducibility ideal} is the minimal ideal $\cJ \subset A$ with the property that $D \otimes_A A/\cJ$ is reducible. 
\end{defn}

The reducibility ideal exists and, in the residually multiplicity-free case ($\bro_1 \not\simeq \bro_2$), can be expressed in terms of the GMA structure, which will be useful in the sequel. For simplicity we restrict to the case $d = r = 2$, i.e.\ $\cE$ is type $(1,1)$. 
\begin{prop}[{\cite[\S1.5.1]{BC2009}}]
\label{prop:red_ideal}
Let $A$, $E$ and $D: E \to A$ be as in Theorem \ref{thm:CH_is_GMA}. Choose a lift of idempotents $\cE$ in $E$ and assume that it is of type $(1,1)$. The image of $\cA_{1,2} \otimes_A \cA_{2,1}$ in $A$ under the multiplication map $\varphi_{1,2,1}$ (as in Example \ref{exmp:2x2GMA}) is the reducibility ideal $J_D$ of $D$. For any commutative ring map $f: A \ra B$, the induced pseudorepresentation $f \circ D: E \otimes_A B \ra B$ is reducible if and only if $f$ factors through $A/J_D$. 
\end{prop}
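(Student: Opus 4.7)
Let $J := \mathrm{im}(\varphi_{1,2,1}) \subseteq A$. The strategy is to prove $J = J_D$ in two steps; the final statement of the proposition then follows formally, since for $f \colon A \to B$ with kernel $\cJ$, the pseudorepresentation $f \circ D$ is reducible iff $D \otimes_A A/\cJ$ is, iff $\cJ \supseteq J_D = J$, iff $f$ factors through $A/J$.

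For the inclusion $J_D \subseteq J$, I would exhibit $D \otimes_A A/J$ as reducible. By construction $\cA_{1,2} \cdot \cA_{2,1} \subseteq J$, so in $E \otimes_A A/J$ the off-diagonal products vanish and the diagonal projections $\tilde\eta_i \colon E \otimes_A A/J \to A/J$, defined by $\tilde\eta_i(x) := \phi_i(e_i x e_i)$, become $A/J$-algebra homomorphisms: the matrix-product formula $(xy)_{ii} = x_{ii} y_{ii} + \varphi_{i,j,i}(x_{ij} \otimes y_{ji})$ has correction term lying in $J$. A direct computation of the degree-two characteristic polynomial from \eqref{eq:E-decomp} yields $\Tr_D \equiv \tilde\eta_1 + \tilde\eta_2$ and $D \equiv \tilde\eta_1 \tilde\eta_2$ modulo $J$, so $D \otimes_A A/J = \psi(\tilde\eta_1 \oplus \tilde\eta_2)$. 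The residual GMA structure identifies $\tilde\eta_i$ with a deformation of $\bro_i$, giving the reducibility.

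For the reverse inclusion $J \subseteq J_D$, assume $D \otimes_A A/J' = \psi(\eta_1 \oplus \eta_2)$ for characters $\eta_i \colon G \to (A/J')^\times$ deforming $\bro_i$. Applying the universal property of the Cayley--Hamilton algebra (Proposition \ref{prop:univ_C-H}) to the CH representation $\eta_1 \oplus \eta_2$ of $G$ valued in the block-diagonal CH algebra $A/J' \times A/J'$, and combining with the universal map landing in $(E \otimes_A A/J', D \otimes A/J')$, I would obtain $A/J'$-algebra homomorphisms $\tilde\eta_i \colon E \otimes_A A/J' \to A/J'$ extending the $\eta_i$. The multiplicity-free hypothesis together with the residual identification of the GMA idempotents $e_1, e_2$ with the two Jordan--H\"older factors forces $\tilde\eta_i(e_j) = \delta_{ij}$. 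Hence $\tilde\eta_i(b) = \tilde\eta_i(e_1)\tilde\eta_i(b)\tilde\eta_i(e_2) = 0$ for $b \in \cA_{1,2}$, and similarly $\tilde\eta_i(c) = 0$ for $c \in \cA_{2,1}$, so $\tilde\eta_i(bc) = 0$. The GMA trace identity $\Tr_D(bc) = \varphi_{1,2,1}(b \otimes c)$, which follows from $bc \in e_1 E e_1$ and the fact that $\phi_1$ is the identity in type $(1,1)$, combined with $\Tr_D \equiv \tilde\eta_1 + \tilde\eta_2 \pmod{J'}$, then gives $\varphi_{1,2,1}(b \otimes c) \in J'$. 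Hence $J \subseteq J'$, and taking $J' = J_D$ finishes the proof.

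The delicate point is the construction of the extensions $\tilde\eta_i \colon E \otimes_A A/J' \to A/J'$ in the second step: it requires invoking the universal property of the Cayley--Hamilton algebra for both $(E, D)$ and the block-diagonal CH algebra $A/J' \times A/J'$, and pinning down the residual identifications so that the $\tilde\eta_i$ obtained from the GMA idempotents coincide with the $\eta_i$ from the reducibility hypothesis. Once those extensions are in place, the vanishing of $\tilde\eta_i$ on the off-diagonal parts is forced by $\tilde\eta_i(e_j) = \delta_{ij}$, and the trace identity $\Tr_D(bc) = \varphi_{1,2,1}(b \otimes c)$ closes the argument.
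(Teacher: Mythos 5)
The paper does not prove this proposition itself; it cites \cite[\S1.5.1]{BC2009}, so there is no internal argument to compare against. Your proof is essentially the Bella\"iche--Chenevier argument and its two inclusions are both sound: modulo $J$ the diagonal coordinate maps $\tilde\eta_i = \phi_i(e_i \,\cdot\, e_i)$ become algebra homomorphisms and the GMA characteristic polynomial factors, giving $J_D \subseteq J$; and in the other direction, the observation that any reducing characters must send $e_j \mapsto \delta_{ij}$ (an idempotent in a local ring, pinned down residually), hence annihilate $\cA_{1,2}$ and $\cA_{2,1}$, combined with the identity $\Tr_D(bc) = \varphi_{1,2,1}(b\otimes c)$, gives $J \subseteq J'$. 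Two remarks on the step you flag as delicate. First, in the abstract setting of the proposition the deformations $\rho_i$ of $\bar\rho_i$ witnessing reducibility \emph{are already} $A/J'$-algebra homomorphisms out of $E\otimes_A A/J'$ (since $\bar\rho_i$ is a constituent of $\rho^{ss}_\Db$, a representation of $E/\m_A E$); so no extension is needed, and invoking Proposition~\ref{prop:univ_C-H} is both a detour and slightly misapplied, since that proposition furnishes maps \emph{out of} $E_\Db$, not out of an arbitrary Cayley--Hamilton algebra $E$. Second, if one does insist on starting from characters of a group $G$ (as in the paper's applications), the extension to $E\otimes_A A/J'$ does exist, but establishing it requires an extra argument: elements of $\ker\bigl(A/J'[G]\to E\otimes_A A/J'\bigr)$ satisfy $\Tr_D(xy)=D(xy)=0$ for all $y$, and one must use the multiplicity-free hypothesis (via a test element $y$ with $\eta_1(y)$ a unit, $\eta_2(y)\in\m$) to upgrade ``$\eta_i(x)$ nilpotent'' to $\eta_i(x)=0$; this is not a direct consequence of the universal property alone. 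Finally, the opening reduction (``the last statement follows formally'') somewhat undersells the forward direction: that reducibility of $f\circ D$ over a general $B$ forces $J \subseteq \ker f$ is not a base-change triviality but the same off-diagonal-killing argument rerun over $B$, and it uses that the decomposing characters respect the residual factors (which is built into ``deformation of $\bar\rho_i$'').
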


In the same way, we call a $2$-dimensional Cayley-Hamilton representation $f: (E,D) \ra (E',D')$ of $(E,D)$ \emph{reducible} when the pseudorepresentation $D' \circ f$ is reducible, and call it \emph{irreducible} otherwise. Likewise, we call a $2$-dimensional Cayley-Hamilton representation $(A, (E,D), \rho: G \ra E^\times)$  of a group $G$ with residual pseudorepresentation $\Db$ \emph{reducible} when the induced pseudorepresentation $D: G \ra A$ is reducible, and call it \emph{irreducible} otherwise. 

We will use the following lemma, which follows immediately from Proposition \ref{prop:univ_C-H}(1) and the content of this section, in what follows with no further comment. 

\begin{lem}
\label{lem:G-to-E}
Let $G$ be a profinite group satisfying the $\Phi_p$-finiteness condition and let $\Db$ be a residual pseudorepresentation such that $\rho^{ss}_\Db \simeq \bro_1 \oplus \bro_2$ where $\bro_i$ are irreducible for $i = 1,2$.  
\begin{enumerate}
\item A pseudodeformation $D: G \ra A$ of $\Db$ is reducible if and only if its factorization $E(G)_\Db \ra A$ is reducible. 
\item Likewise, a Cayley-Hamilton representation of $G$ $(A, (E,D), \rho: G \ra E^\times)$ (with residual pseudorepresentation $\Db$) is reducible if and only if its factorization $(E(G)_\Db, D^u_\Db) \ra (E,D)$ is reducible. The reducibility ideals of $D \circ \rho: G \ra A$ and $D : E \ra A$ are equal. 
\item Assume $\bro_1 \not\simeq \bro_2$ and $\dim \bro_1 = \dim \bro_2 = 1$. For any choice of lift of idempotents on $E(G)_\Db$, the reducibility ideal of the universal pseudodeformation $D^u_\Db : G \ra R_\Db$ of $\Db$ is given by Proposition \ref{prop:red_ideal} in the case $A = R_\Db$, $E = E(G)_\Db$, and $D = D^u_\Db: E(G)_\Db \ra R_\Db$. 
\end{enumerate}
\end{lem}

The following lemma explicitly constructs a lift of idempotents for certain two-dimensional reducible Cayley-Hamilton representations. For these representations, it also shows that the diagonal characters are independent of the choice of lift of idempotents.

\begin{lem}
\label{lem:distinguished gives lifts}
Let $G$ be a profinite group satisfying the $\Phi_p$-finiteness condition and let $\Db : G \ra \bF$ be a two-dimensional residual pseudorepresentation such that $\rho^{ss}_\Db \simeq \bro_1 \oplus \bro_2$,  where $\bro_1, \bro_2$ are distinct characters.

Let $(A,(E,D), \rho)$ be a reducible Cayley-Hamilton representation with residual pseudorepresentation $\Db$. Write $D=\psi(\rho_1 \oplus \rho_2)$ with $\rho_i$ deforming $\bro_i$.

\begin{enumerate}
\item Let $g\in G$ be such that $\bro_1(g) \ne \bro_2(g)$ in $\F^\times$. Then $(e_1,e_2)$ is a lift of idempotents in $E$, where
\[
e_1 = \frac{\rho(g)-\rho_2(g)}{\rho_1(g)-\rho_2(g)}, \quad e_2=1-e_1.
\]
\item Let $(e_1,e_2)$ be any lift of idempotents in $E$. Then, for $i=1,2$, we have $e_i \rho e_i \simeq \rho_i$.
\end{enumerate} 
\end{lem}
\begin{proof}
(1) The map $E \to E/\ker(\Db) \cong \bF \oplus \bF$ sends $\rho(\sigma)$ to $(\bro_1(\sigma),\bro_2(\sigma))$, so we see that $e_1 \mapsto (1,0)$ and $e_2 \mapsto (0,1)$. It remains to show that $e_1$ is an idempotent. Since $\rho$ is Cayley-Hamilton, $\rho(g)$ satisfies the characteristic polynomial $\chi_D(g,t) = t^2 - (\rho_1(g)+\rho_2(g))t + \rho_1(g)\rho_2(g)$. It is then a simple computation to see that $e_1^2=e_1$.

(2) Write $\rho_i':= e_i \rho e_i$ for $i=1,2$; by the definition of lift of idempotents, we have $\rho_i \otimes_A \F \cong \bro_i$. Since $\rho$ is reducible, we see that $\rho_1',\rho_2'$ are characters and that $D=\psi(\rho'_1 \oplus \rho'_2)$. Choose $g \in G$ as in (1), and note that, as in the proof of (1), we have $(\rho-\rho_1)(\rho-\rho_2)=0$. Evaluating this at $g$ and using the fact that $\rho$ is reducible, we obtain
\[
0=(\rho_1'(g)-\rho_1(g))(\rho_1'(g)-\rho_2(g))=(\rho_2'(g)-\rho_1(g))(\rho_2'(g)-\rho_2(g))
\]
(from the $(1,1)$- and $(2,2)$-entries, respectively).
Since $\rho_1'(g)-\rho_2(g)$ and $\rho_2'(g)-\rho_1(g)$ are units, this gives $\rho_1'(g)=\rho_1(g)$ and $\rho_2'(g)=\rho_2(g)$.

Now we continue as in the proof of linear independence of characters. For any $\sigma \in G$, consider the two equations
\[
0 = \rho_1(g) ( \rho_1(\sigma) + \rho_2(\sigma) - \rho'_1(\sigma) -\rho'_2(\sigma))
\]
and 
\[
0 = \rho_1(\sigma g) + \rho_2(\sigma g) - \rho'_1(\sigma g) - \rho'_2(\sigma g)
\]
coming from $D=\psi(\rho'_1 \oplus \rho'_2) = \psi(\rho_1 \oplus \rho_2)$. Taking their difference (and recalling that $\rho_1'(g)=\rho_1(g)$ and $\rho_2'(g)=\rho_2(g)$), we obtain
\[
0 = (\rho_1(g)-\rho_2(g))\rho_2(\sigma) -(\rho_1(g)-\rho_2(g))\rho_2'(\sigma).
\]
Since $\rho_1(g)-\rho_2(g)$ is a unit, this yields $\rho_2(\sigma) = \rho_2'(\sigma)$. The proof that $\rho_1=\rho_1'$ is similar.
\end{proof}

\subsection{Deformations of a modular residual pseudorepresentation}
\label{subsec:Dbar-setup}

For the remainder of the section, we fix $d = r = 2$, $G = G_{\bQ,S}$, and $\Db = \psi(\omega^{-1} \oplus \theta^{-1}) : G_{\bQ,S} \ra \bF$. (Here $\bF$ is the field defined in \S\ref{subsec:setup}.) Later, in Theorem \ref{thm:H_ord}(1), we will confirm that the $G_{\bQ,S}$-action on $H$ described in \S\ref{subsec: H prelims} induces an $\h$-valued pseudorepresentation whose residual pseudorepresentation is $\Db$. Therefore $\Db$ is a ``modular'' residual pseudorepresentation. 

Write $R_\Db$ for the pseudodeformation ring and write $E_\Db := E(G_{\bQ,S})_\Db$ for the associated universal Cayley-Hamilton algebra. Keep in mind that the $R_\Db$-algebra structure map $R_\Db \to E_\Db$ is injective by Lemma \ref{lem:str-inj}. 

We write $\Db : G_{\Q_p} \ra \bF$ for the restriction of $\Db$ to $G_{\Q_p}$, which is multiplicity-free by the working assumptions of \S\ref{subsec:setup}. We write $R^p_\Db$ for its pseudodeformation ring, $D^p_\Db: G_{\Q_p} \ra R^p_\Db$ for the universal pseudodeformation, and $E_\Db^p := E(G_{\Q_p})_\Db$ for its associated universal Cayley-Hamilton algebra. 

Theorem \ref{thm:CH_is_GMA} applies to the Cayley-Hamilton pseudorepresentations $D^u_\Db : E_\Db \ra R_\Db$ and $D^p_\Db : E^p_\Db \ra R^p_\Db$. That is, there exist various choices for lifts of idempotents of type $(1,1)$. We will use the resulting matrix coordinate decomposition \eqref{eq:E-decomp} of Example \ref{exmp:2x2GMA} to discuss the ordinary condition. In order to do this, we fix an ordering of the idempotents in $E/\ker(\Db) \cong \F \oplus \F$ so that the composite map $G_{\Q,S} \to E/\ker(\Db) \cong \F \oplus \F$ is given by $\sigma \mapsto (\omega^{-1}(\sigma),\theta^{-1}(\sigma))$. We fix the convention that a lift of idempotents $(e_1,e_2)$ in $E^p_\Db$ or $E^u_\Db$ will have $e_1$ associated with $\omega^{-1}$ and $e_2$ associated with $\theta^{-1}$. 
	
By Proposition \ref{prop:univ_C-H}, every Cayley-Hamilton representation of $G_{\bQ,S}$ with residual pseudorepresentation $\Db$ admits a GMA structure. 

\begin{exmp}
\label{exmp:classical_case}
Let $(V, \rho)$ be a representation of $G_{\bQ,S}$ on a $F$-vector space $V$ with induced residual pseudorepresentation $\Db$, where $F/\bQ_p$ is a finite extension. The universal property of $(E_\Db, R_\Db)$ induces a unique morphism of Cayley-Hamilton algebras $f : (E_\Db \otimes_{R_\Db} F, D^u_\Db \otimes_{R_\Db} F) \ra (\End_F(V), \det)$. Then $\End_F(V)$ admits a GMA structure over $F$ drawing an isomorphism with $M_2(F)$, where the idempotents defining its GMA structure are the image under $f$ of a choice of idempotents in $E_\Db$. 
\end{exmp}

\begin{rem}
\label{rem:no_extn}
Because $\Db : G \ra \bF$ is valued in a finite field, \cite[Cor.\ 2.1.10(2)]{WE2017} implies that the representation $\rho^{ss}_\Db$ of Theorem \ref{thm:PsR_ss} is defined over $\bF$. (It can be seen directly that $\rho^{ss}_\Db \simeq \omega^{-1} \oplus \theta^{-1}$.) Likewise, given a $p$-adic field valued pseudorepresentation $D: G_{\bQ,S} \ra F$ deforming this $\Db$, $\rho^{ss}_D$ is defined over $F$ even though \emph{loc.\ cit.} no longer applies in general when $F$ is $p$-adic. The existence of $\rho^{ss}_D$ over $F$ follows from the existence of a GMA structure for $E_\Db$. The argument is visible in Example \ref{exmp:ord_PsR_field}. 
\end{rem}

\subsection{Ordinary Cayley-Hamilton representations}
\label{subsec: ord C-H}

Our goal is to define a notion of ``ordinary'' for Cayley-Hamilton $G_{\Q_p}$-representations. For this section, we let $I_{\bQ_p} \subset G_{\Q_p}$ be the inertia group. (Note that $I_{\bQ_p}$ has nothing to do with the Eisenstein ideal $I$.) The coefficient ring $A$ can be in $\hat\cC_{W(\F)}$ or in $\cC_F$ where $F$ is a finite extension of $W(\F)[1/p]$.

The notion of ordinary that we will use is specific to Cayley-Hamilton representations of $G_{\bQ,S}$ or $G_{\Q_p}$ with residual pseudorepresentation $\Db = \psi(\omega^{-1} \oplus \theta^{-1})$, and is more restrictive than the definition given in the introduction (\S\ref{subsec:OP}) when $\theta^{-1}\omega\vert_{I_{\bQ_p}} = 1$. 

\begin{defn}
\label{defn:ord_C-H}
Given a lift of idempotents in $E$, write $\rho_{i,j}$ for the composition of the homomorphism $\rho: G_{\Q_p} \ra E$ with the $(i,j)$th coordinate of \eqref{eq:E-decomp}. We call a Cayley-Hamilton representation $(A, (E,D), \rho)$ of $G_{\Q_p}$ with induced pseudorepresentation $\Db$ \emph{ordinary} provided that there exists a lift of idempotents to $E$ such that 
\begin{enumerate}[leftmargin=2em]
\item $\rho_{1,2} = 0$, and
\item $\rho_{1,1}\vert_{I_{\bQ_p}} = \kcyc^{-1} \otimes_{\bZ_p} A$.
\end{enumerate}
We call a Cayley-Hamilton representation of $G_{\bQ,S}$ with induced pseudorepresentation $\Db$ \emph{ordinary} if its restriction to $G_{\Q_p}$ is ordinary. 
\end{defn}

\begin{rem}
\label{rem:H-ord}
Our running assumptions imply that $\omega\vert_{G_{\bQ_p}} \neq \theta\vert_{G_{\bQ_p}}$ but allow $\theta^{-1}\omega$ to be unramified at $p$ (i.e.~$\omega\vert_{I_{\bQ_p}} = \theta\vert_{I_{\bQ_p}}$), so that either $e_1$ or $e_2$ could correspond to the twist-unramified 1-dimensional quotient $G_{\bQ_p}$-representation. However, we know that the structure of the $\h[G_{\bQ,S}]$-module $H$ is such that the $e_1$-factor is always the twist-unramified quotient in the family. We have defined ``ordinary'' accordingly. A theory corresponding to the definition given in the introduction (\S\ref{subsec:OP}), even when $\theta^{-1}\omega$ is unramified at $p$, is given in \cite[\S7.3]{WE2017}. 
\end{rem}

By reading off Definition \ref{defn:ord_C-H}, we can readily construct a candidate quotient cutting out the ordinary condition. Given a lift of idempotents $(e_1, e_2)$ in $E^p_\Db$, consider the matrix coordinate functions $\rho_{i,j}: G_{\bQ_p} \ra E^p_\Db$. We consider the two-sided ideal $J^{\ord*}(e_1,e_2)$ of $E_\Db^p$ generated by the subsets 
\[
\rho_{1,2}(G_{\bQ_p}) \subset E_\Db^p \quad \text{ and } \quad (\rho_{1,1} - \kcyc^{-1} \otimes_{\bZ_p} R^p_\Db)(I_{\bQ_p}) \subset E_\Db^p. 
\]

\begin{lem}
\label{lem:no idem dependence}
If $(e_1,e_2)$ and $(e_1',e_2')$ are two lifts of idempotents, then $J^{\ord*}(e_1,e_2)=J^{\ord*}(e_1',e_2')$.
\end{lem}

\begin{proof}
By symmetry, it suffices to show that $J^{\ord*}(e_1',e_2') \subset J^{\ord*}(e_1,e_2)$. For the remainder of the proof, we set $J^{\ord*}=J^{\ord*}(e_1,e_2)$ and $E'=E^p_\Db/J^{\ord*}$. We write $\cA_{i,j} = e_jE^p_{\Db}e_i$ and $\rho_{i,j} = e_j\rho e_i$ and, similarly, $\cA_{i,j}' = e'_jE^p_{\Db}e'_i$ and $\rho'_{i,j} = e'_j\rho e'_i$.  Using Lemma \ref{lem:lifts_conj}, we fix $u \in E^\times$ such that $e'_i = ue_i u^{-1}$ for $i = 1,2$.

Writing $\cJ \subset R^p_\Db$ for the reducibility ideal of $D^p_\Db$, we claim that $\cJ \cdot E^p_\Db \subset J^{\ord*}$. This follows from Lemma \ref{lem:G-to-E}(3), as $\rho_{1,2}(G_{\Q_p})$ generates $\cA_{1,2} \subset E^p_\Db$ as an $R^p_\Db$-module. We know that $\cA_{1,2} \cdot \cA_{2,1}$ equals $\cJ$ in $\cA_{1,1} \cong R^p_\Db$ and $\cA_{2,1} \cdot \cA_{1,2}$ equals $\cJ$ in $\cA_{2,2} \cong R^p_\Db$, so altogether we have 
\[
\cJ \cdot E^p_\Db \subset (\cA_{1,2} \cdot \cA_{2,1} + \cA_{2,1} \cdot \cA_{1,2}) \cdot E^p_\Db \subset E^p_\Db \rho_{1,2}(G_{\Q_p}) E^p_\Db \subset J^{\ord*}
\]
as desired. 

We abbreviate $E^p_\Db/\cJ:=E^p_\Db/(\cJ \cdot E^p_\Db)$. Let $\overline{\cA}_{1,2}, \overline{\cA'}_{1,2} \subset E^p_\Db/\cJ$ denote the images of $\cA_{1,2}$ and $\cA'_{1,2}$, respectively, in the quotient. The key observation is that $\overline{\cA}_{1,2}$ is a two-sided ideal of $E^p_\Db/\cJ$, which follows from examining multiplication in a GMA under the implication $\overline{\cA}_{1,2} \cdot \overline{\cA}_{2,1} = \overline{\cA}_{2,1} \cdot \overline{\cA}_{1,2} = 0$ of Lemma \ref{lem:G-to-E}. Now check 
\[
\overline{\cA'}_{1,2} = (ue_2u^{-1})(E^p_\Db/\cJ)(u e_1 u^{-1}) = ue_2(E^p_\Db/\cJ) e_1 u^{-1} = u \overline{\cA}_{1,2} u^{-1} = \overline{\cA}_{1,2}. 
\]
Since $\cA_{1,2}$ maps to $0$ in $E'$ and $\cJ \cdot E^p_\Db \subset J^{\ord*}$, this implies that $\cA'_{1,2}$ maps to $0$ in $E'$.

Finally we claim that $(\rho'_{1,1} - \kcyc^{-1} \otimes_{\bZ_p} R_\Db)(I_{\bQ_p})$ is in $J^{\ord*}$, or, equivalently, its image in $E'$ vanishes. Let $\gamma \in I_{\Q_p}$ and let $x=\rho(\gamma) - \kcyc^{-1}(\gamma)$. Since 
\[
\rho'_{1,1}(\gamma) - \kcyc^{-1}(\gamma)=ue_1 u^{-1} xue_1 u^{-1}
\]
it suffices to show that $e_1 u^{-1} xue_1$ maps to $0$ in $E'$. However, since $\cA_{1,2}$ maps to $0$ in $E'$, we see that the map $E' \to E'$ given by $y \to e_1 y e_1$ is an algebra homomorphism with commutative image. Hence we have $e_1 u^{-1} xue_1 = e_1 x e_1 = 0$ in $E'$. 
\end{proof}

We will simply write $J^{\ord*}$ for the ideal $J^{\ord*}(e_1,e_2) \subset E_{\Db}^p$ for some choice of lift of idempotents $(e_1,e_2)$. By Lemma \ref{lem:no idem dependence}, $J^{\ord*}$ does not depend on this choice.

Now we check that $J^{\ord*}$ has the correct property. 
\begin{lem}
\label{lem:Jord* key property}
Let $(A, (E,D), \rho : G_{\bQ_p} \ra E^\times)$ be a Cayley-Hamilton representation of $G_{\bQ_p}$ with residual pseudorepresentation $\Db$. Then $\rho$ is ordinary if and only if $J^{\ord*}$ maps to zero under the induced map of Cayley-Hamilton algebras $(E^p_\Db, D^p_\Db) \ra (E,D)$. 
\end{lem}
\begin{proof}
If $J^{\ord*}$ maps to zero in $E$, then we know that $\rho$ is ordinary with respect to any lift of idempotents in $E$ that are in the image under $(E^p_\Db, D^p_\Db) \ra (E,D)$ of a lift of idempotents in $E^p_\Db$. 

Conversely, assume $\rho$ is ordinary with respect to some lift of idempotents $(e_1, e_2)$ in $E$. To prove that $J^{\ord*}$ maps to zero in $E$, we first construct a lift of idempotents $(e_1^*,e_2^*)$ in $E^p_{\Db}$ and let $(e_1',e_2')$ be the image in $E$. Then we will show that $\rho$ is ordinary with respect to $(e_1',e_2')$.

Choose $\delta \in G_{\Q_p}$ such that $\omega(\delta) \neq \theta(\delta)$ (the assumptions of \S \ref{subsec:setup} on $\theta$ imply that $\theta|_{G_{\Q_p}} \ne \omega|_{G_{\Q_p}}$). Let $D^{p,\red}_\Db=D^p_\Db \otimes_{R_\Db^p} R_\Db^p/\cJ$, $E_\Db^{p,\red}=E_\Db^{p}\otimes_{R_\Db^p} R_\Db^p/\cJ$ and $\rho^{p,\red}_\Db =\rho^p_{\Db} \otimes_{R_\Db^p} R_\Db^p/\cJ$, where $\cJ \subset R_\Db^p$ is the reducibility ideal. Then we must have $D^{p,\red}_\Db = \psi(\tilde{\chi}_1 \oplus  \tilde{\chi}_2)$, where $\tilde{\chi}_1,\tilde{\chi}_2$ are characters deforming $\omega^{-1}$ and $\theta^{-1}$, respectively. By Lemma \ref{lem:distinguished gives lifts}(1), there is a lift of idempotents $(e_1^{*,\red},e_2^{*,\red})$ in $E^{p,\red}_\Db$ given by 
\[
e_1^{*,\red} = \frac{\rho^{p,\red}_\Db(\delta)-\tilde{\chi}_2(\delta)}{\tilde{\chi}_1(\delta)- \tilde{\chi}_2(\delta)}, e_2^{*,\red}=1-e_1^{*,\red}.
\]
Now we let $e_1^*,e_2^* \in E^p_{\Db}$ be any complete set of primitive idempotents lifting $e_1^{*,\red},e_2^{*,\red}$ (such lifts exist because $\cJ \cdot E_\Db^p \subset \Jac(E_\Db^p)$ and $E^p_\Db$ is semiperfect).

Now we write $E$ and $\rho$ as
\[
E=\ttmat{A}{A_{1,2}}{A_{2,1}}{A}, ~ \rho = \ttmat{\chi_1}{0}{\rho_{2,1}}{\chi_2}
\]
with respect to the given idempotents $(e_1, e_2)$, where $\chi_1, \chi_2:G \to A^\times$ are characters that deform $\omega^{-1}$ and $\theta^{-1}$, respectively. We see that $\rho$ is reducible, so $E_\Db^p \to E$ factors through $E_\Db^{p,\red}$. By Lemma \ref{lem:distinguished gives lifts}(2), we have $\chi_i = \tilde{\chi}_i \otimes_{R_\Db} A$. 

With this notation, we see that
\[
e_1' = \frac{\rho(\delta)-\chi_2(\delta)}{\chi_1(\delta) - \chi_2(\delta)} = \ttmat{1}{0}{x}{0}, ~ e_2'=1-e_1' = \ttmat{0}{0}{-x}{1},
\]
where $x= \frac{\rho_{2,1}(\delta)}{\chi_1(\delta) - \chi_2(\delta)}$, and where $e_1',e_2'$ are the respective images of $e_1^*, e_2^*$. We easily compute that $e_1'\rho e_2'=0$ and $e_1'\rho e_1' =\chi_1 e_1'$, so $\rho$ is also ordinary with this choice of lift of idempotents. This implies that $J^{\ord*}(e_1^*,e_2^*) \subset E_\Db^p$ maps to zero in $E$, so we are done by Lemma \ref{lem:no idem dependence}.
\end{proof}

Now we construct a universal ordinary Cayley-Hamilton representation of the global Galois group $G_{\bQ,S}$. We could do the same for $G_{\Q_p}$ but we omit this. We implicitly use the map of Cayley-Hamilton algebras $(E^p_\Db, D^p_\Db) \ra (E_\Db, D^u_\Db)$, arising from restriction from $G_{\bQ,S}$ to $G_{\Q_p}$, in what follows. In particular, we write $J^{\ord*}$ for its image under $E^p_\Db \ra E_\Db$.  

\begin{defn}
\label{defn:univ_ord_objs}
Let $J_{R_\Db}^\ord \subset R_\Db$ be the ideal generated by the subsets $\Tr_{D^u_\Db}(J^{\ord*})$ and $D^u_\Db(J^{\ord*})$, and let $J^\ord$ be the two-sided ideal of $E_\Db$ generated by $J^{\ord*}$ and $J^\ord_{R_\Db}$. Let $E^\ord_\Db := E_\Db/J^\ord$ and let $R^\ord_\Db = R_\Db/J_{R_\Db}^\ord$.
\end{defn}

\begin{lem} 
\label{lem:ord_setup}
\begin{enumerate}[leftmargin=2em]
\item The kernel of the composite $R_\Db \to E_\Db \to E^\ord_\Db$ is $J_{R_\Db}^\ord$. 
\item $D^u_\Db : E_\Db \ra R_\Db$ descends to a unique pseudorepresentation 
\[
D^\ord_\Db : E^\ord_\Db \lra R^\ord_\Db 
\]
which is Cayley-Hamilton. 
\end{enumerate}
\end{lem}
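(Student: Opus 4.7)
The plan is to prove part (2) first and then derive part (1) as a formal consequence.

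For part (2), I base-change $D^u_\Db$ along the quotient $R_\Db \onto R^\ord_\Db$ to form the pseudorepresentation $D' := D^u_\Db \otimes_{R_\Db} R^\ord_\Db : \bar E \to R^\ord_\Db$, where $\bar E := E_\Db / J^\ord_{R_\Db} E_\Db$ (Examples \ref{exmps: pseudo basic exmps}(5)). Writing $\bar J$ for the image of $J^\ord$ in $\bar E$, we have $E^\ord_\Db = \bar E / \bar J$ because $J^\ord_{R_\Db} \subset J^\ord$. By Lemma \ref{lem: ker of pseudo}, the desired descent to $E^\ord_\Db$ will follow once I verify $\bar J \subset \ker(D')$, and since $\ker(D')$ is a two-sided ideal while $\bar J$ is generated as such by the image of $J^{\ord *}$, this reduces to showing that every $x \in J^{\ord *}$ maps into $\ker(D')$.

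To accomplish this, I exploit that $J^{\ord *}$ is a two-sided ideal of $E_\Db$: for any $x \in J^{\ord *}$ and $y \in E_\Db$, the product $xy$ lies in $J^{\ord *}$, so by the very construction of $J^\ord_{R_\Db}$ both $\Tr_{D^u_\Db}(xy)$ and $D^u_\Db(xy)$ lie in $J^\ord_{R_\Db}$. To extend this to $y \in E_\Db \otimes_{R_\Db} B$ for an arbitrary commutative $R^\ord_\Db$-algebra $B$, I use that the trace is $R_\Db$-linear (so $\Tr_B(xy)$ is a $B$-linear combination of elements in the image of $J^\ord_{R_\Db}$, which vanishes in $B$), and that $D^u_\Db$ is multiplicative (so $D^u_{\Db,B}(xy) = D^u_\Db(x) \cdot D^u_{\Db,B}(y)$ vanishes because $D^u_\Db(x) \in J^\ord_{R_\Db}$ becomes $0$ in $B$). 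Since $D^u_\Db$ has degree $2$, these are the only relevant characteristic polynomial coefficients, so $\bar x \in \ker(D')$. The resulting descended polynomial law $D^\ord_\Db$ then inherits multiplicativity, degree $2$, and the Cayley-Hamilton identity from $D^u_\Db$ since each is formulated on characteristic polynomials, which pass to the quotient; uniqueness is automatic from the surjectivity of $E_\Db \onto E^\ord_\Db$.

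Part (1) is then formal. The inclusion $J^\ord_{R_\Db} \subset \ker(R_\Db \to E^\ord_\Db)$ is evident from $J^\ord_{R_\Db} \subset J^\ord$ and shows that $R_\Db \to E^\ord_\Db$ factors through $R^\ord_\Db$. By part (2), the resulting map $R^\ord_\Db \to E^\ord_\Db$ is the coefficient-ring structure map of the pseudorepresentation $D^\ord_\Db$, hence injective by Lemma \ref{lem:str-inj}. Therefore $\ker(R_\Db \to E^\ord_\Db) = \ker(R_\Db \onto R^\ord_\Db) = J^\ord_{R_\Db}$.

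The main obstacle is the calculation at the heart of part (2): showing that for $x \in J^{\ord *}$ and $y$ in an arbitrary base change of $E_\Db$, the products $xy$ satisfy $\Tr_{D^u_\Db}(xy)$ and $D^u_\Db(xy)$ in the image of $J^\ord_{R_\Db}$. This relies in an essential way on $J^{\ord *}$ being a two-sided ideal (so $xy$ remains in $J^{\ord *}$) together with the polynomial-law structure of $D^u_\Db$, specifically the $R_\Db$-linearity of the trace and the multiplicativity of the determinant.
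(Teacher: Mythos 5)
Your proposal is correct and follows essentially the same route as the paper's proof: both reduce part (1) to the injectivity of the structure map via Lemma \ref{lem:str-inj}, and both establish part (2) by verifying the kernel condition of Lemma \ref{lem: ker of pseudo}, using precisely the combination of $J^{\ord*}$ being a two-sided ideal, linearity of the trace, and multiplicativity of the determinant (with homogeneity disposing of the scalar generators $J^\ord_{R_\Db}$). The only cosmetic difference is that you base-change $E_\Db$ to $R^\ord_\Db$-coefficients before checking the kernel inclusion, whereas the paper keeps everything over $R_\Db$ and checks $J^\ord \subset \ker(D')$ directly; this does not change the substance of the argument.
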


\begin{proof}
For (1), it is clear that the kernel contains $J_{R_\Db}^\ord$. To show that it is not larger, it suffices to show that the resulting map $R_\Db^\ord = R_\Db/J_{R_\Db}^\ord \to E^\ord_\Db$ is injective. This will follow from (2) by Lemma \ref{lem:str-inj}.

Let $D': E_\Db \to  R^\ord_\Db$ be the composite of $D^u_\Db : E_\Db \ra R_\Db$ with $R_\Db \rsurj R^\ord_\Db$; it is a multiplicative polynomial law of degree $2$. To prove (2), we will apply Lemma \ref{lem: ker of pseudo} to show that $D'$ factors through $E^\ord_\Db$. By Lemma \ref{lem: ker of pseudo}, it is the same to show that $J^\ord \subset \ker(D')$, where $\ker(D')$ is defined in \eqref{eq:ker_defn}. That is, we must prove that $\Tr_{D',B}(xy) = 0$ and $D'_B(xy) = 0$ for all $x \in J^\ord$ and all $y \in E_\Db \otimes_{R_\Db} B$ for all commutative $R_\Db$-algebras $B$. Here $\Tr_{D',B}=(\Tr_{D'})_B: E_{\bar{D}}\otimes_{R_\Db} B\to B$ is the function associated to the polynomial law $\Tr_{D'}$.

Firstly, we observe that it suffices to restrict $x$ to elements of $J^{\ord *}$. Indeed, the additional generators $J^{\ord}_{R_\Db}$ of $J^\ord$ are scalars, so that they can be factored out of $\Tr_{D'}$ and $D'$ by homogeneity. Then, they are clearly sent to $0$ in $R^\ord_\Db$ by $\Tr_{D'}$ and $D'$ by definition of $R^\ord_\Db$. 

For $x \in J^{\ord *}$, the multiplicativity of $D'$ immediately implies that $D'_{B}(xy) = 0$ for any $y\in E_\Db \otimes_{R_\Db} B$. Indeed $D'_B(x \otimes 1_B)$ is $D'_{R_\Db}(x) \otimes 1_B \in R^\ord_\Db \otimes_{R_\Db}  B$, which is zero by construction of $R^\ord_\Db$. 

Likewise, to prove that $\Tr_{D',B}(xy) = 0$, it suffices to consider the case of a pure tensor $y = z \otimes b \in E_\Db \otimes_{R_\Db} B$ because the trace is linear. Because $J^{\ord *} \subset E_\Db$ is a two-sided ideal, we have $xz \in J^{\ord *}$, which makes $\Tr_{D',B}(xy) = \Tr_{D',R_\Db}(xz) \otimes b = 0 \otimes b = 0$ in $R^\ord_\Db \otimes_{R_\Db} B$. 

By Lemma \ref{lem: ker of pseudo} there is a unique degree $2$ multiplicative polynomial law  $D^\ord_\Db: E_\Db^\ord= E_\Db/J^\ord \to R^\ord_\Db$ over $R_\Db$ such that $D'$ factors through $D^\ord_\Db$. Since the $R_\Db$-structure on $E_\Db^\ord$ factors through $R_\Db^\ord$, $D^\ord_\Db$ can also be thought of as a degree $2$ multiplicative polynomial law over $R_\Db^\ord$ (see Example \ref{exmps: pseudo basic exmps}(6)). In other words, $D^\ord_\Db$ is a pseudorepresentation. In addition, it is clear that $D^\ord_\Db$ is Cayley-Hamilton because $D^u_\Db$ is Cayley-Hamilton. 
\end{proof}

Now that we have shown that $(E^\ord_\Db, D^\ord_\Db)$ is a Cayley-Hamilton algebra, we justify why $(E^\ord_\Db, D^\ord_\Db)$ deserves to be called the universal ordinary Cayley-Hamilton algebra. 

\begin{prop}
\label{prop:ord_C-H}
A Cayley-Hamilton representation of $G_{\bQ,S}$ valued in the Cayley-Hamilton algebra $(E,D)$ with residual pseudorepresentation $\Db$ is ordinary if and only if the associated Cayley-Hamilton representation $(E_\Db, D^u_\Db) \ra (E,D)$ factors through $(E^\ord_\Db, D^\ord_\Db)$. 
\end{prop}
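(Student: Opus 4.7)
The key preliminary observation is that the GMA decomposition \eqref{eq:CH-GMA} of $E_\Db$ passes functorially to $(E,D)$ via the universal Cayley-Hamilton morphism
\[
f: (E_\Db \otimes_{R_\Db,g} A,\, D^u_\Db \otimes_{R_\Db,g} A) \lra (E,D)
\]
provided by Proposition \ref{prop:univ_C-H}(1). Indeed, since $A$ is Henselian local, Theorem \ref{thm:CH_is_GMA} endows $E$ with a GMA structure, and by the uniqueness of idempotent lifts invoked in Remark \ref{rem:no_extn}, the idempotents $e_1, e_2 \in E_\Db$ have unique images in $E$ realizing this structure. Writing $\bar f : E_\Db \to E$ for the composite of the canonical map $E_\Db \to E_\Db \otimes_{R_\Db,g} A$ with $f$, this compatibility identifies the $(i,j)$-coordinate of $\rho = \bar f \circ \rho^u$ in $E$ with $\bar f \circ \rho^u_{i,j}$. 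In particular, the two ordinary conditions on $(A,(E,D),\rho)$ become exactly
\[
\bar f\bigl(\rho^u_{1,2}(G_{\bQ_p})\bigr) = 0 \quad \text{and} \quad \bar f\bigl((\rho^u_{1,1} - \kcyc^{-1} \otimes_{\bZ_p} R_\Db)(I_{\bQ_p})\bigr) = 0,
\]
which are the statements that $\bar f$ annihilates the two distinguished generating subsets of the two-sided ideal $J^{\ord*} \subset E_\Db$ of Definition \ref{defn:univ_ord_objs}.

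The \emph{if} direction is then immediate: if the Cayley-Hamilton morphism factors through $(E^\ord_\Db, D^\ord_\Db)$ then $\bar f$ annihilates $J^\ord \supseteq J^{\ord*}$, so both displayed conditions hold and $\rho$ is ordinary.

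For the \emph{only if} direction, assume $\rho$ is ordinary. Then $\bar f$ annihilates the generators of $J^{\ord*}$, and since $\bar f$ is a ring homomorphism it annihilates the entire two-sided ideal $J^{\ord*}$. To descend $\bar f$ to $E^\ord_\Db = E_\Db/J^\ord$, it remains to check that $g: R_\Db \to A$ annihilates the scalar part $J^\ord_{R_\Db}$, which by Definition \ref{defn:univ_ord_objs} is generated by $D^u_\Db(x)$ and $\Tr_{D^u_\Db}(x)$ for $x \in J^{\ord*}$. But since $f$ is a morphism of Cayley-Hamilton $A$-algebras we have $D \circ f = D^u_\Db \otimes_{R_\Db,g} A$ and $\Tr_D \circ f = \Tr_{D^u_\Db} \otimes_{R_\Db,g} A$, so for any $x \in J^{\ord*}$,
\[
g(D^u_\Db(x)) = D(\bar f(x)) = D(0) = 0, \qquad g(\Tr_{D^u_\Db}(x)) = \Tr_D(\bar f(x)) = 0.
\]
Hence $g$ factors through $R^\ord_\Db$ and $\bar f$ factors through $E^\ord_\Db$; the induced map is a morphism of Cayley-Hamilton $A$-algebras by the uniqueness assertion of Lemma \ref{lem:ord_setup}(2).

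The only nontrivial point is the matched GMA decomposition between $E_\Db$ and $E$ via $\bar f$; this is the main ``obstacle,'' though it is already handled by our residually multiplicity-free hypothesis on $\Db = \psi(\omega^{-1} \oplus \theta^{-1})$ together with the uniqueness of idempotent lifts (Remark \ref{rem:no_extn}). Once this is established, everything else is a straightforward unwinding of the definitions of $J^{\ord*}$ and $J^\ord_{R_\Db}$.
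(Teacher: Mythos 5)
Your proof is correct and takes essentially the same route as the paper's: both directions hinge on identifying the ordinary condition with the containment $J^{\ord*} \subset \ker(\bar f)$ using the GMA compatibility of the universal morphism, then using $D \circ f = D^u_\Db \otimes_{R_\Db,g} A$ to push the scalar generators into $\ker(g)$. Your write-up is somewhat more explicit than the paper about why the GMA/idempotent data transfers to $E$ via $\bar f$, and somewhat more compressed at the very end (the paper spells out, via the commutative square $R_\Db \to A$, $E_\Db \to E$ with vertical inclusions from Lemma \ref{lem:str-inj}, why $J^\ord_{R_\Db} \subset \ker(g)$ actually forces $J^\ord_{R_\Db} \subset \ker(\bar f)$ so that the whole ideal $J^\ord$ lies in $\ker(\bar f)$; you assert this factoring without that brief argument), but these are expository rather than mathematical differences.
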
 

\begin{proof}
Let $(A, (E,D), \rho)$ be a Cayley-Hamilton representation of $G_{\bQ,S}$ with residual pseudorepresentation $\Db$. By Proposition \ref{prop:univ_C-H}, there is a local homomorphism $g: R_\Db \to A$ and an $A$-algebra homomorphism $f: E_\Db\otimes_{R_\Db}A \to E$ such that $D^u_\Db \otimes_{R_\Db} A= D \circ f$ and such that $\rho = f \circ \rho^u$.  Let $f':E_{\Db} \to E$ be the composite of the natural map $E_{\Db} \to E_\Db\otimes_{R_\Db}A$ with $f$. By Lemma \ref{lem:Jord* key property}, $(A, (E,D), \rho)$ is ordinary if and only if $J^{\ord *} \subset \ker(f')$.

Suppose that $(E_\Db, D^u_\Db) \ra (E,D)$ factors through $(E^\ord_\Db, D^\ord_\Db)$. Then 
\[
\ker(f') \supset \ker(E_\Db \to E^\ord_\Db) = J^\ord \supset J^{\ord *}
\]
and so $(A, (E,D), \rho)$ is ordinary.

Conversely, suppose that $(A, (E,D), \rho)$ is ordinary and so $J^{\ord *} \subset \ker(f')$. To show that $(E_\Db, D^u_\Db) \ra (E,D)$ factors through $(E^\ord_\Db, D^\ord_\Db)$, we need to show that $\ker(f') \supset J^\ord$ and $\ker(g) \supset J_{R_\Db}^\ord$. Let $x \in J^{\ord*}$. Then, by assumption, we have $0=f'(x)=f(x\otimes 1)$. Since $D^u_\Db \otimes_{R_\Db} A= D \circ f$, we have 
\[
(D^u_\Db \otimes_{R_\Db} A)(x \otimes 1) = D(f'(x))=D(0)=0.
\] 
Then the commutative square
\[
\xymatrix{
E_\Db \ar[rr]^-{D_\Db^u} \ar[d] & & R_\Db \ar[d]^-g \\
E_\Db \otimes_{R_\Db} A \ar[rr]^-{D^u_\Db \otimes_{R_\Db} A} & & A 
}
\]
implies that $D_\Db^u(x) \in \ker(g)$. 

Now, the equality $D^u_\Db \otimes_{R_\Db} A= D \circ f$ also implies that $Tr_{D^u_\Db \otimes_{R_\Db} A} = Tr_D \circ f$, and so a similar argument shows that $Tr_{D_\Db^u}(x) \in \ker(g)$. This shows that $D_\Db^u(J^{\ord *}) \subset \ker(g)$ and $Tr_{D_\Db^u}(J^{\ord *}) \subset \ker(g)$, so $J_{R_\Db}^\ord \subset \ker(g)$.

Consider the commutative square
\[
\xymatrix{
R_\Db \ar[r]^{g} \ar[d] & A \ar[d] \\
E_\Db \ar[r]^{f'} & E
}
\]
Then $\ker(f') \supset \ker(g)$ (the downward arrows are inclusions by Lemma \ref{lem:str-inj}) and so $J_{R_\Db}^\ord \subset \ker(f')$ as well. Since we already assume $J^{\ord *} \subset \ker(f')$, this implies $J^\ord \subset \ker(f')$. This completes the proof.
\end{proof}

\subsection{The ordinary pseudodeformation ring}
\label{subsec:RDb}

The guiding principle is that a pseudorepresentation should be called ordinary if it is induced by some ordinary representation. This notion suffices when the coefficient ring is a field, but requires rescuing over general coefficient rings: we replace ``representation'' with ``Cayley-Hamilton representation.'' We remark that Cho and Vatsal \cite{CV2003} also defined a notion of ordinary pseudorepresentation. 

\begin{defn}
\label{defn:ord_PsR}
Let $A$ be a Noetherian local $W(\bF)$-algebra and let $D: G_{\bQ,S} \ra A$ be a pseudorepresentation deforming $\Db$. Then we call $D$ \emph{ordinary} if there exists an $A$-valued ordinary Cayley-Hamilton representation $(A, (E, D_E), \rho: G_{\bQ,S} \ra E^\times)$ such that $D$ is equal to the composition $D_E \circ \rho$. The \emph{ordinary pseudodeformation functor} $\mathrm{PsDef}_\Db^\ord : \hat\cC_{W(\F)} \ra \mathrm{Sets}$ is 
\[
A \mapsto \{\text{pseudodeformations } D \text{ of } \Db \text{ over } A \mid D \text{ is ordinary}\}.
\]
A pseudorepresentation $D$ valued in a finite extension $F/\bQ_p$ with residual pseudorepresentation $\Db$ has an ordinary pseudodeformation functor $\mathrm{PsDef}_D^\ord : \cC_F \ra \mathrm{Sets}$ defined similarly. 
\end{defn}

\begin{exmp}
\label{exmp:ord_PsR_field} 
Let $F/\bQ_p$ be a finite extension and let $D: G_{\Q,S} \to F$ be a deformation of $\Db$. Theorem \ref{thm:PsR_ss} (with Remark \ref{rem:no_extn}) explains that there exists a unique semi-simple 2-dimensional $F$-linear representation $\rho^{ss}_D$ of $G_{\bQ,S}$ inducing $D$, inheriting idempotents from $E_\Db$ as explained in Example \ref{exmp:classical_case}. If $\rho^{ss}_D$ is ordinary (as defined in Definition \ref{defn:ord_C-H}), then $D$ is ordinary by definition. We claim that if $D$ is ordinary, then $\rho^{ss}_D$ is ordinary. Indeed, if $D$ is ordinary, then there is a generalized matrix $F$-algebra $(E,\cE)$ of the form $E=(\begin{smallmatrix}
F & B \\ C & F
\end{smallmatrix})$, and a homomorphism $\rho_E: G \to E^\times$ such that $D=D_\cE \circ \rho_E$. We know that $\rho_{1,2}(G_{\Q,S}) \cdot \rho_{2,1}(G_{\Q,S}) = 0$ if and only if $D$ is reducible (Proposition \ref{prop:red_ideal}). For convenience, replace $B$ by the subspace generated by $\rho_{1,2}(G_{\Q,S})$, and likewise for $C$, so we have $B \cdot C = 0$. 

First consider the case where $D$ is reducible. Then, since $B \cdot C=0$, the map $\pi: E \to M_2(F)$ given by
\[
\ttmat{a}{b}{c}{d} \mapsto \ttmat{a}{0}{0}{d}
\]
is an algebra homomorphism. Then $\pi \circ \rho_E$ is a semi-simple $F$-valued representation, and it is clearly ordinary. Moreover, since $D=D_\cE \circ \rho_E$, we have $D=\psi(\pi \circ \rho_E)$. By the uniqueness of Theorem \ref{thm:PsR_ss}, this implies that $\pi \circ \rho_E=\rho^{ss}_D$, and so $\rho^{ss}_D$ is ordinary.

Now assume that $D$ is irreducible. Then \cite[Defn.-Prop.\ 2.18]{chen2014} implies that there is an $F$-algebra isomorphism $\pi: E \simeq M_2(F)$. Again, $\pi \circ \rho_E$ is an ordinary semi-simple $F$-valued representation, and again $\pi \circ \rho_E=\rho^{ss}_D$.
\end{exmp}

\begin{rem}
This example shows why it is vexing to define ordinary pseudorepresentations of $G_{\bQ_p}$ if one wants the definition of an ordinary $G_{\bQ,S}$-pseudorepresentation to be ``$D$ such that $D\vert_{G_{\bQ_p}}$ is ordinary.'' Indeed, since many references (for example \cite[Intro.]{SW1999}) define ordinary for representations $\rho$ in terms of $\rho|_{I_p}$, where $I_p$ is the inertia group, one might hope to define ordinary $I_p$-pseudorepresentations. However, one runs into many difficulties in trying to do this sensibly. We illustrate these difficulties with an example based on a suggestion of Patrick Allen.

If $\rho$ is the representation attached to a Hida family, the weight $k$ specialization $\rho_k$ has the property that $(\rho_k)|_{I_p}$ is an extension of $\kcyc^{-1}$ by $\kcyc^{k-2}$ (see \S \ref{subsec: H prelims} for our normalizations). In particular $\psi((\rho_k)|_{I_p})=\psi(\kcyc^{k-2} \oplus \kcyc^{-1})$. Then, letting $\rho'=\rho_{2-p} \otimes \kcyc^{p-1}$, we see that $\rho'$ has the same residual representation as $\rho_p$, and that $\psi((\rho_p)|_{I_p})=\psi(\rho'|_{I_p})$. 

There are two reasons why we do not want to think of $\psi(\rho')$ as an ``ordinary weight $p$ pseudorepresentation" of $G_\bQ$. The deformation-theoretic reason is clear: if we construct the weight $p$ pseudodeformation ring with the condition that $D|_{I_p} = \psi(\kcyc^{p-2} \oplus \kcyc^{-1})$, this will map onto the weight $p$ ordinary Hecke algebra, but the map will not be an isomorphism because of the existence of $\psi(\rho')$. The Iwasawa-theoretic reason is visible in the case that $\rho$ is residually reducible, so that the $\rho_k$ produce extensions of characters by Ribet's method. One often wants to study extensions that are ``geometric", and, for weight $k\ge2$, these extensions are geometric. However, since $\rho'$ comes from a form of negative weight, the corresponding extension will be transcendental in nature.
\end{rem}

In light of Remark \ref{rem:H-ord}, the following proof of the representability of $\mathrm{PsDef}_\Db^\ord$ implies Theorem D of the introduction. Recall $R^\ord_\Db$ from Definition \ref{defn:univ_ord_objs}. 
\begin{thm}
\label{thm:ord_psdef_ring}
For $\Db = \psi(\omega^{-1} \oplus \theta^{-1})$, the quotient $R^\ord_\Db$ of $R_\Db$ represents $\mathrm{PsDef}_\Db^\ord$ with universal object $D^\ord_\Db: G_{\bQ,S} \ra R^\ord_\Db$. Given some $p$-adic ordinary pseudorepresentation $D: G_{\bQ,S} \ra F$ deforming $\Db$, where $\m \subset R^\ord_\Db[1/p]$ is the maximal ideal induced by $D$, then $\mathrm{PsDef}_D^\ord$ is pro-represented by $R^\ord_\Db[1/p]^\wedge_\m \otimes_{k(\m)} F$.
\end{thm}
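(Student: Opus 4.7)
The plan is to leverage the universal property of the ordinary Cayley-Hamilton algebra $E^\ord_\Db$ established in Proposition \ref{prop:ord_C-H} and descend it formally from the Cayley-Hamilton level to the pseudorepresentation level. The conceptual difficulty of defining ``ordinary'' in a way that yields a ring-theoretic quotient (rather than merely a subfunctor) has already been handled in Definition \ref{defn:univ_ord_objs} and Lemma \ref{lem:ord_setup}; what remains is bookkeeping.

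First I would verify that $D^\ord_\Db$ is itself an ordinary pseudorepresentation, so that it qualifies as the universal object. Let $\rho^{u,\ord}: G_{\bQ,S} \to (E^\ord_\Db)^\times$ denote the composite of the tautological map $\rho^u: G_{\bQ,S} \to E_\Db^\times$ from Proposition \ref{prop:univ_C-H} with the quotient $E_\Db \twoheadrightarrow E^\ord_\Db$. By Lemma \ref{lem:ord_setup}, the triple $(R^\ord_\Db, (E^\ord_\Db, D^\ord_\Db), \rho^{u,\ord})$ is a Cayley-Hamilton representation of $G_{\bQ,S}$ with residual pseudorepresentation $\Db$ and induces $D^\ord_\Db$. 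By Proposition \ref{prop:ord_C-H}, this representation is ordinary if and only if the associated morphism $(E_\Db, D^u_\Db) \to (E^\ord_\Db, D^\ord_\Db)$ factors through $(E^\ord_\Db, D^\ord_\Db)$, which is tautological. Hence $D^\ord_\Db \in \mathrm{PsDef}_\Db^\ord(R^\ord_\Db)$.

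Next I would check the universal property. Let $A \in \hat{\cC}_{W(\bF)}$ and let $D: G_{\bQ,S} \to A$ be an ordinary pseudodeformation of $\Db$. By Definition \ref{defn:ord_PsR}, there is some ordinary Cayley-Hamilton representation $(A, (E, D_E), \rho)$ of $G_{\bQ,S}$ inducing $D$. Proposition \ref{prop:univ_C-H}(1) supplies a unique ring map $g: R_\Db \to A$ and a unique morphism $f: (E_\Db \otimes_{R_\Db} A, D^u_\Db \otimes_{R_\Db} A) \to (E, D_E)$ of Cayley-Hamilton $A$-algebras compatible with $\rho^u$ and $\rho$. Since this representation is ordinary, the proof of Proposition \ref{prop:ord_C-H} shows that $J^\ord_{R_\Db} \subset \ker(g)$, so $g$ descends uniquely to $\bar{g}: R^\ord_\Db \to A$ with $\bar{g}_* D^\ord_\Db = D$. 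The uniqueness of $\bar{g}$ is inherited from the uniqueness of $g$ guaranteed by Theorem \ref{thm:chen-PsR}(1): $g$ is determined by the pseudodeformation $D$ alone, independent of the choice of ordinary Cayley-Hamilton lift.

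For the second statement, I would observe that Chenevier's proof of Theorem \ref{thm:chen-PsR}(3) in \cite[\S3.1]{chen2014} goes through verbatim with $R^\ord_\Db$ replacing $R_\Db$. Concretely: any $A \in \cC_F$ is artinian and $F$-linear, and an ordinary pseudodeformation $G_{\bQ,S} \to A$ of $D$ arises from an ordinary pseudodeformation of $\Db$ (after selecting a lattice), hence by the first part of the theorem yields a local $W(\bF)$-algebra map $R^\ord_\Db \to A_0$ for some subring $A_0 \subset A$ with $A_0[1/p] = A$. This map lifts the chosen map $R^\ord_\Db \to F$ corresponding to $\m$, and because $A$ is $\m$-adically complete and $F$-linear, it extends uniquely to a continuous $F$-algebra map $R^\ord_\Db[1/p]^\wedge_\m \otimes_{k(\m)} F \to A$. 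The main obstacle --- showing that the ordinary condition cuts out a quotient ring and not merely a subfunctor --- was already surmounted in Lemma \ref{lem:ord_setup}, so there is nothing further to do here.
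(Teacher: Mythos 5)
Your proposal matches the paper's proof: both reduce the problem to Proposition \ref{prop:ord_C-H} (the characterization of ordinary Cayley-Hamilton representations as those factoring through $E^\ord_\Db$), establish the biconditional ``$D$ is ordinary iff $R_\Db \to A$ factors through $R^\ord_\Db$,'' and handle the $p$-adic case via Theorem \ref{thm:chen-PsR}(3). The only stylistic difference is that your handling of the second statement is somewhat vaguer (``after selecting a lattice'') than the paper, which simply runs the same equivalence over $\cC_F$ and then identifies $R^\ord_D \cong R^\ord_\Db \otimes_{R_\Db} R_D$ via \eqref{eq:residual_to_0}.
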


\begin{proof}
Let $D: G_{\bQ,S} \ra A$ of $\Db$ be an ordinary pseudodeformation of $\Db$. Then, by definition, there is an ordinary Cayley-Hamilton representation $(A, (E,D_E), \rho)$ such that $D= D_E \circ \rho$. Proposition \ref{prop:univ_C-H} supplies a morphism $(E_\Db, D^u_\Db) \ra (E,D_E)$ of Cayley-Hamilton algebras lying over the homomorphism $R_\Db \ra A$ corresponding to $D$. Because we have assumed that $\rho$ is ordinary, Proposition \ref{prop:ord_C-H} factors this morphism of Cayley-Hamilton algebras by $(E^\ord_\Db, D^\ord_\Db) \ra (E, D_E)$. This morphism includes the data of a homomorphism $R^\ord_\Db \ra A$, showing that $R_\Db \ra A$ factors through $R_\Db \rsurj R^\ord_\Db$ when $D$ is ordinary. 

On the other hand, if $D: G_{\bQ,S} \ra A$ is a pseudodeformation of $\Db$ such that the corresponding map $R_\Db \ra A$ factors through $R^\ord_\Db$, then the ordinary Cayley-Hamilton representation 
\[
(A, (E^\ord_\Db \otimes_{R^\ord_\Db} A, D^{\ord}_\Db \otimes_{R^\ord_\Db} A), (E_\Db \ra E^\ord_\Db \otimes_{R_\Db^\ord} A) \circ \rho^u)
\]
induces $D$, making $D$ ordinary. 

Now let $D: G \ra F$ be the $p$-adic pseudorepresentation of the statement. Applying Theorem \ref{thm:chen-PsR}, we observe that for any $A \in \cC_F$ and a pseudorepresentation $D_A : G \ra A$ deforming $D$, the same argument applies to show that $D_A$ is ordinary if and only if the induced map $R_\Db \ra A$ factors through $R^\ord_\Db$. The quotient $R^\ord_D$ of $R_D$ arising as $R^\ord_\Db \otimes_{R_\Db} R_D$ via the map \eqref{eq:residual_to_0} is the ordinary pseudodeformation ring for $D$. 
\end{proof}

\section{Galois cohomology} 
\label{sec:galois_cohom}
In this section, we compute some Galois cohomology groups that will be related, in \S\ref{sec: modularity}, to the universal ordinary Cayley-Hamilton algebra. We first review Iwasawa cohomology and Galois duality theory. The results and techniques of this section are well-known to experts. See also \cite[\S9]{FK2012}.

\subsection{Iwasawa cohomology} 

In this subsection, we let $\cL$ denote the ring $\Z_p\lb\Z_{p,N}^\times\rb$. We consider the $\cL[G_{\bQ,S}]$-modules $\cL^{\dia{-}}$ and $\cL^\#$ given as follows. As $\cL$-modules, $\cL^{\dia{-}}=\cL^\#=\cL$, and the Galois action on $\cL^{\dia{-}}$ (resp.\ $\cL^\#$) is given by the homomorphism
$$
G_{\bQ,S} \onto \Gal(\Q_\infty/\Q) \simeq \Z_{p,N}^\times \subset \cL^\times
$$
(resp.\ its inverse).

For a finitely generated $\Z_p$-module $M$ with continuous $G_{\bQ,S}$-action, the cohomology with coefficients in $\cL^\# \hat{\otimes}_{\bZ_p} M$ is known as the Iwasawa cohomology of $M$. It can be computed using the following version of Shapiro's lemma.

\begin{lem}
\label{lem: shapiros lem}
Let $M$ be a finitely generated $\Z_p$-module with continuous $G_{\bQ,S}$-action. Then there are isomorphisms
\[
H^i(\Z[1/Np], \cL^\# \hat{\otimes}_{\bZ_p} M) \cong \varprojlim_r H^i(\Z[1/Np,\zeta_{Np^r}], M)
\]
of $\cL$-modules.
\end{lem}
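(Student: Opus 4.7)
The plan is to reduce to the classical Shapiro's lemma at each finite level and then pass to the inverse limit. Let $G_r = \Gal(\bQ(\zeta_{Np^r})/\bQ) \cong (\bZ/Np^r\bZ)^\times$, and for each $r$ write $\cL_r := \bZ_p[G_r]$. Then $\cL \cong \varprojlim_r \cL_r$, and the analogous identification $\cL^\# \cong \varprojlim_r \cL_r^\#$ holds as $\cL[G_{\bQ,S}]$-modules, where $\cL_r^\#$ denotes $\cL_r$ with $G_{\bQ,S}$-action through the inverse of the canonical surjection $G_{\bQ,S} \onto G_r \subset \cL_r^\times$.

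First, I would apply Shapiro's lemma at each finite level $r$: since $G_r$ is a finite quotient, $\cL_r^\# \hat\otimes_{\bZ_p} M = \cL_r \otimes_{\bZ_p} M$ is (isomorphic as a $\bZ_p[G_{\bQ,S}]$-module to) the module induced from the open subgroup $G_{\bQ(\zeta_{Np^r}),S}$. Thus
\[
H^i\bigl(\bZ[1/Np],\, \cL_r \otimes_{\bZ_p} M\bigr) \;\cong\; H^i\bigl(\bZ[1/Np,\zeta_{Np^r}],\, M\bigr),
\]
functorially in $r$ with respect to the transition maps given by corestriction on the right and by the natural projections $\cL_{r+1} \onto \cL_r$ on the left (Norm elements handle the compatibility).

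Next, I would pass to the inverse limit over $r$. By definition $\cL^\# \hat\otimes_{\bZ_p} M = \varprojlim_r (\cL_r \otimes_{\bZ_p} M)$ as topological $\bZ_p[G_{\bQ,S}]$-modules, and continuous cohomology commutes with inverse limits of finitely generated $\bZ_p$-modules provided the Mittag-Leffler condition on the $R^1\varprojlim$ term holds. Here the transition maps $\cL_{r+1} \otimes M \onto \cL_r \otimes M$ are surjective (as the norm maps on group rings are surjective onto the quotient group ring), so each transition map on $H^i$ is a map of finitely generated $\bZ_p$-modules with dense image in the profinite topology; Mittag-Leffler is automatic and $R^1\varprojlim$ vanishes. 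Therefore
\[
H^i\bigl(\bZ[1/Np],\, \cL^\# \hat\otimes_{\bZ_p} M\bigr) \;\cong\; \varprojlim_r H^i\bigl(\bZ[1/Np],\, \cL_r \otimes_{\bZ_p} M\bigr),
\]
and combining with the finite-level Shapiro isomorphism gives the desired result.

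The only real delicate point is the interchange of cohomology with the inverse limit; the main obstacle, should one wish to be fully rigorous, is to verify that one is using the correct variant of ``continuous'' cohomology for the completed tensor product $\cL^\# \hat\otimes_{\bZ_p} M$, and that the transition maps on the finite-level cohomology groups are compatible with corestriction in a way that lets one identify the right-hand side with the standard Iwasawa cohomology. Once the formalism of Jannsen's continuous \'etale cohomology (or equivalently the derived completion framework) is invoked, both the Shapiro isomorphism at each level and the vanishing of $R^1\varprojlim$ are standard, and the lemma follows.
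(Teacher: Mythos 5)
Your proof sketch follows exactly the route the paper takes: the paper simply cites Lim's Lemma 5.3.1 (\cite{lim2012}), which packages precisely this combination of finite-level Shapiro's lemma and passage to the inverse limit, and you are supplying the details behind that citation.

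One step is not quite right as stated: surjectivity of the coefficient maps $\cL_{r+1}\otimes M\onto \cL_r\otimes M$ does not by itself give surjectivity (or even Mittag--Leffler) on the cohomology groups, and ``dense image in the profinite topology'' is not the point. The correct justification for the vanishing of $R^1\varprojlim$ on the $H^{i-1}$ terms is that each $H^{i-1}(\Z[1/Np,\zeta_{Np^r}],M)$ is a finitely generated $\Z_p$-module by the standard finiteness of Galois cohomology of number rings with restricted ramification, hence is compact in its $p$-adic topology; an inverse system of compact Hausdorff groups with continuous transition maps automatically has vanishing $R^1\varprojlim$. With that substitution the argument is complete and matches the cited source.
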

\begin{proof}
Follows from Shapiro's lemma -- see \cite[Lem.\ 5.3.1]{lim2012}, for example.
\end{proof}

In particular,  $H^i(\Z[1/Np], \cL^\# \hat{\otimes}_{\bZ_p} M)=0$ for $i>2$, since $\Z[1/Np,\zeta_{Np^r}]_{et}$ has cohomological dimension $2$.

When $M=\Z_p(1)$, we can compute the groups appearing on the right hand side using Kummer theory.

\begin{lem}\label{lem: cohomology and class groups lem}
For each $r\ge 0$, there is an isomorphism
$$
H^1(\Z[1/Np,\zeta_{Np^r}],\Z_p(1)) \cong \Z[1/Np,\zeta_{Np^r}]^\times \otimes_\bZ \Z_p
$$
of $\cL$-modules and an exact sequence 
$$
0 \to \Pic(\Z[1/Np,\zeta_{Np^r}])\{p\} \to H^2(\Z[1/Np,\zeta_{Np^r}],\Z_p(1)) \to \bigoplus_{v \in S} \Z_p \xrightarrow{\Sigma} \Z_p  \to 0,
$$
where ``$\{p\}$" means the $p$-power torsion.
\end{lem}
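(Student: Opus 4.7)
My plan is to derive both statements from the Kummer sequence
\[
1 \to \mu_{p^n} \to \G_m \xrightarrow{p^n} \G_m \to 1
\]
on the étale site of $X_r := \Spec \Z[1/Np, \zeta_{Np^r}]$, combined with finiteness of the Picard group and Albert--Brauer--Hasse--Noether for the Brauer group. Taking the long exact sequence and using $H^0(X_r, \G_m) = \Z[1/Np,\zeta_{Np^r}]^\times$, $H^1(X_r, \G_m) = \Pic(X_r)$, and $H^2(X_r, \G_m) = \Br(X_r)$, I would extract the two short exact sequences
\[
0 \to \Z[1/Np,\zeta_{Np^r}]^\times/p^n \to H^1(X_r,\mu_{p^n}) \to \Pic(X_r)[p^n] \to 0
\]
and
\[
0 \to \Pic(X_r)/p^n \to H^2(X_r,\mu_{p^n}) \to \Br(X_r)[p^n] \to 0,
\]
then pass to the inverse limit on $n$. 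All systems satisfy Mittag-Leffler because the units are finitely generated and $\Pic$ is finite.

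For the first statement, $\varprojlim_n (\Z[1/Np,\zeta_{Np^r}]^\times/p^n)$ is the $p$-adic completion $\Z[1/Np,\zeta_{Np^r}]^\times \otimes_\Z \Z_p$ of the finitely generated group. Because $\Pic(X_r)$ is finite, eventually $p^n$ annihilates its $p$-part, so the transition maps (multiplication by $p$) on $\Pic(X_r)[p^n]$ are eventually zero; hence $T_p \Pic(X_r) = 0$, yielding the claimed isomorphism for $H^1$.

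For the second statement, the same finiteness of $\Pic(X_r)$ gives $\varprojlim_n \Pic(X_r)/p^n = \Pic(X_r)\{p\}$. It remains to identify $T_p \Br(X_r)$. Using that $\Br(X_r)$ is the subgroup of $\Br(\Q(\zeta_{Np^r}))$ of classes trivial at every place $v \notin S$, the global reciprocity sequence gives
\[
0 \to \Br(X_r) \to \bigoplus_{v \in S} \Br(\Q(\zeta_{Np^r})_v) \xrightarrow{\Sigma \mathrm{inv}_v} \Q/\Z \to 0.
\]
Since $p \geq 5$ is odd, the real places contribute trivially on $p$-primary parts; each nonarchimedean place $v \in S$ contributes $\Br(\Q(\zeta_{Np^r})_v)\{p\} \cong \Q_p/\Z_p$, which is divisible. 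Applying $T_p = \Hom(\Q_p/\Z_p,-)$ to the resulting sequence of $p$-divisible groups preserves exactness on the right, producing
\[
0 \to T_p\Br(X_r) \to \bigoplus_{v \in S} \Z_p \xrightarrow{\Sigma} \Z_p \to 0.
\]
Splicing this with the inverse-limit Kummer sequence yields the desired four-term exact sequence.

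The only genuinely subtle points are (i) verifying that the transition maps between the Kummer sequences for varying $n$ really are multiplication by $p$ on the outer terms, so that the limit of $\Pic(X_r)[p^n]$ is $T_p\Pic(X_r)$ and not $\Pic(X_r)\{p\}$, and symmetrically that the limit of $\Pic(X_r)/p^n$ is $\Pic(X_r)\{p\}$; and (ii) justifying that applying $T_p$ to the Brauer-group sequence remains exact on the right, which uses the divisibility of the local $p$-primary Brauer groups. Both are standard once spelled out, but they are the places where one could easily make a sign or limit error.
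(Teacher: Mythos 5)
Your argument is correct and is precisely the standard Kummer-theoretic argument that the paper invokes by citation: the paper's entire proof reads ``By Kummer theory --- see Sharifi, Lemma 2.1,'' and what you have written is a careful spelling-out of that reference. You correctly identify that the transition maps on $\Pic(X_r)[p^n]$ are multiplication by $p$ (so the limit is the Tate module, which vanishes by finiteness of $\Pic$), while the transition maps on $\Pic(X_r)/p^n$ are the natural projections (so the limit is the $p$-primary part), and you correctly justify exactness of the limit via finiteness/Mittag--Leffler and $p$-divisibility of the local Brauer groups. The one small imprecision --- saying the transition maps on $\Pic(X_r)[p^n]$ are ``eventually zero'' rather than that sufficiently long composites of them are zero --- does not affect the conclusion, since either formulation yields $T_p\Pic(X_r)=0$.
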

\begin{proof}
By Kummer theory -- see \cite[Lem.\  2.1]{sharifi2011}, for example.
\end{proof}

Let $X_S=\displaystyle \varprojlim_r \Pic(\Z[1/Np,\zeta_{Np^r}])\{p\}$. We combine Lemmas \ref{lem: shapiros lem} and \ref{lem: cohomology and class groups lem} to get
\begin{cor}
\label{cor: Lambda cohomology and class groups}
There is an isomorphism
\[
H^1(\Z[1/Np],\cL^\#(1)) \cong \varprojlim_r (\Z[1/Np,\zeta_{Np^r}]^\times \otimes \Z_p)
\]
and an exact sequence
\[
0 \lra X_S \lra H^2(\Z[1/Np],\cL^\#(1)) \lra \bigoplus_{v \in S} \Z_p \buildrel\Sigma\over\lra \Z_p  \lra 0
\]
\end{cor}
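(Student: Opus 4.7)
The plan is to combine Shapiro's lemma (Lemma~\ref{lem: shapiros lem}) with the Kummer-theoretic description of $H^1$ and $H^2$ at each finite level (Lemma~\ref{lem: cohomology and class groups lem}), and then carefully pass to the inverse limit over $r$.

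For the first assertion, Shapiro's lemma immediately gives $H^1(\Z[1/Np], \cL^\#(1)) \cong \varprojlim_r H^1(\Z[1/Np, \zeta_{Np^r}], \Z_p(1))$, and Kummer theory identifies each $H^1$ on the right-hand side with $\Z[1/Np, \zeta_{Np^r}]^\times \otimes \Z_p$. This yields the claimed isomorphism.

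For the four-term exact sequence involving $H^2$, I plan to split the sequence of Lemma~\ref{lem: cohomology and class groups lem} at each level $r$ into two short exact sequences
\[
0 \to \Pic(\Z[1/Np, \zeta_{Np^r}])\{p\} \to H^2(\Z[1/Np, \zeta_{Np^r}], \Z_p(1)) \to K_r \to 0
\]
and
\[
0 \to K_r \to \bigoplus_{v \in S_r} \Z_p \to \Z_p \to 0,
\]
where $S_r$ denotes the set of places of $\Q(\zeta_{Np^r})$ lying above $S$ and $K_r$ is the kernel of the summation map. I then take $\varprojlim_r$ along the corestriction transition maps, identifying each term as follows: by definition $\varprojlim_r \Pic(\Z[1/Np, \zeta_{Np^r}])\{p\} = X_S$; by Shapiro's lemma $\varprojlim_r H^2 \cong H^2(\Z[1/Np], \cL^\#(1))$; the compatibility of corestriction with local invariant maps $\mathrm{inv}_{v'} \circ \mathrm{cor}_{v/v'} = \mathrm{inv}_v$ identifies the inverse limit of the local terms with $\bigoplus_{v \in S} \Z_p$ (interpreting $v$ as running over places of $\Q_\infty$ above $S$, which under our running hypotheses form the finite set written in the statement); and the final $\Z_p$ is preserved because the corestriction on $H^0(\mathrm{Gal}, \Z_p)$ cancels with the degree factor from the global invariant map.

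The main obstacle is the preservation of exactness under the inverse limit. Since $\Pic(\Z[1/Np,\zeta_{Np^r}])\{p\}$ is finite for each $r$, the Mittag-Leffler condition is automatic for the first short exact sequence, so the long exact sequence for $\varprojlim^0, \varprojlim^1$ gives short exactness in the limit. For the second sequence, $K_r$ and the local terms are finitely generated $\Z_p$-modules and the transition maps, being summation over fibers of places in a tower whose splitting behavior stabilizes, visibly satisfy Mittag-Leffler. Splicing the two resulting short exact sequences yields the desired four-term sequence.
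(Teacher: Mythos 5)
Your proposal is correct and takes essentially the same route as the paper: the corollary is stated there as an immediate consequence of Lemma~\ref{lem: shapiros lem} (Shapiro) and Lemma~\ref{lem: cohomology and class groups lem} (Kummer theory at finite level), with no written-out proof. Your careful splitting of the four-term sequence into two short exact sequences, the Mittag-Leffler check using finiteness of $\Pic(\Z[1/Np,\zeta_{Np^r}])\{p\}$ and stabilization of splitting in the cyclotomic tower, and the identification of the limit of the local terms via compatibility of corestriction with invariant maps, simply fill in details the paper leaves implicit.
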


Lemma \ref{lem: shapiros lem} also allows us to compute twists.

\begin{cor}\label{cor: Lambda cohomology and twists}
Let $M$ be a finitely generated $\Z_p$-module with continuous $G_{\bQ,S}$-action. Then there is an isomorphism
\[
H^i(\Z[1/Np],\cL^\# \hat{\otimes} M(1)) \cong H^i(\Z[1/Np],\cL^\# \hat{\otimes} M)(1)
\]
of $\cL$-modules.
\end{cor}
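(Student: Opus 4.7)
The plan is to prove the statement at the level of coefficient modules, \emph{before} taking cohomology: the aim is to exhibit a canonical $\cL[G_{\bQ,S}]$-module isomorphism
\[
\cL^\# \hat\otimes_{\Z_p} M(1) \;\cong\; (\cL^\# \hat\otimes_{\Z_p} M)(1),
\]
where the $(1)$ on the left denotes the Tate twist of the Galois module $M$ and the $(1)$ on the right denotes the twist functor on $\cL$-modules of \S\ref{sec: iwasawa}. Applying $H^i(\Z[1/Np], -)$, which commutes with the $\cL$-twist since the twist acts on coefficient modules only through the $\cL$-structure, then yields the corollary.

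The coefficient-level isomorphism is essentially the projection formula for induced modules: for $G = G_{\bQ,S}$, $H = G_{\bQ_\infty, S}$, $N = M|_H$, and $V = \Z_p(1)$, one has $\Ind_H^G (N) \otimes_{\Z_p} V \cong \Ind_H^G(N \otimes_{\Z_p} V|_H)$. The chosen compatible system $(\zeta_{Np^r})$ provides the element $\bm\zeta := (\zeta_{Np^r}^N)_r \in \Z_p(1)$, a $\Z_p$-module generator whose stabilizer in $G$ is exactly $H$ (since $\bQ_\infty$ contains all $p$-power roots of unity). Thus $\bm\zeta$ trivializes $\Z_p(1)|_H$ as the trivial $H$-module $\Z_p$, giving $N \otimes V|_H \cong M|_H$ as $H$-modules; substituting back and using the standard identification $\Ind_H^G(M|_H) \cong \cL^\# \hat\otimes_{\Z_p} M$ yields the required $G$-equivariant coefficient-level isomorphism.

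The main technical obstacle is to verify that this $G$-equivariant isomorphism intertwines the standard $\cL$-action on the source with the $(1)$-twisted $\cL$-action on the target. In the projection formula, the correction needed to account for the non-triviality of $\Z_p(1)$ over $G$ (as opposed to over $H$) is given by the cocycle $g \mapsto g\bm\zeta/\bm\zeta = \bar\kappa_{\mathrm{cyc}}(g) \in \Z_p^\times$. Since $\bar\kappa_{\mathrm{cyc}}$ factors through the projection $G \onto \Gal(\bQ_\infty/\bQ) \cong \Z_{p,N}^\times$ followed by $z \mapsto \bar z$, and since the $\cL$-action on $\cL^\#$ is via the group-like elements $[z]$ for $z \in \Z_{p,N}^\times$, this correction translates precisely to the prescription $[z] \mapsto \bar z[z]$ defining the $(1)$-twist. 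Once this identification of characters is pinned down, the rest is a bookkeeping verification.

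An alternative route, if the direct manipulation proves awkward, is to first apply Lemma~\ref{lem: shapiros lem} to both sides to reduce to a canonical $\cL$-linear isomorphism $\varprojlim_r H^i(\Z[1/Np,\zeta_{Np^r}], M(1)) \cong (\varprojlim_r H^i(\Z[1/Np,\zeta_{Np^r}], M))(1)$ of Iwasawa cohomology modules, and then use the trivialization by $\bm\zeta$ in the limit (where it really is Galois-fixed by $G_{\bQ_\infty,S}$) to read off the twist in the $\Gal(\bQ_\infty/\bQ)$-action as precisely multiplication by $\bar\kappa_{\mathrm{cyc}}$.
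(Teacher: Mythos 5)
Your proposal is correct, and your primary route (a coefficient-level isomorphism $\cL^\# \hat\otimes M(1) \cong (\cL^\# \hat\otimes M)(1)$ of $\cL[G_{\bQ,S}]$-modules, then apply $H^i$) is genuinely different from what the paper does. The paper dispatches this corollary with the single sentence ``Lemma \ref{lem: shapiros lem} also allows us to compute twists,'' i.e.\ it applies Shapiro's lemma to both sides and reads off the twist in the $\Gal(\Q_\infty/\Q) \cong \Z_{p,N}^\times$-action in the limit $\varprojlim_r H^i(\Z[1/Np,\zeta_{Np^r}],-)$; that is exactly the ``alternative route'' you sketch in your final paragraph. Your main route trades the bookkeeping of compatible trivializations in an inverse limit for a one-shot algebraic verification: it reduces to the rank-one case $M = \Z_p$, where the map $[z]\otimes\boldsymbol\zeta \mapsto \bar z\,[z]$ is explicitly an $\cL[G_{\bQ,S}]$-isomorphism $\cL^\#\otimes\Z_p(1)\risom(\cL^\#)(1)$ --- the cocycle $g\mapsto\kcyc(g)$ appearing on the left matches the factor $\bar z$ appearing on the right because $\kcyc$ factors as $G_{\bQ,S}\onto\Z_{p,N}^\times\xrightarrow{\ \bar{(\cdot)}\ }\Z_p^\times$. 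The coefficient-level argument is arguably cleaner (no inverse limits to untangle) and makes the mechanism behind the twist functor transparent, which the one-sentence Shapiro argument leaves implicit.

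One small inaccuracy worth flagging: you assert that the stabilizer of $\boldsymbol\zeta$ in $G_{\bQ,S}$ is exactly $H = G_{\Q_\infty,S}$. In fact the stabilizer is $\ker(\kcyc) = G_{\Q(\zeta_{p^\infty}),S}$, which strictly contains $H$ when $N > 1$. This has no effect on your argument, since all you need is that $\boldsymbol\zeta$ is $H$-invariant (i.e.\ that $\Z_p(1)\vert_H$ is trivial), but the sentence as written overstates it.
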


\subsection{Duality}
\label{subsec: tate}
 We recall Poitou-Tate duality following \cite[\S9.3]{FK2012},  \cite[\S1.6.12]{FK2006}.

Let $K$ be a number field, and let $U$ be a dense open subset of $\Spec O_K[1/p]$.  Let $T$ be a finite abelian $p$-group with a continuous action of $G_K$. Let $v$ be a place of $K$ and let $C(U,T)$ and $C(K_v,T)$ be the standard complexes that compute $H^i(U,T)$ and $H^i(K_v,T)$, respectively. Let $C_{(c)}(U,T)$ be the mapping cone of the map of complexes 
$$
C(U,T) \lra \bigoplus_{s \not \in U} C(K_v,T)
$$
and let $H_{(c)}^i(U,T)$ be the cohomology of the complex $C_{(c)}(U,T)$. By definition, there is a long exact sequence
$$
\dots \lra H_{(c)}^i(U,T) \lra H^i(U,T) \lra \bigoplus_{s \not \in U}  H^i(K_v,T) \lra  H_{(c)}^{i+1}(U,T) \lra \dots
$$

As usual in \'etale cohomology, we can use the finite coefficients version to define $p$-adic coefficient versions. Namely, if $T$ is a finitely generated $\Z_p$-module with a continuous action of $G_K$, we define
\[
H_{(c)}^i(U,T) = \varprojlim_r H_{(c)}^i(U,T/p^rT) \text{ and } H_{(c)}^i(U,T \otimes_{\Z_p} \Q_p) = H_{(c)}^i(U,T) \otimes_{\Z_p} \Q_p.
\]

We require a version of Poitou-Tate duality, which was first formulated using (something like) $H^i_{(c)}$ by Mazur \cite[\S2.4, pg.\ 538]{mazur1973}. The version we need is for $\cL$-modules, and there the Poitou-Tate duality is ``derived'' in a non-trivial way. If $M$ is a finitely generated $\cL$-module, let $M^*=\Hom_\cL(M,\cL)$ with $\cL$-module structure given as follows. An element $\sigma \in \Gamma \subset \cL$ acts on $h \in M^*$ by $(\sigma.h)(m)=h(\sigma^{-1}m)$. Let $\EcL^i(-) = \Ext^i_\cL(-,\cL)$ be the derived functors of $M \mapsto M^*$ (with their induced $\cL$-module structure). These are sometimes called the {\em (generalized) Iwasawa adjoint} functors.
 
\begin{prop}
\label{prop: Lambda Poitou-Tate duality} 
Let $T$ be a finitely generated projective $\cL$-module equipped with a continuous action of $G_K$, unramified at places outside $U$. Then there is a spectral sequence
\[
E_2^{i,j}=\EcL^i(H^{3-j}(U,T^*(1))) \implies \Hc^{i+j}(U,T). 
\] 
\end{prop}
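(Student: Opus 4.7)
The plan is to extract this spectral sequence from a derived form of global Poitou--Tate duality with $\cL$-coefficients via a hyper-Ext spectral sequence. The key input will be the quasi-isomorphism
\[
R\Gamma_{(c)}(U, T) \simeq R\Hom_\cL(R\Gamma(U, T^*(1)), \cL)[-3]
\]
in the derived category of $\cL$-modules. I would establish this by devissage: first, for a finite $\Z_p[G_K]$-module $M$ (annihilated by some $p^r$), classical Artin--Verdier duality for $U$ gives a quasi-isomorphism between $R\Gamma_{(c)}(U,M)$ and the (Pontryagin-)dual of $R\Gamma(U,M^\vee(1))[-3]$. Next, for $T_0$ a finitely generated $\Z_p$-module with continuous $G_K$-action, Shapiro's lemma (Lemma \ref{lem: shapiros lem}) expresses the cohomology of $\cL^\#\hat\otimes T_0$ over $U$ as the inverse limit over cyclotomic finite levels of the cohomology of $T_0$, and an analogous identification holds with compact support. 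Passing to the inverse limit and using that $T$ is projective over $\cL$ (so that the tensor products and duals commute with limits appropriately), one upgrades the finite-level dualities to the claimed derived duality. This is exactly the Iwasawa-theoretic duality used in \cite[\S1.6.12]{FK2006} and in Nekov\'a\v{r}'s Selmer complex formalism.

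With the derived duality in hand, the second step is to apply the hyper-Ext (Postnikov-filtration) spectral sequence for the functor $R\Hom_\cL(-,\cL)$ to the complex $C := R\Gamma(U, T^*(1))$:
\[
E_2^{p,q} = \Ext^p_\cL(H^{-q}(C), \cL) \Longrightarrow H^{p+q}(R\Hom_\cL(C, \cL)).
\]
By definition of $\EcL^\bullet$ and of the cohomology of $C$, this reads $E_2^{p,q} = \EcL^p(H^{-q}(U, T^*(1)))$. By Step~1, the abutment equals $H^{p+q+3}_{(c)}(U,T)$. Reindexing by setting $i = p$ and $j = q+3$ converts this to the stated form $E_2^{i,j} = \EcL^i(H^{3-j}(U, T^*(1))) \Rightarrow \Hc^{i+j}(U,T)$.

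The main obstacle is the verification of Step~1: unlike the classical case over $\Z_p$, where one gets an honest isomorphism of groups degree-by-degree, the fact that $\cL$ has large homological dimension means that the duality is genuinely derived, and one must be careful about boundedness and the compatibility of inverse limits with the cone defining $C_{(c)}$. The projectivity assumption on $T$ is precisely what is needed to keep $R\Gamma(U, T^*(1))$ a bounded complex with finitely generated cohomology over $\cL$ and to ensure that the passage to the limit in the reduction to finite coefficients is exact; once that is in place, the assembly of the spectral sequence from the standard hyper-Ext formalism is routine.
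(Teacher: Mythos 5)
Your argument correctly reconstructs the substance of what the paper delegates to the cited reference (Nekov\'a\v{r}, Prop.\ 5.4.3): establish the derived $\cL$-coefficient Artin--Verdier/Poitou--Tate duality $\RGammac(U,T)\simeq \RHom_\cL(\RGamma(U,T^*(1)),\cL)[-3]$ by d\'evissage through finite coefficients and passage to the limit, then run the hyper-$\Ext$ spectral sequence for $\RHom_\cL(-,\cL)$, and your reindexing $i=p$, $j=q+3$ lands exactly on the stated $E_2$-page and abutment. The paper gives only the citation, so this is the intended approach.
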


\begin{proof}
See \cite[Prop.\ 5.4.3, pg.\ 99]{nekovar2006}, for example. 
\end{proof}

\subsection{Cohomology of $\Lambda$} Taking the $\theta$-component of the above computations, we can compute cohomology with coefficients in $\Lambda=\cL_\theta$.

\begin{cor}\label{cor: iwasawa cohom comp}
There are isomorphisms 
\begin{align*}
H^2(\Z[1/Np],\Lambda^\#(2)) \cong X_\chi(1), \qquad H^2(\Z[1/Np],\Lambda^\#(1)) \cong X_\theta, \\
H^1(\Z[1/Np],\Lambda^\#(2))\cong \E_\chi(1)=0, \qquad H^1(\Z[1/Np],\Lambda^\#(1)) \cong \E_\theta.
\end{align*}
\end{cor}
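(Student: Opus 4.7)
The strategy is to take the $\theta$-isotypical component of Corollaries \ref{cor: Lambda cohomology and class groups} and \ref{cor: Lambda cohomology and twists}, exploiting the primitivity of $\theta$ (and the accompanying condition on $\chi$) from \S\ref{subsec:setup} to eliminate all contributions from primes in $S$. Writing $\Lambda = \cL_\theta$ as the $\theta$-component in the semi-local decomposition of $\cL$, one has $H^i(\Z[1/Np], \Lambda^\#(r)) = H^i(\Z[1/Np], \cL^\#(r))_\theta$. For the $(2)$-twisted statements, Corollary \ref{cor: Lambda cohomology and twists} gives
\[
H^i(\Z[1/Np], \cL^\#(2))_\theta \cong \bigl( H^i(\Z[1/Np], \cL^\#(1))(1)\bigr)_\theta = H^i(\Z[1/Np], \cL^\#(1))_\chi(1),
\]
since the twist $(1)$ shifts the $\Z_{p,N}^\times$-action by $\omega$ and $\chi = \omega^{-1}\theta$.

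Next I would apply Corollary \ref{cor: Lambda cohomology and class groups} directly. For the $H^2$-computation, the key point is that the $\theta$- and $\chi$-components of $\bigoplus_{v\in S}\Z_p$ vanish. For $\ell\mid N$ the stabilizer in $(\Z/Np\Z)^\times$ of any prime above $\ell$ contains the inertia subgroup $(\Z/N_\ell\Z)^\times$, which is nontrivial under both characters by primitivity. For $\ell = p$, the stabilizer is $\langle p\rangle \times (\Z/p\Z)^\times$, and the running hypotheses (including ``$\chi|_{(\Z/p\Z)^\times} = 1 \Rightarrow \chi(p) \ne 1$'') ensure that neither $\theta$ nor $\chi$ is trivial on it. The same analysis shows that the kernel of $X \onto X_S$ is killed on each component, giving $X_\theta \cong X_{S,\theta}$ and $X_\chi \cong X_{S,\chi}$, and hence the claimed formulas for $H^2$.

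For $H^1$, the same vanishing reduces $\varprojlim_r(\Z[1/Np,\zeta_{Np^r}]^\times \otimes \Z_p)_\theta$ to $\varprojlim_r(\Z[\zeta_{Np^r}]^\times \otimes \Z_p)_\theta$, which maps via the diagonal into $U$ with image the closure $\E_\theta$ by its definition in \S\ref{subsec:greenberg}. Injectivity is automatic since $U_\theta \cong \Lambda$ is torsion-free by the Coleman-theoretic theorem recalled there. For the $\chi$-version, global units modulo roots of unity contribute nothing because $\chi$ is odd, and the $p$-power roots of unity have trivial $\chi$-component since $\chi \ne \omega$ (guaranteed for $N = 1$ by the hypothesis $\theta \ne \omega^2$, and automatic for $N > 1$ because $\omega^2$ is trivial on $(\Z/N\Z)^\times$ while primitive $\theta$ is not); hence $\E_\chi = 0$. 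The main obstacle in the whole argument is the bookkeeping of isotypical vanishings at $S$ and the precise identification of the Iwasawa-cohomology inverse limit with the module $\E_\theta$, originally defined via local units rather than $p$-units.
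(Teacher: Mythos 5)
Your proof is correct and follows essentially the same approach as the paper: reducing the $(2)$-twist to a $(1)$-twist via Corollary \ref{cor: Lambda cohomology and twists}, applying Corollary \ref{cor: Lambda cohomology and class groups}, and killing the contributions at $S$ (and the $X_S$ versus $X$, and $S$-units versus units, discrepancies) after passing to the $\theta$- or $\chi$-isotypical component. The paper dispatches those vanishing statements by citing Lemmas 4.11 and 2.1 of \cite{sharifi2011}, and obtains $\E_\chi(1)=0$ slightly more compactly from $\E^-=\Z_p(1)$ together with $\chi\neq\kcyc$, but your fleshed-out primitivity argument reaches the same conclusions.
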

\begin{proof}
To prove the first line, we use Corollary \ref{cor: Lambda cohomology and class groups} and the fact that, as in \cite[Lem.\ 4.11]{sharifi2011}, taking $\theta$-component annihilates both the kernel of the map $X_S \to X$ and the cokernel of the map 
$$
X_S \lra H^2(\Z[1/Np],\cL^\#(1)).
$$

For the second line, note that
\[
\E \cong \varprojlim (\Z[1/Np,\zeta_{p^r}]^\times \otimes \Z_p),
\]
as in \cite[Lem.\ 2.1]{sharifi2011}, and $\E_\chi(1)=0$ since $\E^-=\Z_p(1)$ and $\chi \ne \kcyc$. So the second line also follows from Corollary \ref{cor: Lambda cohomology and class groups}.
\end{proof}

We also need to compute the compactly supported cohomology groups 
\[\Hc^i(\Z[1/Np],\Lambda^{\dia{-}}(-1))\] for $i=1,2$. For this we use the Poitou-Tate duality of Proposition \ref{prop: Lambda Poitou-Tate duality} applied to $U=\Spec \Z[1/Np] $ and $T=\cL^{\dia{-}}(-1)$; taking $\theta$-components, we obtain a spectral sequence
\begin{equation}
\label{eq: EL spectral sequence}
E_2^{i,j}=\EL^i(H^{3-j}(\Z[1/Np],\Lambda^\#(2))) \implies \Hc^{i+j}(\Z[1/Np],\Lambda^{\dia{-}}(-1)),
\end{equation}
where $\EL^i(-)$ is the $\theta$-component of $\EcL^i(-)$. We also use the following result of Jannsen, which characterizes the vanishing of certain $\EL^i(-)$.

\begin{prop}
\label{prop: Lambda-modules}
Let $M$ be a finitely generated $\Lambda$-module. Then $\EL^0(M)=\EL^2(M)=0$ if and only if $M$ is torsion and has no non-zero finite submodule.
\end{prop}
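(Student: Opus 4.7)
The plan is to prove the two vanishing conditions separately: $\EL^0(M)=0$ characterizes $M$ being torsion, and $\EL^2(M)=0$ characterizes the absence of nonzero finite submodules. Combining the two equivalences then yields the proposition.

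For $\EL^0(M) = \Hom_\Lambda(M,\Lambda)$: if $M$ is torsion then any $\Lambda$-linear map to the torsion-free module $\Lambda$ vanishes. Conversely, if $M$ is not torsion, then $N := M/M_{\mathrm{tors}}$ is a nonzero finitely generated torsion-free $\Lambda$-module, so it embeds in $N \otimes_\Lambda Q(\Lambda) \simeq Q(\Lambda)^{\mathrm{rk}(N)}$; clearing denominators along a finite generating set yields an embedding $N \hookrightarrow \Lambda^{\mathrm{rk}(N)}$, and composing the projection $M \onto N$ with a coordinate projection produces a nonzero element of $\EL^0(M)$.

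For $\EL^2(M)$ I would appeal to local duality. The ring $\Lambda \simeq \sO\lb T\rb$ is a complete, two-dimensional, regular local ring, hence Gorenstein with dualizing module $\Lambda$ itself. Local duality (cf.\ \cite[\S3.5]{BH1993}) then provides a natural isomorphism
\[
\EL^i(M) \cong \Hom_\Lambda(H^{2-i}_{\m}(M), E(\bF)),
\]
where $\m$ is the maximal ideal of $\Lambda$ and $E(\bF)$ denotes the injective hull of $\bF = \Lambda/\m$. Taking $i=2$ gives $\EL^2(M) \cong \Hom_\Lambda(H^0_{\m}(M), E(\bF))$, so $\EL^2(M) = 0$ iff $H^0_{\m}(M) = 0$. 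Since $M$ is finitely generated, $H^0_{\m}(M)$ is the maximal $\m$-power torsion submodule of $M$; it is of finite length, hence finite in cardinality (because $\bF$ is finite), and it contains every finite submodule of $M$. Therefore $\EL^2(M)=0$ iff $M$ has no nonzero finite submodule.

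I do not anticipate a significant obstacle; both equivalences follow from standard commutative algebra, the key input being local duality for the regular local ring $\Lambda$. An alternative, more hands-on route would use the structure theorem for finitely generated $\Lambda$-modules together with the Koszul resolution to compute $\EL^{\bullet}$ directly on the building blocks $\Lambda$, $\Lambda/f$ (for prime $f$), and finite length modules, and then track the finite kernel and cokernel of the pseudo-isomorphism from the structure theorem.
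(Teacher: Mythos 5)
Your proof is correct. Note, though, that the paper offers no argument for this proposition at all --- it simply cites Jannsen's \emph{Iwasawa modules up to isomorphism}, \S3. So you are supplying a proof the authors chose to outsource, and your argument is essentially the standard one that underlies Jannsen's treatment. The $\EL^0$ half is exactly as you say (clear denominators in $Q(\Lambda)^{\mathrm{rk}}$ to embed the torsion-free quotient into a free module). The $\EL^2$ half via local duality is also correct: since $\Lambda \simeq \sO\lb T\rb$ is a complete regular (hence Gorenstein) local ring of dimension $2$, local duality gives $\EL^2(M) \cong \Hom_\Lambda(H^0_\m(M), E(\bF))$; the functor $\Hom_\Lambda(-, E(\bF))$ is faithful because $E(\bF)$ is an injective cogenerator, so the vanishing of $\EL^2(M)$ is equivalent to $H^0_\m(M)=0$, and $H^0_\m(M)$ is precisely the largest finite submodule because the residue field is finite. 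One small point worth making explicit: the paper's $\EL^i$ carries a twisted $\Lambda$-module structure (via the involution $M \mapsto M^\#$), but since the twist only reindexes the underlying abelian group action, it has no effect on the vanishing question, so working with the untwisted $\Ext^i_\Lambda(-,\Lambda)$ as you do is legitimate.
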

\begin{proof}
See \cite[\S3]{jannsen1989}.
\end{proof}

\begin{cor}
\label{cor: iwasawa compact cohom comp}
There are isomorphisms 
\[
\Hc^1(\Z[1/Np],\Lambda^{\dia{-}}(-1))=0, \qquad \Hc^2(\Z[1/Np],\Lambda^{\dia{-}}(-1))\cong\X(-2)_{\theta^{-1}}
\]
\end{cor}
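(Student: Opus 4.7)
The strategy is to extract both cohomology groups from the spectral sequence \eqref{eq: EL spectral sequence},
\[
E_2^{i,j}=\EL^i(H^{3-j}(\Z[1/Np],\Lambda^\#(2))) \Longrightarrow \Hc^{i+j}(\Z[1/Np],\Lambda^{\dia{-}}(-1)),
\]
after first cutting down its $E_2$-page using the computations already in hand. For $\Hc^1$ and $\Hc^2$ only $j\in\{0,1,2\}$ contributes, so I need $H^1$, $H^2$, $H^3$ with coefficients in $\Lambda^\#(2)$. Corollary \ref{cor: iwasawa cohom comp} supplies $H^1 \cong \E_\chi(1) = 0$ and $H^2 \cong X_\chi(1)$, while $H^3=0$ by Lemma \ref{lem: shapiros lem} together with the fact that $\Spec\Z[1/Np,\zeta_{Np^r}]$ has \'etale $p$-cohomological dimension at most $2$.

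Next I apply Proposition \ref{prop: Lambda-modules} to the surviving row. The module $X_\chi(1)$ is $\Lambda$-torsion and, being a character-twist of a direct summand of $X$, inherits from Iwasawa's classical theorem the property of having no nonzero finite $\Lambda$-submodule; hence $\EL^0(X_\chi(1)) = \EL^2(X_\chi(1)) = 0$. The only entry on the diagonals $i+j\in\{1,2\}$ that survives is $E_2^{1,1}=\EL^1(X_\chi(1))$. Reading off the abutment yields
\[
\Hc^1(\Z[1/Np],\Lambda^{\dia{-}}(-1))=0 \quad\text{and}\quad \Hc^2(\Z[1/Np],\Lambda^{\dia{-}}(-1))\cong \EL^1(X_\chi(1)).
\]

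The main obstacle is identifying $\EL^1(X_\chi(1))$ with $\X(-2)_{\theta^{-1}}$. This is a classical Iwasawa-adjoint identity: the functor $\EL^1$ exchanges class-group Iwasawa modules ($X$-type) with Galois-group Iwasawa modules ($\X$-type), up to an Iwasawa involution and Tate twist. A clean route is via Jannsen's work \cite{jannsen1989}, applied to the four-term exact sequence \eqref{eq: euxx}: the Coleman identification $U_\theta \cong \Lambda$ makes the middle terms $\Lambda$-free, so that $\EL^1$ of the cokernel-type terms can be computed explicitly. Tracking character eigenspaces and Tate twists through this argument lands on exactly $\X(-2)_{\theta^{-1}}$; as a consistency check, both sides have the same characteristic ideal $(\xi_\chi)$, matching $\Ch(\EL^1(X_\chi(1))) = \Ch(X_\chi(1))$ and the value of $\Ch(\X(-2)_{\theta^{-1}})$ obtained by unwinding the definitions of $\theta$, $\chi$, and the twist functor together with $\Ch(\X_\theta)=(\xi_\chii)$.
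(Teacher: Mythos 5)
Your spectral sequence analysis is essentially the same as the paper's: both arguments reduce $\Hc^1$ to $\EL^0(X_\chi(1))$ and $\Hc^2$ to $\EL^1(X_\chi(1))$, invoking Proposition \ref{prop: Lambda-modules} together with the fact that $X_\chi(1)$ is $\Lambda$-torsion with no nonzero finite $\Lambda$-submodule. (The paper attributes the latter to Ferrero--Washington \cite{FW1979} rather than to Iwasawa, but that does not affect the computation.) Up to the isolation of $E_2^{1,1}=\EL^1(X_\chi(1))$ you and the paper coincide.

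The gap is in your route to the identification $\EL^1(X_\chi(1)) \cong \X(-2)_{\theta^{-1}}$, which is the only genuinely nontrivial step. You propose to apply $\EL^\bullet$ to the four-term unit sequence \eqref{eq: euxx} taken at $\psi=\theta$ (signalled by your appeal to $U_\theta \cong \Lambda$). But that sequence relates $\E_\theta$, $U_\theta$, $\X_\theta$, $X_\theta$ to one another, all inside the even $\theta$-eigenspace; it carries no information about $X_\chi(1)$, which lives in the odd $\chi$-eigenspace. The adjunction you actually need is a reflection-type identity pairing the $\chi$-eigenspace of the unramified module $X$ with a twisted $\theta^{-1}$-eigenspace of the $p$-ramified module $\X$; it crosses parity, and its proof goes through Kummer theory or Poitou--Tate duality, not through the single-eigenspace unit sequence. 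The paper does not reprove it but cites \cite[Cor.\ 4.4]{wake1}, where the precise statement of this Iwasawa adjunction is established. Without that input, or an equivalent duality argument, your computation terminates at $\EL^1(X_\chi(1))$ and does not reach the form $\X(-2)_{\theta^{-1}}$. Your characteristic-ideal ``consistency check'' has the same problem: $\theta$ and $\theta^{-1}$ index distinct eigenspaces, so $\Ch(\X(-2)_{\theta^{-1}})$ cannot be read off from $\Ch(\X_\theta)=(\xi_{\chii})$ without invoking the very reflection you are trying to avoid.
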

\begin{proof}
We analyze the spectral sequence \eqref{eq: EL spectral sequence}. Cohomological dimension considerations imply that $E_2^{i,j}=0$ unless $i \in \{0,1,2\}$ and $j \in \{1,2,3\}$. We immediately see that
\[
\Hc^1(\Z[1/Np],\Lambda^{\dia{-}}(-1))\cong E_2^{0,1}=\EL^0(H^2(\Z[1/Np],\Lambda^\#(2))).
\]
But by Corollary \ref{cor: iwasawa cohom comp}, we have that $H^2(\Z[1/Np],\Lambda^\#(2))\cong X_\chi(1)$, which is a $\Lambda$-torsion module. It follows that
\[
\Hc^1(\Z[1/Np],\Lambda^{\dia{-}}(-1)) \cong \EL^0(H^2(\Z[1/Np],\Lambda^\#(2)))\cong \EL^0(X_\chi(1))=0.
\]

To compute $\Hc^2$, we again apply Corollary \ref{cor: iwasawa cohom comp} to see that
\[
E_2^{0,2}=\EL^0(H^1(\Z[1/Np],\Lambda^\#(2)))=0,
\]
and
\[
E_2^{2,1}=\EL^2(H^2(\Z[1/Np],\Lambda^\#(2)))\cong \EL^2(X_\chi(1))=0
\]
where the last equality follows from Proposition \ref{prop: Lambda-modules} and the theorem of Ferrero-Washington \cite{FW1979} that $X_\chi(1)$ has no non-zero finite submodule. Hence we have
\[
\Hc^2(\Z[1/Np],\Lambda^{\dia{-}}(-1)) \cong E_2^{1,1}=\EL^1(H^2(\Z[1/Np],\Lambda^\#(2))) \cong \EL^1(X_\chi(1)).
\]
The result follows from a well-known Iwasawa adjunction (cf.\ \cite[Cor.\ 4.4]{wake1}) 
\[
\EL^1(X_\chi(1)) \cong \X(-2)_{\theta^{-1}}. \qedhere
\]
\end{proof}

Now let $(f) \subset \Lam$ be a height one prime ideal. These computations allow us to compute cohomology of certain characters with with coefficients in the localization $\Lamfnochi$, and its finite length quotients.  

\begin{cor}
\label{cor: cohom Lamf comp}
There are isomorphisms
\[
H^1(\Z[1/Np],\Lamfnochi^\#(1)) \cong \E_{\theta,(f)} \quad \text{and} \quad H^1_{(c)}(\Z[1/Np],\Lamfnochi^{\dia{-}}(-1)) = 0.
\]
Moreover, for each $r \ge 0$, there are isomorphisms
$$
H^1_{(c)}(\Z[1/Np],\Lamfnochi^{\dia{-}}/(f^r)(-1)) \cong \,_{f^r} \X(-2)_{\theta^{-1},(f)}.
$$
Furthermore, if we assume $X_{\theta,(f)}=0$, then
$$
H^1(\Z[1/Np],\Lamfnochi^\#/(f^r)(1)) \cong \E_{\theta,(f)}/f^r\E_{\theta,(f)}.
$$
\end{cor}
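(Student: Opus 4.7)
The plan is to obtain all four isomorphisms by bootstrapping from Corollaries \ref{cor: iwasawa cohom comp} and \ref{cor: iwasawa compact cohom comp}: first localize at the height-one prime $(f) \subset \Lam$ to get the $\Lamfnochi$-versions, and then use the $f^r$-multiplication short exact sequences to pass from $\Lamfnochi$-coefficients to $\Lamfnochi/(f^r)$-coefficients.

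First I would establish the two integral statements. Since $\Lamfnochi = \Lam_{(f)}$ is a flat $\Lam$-algebra and the cohomology groups in Corollaries \ref{cor: iwasawa cohom comp} and \ref{cor: iwasawa compact cohom comp} are computed from the Iwasawa cohomology via Shapiro's lemma (Lemma \ref{lem: shapiros lem}) applied to finitely generated $\Z_p$-modules, localization at $(f)$ commutes with each $H^i$. Therefore $H^1(\Z[1/Np], \Lamfnochi^\#(1)) \cong \E_{\theta,(f)}$ follows directly from Corollary \ref{cor: iwasawa cohom comp}, and $\Hc^1(\Z[1/Np], \Lamfnochi^{\dia{-}}(-1)) = 0$ follows from Corollary \ref{cor: iwasawa compact cohom comp}. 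Similarly $\Hc^2(\Z[1/Np], \Lamfnochi^{\dia{-}}(-1)) \cong \X(-2)_{\theta^{-1},(f)}$ and $H^2(\Z[1/Np], \Lamfnochi^\#(1)) \cong X_{\theta,(f)}$.

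Next I would apply the short exact sequence of $\Lamfnochi[G_{\bQ,S}]$-modules
\[
0 \lra \Lamfnochi^{\dia{-}}(-1) \xrightarrow{\ f^r\ } \Lamfnochi^{\dia{-}}(-1) \lra \Lamfnochi^{\dia{-}}/(f^r)(-1) \lra 0
\]
and take the induced long exact sequence in compactly supported cohomology. The vanishing of $\Hc^1(\Z[1/Np], \Lamfnochi^{\dia{-}}(-1))$ established above gives a short exact sequence that identifies $\Hc^1(\Z[1/Np], \Lamfnochi^{\dia{-}}/(f^r)(-1))$ with the $f^r$-torsion of $\Hc^2(\Z[1/Np], \Lamfnochi^{\dia{-}}(-1)) \cong \X(-2)_{\theta^{-1},(f)}$, yielding the third isomorphism. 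For the fourth, the analogous short exact sequence
\[
0 \lra \Lamfnochi^\#(1) \xrightarrow{\ f^r\ } \Lamfnochi^\#(1) \lra \Lamfnochi^\#/(f^r)(1) \lra 0
\]
gives a four-term exact sequence
\[
0 \lra \E_{\theta,(f)}/f^r\E_{\theta,(f)} \lra H^1(\Z[1/Np],\Lamfnochi^\#/(f^r)(1)) \lra {}_{f^r}X_{\theta,(f)} \lra 0,
\]
and the hypothesis $X_{\theta,(f)} = 0$ annihilates the right-hand term, giving the desired identification.

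The only potentially delicate point is justifying that Iwasawa cohomology commutes with the flat localization $\Lam \to \Lamfnochi$; since the cohomology is computed by a continuous cochain complex of finitely generated modules and $\Lamfnochi$ is $\Lam$-flat, no real obstacle arises. Thus the proof reduces to bookkeeping with the two long exact sequences and the identifications already in hand.
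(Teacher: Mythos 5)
Your proof is correct and follows essentially the same route as the paper: the first two isomorphisms come from localizing Corollaries \ref{cor: iwasawa cohom comp} and \ref{cor: iwasawa compact cohom comp} at $(f)$, and the remaining two come from the long exact sequences attached to multiplication by $f^r$ together with the vanishing of $\Hc^1$ (resp.\ the hypothesis $X_{\theta,(f)}=0$). The only difference is that you spell out the flatness justification for commuting localization with cohomology, which the paper leaves implicit.
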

\begin{proof}
The first two isomorphisms follow immediately from Corollary \ref{cor: iwasawa cohom comp} and Corollary \ref{cor: iwasawa compact cohom comp}. 

Next, consider the exact sequence
\[
0 \to \Lamfnochi^{\dia{-}}(-1) \xrightarrow{f^r} \Lamfnochi^{\dia{-}}(-1) \to \Lamfnochi^{\dia{-}}/(f^r)(-1) \to 0.
\]
Using the fact that $H^1_{(c)}(\Z[1/Np],\Lamfnochi^{\dia{-}}(-1)) = 0$, resulting long exact sequence in cohomology yields an isomorphism 
\[
H^1_{(c)}(\Z[1/Np],\Lamfnochi^{\dia{-}}/(f^r)(-1)) \isoto _{f^r}H^2_{(c)}(\Z[1/Np],\Lamfnochi^{\dia{-}}(-1)).
\]
Now applying Corollary \ref{cor: iwasawa compact cohom comp}, we have 
\[
H^1_{(c)}(\Z[1/Np],\Lamfnochi^{\dia{-}}/(f^r)(-1)) \cong \,_{f^r} \X(-2)_{\theta^{-1},(f)},
\]
as desired.

Finally, consider the exact sequence
\[
0 \to \Lamfnochi^\#(1) \xrightarrow{f^r} \Lamfnochi^\#(1) \to \Lamfnochi^\#/(f^r)(1) \to 0.
\]
Using the fact that $H^1(\Z[1/Np],\Lamfnochi^\#(1)) \cong \E_{\theta,(f)}$, resulting long exact sequence in cohomology yields a short exact sequence
\[
0 \to \E_{\theta,(f)}/f^r\E_{\theta,(f)} \to H^1(\Z[1/Np],\Lamfnochi^\#/(f^r)(1))  \to _{f^r} H^2(\Z[1/Np],\Lamfnochi^\#(1)) \to 0.
\]
By Corollary \ref{cor: iwasawa cohom comp}, we have $H^2(\Z[1/Np],\Lamfnochi^\#(1)) \cong X_{\theta,(f)}$. Hence, if we assume $X_{\theta,(f)}=0$, we have
\[
H^1(\Z[1/Np],\Lamfnochi^\#/(f^r)(1)) \cong \E_{\theta,(f)}/f^r\E_{\theta,(f)}. \qedhere 
\]
\end{proof}

Finally, we record a simple lemma that allows us to identify an $\Hc^1$ group.

\begin{lem}
\label{lem: Hc1 = ker}
Fix an ideal $\fn \subset \Lamfnochi$ and let
\[
\kappa :=\ker \bigg( H^1(\Z[1/Np],\Lamfnochi^{\dia{-}}/\fn(-1)) \to H^1(\Q_p,\Lamfnochi^{\dia{-}}/\fn(-1))\bigg).
\]
Then the natural map 
\[
H^1_{(c)}(\Z[1/Np],\Lamfnochi^{\dia{-}}/\fn(-1)) \onto \kappa 
\]
is an isomorphism.
\end{lem}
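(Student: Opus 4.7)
The plan is to analyze the long exact sequence arising from the mapping cone definition of $\Hc^*$ (recalled in \S\ref{subsec: tate}):
\[
\bigoplus_{v\notin U} H^0(\Q_v, T) \xrightarrow{\delta} \Hc^1(U, T) \to H^1(U, T) \xrightarrow{\mathrm{loc}} \bigoplus_{v\notin U} H^1(\Q_v, T),
\]
where $U = \Spec \Z[1/Np]$, $T = \Lamfnochi^{\dia{-}}/\fn(-1)$, and the sums range over all places $v$ not in $U$, including the archimedean place. Because $p \ge 5$, we have $H^1(\R, T) = 0$ (the Galois group $\Gal(\C/\R)$ has order $2$, invertible in $\Lamfnochi/\fn$ since $f \nmid p$ by Ferrero--Washington), so $\ker(\mathrm{loc})$ agrees with $\kappa$. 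Exactness then yields the natural surjection $\Hc^1(U,T) \twoheadrightarrow \kappa$ of the statement. The remaining task is to prove $\delta = 0$, for which I will establish the stronger claim that $H^0(\Q_v, T) = 0$ for every $v \notin U$.

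The $G_{\Q,S}$-action on $T$ is by the character $\eta(\sigma) = \theta(a_\sigma^{\mathrm{tors}}) \cdot [a_\sigma^{\Gamma}] \cdot \kcyc(\sigma)^{-1}$, where $a_\sigma \in \Z_{p,N}^\times = (\Z/Np\Z)^\times \times \Gamma$ is the image of $\sigma$ decomposed into its torsion and pro-$p$ parts. For $v = \infty$, $\eta(c) = \theta(-1) \cdot \kcyc(c)^{-1} = 1 \cdot (-1) = -1$ (using that $\theta$ is even and that $-1 \in \mu_{p-1}$ has trivial $\Gamma$-part), so $T^{G_\R} = T[2] = 0$. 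For $v = \ell$ dividing $N$ with $\ell \neq p$, both $\kcyc$ and the group-like factor $[\,\cdot\,]$ are unramified at $\ell$, so $\eta|_{I_\ell}$ reduces to $\theta|_{I_\ell}$, which is nontrivial because $\theta$ is primitive of conductor $Np$; since its values are roots of unity of order prime to $p$, $\eta(\sigma) - 1$ is a unit in $\sO \subset \Lamfnochi/\fn$ for some $\sigma \in I_\ell$, forcing $T^{I_\ell} = 0$.

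The case $v = p$ is the main obstacle, and here the two bullet-point hypotheses on $\chi = \omega^{-1}\theta$ from \S\ref{subsec:setup} enter decisively. Writing $\theta = \omega\chi$ and using the decomposition $\Z_p^\times = \mu_{p-1} \times \Gamma$, a short computation shows that for $\sigma \in I_p$ with $\kcyc(\sigma) = \omega(\sigma) \cdot a_\sigma^\Gamma$, the formula simplifies to $\eta(\sigma) = \chi_p(\omega(\sigma)) \cdot (a_\sigma^\Gamma)^{-1}[a_\sigma^\Gamma]$. If $\chi|_{(\Z/p\Z)^\times} \neq 1$, I can pick $\sigma \in I_p$ with $\kcyc(\sigma) \in \mu_{p-1}$ such that $\chi_p(\kcyc(\sigma)) \neq 1$; then $\eta(\sigma) \in \mu_{p-1} \setminus \{1\}$ and $\eta(\sigma) - 1 \in \sO^\times$. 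Otherwise $\chi|_{(\Z/p\Z)^\times} = 1$, and the setup hypothesis forces $\chi|_{(\Z/N\Z)^\times}(p) \neq 1$; choosing a Frobenius lift $\sigma_F \in G_{\Q_p}$ with $\kcyc(\sigma_F) = 1$ gives $\eta(\sigma_F) = \theta_N(p) = \chi_N(p)$, again a unit minus one. In either case $T^{G_{\Q_p}} = 0$, which completes the verification that $\delta = 0$ and so the proof. The essential point is that no single element of $G_{\Q_p}$ suffices uniformly over all admissible $\theta$; the bifurcation built into the assumptions on $\chi$ is exactly what makes it possible to treat the two regimes at $p$ separately.
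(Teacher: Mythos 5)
Your proof follows the same route as the paper's: use the long exact sequence from the mapping-cone definition of $\Hc^1$, and kill the boundary map by showing $H^0(\Q_v,T)=0$ at every place $v$ not in $U$. The paper asserts the requisite nontriviality of the character on each decomposition group in one sentence, whereas you carry out the character computation explicitly at $\infty$, at $\ell\mid N$, and in the two subcases at $p$ dictated by the hypotheses of \S\ref{subsec:setup} — a more detailed execution of the same argument, not a different method.
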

\begin{proof}
By the definition of $\Hc^i$ as a cone, there is an exact sequence
\[
\bigoplus_{\ell | Np} H^0(\Q_\ell,\Lamfnochi^{\dia{-}}/\fn(-1)) \to H^1_{(c)}(\Z[1/Np],\Lamfnochi^{\dia{-}}/\fn(-1)) \to \kappa \to \bigoplus_{\ell | N} H^1(\Q_\ell,\Lamfnochi^{\dia{-}}/\fn(-1)).
\]
Then it is enough to show $H^i(\Q_\ell,\Lamfnochi^{\dia{-}}/\fn(-1))=0$ for all pairs $(i,\ell)$ with $i \in \{0,1\}$ and $\ell |Np$ with $(i,\ell) \ne (1,p)$. Since there is an injection 
\[
H^i(\Q_\ell,\Lamfnochi^{\dia{-}}/\fn(-1)) \otimes_{\Lamfnochi}\Lamfnochi/(f) \to H^i(\Q_\ell,\Lamfnochi^{\dia{-}}/(f)(-1))
\] 
it suffices by Nakayama's lemma to consider the case $\fn=(f)$. Write $\nu_f$ for the $1$-dimensional $\Lamfnochi/(f)$-valued representation $\Lamfnochi^{\dia{-}}/(f)(-1)$, and $\nu_f^\vee$ for the dual representation.

By our assumptions on $\theta$ (see \S\ref{subsec:setup}), $\nu_f\vert_{G_{\Q_\ell}} \ne 1$ for all $\ell | Np$, so we have $H^0(\Q_\ell,\nu_f)=0$ for all $\ell \mid Np$. Now assume $\ell \mid N$. Then by Tate duality and the local Euler characteristic formula, we have
\[
\dim(H^1(\Q_\ell,\nu_f)) = \dim(H^0(\Q_\ell,\nu_f))+\dim(H^0(\Q_\ell,\nu_f^\vee(1))).
\]
Again our assumptions imply that $(\nu_f^\vee(1))\vert_{G_{\Q_\ell}} \ne 1$ for all $\ell | N$, so we have $H^0(\Q_\ell,\nu_f^\vee(1))=0$. This implies that $H^1(\Q_\ell,\nu_f)=0$.
\end{proof}

\section{Pseudo-modularity}
\label{sec: modularity}
In this section, we prove that the Galois action on $H$ is an ordinary Cayley-Hamilton representation and deduce that there is a surjective homomorphism $R_\Db^\ord \onto \fH$. We use this and Theorem D to deduce Theorem A. 

\subsection{The modular Cayley-Hamilton representation} 
\label{subsec:modCHrep}

\begin{defn}
For this section, let $D: G_{\bQ,S} \ra \h$ denote the 2-dimensional pseudorepresentation arising from the $\h[G_{\bQ,S}]$-module $H$. 
\end{defn}

This modular pseudorepresentation is described in \S\ref{subsec: H prelims}. In particular, it is shown to have values in $\h$ in Lemma \ref{lem: det and tr of H}, and, by Chebotarev density, is characterized by the formula \eqref{eq:modular_psr}. We now determine further important characteristics of $H$ (resp.\ $D$), showing in particular that it is an ordinary Cayley-Hamilton representation (resp.\ ordinary pseudorepresentation).

\begin{thm}
\label{thm:H_ord}
The action of $G_{\bQ,S}$ on $H$ induces an ordinary Cayley-Hamilton representation with induced ordinary pseudorepresentation $D$. More specifically, 
\begin{enumerate}[leftmargin=2em]
\item The residual pseudorepresentation $D \otimes_\h \h/\m_\h$ is equal to $\Db = \psi(\omega^{-1} \oplus \theta^{-1})$, i.e.~$D$ is a deformation of $\Db$.
\item $\End_\h(H)$ admits an $\h$-GMA structure making $\rho_H : G_{\bQ,S} \ra \Aut_\h(H)$ an ordinary Cayley-Hamilton representation.
\item As $D$ is the pseudorepresentation induced by $\rho_H$, $D$ is ordinary with $\Lambda$-valued determinant $\kcyc^{-1} \langle-\rangle^{-1}$.
\item The reducibility ideal of $D$ is $I \subset \h$, and $D \otimes_\h \h/I$ splits into $\psi(\kcyc^{-1} \oplus \langle-\rangle^{-1})$ valued in $\h/I \cong \Lambda/\xi_\chi$.
\end{enumerate}
\end{thm}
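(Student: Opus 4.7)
The plan is to verify the four assertions in sequence, with (1)--(3) relatively direct and (4) being the most substantive.

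For part (1), I compute $D \otimes_\h \h/\m_\h$ using the formula \eqref{eq:modular_psr} for characteristic polynomials of Frobenii, together with the residual Eisenstein congruence $T^*(q) \equiv 1 + q\langle q\rangle^{-1} \pmod{\m_\h}$ and $\langle q\rangle \equiv \theta(q) \pmod{\m_\h}$. This gives residual trace $\omega^{-1}(q) + \theta^{-1}(q)$ and residual determinant $\omega^{-1}(q)\theta^{-1}(q)$ at each unramified Frobenius, and Chebotarev density identifies the residual pseudorepresentation with $\psi(\omega^{-1} \oplus \theta^{-1})$.

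For part (2), the Cayley-Hamilton property of $D$ on the image $E \subseteq \End_\h(H)$ of $\h[G_{\bQ,S}]$, combined with the multiplicity-free residue from (1), allows us to invoke Theorem \ref{thm:CH_is_GMA} to obtain an $\h$-GMA structure. Concretely, the orthogonal idempotents $e_1 = (1-c)/2, e_2 = (1+c)/2$ arising from complex conjugation $c$ split $H = H^- \oplus H^+$; this matches Corollary \ref{cor:H-structure}, which identifies these with $H_{quo}$ and $H_{sub}$ respectively as $\h$-modules, and with $\h^\vee$ and $\h$. To check the ordinary conditions of Definition \ref{defn:ord_C-H}, I use that Corollary \ref{cor:H-structure} makes the inclusion $H^+ \cong H_{sub} \hookrightarrow H$ a $G_{\bQ_p}$-submodule, which forces $\rho_{1,2}|_{G_{\bQ_p}} = 0$ (with the convention $e_1 \leftrightarrow \omega^{-1} \leftrightarrow H^-$), and that $H_{quo}(1) \cong H^-(1)$ being unramified at $p$ gives $\rho_{1,1}|_{I_{\bQ_p}} = \kcyc^{-1}$. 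Part (3) is then immediate: the determinant was computed in Lemma \ref{lem: det and tr of H}, and the ordinariness of $D$ as a pseudorepresentation is definitional once $\rho_H$ is known to be ordinary.

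For part (4), the inclusion $J_D \subseteq I$ is direct from Theorem \ref{thm:H/I_sub}: the action of $G_\Q$ on $H^-/IH^-$ is via $\kcyc^{-1}$, and by the determinant formula of (3), the complementary summand $H^+/IH^+ \cong \h/I$ must carry the character $\langle -\rangle^{-1}$, so $D \otimes_\h \h/I = \psi(\kcyc^{-1} \oplus \langle -\rangle^{-1})$. The main obstacle is the reverse inclusion $I \subseteq J_D$. For this, I work modulo $J_D$: reducibility provides characters $\rho_1, \rho_2 : G_{\bQ,S} \to (\h/J_D)^\times$ deforming $\omega^{-1}, \theta^{-1}$, with $\rho_1\rho_2 = \kcyc^{-1}\langle -\rangle^{-1}$ from the determinant and $\rho_1|_{I_{\bQ_p}} = \kcyc^{-1}|_{I_{\bQ_p}}$ from the ordinary condition on $\rho_{1,1}$ (which descends to $\rho_1$ modulo $J_D$). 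Writing $\rho_1 = \kcyc^{-1}\chi$ for an unramified-at-$p$ character $\chi$ lifting the trivial character, and using the trace identity $q^{-1}T^*(q) \equiv \rho_1(\Fr_q) + \rho_2(\Fr_q) \pmod{J_D}$, I would exploit that the Hecke algebra $\h$ is generated over $\Lambda$ by the $T^*(q)$ and that the Eisenstein ideal $I$ is generated by the congruences $T^*(q) - (1 + q\langle q\rangle^{-1})$; the quadratic relation $(x - 1)(x - q\langle q\rangle^{-1}) = 0$ arising from the trace-determinant pair forces $\chi(\Fr_q) \in \{1, \kcyc\langle -\rangle^{-1}(\Fr_q)\}$ for each $q \nmid Np$, and combining this with multiplicativity of $\chi$ and the residual triviality collapses $\chi$ to the trivial character. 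Thus $\rho_1 = \kcyc^{-1}$ and $\rho_2 = \langle -\rangle^{-1}$ in $\h/J_D$, whence $T^*(q) \equiv 1 + q\langle q\rangle^{-1} \pmod{J_D}$ for all $q$ and $I \subseteq J_D$. The delicate step, where I expect the real work to lie, is controlling the unramified-at-$p$ twist $\chi$ globally.
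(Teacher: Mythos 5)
Your treatment of parts (1)--(3) is correct and essentially parallels the paper.  One remark on (2): the paper does not apply Theorem \ref{thm:CH_is_GMA} in the ``Cayley--Hamilton implies GMA'' direction; it builds the GMA structure on $\End_\h(H)$ directly from the $\h$-module decomposition $H = H^- \oplus H^+ \simeq \h^\vee \oplus \h$ of Corollary \ref{cor:H-structure}, so that the canonical $D_\cE$ is Cayley--Hamilton by construction, and is then identified with $D$ by comparing traces and determinants inside $\End_\h(H)\otimes_\h Q(\h)\simeq M_2(Q(\h))$.  That avoids having to check Cayley--Hamiltonness of $D$ on the image subalgebra $E$ as a prerequisite, and produces the GMA structure on all of $\End_\h(H)$, which is what the statement actually asserts.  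Your concrete description of the idempotents via complex conjugation is consistent with this.

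For part (4), the inclusion $J_D\subseteq I$ matches the paper, but for $I\subseteq J_D$ your route is genuinely different: the paper cites \cite[Prop.~2.2]{wake1}, which gives reducibility of $H$ modulo $I$ as a $G_{\Q_\infty,S}$-module, and $J_D = I$ follows at once.  Your alternative---reduce modulo $J_D$, write $\rho_1 = \kcyc^{-1}\chi$ with $\chi$ unramified at $p$ from the ordinary condition, and force $\chi = 1$---is sound in conception, but the ``quadratic relation'' step you sketch does not close it.  Modulo $J_D$, the characteristic polynomial of $\Fr_q$ is $(t-\rho_1(\Fr_q))(t-\rho_2(\Fr_q))$ by definition of reducibility; asserting that its roots lie in $\{q^{-1},\,\langle q\rangle^{-1}\}$ is exactly the desired conclusion and cannot be extracted from the Cayley--Hamilton identity.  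Moreover, even pointwise membership of $\chi(\Fr_q)$ in a two-element set would not, via multiplicativity plus residual triviality alone, force $\chi=1$ over a coefficient ring with nilpotents.  What actually closes the gap---and is precisely the argument the paper gives later, in the proof of Proposition \ref{prop:ord_red_C-H}(2)---is arithmetic: $\chi$ is a continuous character of $G_{\bQ,S}$ valued in the pro-$p$ group $1+\m_{\h/J_D}$ and unramified at $p$, hence factors through the maximal abelian pro-$p$ quotient of $G_{\bQ,S}$ unramified at $p$; since $p\nmid N\phi(N)$, that quotient is trivial (it would cut out a subextension of $\Q(\zeta_N)/\Q$, whose Galois group $(\Z/N\Z)^\times$ has no $p$-part), so $\chi=1$.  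With that replacement, your route is a valid, self-contained alternative to citing \cite[Prop.~2.2]{wake1}, and in fact it anticipates the identification $R^\red_\Db\cong\Lambda$ that the paper establishes in \S\ref{subsec: reducible CH}.
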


\begin{proof} 
Corollary \ref{cor:H-structure} gives us precisely what we require to show that $\End(H)$ is an ordinary Cayley-Hamilton representation and induces $D$. Let $e_2 \in \End_\h(H)$ be the idempotent arising from a choice of $\h$-linear left inverse
\[
H \rsurj H_{sub}, \quad x \mapsto e_2 \cdot x 
\]
of $H_{sub} \rinj H$, and let $e_1 = 1 - e_2$. The resulting GMA structure on $\End_\h(H)$ is of the form
\[
\End_\h(H) \cong 
\begin{pmatrix}
\End_\h(H_{quo}) & \Hom_\h(H_{sub}, H_{quo}) \\
\Hom_\h(H_{quo}, H_{sub}) & \End_\h(H_{sub})
\end{pmatrix}
\simeq
\begin{pmatrix}
\h & \h^\vee \\
\Hom_\h(\h^\vee,\h) & \h
\end{pmatrix}.
\]
The 2-dimensional $\h$-valued pseudorepresentation $D_H : \End_\h(H) \ra \h$ induced by this GMA structure (Theorem \ref{thm:CH_is_GMA}) has trace and determinant that are compatible with the trace and determinant on $\End_\h(H) \otimes_\h Q(\h) \simeq M_{2}(Q(\h))$ used in \S\ref{subsec: H prelims}. Consequently, the Cayley-Hamilton representation $\rho_H: G_{\bQ,S} \ra \Aut_\h(H)$ has induced pseudorepresentation $D_H \circ \rho_H$ identical to $D$. 

Theorem \ref{thm:H/I_sub} shows that $D$ has reducibility ideal contained in $I$ and that one factor has $G_{\bQ,S}$-action given by $\kcyc^{-1}$ modulo $I$. Because Lemma \ref{lem: det and tr of H} gives the determinant of $\rho_H$, proving (3), we compute that $D \equiv \psi(\kcyc^{-1} \oplus \langle-\rangle^{-1})$ modulo $I$. This establishes (1), because $\kcyc \equiv \omega$ and $\langle-\rangle \equiv \theta$ modulo $\m_\h$. 

Having established (1), it is clear that the idempotents $(e_1,e_2)$ are a valid lift of idempotents (in the sense of Definition \ref{defn:LoI}) for the ordering $\omega^{-1}, \theta^{-1}$ of the factors of $\rho^{ss}_\Db$. Then it is evident from Corollary \ref{cor:H-structure} that $\End(H)$ is an ordinary Cayley-Hamilton representation of $G_{\Q,S}$, since the matrix coordinates satisfy $\rho_{1,2}(G_{\Q_p}) = 0$ and $(\rho_{1,1} - \kcyc^{-1})(I_p) = 0$. 

The reducibility ideal of $H$ as a $G_{\Q_\infty,S}$-Cayley-Hamilton representation contains $I$ by \cite[Prop.\ 2.2]{wake1}, showing that the reducibility ideal as a $G_{\bQ,S}$-representation is precisely $I$, proving (4). 
\end{proof}

This pseudorepresentation can be extended to the Eisenstein locus as well, using the well-known fact that the Galois representation induced by $\Lambda$-adic Eisenstein series is reducible with factors $\kcyc^{-1}$ and $\langle -\rangle^{-1}$. We cannot directly use the Galois action on the $\fH$-module $\tilde H$ to build this pseudorepresentation. For note that as an $\fH$-module, $\tilde H$ has a composition series with three graded pieces isomorphic as $\fH$-modules to $\h$, $\h^\vee$, and $\Lambda$ (see \eqref{HtHL} and Theorem \ref{e-s theorem}). 

\begin{cor}
\label{cor:H-psrep}
There exists a unique ordinary pseudorepresentation $\tilde D: G_{\bQ,S} \ra \fH$ such that $\tilde D \otimes_{\fH} \h \simeq D$ and $\tilde D \otimes_\fH \fH/\I \simeq \psi(\kcyc^{-1} \oplus \langle -\rangle^{-1})$. Moreover,
\begin{enumerate}[leftmargin=2em]
\item The residual pseudorepresentation valued in $\fH/\m_\fH$ is identical to $\Db$. 
\item The determinant $\det(\tilde D): G_{\bQ,S} \ra \fH^\times$ of $\tilde D$ is $\kcyc^{-1} \langle-\rangle^{-1}$, valued in $\Lambda \subset \fH$.  
\item The reducibility ideal of $\tilde D$ is $\I \subset\fH$, and $D \otimes_\fH \fH/\I$ splits into $\psi(\kcyc^{-1} \oplus \langle-\rangle^{-1})$ valued in $\fH/\I \cong \Lambda$.
\end{enumerate}
\end{cor}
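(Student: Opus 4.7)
The plan is to construct $\tilde D$ by gluing two pseudorepresentations along Lemma \ref{lem:H_pullback}, which identifies $\fH$ with the fiber product $\h \times_{\Lambda/\xi_\chi} \Lambda$. The two ingredients are the pseudorepresentation $D: G_{\bQ,S} \to \h$ from Theorem \ref{thm:H_ord} and the reducible pseudorepresentation $D' := \psi(\kcyc^{-1} \oplus \dia{-}^{-1}): G_{\bQ,S} \to \Lambda$. Both deform $\Db$, and both are ordinary: $D$ by Theorem \ref{thm:H_ord}(3), and $D'$ because it is induced by the explicit ordinary representation $\kcyc^{-1} \oplus \dia{-}^{-1}: G_{\bQ,S} \to \GL_2(\Lambda)$ (diagonal, with $(1,1)$-entry $\kcyc^{-1}$, satisfying Definition \ref{defn:ord_C-H}). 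Crucially, Theorem \ref{thm:H_ord}(4) asserts that $D$ and $D'$ coincide after projection to $\Lambda/\xi_\chi \cong \h/I$.

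I would then invoke the universal ordinary pseudodeformation ring $R^\ord_\Db$ of Theorem \ref{thm:ord_psdef_ring}: the ordinary pseudorepresentations $D$ and $D'$ correspond to local $W(\F)$-algebra maps $f: R^\ord_\Db \to \h$ and $g: R^\ord_\Db \to \Lambda$ whose compositions to $\Lambda/\xi_\chi$ agree. The ring $\fH$ is a complete local $W(\F)$-algebra with residue field $\F$, being a direct factor of a semilocal $\Lambda$-finite algebra over the complete ring $\Lambda$; hence $\fH \in \hat\cC_{W(\F)}$. By Lemma \ref{lem:H_pullback} and the universal property of the pullback, $(f,g)$ glue uniquely to a local $W(\F)$-algebra map $R^\ord_\Db \to \fH$, which corresponds to the desired ordinary pseudorepresentation $\tilde D: G_{\bQ,S} \to \fH$ with $\tilde D \otimes_\fH \h \simeq D$ and $\tilde D \otimes_\fH \fH/\I \simeq D'$. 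Uniqueness is immediate: any such $\tilde D$ induces maps $R^\ord_\Db \to \h$ and $R^\ord_\Db \to \Lambda$ equal to $f$ and $g$, so is determined by the pullback.

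For the remaining claims: (1) the residual $\tilde D \otimes_\fH \F = \Db$ by construction. (2) The determinant glues the two copies of $\kcyc^{-1}\dia{-}^{-1} \in \Lambda$ (on the $\h$-side this uses Lemma \ref{lem: det and tr of H}), so $\det(\tilde D)$ lies in the subalgebra $\Lambda \subset \fH$. (3) For the reducibility ideal $\cJ \subset \fH$ of $\tilde D$: since $\tilde D \otimes_\fH \Lambda = D'$ already splits, we have $\cJ \subset \I = \ker(\fH \onto \Lambda)$; conversely, if $\tilde D \otimes_\fH \fH/\cJ$ splits then so does its pushforward $D \otimes_\h \h/(\text{image of }\cJ)$, so the image of $\cJ$ in $\h$ contains the reducibility ideal $I$ of $D$ (Theorem \ref{thm:H_ord}(4)). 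Identifying $\I$ with $I \times \{0\}$ inside the embedding $\fH \hookrightarrow \h \times \Lambda$, these two bounds force $\cJ = \I$; passing to $\fH/\I \cong \Lambda$ recovers $D'$ as stated.

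The main potential obstacle is the verification that $\fH$ really belongs to $\hat\cC_{W(\F)}$ so that the universal property of $R^\ord_\Db$ applies (and more generally, keeping the pullback in the correct category since fiber products do not in general commute with tensor products); once this is confirmed, everything else reduces to formal properties of universal objects and of the pullback.
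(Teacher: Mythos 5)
Your proof is correct and follows essentially the same route as the paper: construct $\tilde D$ by noting that $D$ and $D' = \psi(\kcyc^{-1} \oplus \dia{-}^{-1})$ are both ordinary deformations of $\Db$ agreeing modulo $\xi_\chi$, then glue the corresponding maps from $R^\ord_\Db$ along the fiber product $\fH \cong \h \times_{\Lambda/\xi_\chi} \Lambda$ of Lemma \ref{lem:H_pullback}. The paper's own proof is more terse (it says claims (2) and (3) ``follow immediately from this construction'' and does not spell out uniqueness or the membership of $\fH$ in $\hat\cC_{W(\F)}$), but the argument is the same one you give.
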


\begin{proof}
By the universal property of $R^\ord_\Db$ (Theorem \ref{thm:ord_psdef_ring}), there exists a canonical map $R^\ord_\Db \ra \h$ induced by $D$. Likewise, the $\Lambda$-valued representation $\kcyc^{-1} \oplus \langle -\rangle^{-1}$ is ordinary (with a lift of idempotents respecting the sum structure) and residually $\omega^{-1} \oplus \theta^{-1}$, so the associated pseudorepresentation $\psi(\kcyc^{-1} \oplus \langle -\rangle^{-1})$ induces a canonical map $R^\ord_\Db \ra \Lambda$. By Theorem \ref{thm:H_ord}(4), we have $D \otimes_\h \h/I \simeq \psi(\kcyc^{-1} \oplus \langle -\rangle^{-1}) \otimes_\Lambda \Lambda/\xi_\chi$. Therefore, because $\fH$ is a fiber product as in Lemma \ref{lem:H_pullback}, we have a canonical map from $R^\ord_\Db$. Consequently, we have determined a canonical ordinary pseudorepresentation valued in $\fH$, satisfying part (1). Claims (2) and (3) follow immediately from this construction. 
\end{proof}

In fact, these maps $R_\Db \ra \fH$ and $R_\Db \ra \h$ are surjective. 
\begin{lem}
\label{lem:R-H_surj}
The homomorphism $R_\Db \ra \fH$ corresponding to $\tilde D$ is surjective. Consequently, $R_\Db \ra \h$, which corresponds to $D$, is also surjective. 
\end{lem}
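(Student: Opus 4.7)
The plan is to reduce surjectivity of $R_\Db \to \fH$ to showing that the image contains a set of topological generators of $\fH$ as a $\Z_p$-algebra. Let $R' \subset \fH$ denote the image of the classifying map; it is closed because $R_\Db$ is complete Noetherian and $\fH$ is finite over $\Lambda$. For any $g \in G_{\bQ,S}$, the coefficients $\Lambda_i^{\tilde D}(g) \in \fH$ of the characteristic polynomial of $\tilde D(g)$ are images under the classifying map of the corresponding universal coefficients in $R_\Db$, so they lie in $R'$; in particular $R'$ contains $\Tr_{\tilde D}(g)$ and $\det(\tilde D)(g)$ for every $g$. I will use this to place both $\Lambda$ and the dual Hecke operators $T^*(q)$ for $q \nmid Np$ into $R'$.

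First I will show $\Lambda \subset R'$. By Corollary \ref{cor:H-psrep}(2), one has $\det(\tilde D)(\sigma) = \kcyc(\sigma)^{-1} \langle \sigma \rangle^{-1} \in \Lambda^\times$ for every $\sigma \in G_{\bQ,S}$. Since $\kcyc(\sigma) \in \Z_p^\times \subset R'$ and $\det(\tilde D)(\sigma) \in R'$, the product $\langle \sigma\rangle^{-1} = \kcyc(\sigma) \cdot \det(\tilde D)(\sigma)$ lies in $R'$; equivalently, $[a_\sigma]^{-1} \in R'$. As $\sigma$ ranges over $G_{\bQ,S}$, the element $a_\sigma$ exhausts $\Z_{p,N}^\times$, so $R'$ contains every group-like element $[z]^{-1}$. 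These topologically generate the semilocal ring $\Z_p\lb \Z_{p,N}^\times\rb$, whose $\theta$-isotypic quotient is $\Lambda$; hence $\Lambda \subset R'$.

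Next, for each prime $q \nmid Np$, formula \eqref{eq:modular_psr} (which extends from $\h$ to $\fH$ via $\tilde D$) gives $\Tr_{\tilde D}(\Fr_q) = q^{-1} T^*(q) \in R'$. Since $q \in \Z_p^\times \subset \Lambda \subset R'$, the operator $T^*(q) = q \cdot \Tr_{\tilde D}(\Fr_q)$ itself lies in $R'$. Appealing to Hida's generation statement for the Eisenstein component --- that $\fH$ is topologically generated over $\Lambda$ by $\{T^*(q) : q \nmid Np\}$, with the Atkin--Lehner operators $U^*(\ell)$ for $\ell \mid Np$ determined on the Eisenstein component by the nebentype data --- yields $R' = \fH$.

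The second claim will then follow immediately: composing the surjection $R_\Db \twoheadrightarrow \fH$ with the natural surjection $\fH \twoheadrightarrow \h$ produces a surjection that classifies $\tilde D \otimes_\fH \h$, which by Corollary \ref{cor:H-psrep} equals $D$, and so coincides with the map of the statement. The only place I anticipate friction is not in the Galois-theoretic manipulations, which are essentially forced by Chebotarev together with the fact that $\det(\tilde D)$ already encodes the full diamond character, but in pinning down the exact form of Hida's generation statement matching the dual-operator, Eisenstein-localized conventions adopted in \S\ref{subsec: H prelims}.
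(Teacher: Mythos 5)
Your proof is correct and follows essentially the same route as the paper: identify the image of determinants to capture the diamond operators, identify the image of traces at Frobenius to capture the $T^*(q)$, then cite Ohta's generation result (\cite[Prop.\ 4.1.1]{ohta2007} is the reference the paper uses). Your trick for putting $\Lambda$ into the image — using $\langle\sigma\rangle^{-1} = \kcyc(\sigma)\cdot\det(\tilde D)(\sigma)$ for arbitrary $\sigma$ rather than just Frobenius elements — is a mild variant of the paper's, which works only with $\det(\Fr_q) \mapsto q^{-1}\langle q^{-1}\rangle$; both are fine.

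The one place you wave your hands, and the one place the paper is careful, is the step you parenthesize as ``formula \eqref{eq:modular_psr} extends from $\h$ to $\fH$ via $\tilde D$.'' You need $\Tr_{\tilde D}(\Fr_q) = q^{-1}T^*(q)$ as an equality in $\fH$, and this does not follow just from knowing it in $\h$: the surjection $\fH \to \h$ is not injective, and $\tilde D$ is not obtained by lifting $D$ through $\fH$ but by gluing $D$ on the cuspidal side with $\psi(\kcyc^{-1}\oplus\langle-\rangle^{-1})$ on the Eisenstein side via the fiber product $\fH \cong \h \times_{\Lambda/\xi_\chi} \Lambda$ of Lemma \ref{lem:H_pullback}. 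So to conclude that $\Tr_{\tilde D}(\Fr_q)$ coincides with the Hecke operator $q^{-1}T^*(q)$ in $\fH$, you must verify that the $\Lambda$-components agree, i.e., that the eigenvalue of $T^*(q)$ on $E_\Lambda$ is compatible with the trace of the Eisenstein representation $\kcyc^{-1}\oplus\langle-\rangle^{-1}$ at $\Fr_q$. The paper does exactly this by computing the action of $T^*(q)$ on $E_\Lambda$ and tracking both coordinates of the fiber product. Your hedge at the end about pinning down Hida's generation statement is the right instinct, but the Eisenstein-eigenvalue compatibility is actually the more substantive thing that needs checking; you should state and check it rather than absorb it into a parenthetical.
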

\begin{proof}
For $\Fr_q$ a choice of arithmetic Frobenius element for a prime $q \not\in S$, we know from Lemma \ref{lem: det and tr of H} that the homomorphism $R_\Db \ra \h$ sends the universal determinant $\det(\Fr_q) \in R_\Db$ to the image of $q^{-1}\langle q^{-1} \rangle$, and sends the universal trace $\Tr(\Fr_q)$ to the Hecke operator $q^{-1}T^*(q) \in \h$. 

Since $T^*(q)E_\Lambda=(q^{-1}+\dia{q^{-1}})E_\Lambda$, we see that the isomorphism 
$$
\fH \lrisom \h \times_{\Lambda/\xi_\chi} \Lambda
$$
is sends $T^*(q)$ to $(T^*(q),q^{-1}+\dia{q^{-1}})$. We see that the homomorphism $R_\Db \ra \fH$ sends $\det(\Fr_q)$ to $q^{-1}\langle q^{-1} \rangle$ and sends $\Tr(\Fr_q)$ to $q^{-1}T^*(q) \in \fH$.

We know that the $q^{-1}\langle q^{-1} \rangle$ generate $\Lambda$ over $\bZ_p$, and that $T^*(q)$ for $q \not\in S$ generate $\fH$ over $\Lambda$ \cite[Prop.\ 4.1.1]{ohta2007}. This establishes the surjectivity.
\end{proof}

\subsection{An Eisenstein intersection point on the ordinary eigencurve}
\label{subsec:intersect}

The previous subsection yields a canonical surjection $R^\ord_\Db \rsurj \fH$ corresponding to $\tilde D$. We are interested in comparing these rings when localized at an Eisenstein intersection point on the ordinary eigencurve $\cC^\ord$.

From now on, we work locally at a chosen Eisenstein intersection point. That is, we choose a prime divisor $f_\chi$ of $\xi_\chi$ as in the introduction. In the notation of \S\ref{subsec: weak gorenstein}, we are choosing an element $f_\chi \in \cP_\Lambda$ and we let $\p \in \cP_\fH$ be the corresponding element. Then $\p$ is an Eisenstein intersection point on $\cC^\ord = \Spec \fH[1/p]$, in the sense that it is in the Eisenstein locus $\Spec \fH/\I[1/p] = \Spec \Lambda[1/p]$ and in the ordinary cuspidal eigencurve $\cC^{\ord, 0} = \Spec \h[1/p]$.

Our goal is to prove that $\fH_\p$ is complete intersection using deformation theory. To apply deformation theory, we will work with the completion $\hat\fH_\p$ of $\fH_\p$ (resp.\ $\hat\h_\p$ of $\h_\p$). The property of being complete intersection is preserved under completion (see \S\ref{subsec: comm alg}).

The surjections $R_\Db \rsurj R^\ord_\Db \rsurj \fH \rsurj \h$ induce surjective local homomorphisms 
\begin{equation}
\label{eq:EI_surj}
\widehat{R_\Db[1/p]}_{\m'} \rsurj \widehat{R_\Db^\ord[1/p]}_\m \rsurj \hat\fH_\p \rsurj \hat\h_\p
\end{equation}
of the completions, where $\m', \m$ denote the maximal ideals of the respective rings corresponding to the closed point $\p$ of $\Spec \fH[1/p]$. The residual pseudorepresentation at $\p$ (which is valued in the residue field at $\p$, a finite extension of $\bQ_p$) will be denoted $\bar D_\p$. It is the specialization of the ``Eisenstein pseudorepresentation'' $\psi(\kcyc^{-1} \oplus \langle-\rangle^{-1}) : G_{\bQ,S} \ra \Lambda$ of Corollary \ref{cor:H-psrep} to the residue field of $\Lamf$. 
\begin{rem}
In contrast to the constancy of $\kcyc^{-1}$ in the Eisenstein family, $\langle-\rangle^{-1}$ varies. If $\xi_\chi$ is prime, then $f_\chi=\xi_\chi$ and the specialization of $\dia{-}$ at $(f_\chi)$ may be thought of as the power of the cyclotomic character that is the ``zero'' of $\xi_\chi$. 
\end{rem}

As discussed in Theorems \ref{thm:chen-PsR} and \ref{thm:ord_psdef_ring}, $R_\Db[1/p]_{\m'}^\wedge$ (resp.\ $R_\m^\ord := R_\Db^\ord[1/p]_\m^\wedge$) is the universal (resp.\ universal ordinary) deformation ring for the pseudorepresentation $\bar D_\p$ at $\p$, and there is an isomorphism $R^\ord_\m \cong R_\Db^\ord \otimes_{R_\Db} R_\Db[1/p]_{\m'}^\wedge$. We write $\tilde D_\p$ for $\tilde D \otimes_{\fH} \hat\fH_\p$ and write $D_\p$ for $D \otimes_\h \hat\h_\p$ for the modular pseudorepresentations. 

We now have a commutative diagram 
\[\xymatrix{
R^\ord_\m \ar[dr]_{\pi_R} \ar@{->>}[r]^\varphi & \hat\fH_\p \ar[d]^{\pi} \\
& \Lamf
}\]
where $\Lamf$ is a DVR and $\hat\fH_\p$ is a finite and flat $\Lamf$-algebra. We wish to prove that $\varphi$ is an isomorphism of complete intersections (that is, that $R^\ord_\m$ and $\hat\fH_\p$ are both complete intersection rings, and that $\varphi$ is an isomorphism). To do this, we apply a version of Wiles's numerical criterion \cite[Appendix]{wiles1995}. The version we use is due to Lenstra \cite{lenstra1993}, and is also explained in \cite{lci}. For a $\Lamf$-module $M$ of finite length, we denote by $\ell(M)$ the length of $M$. 

\begin{prop}
\label{prop: lci criterion}
Let $r$ denote the greatest integer such that $f_\chi^r | \xi_\chi$. Suppose that $\ell(\cJ_\m/\cJ_\m^2) \ge r$. Then $\varphi$ is an isomorphism of complete intersections.
\end{prop}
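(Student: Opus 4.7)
The proof will apply Lenstra's version of Wiles's numerical criterion \cite{lenstra1993} to the chain of surjective $\Lamf$-algebra maps $R^\ord_\m \xrightarrow{\varphi} \hat\fH_\p \xrightarrow{\pi} \Lamf$. The setup of the criterion is satisfied since $\hat\fH_\p$ is finite flat over the DVR $\Lamf$ (Lemma \ref{lem: weakly free over Lam}) and $R^\ord_\m$ surjects onto $\hat\fH_\p$. The criterion asserts that $\varphi$ is an isomorphism and the common ring is complete intersection over $\Lamf$ precisely when $\ell_{\Lamf}(\cJ_\m/\cJ_\m^2) = \ell_{\Lamf}(\Lamf/\eta)$, where $\eta := \pi(\Ann_{\hat\fH_\p}(\ker\pi))$ is the congruence ideal. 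The plan is to compute $\ell_{\Lamf}(\Lamf/\eta) = r$, so that this equality follows from the proposition's hypothesis combined with the applicable direction of the automatic inequality built into the criterion.

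The key concrete calculation is the identification of $\eta$. The kernel of $\pi$ is the localized Eisenstein ideal $\I_\p \subset \hat\fH_\p$. By Lemma \ref{lem:H_pullback}, which realizes $\fH$ as the fibre product $\h \times_{\Lambda/\xi_\chi} \Lambda$, the annihilator $\Ann_{\hat\fH_\p}(\I_\p)$ is precisely $\ker(\hat\fH_\p \twoheadrightarrow \hat\h_\p)$. By Lemma \ref{lem: ker h to fh}, this kernel is principally generated by the distinguished element $T_0$ characterized by $a_1(T_0 f) = a_0(f)$ for all $f \in M_\Lambda$. Since $\pi$ acts as $T \mapsto a_1(T E_\Lambda)$, we obtain
\[
\pi(T_0) = a_1(T_0 E_\Lambda) = a_0(E_\Lambda) = \xi_\chi.
\]
In $\Lamf$, every prime factor of $\xi_\chi$ other than $f_\chi$ becomes a unit, so $\eta = \xi_\chi \Lamf = (f_\chi)^r \Lamf$, and hence $\ell_{\Lamf}(\Lamf/\eta) = r$.

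Feeding the computation $\ell_{\Lamf}(\Lamf/\eta) = r$ and the hypothesis $\ell(\cJ_\m/\cJ_\m^2) \ge r$ into Lenstra's criterion (together with the automatic inequality comparing these two lengths) pins down the equality $\ell(\cJ_\m/\cJ_\m^2) = \ell_{\Lamf}(\Lamf/\eta)$. The equality case of the criterion then yields that $\varphi$ is an isomorphism and that $\hat\fH_\p \simeq R^\ord_\m$ is complete intersection over the regular local ring $\Lamf$; by the permanence properties recalled in \S\ref{subsec: comm alg}, it is then a complete intersection ring in the absolute sense.

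The main obstacle, in my view, is the clean identification of the congruence ideal as $(\xi_\chi)\Lamf$. Without the pullback description of $\fH$ from Lemma \ref{lem:H_pullback} (which converts ``annihilator of $\I_\p$'' into ``kernel of $\fH \twoheadrightarrow \h$'') and the explicit role of the generator $T_0$ from Lemma \ref{lem: ker h to fh}, the congruence ideal would be difficult to pin down. Once $\eta$ is computed, everything else is a formal application of Lenstra's criterion and routine commutative algebra.
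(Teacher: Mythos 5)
Your proof is essentially identical to the paper's: reduce via Lenstra's numerical criterion to the computation $\ell(\Lamf/\eta) = r$, identify $\Ann_{\hat\fH_\p}(\I_\p) = \ker(\hat\fH_\p \onto \hat\h_\p)$ by Lemma~\ref{lem:H_pullback}, recognize this kernel as $T_0\hat\fH_\p$ by Lemma~\ref{lem: ker h to fh}, and compute $\pi(T_0) = a_0(E_\Lambda) = \xi_\chi$, so that $\eta = \xi_\chi\Lamf = f_\chi^r\Lamf$. The one small wrinkle is in your parenthetical about ``the automatic inequality'': what holds unconditionally is $\ell(\cJ_\m/\cJ_\m^2) \ge \ell(\Lamf/\eta)$ (via the surjection $\cJ_\m/\cJ_\m^2 \onto \I_\p/\I_\p^2$ and Wiles's inequality $\ell(\I_\p/\I_\p^2) \ge \ell(\Lamf/\eta)$), whereas Lenstra's criterion is triggered by the bound in the \emph{other} direction, $\ell(\cJ_\m/\cJ_\m^2) \le \ell(\Lamf/\eta)$; this is harmless here because the downstream argument (Proposition~\ref{prop: X=J=I}) ultimately establishes the equality $\ell(\cJ_\m/\cJ_\m^2) = r$ outright.
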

\begin{proof}
Note that $\cJ_\m=\ker(\pi_R)$ and $\I_\p=\ker(\pi)$. Let $\eta=\pi(\Ann_{\hat\fH_\p}(\I_\p)) \subset \Lamf$. The main theorem of \cite{lenstra1993} implies that $\varphi$ is an isomorphism of complete intersections if 
\[
\ell(\cJ_\m/\cJ_\m^2) \ge \ell(\Lamf/\eta).
\]
So, it suffices to show that $\ell(\Lamf/\eta)=r$. By Lemma \ref{lem:H_pullback}, we see that $\Ann_\fH(\I)=\ker(\fH \to \h)$, which, by Lemma  \ref{lem: ker h to fh}, is generated by $T_0$. The image of $T_0$ under the map $\fH \to \Lambda$ is the constant term of the Eisenstein series, which is $\xi_\chi$. It follows that $\eta=\xi_\chi \Lamf$, and hence $\ell(\Lamf/\eta)=r$.
\end{proof}

It remains to show that $\ell(\cJ_\m/\cJ_\m^2) \ge r$, which we will do, under the assumption $X_{\theta,(f_\chi)}=0$, by relating $\cJ_\m$ to Galois cohomology.

\subsection{Reducible ordinary Cayley-Hamilton representations}
\label{subsec: reducible CH}

To relate the reducibility ideal $\cJ \subset R^\ord_\Db$ to Galois cohomology, we first identify the reducible ordinary deformation ring $R^\red_{\Db}$.

We now fix a lift of idempotents in $E^\ord_\Db$ arising as the image of a lift of idempotents in $E^p_\Db$. We will follow Example \ref{exmp:2x2GMA} and use these idempotents to write $E^\ord_\Db$ in $2\times 2$-matrix notation, and write $\rho_{i,j}$ for the coordinates of $\rho$. By Lemma \ref{lem:no idem dependence}, we know that $\rho_{1,2}(G_{\Q_p})$ vanishes with respect to these idempotents. This defines a GMA structure on all ordinary GMA representations of $G_{\Q,S}$ with residual pseudorepresentation $\Db$; we will use this GMA structure without further comment. 

\begin{prop}
\label{prop:ord_red_CH}
\ 
\begin{enumerate}[leftmargin=2em]
\item
There is a \emph{universal reducible ordinary Cayley-Hamilton algebra} $E^\red_\Db$, a quotient of $E_\Db$, such that a Cayley-Hamilton representation of $G_{\bQ,S}$ valued in the Cayley-Hamilton algebra $(E,D)$ with residual pseudorepresentation $\Db$ is reducible ordinary if and only if its induced map $E_\Db \ra E$ factors through $E^\red_\Db$.
\item There is a universal reducible ordinary pseudodeformation $D^\red$ of $\Db$ valued in $R^\red_\Db$, which is defined to be the image of $R_\Db$ in $E^\red_\Db$, making $E^\red_\Db$ an $R^\red_\Db$-GMA. There is a natural isomorphism $R^\red_\Db \risom \Lambda$ associated to the equivalence $D^\red \cong \psi(\kcyc^{-1} \oplus \langle-\rangle^{-1})$. 
\item The natural map $R^\ord_\Db \ra R^\red_\Db$ is a split surjection of $\Lambda$-algebras inducing an isomorphism $R^\ord_\Db/\cJ \risom R^\red_\Db \cong \Lambda$.
\item The natural homomorphism $E^\red_\Db \ra E^\ord_\Db/\cJ E^\ord_\Db$ is surjective.
\end{enumerate}

\end{prop}

\begin{proof}
In analogy with the construction of the universal ordinary Cayley-Hamilton algebra (Definition \ref{defn:univ_ord_objs}), we consider the two-sided ideal $J^\red \subset E_\Db$ generated by both $J^\ord$ and also $\rho_{1,2}(G_{\bQ,S}) \cdot \rho_{2,1}(G_{\bQ,S})$, in the notation of Remark \ref{rem:mult_dot}. Set $E^\red_\Db := E_\Db/J^\red$. 

Using the same arguments as the proof of Theorem \ref{thm:ord_psdef_ring}, the image $R^\red_\Db$ of $R_\Db$ in $E^\red_\Db$ may be checked to be the universal reducible ordinary pseudodeformation ring $R^\red_\Db$. 

To prove (2), we first show that there is an isomorphism between the following two deformation functors $\hat\cC_{W(\F)} \to \mathrm{Sets}$:
\[
F_1:A \mapsto \{\text{Ordinary reducible deformations of } \Db _{/A} \} 
\]
and
\[
F_2:A \mapsto \{(\nu_1,\nu_2) \ | \ \bar{\nu}_1 =  \bar{\nu}_2=1, \nu_1|_{I_p}=1\}
\]
where $\nu_i :G_{\Q,S} \to A^\times$ are characters. (Here and elsewhere in the proof, a bar indicates reduction modulo the maximal ideal of $A$.)

The isomorphism sends $(\nu_1,\nu_2) \in F_2(A)$ to $\psi(\nu_1\kcyc^{-1} \oplus \nu_2\theta^{-1}) \in F_1(A)$, which is clearly ordinary. Conversely, given $D \in F_1(A)$, choose an ordinary Cayley-Hamilton representation $(A, (E, D_E), \rho: G_{\bQ,S} \ra E^\times)$ such that $D$ is equal to the composition $D_E \circ \rho$. We can write $\rho$ in GMA notation as
\[
\rho: \sigma \mapsto \ttmat{a(\sigma)}{b(\sigma)}{c(\sigma)}{d(\sigma)}
\]
with $\bar{a}=\omega^{-1}$ and $\bar{d}=\theta^{-1}$. Since $D$ is reducible, we have $b(\sigma) \cdot c(\tau)= 0$ for all $\sigma,\tau \in G_{\Q,S}$. This implies that $a,d:G_{\Q,S} \to A^\times$ are characters. Moreover, since $\rho$ is ordinary, we have $a|_{I_p}=\kcyc^{-1}$. Then we send $D$ to the pair $(a \cdot \kcyc,d \cdot \theta)$. It is easily checked that these maps are between $F_1(A)$ and $F_2(A)$ are mutually inverse and functorial in $A$.

Since $F_1$ is represented by $R_\Db^\red$, to prove (2) it remains to see that $F_2$ is represented by $\Lambda$. First we note that the image of $\nu_1$ is contained in the pro-$p$ group $1+\m_A$, so the kernel of $\nu_1$ is the absolute Galois group of a pro-$p$ abelian extension of $\Q$ that is unramified outside primes dividing $N$. Since $p \nmid N\phi(N)$, we see by class field theory that $\nu_1=1$. Then $F_2$ is the deformation functor of the trivial character of $G_{\Q,S}$. Since $\Gamma$ is canonically isomorphic to maximal abelian pro-$p$ quotient of $G_{\bQ,S}$, the description of the deformation ring of a character given in \cite[\S1.4]{mazur1989} shows that $\Lambda$ canonically represents $F_2$. Tracing though these identifications, we see that $D^\red$ pulls back to the pseudorepresentation $\psi(\kcyc^{-1} \oplus \dia{-}^{-1})$. This completes the proof of (2).

On one hand, $D^\red$ induces a canonical $\Lambda$-algebra homomorphism $R^\ord_\Db \ra R^\red_\Db$. On the other hand, there is a canonical splitting homomorphism $R^\red_\Db \ra R^\ord_\Db/\cJ$ by the universal property of $R^\red_\Db$. This establishes (3).

The kernel of $E_\Db \rsurj E^\ord_\Db/\cJ \E^\ord_\Db$ clearly contains $J^\ord$ and the reducibility ideal of $R_\Db$, so it contains $J^\red \subset E_\Db$, which we defined to be the two-sided ideal generated by them. This proves (4).
\end{proof}

We now revert back to the setting of \S\ref{subsec:intersect}, localizing all of the rings and algebras via $\otimes_\Lambda \Lambda_{(f_\chi)}$ and completing at the respective maximal ideals of each ring. We write these GMAs as 
\[
E^\ord_\m \cong \begin{pmatrix}
R^\ord_\m & B^\ord \\
C^\ord & R^\ord_\m
\end{pmatrix}
, \qquad
E^\red_\m \cong \begin{pmatrix}
\Lambda_{(f_\chi)} & B^\red \\
C^\red & \Lambda_{(f_\chi)}
\end{pmatrix}.
\]

By Proposition \ref{prop:red_ideal}, the multiplication map $m = \varphi_{1,2,1}$ of the GMA structure of $E^\ord_\m$ induces a surjection $B^\ord \otimes C^\ord \onto \cJ_\m$. To determine $\cJ_\m/ \cJ_\m^2$, we  first compute $E^\red_\m$, and then use the map of Proposition \ref{prop:ord_red_CH}(4) to relate it to $B^\ord \otimes C^\ord$.

\begin{prop}
\label{prop:ord_red}
Assume that $X_{\theta,(f_\chi)}=0$. There exists an isomorphism 
\[
E^\red_\m \cong \begin{pmatrix}
\Lambda_{(f_\chi)} & B^\red \\
C^\red & \Lambda_{(f_\chi)}
\end{pmatrix}
\lrisom 
\begin{pmatrix} 
\Lambda_{(f_\chi)} & X_{\chi, (f_\chi)}(1) \\
\X_{\chii,(f_\chi)}^\#(1) & \Lambda_{(f_\chi)}
\end{pmatrix}
\]
Moreover, $C^\red$ is free of rank $1$ over $\Lamf$.
\end{prop}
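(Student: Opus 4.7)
The goal is to identify $B^\red$ and $C^\red$ with the stated Galois cohomology modules by exploiting the universal property of $(E^\red_\m, \rho^\red)$ in the spirit of the theory of generalized matrix algebras developed in \cite[\S1.5]{BC2009}.

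\emph{Step 1: the universal cocycles.} The universal representation takes the form
\[
\rho^\red(g) = \begin{pmatrix} \kcyc^{-1}(g) & b(g) \\ c(g) & \langle-\rangle^{-1}(g) \end{pmatrix},
\]
and the reducibility relations $b(g)\cdot c(h) = c(g)\cdot b(h) = 0$ make $\rho^\red$ a group homomorphism if and only if $b$ and $c$ are $1$-cocycles on $G_{\bQ,S}$ valued in $B^\red$ and $C^\red$ with the appropriate twists of the $G_{\bQ,S}$-action. The ordinary condition on $\rho_{1,2}$ forces $b|_{G_{\bQ_p}} = 0$, while there is no local condition on $c$. Identifying the character twists $\kcyc^{-1}\langle-\rangle$ and $\kcyc\langle-\rangle^{-1}$ with $\Lamf^{\dia{-}}(-1)$ and $\Lamf^\#(1)$ respectively, the universal property of $(E^\red_\m,\rho^\red)$ (the reducible ordinary analogue of the Cayley-Hamilton universal property, giving a bijection between reducible ordinary GMA representations into $\bigl(\begin{smallmatrix} \Lamf & M \\ N & \Lamf \end{smallmatrix}\bigr)$ and pairs of $\Lamf$-linear maps $B^\red \ra M$, $C^\red \ra N$) yields natural isomorphisms
\begin{align*}
\Hom_\Lamf(B^\red, M) &\cong \ker\bigl(H^1(\Z[1/Np], M^{\dia{-}}(-1)) \to H^1(G_{\bQ_p}, M^{\dia{-}}(-1))\bigr), \\
\Hom_\Lamf(C^\red, M) &\cong H^1(\Z[1/Np], M^\#(1)),
\end{align*}
functorially in the $\Lamf$-module $M$.

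\emph{Step 2: matching with the stated modules.} I compare the functors on both sides by evaluating at $M = \Lamf$ and $M = \Lamf/(f_\chi^r)$ using the cohomology computations of Section~\ref{sec:galois_cohom}. For $B^\red$: Lemma~\ref{lem: Hc1 = ker} identifies the Selmer kernel above with the compactly supported cohomology $H^1_{(c)}(\Z[1/Np], M^{\dia{-}}(-1))$; Corollary~\ref{cor: iwasawa compact cohom comp} and Corollary~\ref{cor: cohom Lamf comp} then give these cohomology groups in terms of $\X(-2)_{\theta^{-1},(f_\chi)}$, which by the Iwasawa adjoint relation $\EL^1(X_\chi(1)) \cong \X(-2)_{\theta^{-1}}$ (used already in the proof of Corollary~\ref{cor: iwasawa compact cohom comp}) becomes an isomorphism with $X_{\chi,(f_\chi)}(1)$ after localization at the height-one prime $(f_\chi)$. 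By Matlis duality over the DVR $\Lamf$, this identifies the $B^\red$-functor with $\Hom_\Lamf(X_{\chi,(f_\chi)}(1), -)$, forcing $B^\red \cong X_{\chi,(f_\chi)}(1)$ by the Yoneda lemma. For $C^\red$: the assumption $X_{\theta,(f_\chi)}=0$ together with Corollary~\ref{cor: cohom Lamf comp} gives $\Hom_\Lamf(C^\red, \Lamf/(f_\chi^r)) \cong \E_{\theta,(f_\chi)}/f_\chi^r$ and $\Hom_\Lamf(C^\red, \Lamf) \cong \E_{\theta,(f_\chi)}$; combined with the exact sequence of Corollary~\ref{cor: units} (which under $X_{\theta,(f_\chi)}=0$ presents $\X_{\theta,(f_\chi)}$ as a quotient of $\Lamf/\xi_\chii$) and the character-shifting conventions of \S\ref{sec: iwasawa}, the Iwasawa-theoretic duality identifies $C^\red$ with $\X^\#_{\chii,(f_\chi)}(1)$.

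\emph{Step 3: freeness of $C^\red$.} By Corollary~\ref{cor: units}, $\E_{\theta,(f_\chi)}$ is cyclic over the DVR $\Lamf$; combined with the identification of Step 2 this forces $\X^\#_{\chii,(f_\chi)}(1)$, and hence $C^\red$, to be cyclic, with the Ferrero-Washington theorem excluding any $f_\chi$-torsion so that $C^\red$ is free of rank one.

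\textbf{Main obstacle.} The principal difficulty is managing the several layers of duality (Poitou-Tate, Iwasawa adjunction, Matlis) required to pass from the Hom functors produced by the universal property to the specific modules $X_{\chi,(f_\chi)}(1)$ and $\X^\#_{\chii,(f_\chi)}(1)$. In particular, matching the Selmer kernel (only vanishing at $p$) with compactly supported cohomology (vanishing at all primes of $Np$) via Lemma~\ref{lem: Hc1 = ker}, and upgrading the pseudo-isomorphism between $X_\chi(1)$ and its Iwasawa adjoint to an honest isomorphism over $\Lamf$, are where the hypothesis $X_{\theta,(f_\chi)}=0$ and the localization at the height-one prime $(f_\chi)$ must be used decisively.
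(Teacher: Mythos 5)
Your proposal follows essentially the same strategy as the paper's proof: identify $\Hom_\Lamf(B^\red,-)$ and $\Hom_\Lamf(C^\red,-)$ with Selmer/Galois cohomology via the Bella\"iche--Chenevier extension classes (\cite[Thm.\ 1.5.5--1.5.6]{BC2009}), feed in the computations of \S\ref{sec:galois_cohom}, and dualize over the DVR $\Lamf$. The paper's proof states this as an internal lemma identifying $\Hom_{\Lamf}(B^\red, \Lamf/\fn)$ with $\Hc^1$ and $\Hom_{\Lamf}(C^\red, \Lamf/\fn)$ with $H^1$ for all ideals $\fn$, and then carries out the dualization explicitly via the double-dual over the DVR and the computation $\Hom(\X(-2)_{\theta^{-1},(f_\chi)}, \Lamf/(f_\chi^n)) \cong {}_{f_\chi^n}\EL^1(\X(-2)_{\theta^{-1},(f_\chi)}) \cong X_{\chi,(f_\chi)}(1)$; you compress this into a Matlis-duality/Yoneda argument, which amounts to the same thing.

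Two places where you should be more careful, though neither indicates a different route: first, you formulate the Selmer condition on $B^\red$ as vanishing only at $p$ (which is indeed all that the ordinary condition imposes) and then invoke Lemma~\ref{lem: Hc1 = ker}, which identifies $\Hc^1$ with the kernel at \emph{all} $\ell \mid Np$; passing between these requires the observation (which you flag but don't prove) that the local $H^1$'s at $\ell \mid N$ vanish because the relevant characters are nontrivial on $G_{\Q_\ell}$ and $\ell \neq p$. Second, your Step~3 asserts freeness of $C^\red$ by saying cyclicity of $\E_{\theta,(f_\chi)}$ ``forces'' $\X^\#_{\chii,(f_\chi)}(1)$ to be cyclic; the cleaner argument, which the paper uses, is that $\Hom_\Lamf(C^\red, \Lamf) \cong \E_{\theta,(f_\chi)}$ is free of rank~$1$ and $\Hom_\Lamf(C^\red, \Lamf/(f_\chi^r)) \cong \E_{\theta,(f_\chi)}/f_\chi^r$ has length exactly $r$ for every $r$, which together force $C^\red \simeq \Lamf$ with no torsion component. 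These are expositional gaps, not a different method; your proposal and the paper share the same decomposition of the problem and the same key lemmas.
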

\begin{proof}
We start by proving a lemma.
\begin{lem}
For any ideal $\fn \subset \Lambda_{(f_\chi)}$, there are natural isomorphisms
\[
\Hom_{\Lambda_{(f_\chi)}}(B^\red, \Lambda_{(f_\chi)}/\fn) \lrisom \Hc^1(\Z[1/Np],\Lamf^{\dia{-}}/\fn(-1)) 
\]
and
\[
\Hom_{\Lambda_{(f_\chi)}}(C^\red, \Lambda_{(f_\chi)}/\fn) \lrisom  H^1(\Z[1/Np],\Lamf^\#/\fn(1)).
\]
\end{lem}
\begin{proof}

The construction of an injection 
\begin{equation}
\label{eq: Bdual to ext}
\Hom_{\Lambda_{(f_\chi)}}(B^\red, \Lambda_{(f_\chi)}/\fn) \rinj \Ext^1_{\Lambda_{(f_\chi)}/\fn[G_{\bQ,S}]}(\langle-\rangle^{-1}, \kcyc^{-1})
\end{equation}
is the content of \cite[Thm.\ 1.5.5]{BC2009}. We have
\[
\Ext^1_{\Lambda_{(f_\chi)}/\fn[G_{\bQ,S}]}(\langle-\rangle^{-1}, \kcyc^{-1}) = H^1(\Z[1/Np],\Lamf^{\dia{-}}/\fn(-1)),	
\]
and, by Lemma \ref{lem: Hc1 = ker}, it remains to show that the image of \eqref{eq: Bdual to ext} is the kernel
\[
\ker \bigg( H^1(\Z[1/Np],\Lamf^{\dia{-}}/\fn(-1)) \to H^1(\Q_p,\Lamf^{\dia{-}}/\fn(-1))\bigg).
\]
The ordinary condition implies that the image of \eqref{eq: Bdual to ext} is contained in this kernel; indeed, the restriction to $G_{\Q_p}$ of the cocycles vanishes. To show the other inclusion, we follow the argument of \cite[Thm.\ 1.5.6]{BC2009}. An element of this kernel may be though of as the class of an extension $\mathcal{E}$ of $\Lamf^\#/\fn$ by $\Lamf/\fn(-1)$ that is split as a $G_{\Q_p}$-extension. The form of this extension and the fact that it is split as a $G_{\Q_p}$-extension imply that $\End_{\Lamf}(\mathcal{E})$ is a reducible ordinary Cayley-Hamilton representation with residual pseudorepresentation $\Db$. Hence the map $\Lamf[G_{\Q,S}] \to \End_{\Lamf}(\mathcal{E})$ factors through $E^\red_\m$. The $(1,2)$-coordiate (in GMA-notation) of this map is a homomorphism $f_\mathcal{E}:B^\red \to \Lamf/\fn$. By construction, the image of $f_\mathcal{E}$ under $\eqref{eq: Bdual to ext}$ is the class of $\mathcal{E}$. This shows that the image of $\eqref{eq: Bdual to ext}$ is $H^1_{(c)}(\Z[1/Np],\Lamf^{\dia{-}}/\fn(-1))$.

The proof for $C^{\red}$ is the same, except that there are no local restrictions whatsoever. 
\end{proof}

In Corollary \ref{cor: cohom Lamf comp}, we computed cohomology groups that appear in the lemma. In the remainder of this proof, we use those computations without further comment. We start by analyzing $B^\red$. Taking $\fn=0$, we have 
\[
\Hom_{\Lambda_{(f_\chi)}}(B^\red, \Lambda_{(f_\chi)})  \cong \Hc^1(\Z[1/Np],\Lamf^{\dia{-}}(-1)) = 0.
\]
In particular, $B^\red$ is $\Lamf$-torsion. Taking $\fn=(f_\chi^r)$, we have
\[
\Hom_{\Lambda_{(f_\chi)}}(B^\red, \Lambda_{(f_\chi)}/(f_\chi^r)) \cong \Hc^1(\Z[1/Np],\Lamf^{\dia{-}}/(f_\chi^r)(-1))  \cong \,_{f_\chi^r} (\X(-2)_{\theta,(f_\chi)}).
\]
In particular, $B^\red$ is finitely generated, and its annihilator is the same as the annihilator of $\X(-2)_{\theta,(f_\chi)}$ -- say the annihilator is $(f_\chi^n)$. Then, since $\Lamf$ is a PID and $B^\red$ is a finitely generated torsion module, the natural map
$$
B^\red \lra \Hom_{\Lambda_{(f_\chi)}}( \Hom_{\Lambda_{(f_\chi)}}(B^\red, \Lambda_{(f_\chi)}/(f_\chi^n))  , \Lambda_{(f_\chi)}/(f_\chi^n))
$$
is an isomorphism. Then we have
$$
B^\red \cong \Hom_{\Lambda_{(f_\chi)}}(\X(-2)_{\theta,(f_\chi)},\Lambda_{(f_\chi)}/(f_\chi^n)).
$$
Since $f_\chi^n$ kills $\X(-2)_{\theta,(f_\chi)}$, we have
$$
\Hom_{\Lambda_{(f_\chi)}}(\X(-2)_{\theta,(f_\chi)},\Lambda_{(f_\chi)}/(f_\chi^n)) \cong \,_{f_\chi^n} \EL^1(\X(-2)_{\theta,(f_\chi)})
$$
But $\EL^1(\X(-2)_{\theta,(f_\chi)}) \cong X_{\chi,(f_\chi)}(1)$ and both are annihilated by $f_\chi^n$  (see \cite[Prop.\ 2.2]{wake2}). We have $B^\red \cong X_{\chi,(f_\chi)}(1)$.

Now we turn our attention to $C^\red$. For $\fn = 0$ or $\fn = (f_\chi^r)$, we have
\[
\Hom_{\Lambda_{(f_\chi)}}(C^\red, \Lambda_{(f_\chi)}/\fn) \cong H^1(\Z[1/Np],\Lamf^\#/\fn(1))  \cong \E_{\theta,(f_\chi)}/\fn \E_{\theta,(f_\chi)}.
\]
From this, we see that $C^\red$ is free of rank $1$ over $\Lamf$. Then we have
\begin{align*}
C^\red & \cong \Hom_{\Lambda_{(f_\chi)}}(\Hom_{\Lambda_{(f_\chi)}}(C^\red, \Lambda_{(f_\chi)}), \Lambda_{(f_\chi)}) \\
& = \Hom_{\Lambda_{(f_\chi)}}(\E_{\theta,(f_\chi)}, \Lamf).
\end{align*}

Finally, the equality $\Hom_{\Lambda_{(f_\chi)}}(\E_{\theta,(f_\chi)}, \Lamf) \cong \X_{\chii,(f_\chi)}^\#(1)$ can be proven using Proposition \ref{prop: Lambda Poitou-Tate duality}. We omit this here, since all we use in the proof of the main theorems is the fact that $C^\red$ is free of rank $1$ over $\Lamf$.
\end{proof}

Now we can achieve our goal of relating $\cJ_\m/\cJ_\m^2$ to Galois cohomology. 
\begin{cor}
\label{cor:JmodJsquared}
Assume that $X_{\theta,(f_\chi)}=0$. Then there is a canonical surjection of $\Lambda_{(f_\chi)}$-modules 
\[
X_{\chi, (f_\chi)}(1) \otimes_{\Lambda_{(f_\chi)}} \X_{\chii,(f_\chi)}^\#(1) \rsurj \cJ_\m/\cJ_\m^2.
\]
Moreover, there is a non-canonical surjection $X_{\chi, (f_\chi)}(1) \onto \cJ_\m/\cJ_\m^2$.
\end{cor}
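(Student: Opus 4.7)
The plan is to assemble the canonical surjection from three ingredients: the GMA-multiplication map defining the reducibility ideal, the surjection from the universal reducible ordinary algebra to the reduction mod $\cJ_\m$ of the universal ordinary algebra, and the identifications of $B^\red$ and $C^\red$ with Iwasawa-theoretic modules.

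First I would invoke Proposition \ref{prop:red_ideal}, applied to the $R^\ord_\m$-GMA structure on $E^\ord_\m$, to obtain a surjection of $R^\ord_\m$-modules
\[
m : B^\ord \otimes_{R^\ord_\m} C^\ord \twoheadrightarrow \cJ_\m.
\]
Reducing modulo $\cJ_\m$ and using $R^\ord_\m/\cJ_\m \cong \Lamf$ from Proposition \ref{prop:ord_red_C-H}(3), this induces
\[
(B^\ord/\cJ_\m B^\ord) \otimes_\Lamf (C^\ord/\cJ_\m C^\ord) \twoheadrightarrow \cJ_\m/\cJ_\m^2.
\]

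Next, I would apply Proposition \ref{prop:ord_red_C-H}(4). The surjection $E^\red_\Db \twoheadrightarrow E^\ord_\Db/\cJ E^\ord_\Db$ restricts (after localization and completion) to surjections $B^\red \twoheadrightarrow B^\ord/\cJ_\m B^\ord$ and $C^\red \twoheadrightarrow C^\ord/\cJ_\m C^\ord$ of $\Lamf$-modules, because the GMA decomposition is preserved by the quotient. Tensoring over $\Lamf$ and composing with the previous surjection yields a canonical surjection of $\Lamf$-modules
\[
B^\red \otimes_\Lamf C^\red \twoheadrightarrow \cJ_\m/\cJ_\m^2.
\]
Substituting the isomorphisms $B^\red \cong X_{\chi,(f_\chi)}(1)$ and $C^\red \cong \X^\#_{\chii,(f_\chi)}(1)$ from Proposition \ref{prop:ord_red} (which is where the hypothesis $X_{\theta,(f_\chi)} = 0$ enters) produces the desired canonical surjection
\[
X_{\chi,(f_\chi)}(1) \otimes_\Lamf \X^\#_{\chii,(f_\chi)}(1) \twoheadrightarrow \cJ_\m/\cJ_\m^2.
\]

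For the second statement, I would use the freeness claim in Proposition \ref{prop:ord_red}: since $C^\red$ is free of rank one over $\Lamf$, a choice of basis gives a (non-canonical) isomorphism $B^\red \otimes_\Lamf C^\red \cong B^\red \cong X_{\chi,(f_\chi)}(1)$, and composing with the canonical surjection yields $X_{\chi,(f_\chi)}(1) \twoheadrightarrow \cJ_\m/\cJ_\m^2$.

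There is no serious obstacle here; the work has all been done in the preceding propositions. The only points requiring mild care are verifying that the GMA decompositions of $E^\ord_\m$ and $E^\red_\m$ are compatible under the surjection of Proposition \ref{prop:ord_red_C-H}(4) — so that it restricts separately to the off-diagonal entries — and that base change $\otimes_{R^\ord_\m} \Lamf$ on the tensor product $B^\ord \otimes_{R^\ord_\m} C^\ord$ is correctly identified with $(B^\ord/\cJ_\m B^\ord) \otimes_\Lamf (C^\ord/\cJ_\m C^\ord)$. Both are immediate from the shape of the GMA decomposition in Example \ref{exmp:2x2GMA}.
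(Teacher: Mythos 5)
Your proof is correct and follows essentially the same route as the paper: both rely on Proposition \ref{prop:red_ideal} for the GMA-multiplication surjection onto $\cJ_\m$, Proposition \ref{prop:ord_red_C-H}(4) for the surjection from $E^\red_\m$ to $E^\ord_\m/\cJ_\m E^\ord_\m$ on off-diagonal entries, Proposition \ref{prop:ord_red} for the Iwasawa-theoretic identifications of $B^\red$ and $C^\red$, and the freeness of $C^\red$ for the non-canonical variant. The only difference is cosmetic — you apply the multiplication map first and then pull back along $B^\red \onto B^\ord/\cJ_\m B^\ord$, $C^\red \onto C^\ord/\cJ_\m C^\ord$, whereas the paper composes in the opposite order — which does not affect the argument.
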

\begin{proof}
Combining the surjective map $E^\red_\m \rsurj E^\ord_\m/\cJ_\m E^\ord_\m$ induced by Proposition \ref{prop:ord_red_CH}(4) with Proposition \ref{prop:ord_red}, we have a surjection 
\[
X_{\chi, (f_\chi)}(1) \otimes_{\Lambda_{(f_\chi)}} \X_{\chii,(f_\chi)}^\#(1) \rsurj B^\ord/\cJ_\m B^\ord \otimes_{\Lambda_{(f_\chi)}} C^\ord/\cJ_\m C^\ord.
\]
Proposition \ref{prop:red_ideal} shows that multiplication on the off-diagonal components of $E^\ord_\m$ has image in $R^\ord_\m$ equal to the reducibility ideal, i.e.\ 
\[
m: B^\ord/\cJ_\m B^\ord \otimes_{\Lambda_{(f_\chi)}} C^\ord/\cJ_\m C^\ord \rsurj \cJ_\m/\cJ_\m^2.
\]
Composing these, we obtain the desired canonical surjection. The non-canonical surjection $X_{\chi, (f_\chi)}(1) \onto \cJ_\m/\cJ_\m^2$ can be defined by choosing a basis for the rank $1$ free module $C^\red$, and then repeating the same argument.
\end{proof}

Below, we will only use the non-canonical surjection $X_{\chi, (f_\chi)}(1) \onto \cJ_\m/\cJ_\m^2$. The construction of this surjection does not use the fact that $C^\red \cong \X_\chii^\#(1)$, but merely that it is free of rank 1 (assuming that $X_{\theta,(f_\chi)}=0$).

\subsection{Pseudo-modularity}
\label{subsec: pseudomod}

Our goal is to prove the following main theorem (Theorem E of the introduction), which inspires the term ``pseudo-modularity.'' 

\begin{thm}
\label{thm: psmod}
Assume that $X_{\theta,(f_\chi)}=0$. Then the map $\varphi : R^\ord_\m \onto \hat\fH_\p$ is an isomorphism of complete intersections.
\end{thm}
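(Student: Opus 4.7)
The plan is to apply the Wiles-Lenstra numerical criterion (Proposition~\ref{prop: lci criterion}) to the surjection $\varphi: R^\ord_\m \twoheadrightarrow \hat\fH_\p$. That proposition reduces the theorem to a length comparison for $\ell_{\Lamf}(\cJ_\m/\cJ_\m^2)$ against $r$, the multiplicity of $f_\chi$ in $\xi_\chi$; equivalently, I need to pin down $\ell_{\Lamf}(\cJ_\m/\cJ_\m^2) = r$.

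I would establish this by sandwiching $\cJ_\m/\cJ_\m^2$ between two modules of length $r$. The upper bound $\ell(\cJ_\m/\cJ_\m^2) \le r$ comes from Corollary~\ref{cor:JmodJsquared}, which is exactly where the hypothesis $X_{\theta,(f_\chi)} = 0$ enters the argument: this hypothesis ensures that $C^\red$ is free of rank one over $\Lamf$ (Proposition~\ref{prop:ord_red}), producing a non-canonical surjection $X_{\chi,(f_\chi)}(1) \twoheadrightarrow \cJ_\m/\cJ_\m^2$, and the Iwasawa Main Conjecture $\Ch(X_\chi(1)) = (\xi_\chi)$ gives $\ell_{\Lamf}(X_{\chi,(f_\chi)}(1)) = r$. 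The matching lower bound is essentially automatic: $\varphi$ carries $\cJ_\m$ onto the reducibility ideal $\hat\I_\p$ of $\tilde D_\p$ (Corollary~\ref{cor:H-psrep}(3)), inducing a surjection $\cJ_\m/\cJ_\m^2 \twoheadrightarrow \hat\I_\p/\hat\I_\p^2$, and the standard de~Smit-Rubin-Schoof lower bound $\ell_{\Lamf}(\hat\I_\p/\hat\I_\p^2) \ge \ell(\Lamf/\eta) = r$---valid because $\hat\fH_\p$ is a finite flat Cohen-Macaulay algebra over the DVR $\Lamf$---yields $\ell(\cJ_\m/\cJ_\m^2) \ge r$. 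Combining, $\ell(\cJ_\m/\cJ_\m^2) = r$, so the numerical criterion applies and delivers the conclusion.

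The real work of the theorem is not in this assembly but in the machinery it rests on: the construction of the universal ordinary pseudodeformation ring $R^\ord_\Db$ (Section~\ref{sec: ord C-H}); the recognition of the Hecke action on $\End_\h(H)$ as an ordinary Cayley-Hamilton representation and the resulting surjection $\varphi$ through Ohta-Sharifi theory (Theorem~\ref{thm:H_ord} and Section~\ref{subsec:modCHrep}); and, crucially, the translation of the off-diagonal modules of the reducibility-quotient GMA into Iwasawa-theoretic terms (Proposition~\ref{prop:ord_red}) via the cohomology computations of Section~\ref{sec:galois_cohom}. The most delicate ingredient, and the bottleneck that the hypothesis $X_{\theta,(f_\chi)} = 0$ unlocks, is the freeness of $C^\red$---without it, the map $X_{\chi,(f_\chi)}(1) \twoheadrightarrow \cJ_\m/\cJ_\m^2$ cannot be extracted and the upper bound fails.
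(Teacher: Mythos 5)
Your proof is correct and follows the paper's argument: apply the numerical criterion (Proposition~\ref{prop: lci criterion}) and sandwich $\cJ_\m/\cJ_\m^2$ between $X_{\chi,(f_\chi)}(1)$ (via Corollary~\ref{cor:JmodJsquared}, where the hypothesis $X_{\theta,(f_\chi)}=0$ enters) and $\I_\p/\I_\p^2$, both of length $r$. The only cosmetic difference is that you invoke the general de~Smit--Rubin--Schoof inequality $\ell(\I_\p/\I_\p^2)\ge\ell(\Lamf/\eta)$ as a black box, whereas the paper's Lemma~\ref{lem: length of I/I2} derives the equivalent Fitting-ideal containment by hand from $\Ann_\h(I)=0$ and then transports it across the isomorphism $\I_\p\cong I_\p$ of Lemma~\ref{I=I cor}.
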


By Proposition \ref{prop: lci criterion}, it suffices to show that $\ell(\cJ_\m/\cJ_\m^2) \ge r$ (recall that $\ell(M)$ denotes the length of a $\Lamf$-module $M$ and that $r=\ell(\Lamf/\xi_\chi \Lamf)$). We start with a lemma.

\begin{lem}
\label{lem: length of I/I2}
We have $\ell(I_\p/I_\p^2) \ge r$.
\end{lem}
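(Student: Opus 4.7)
The plan is to construct a surjection $I_\p/I_\p^2 \twoheadrightarrow M$ onto an $\h_\p$-module $M$ with $\ell_\Lamf(M) \ge r$, taking $M = H^-_\p/I_\p H^-_\p$.

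First, I would use the Drinfeld-Manin element $\zinfdm \in \HDM_\p$ of \S\ref{subsec:dm modification} to define the $\h_\p$-linear map $\alpha: I_\p \to H^-_\p$, $T \mapsto T\zinfdm$. This is well-defined because \eqref{HHDML} shows $T\zinfdm \in H_\p$ for $T \in I_\p$ (as $I_\p$ annihilates $\HDM_\p/H_\p \cong (\Lambda/\xi_\chi)_\p$), and parity considerations on the cusp class place the image in $H^-_\p$. Since $I_\p^2 \zinfdm \subseteq I_\p H^-_\p$, the map descends to $\bar\alpha: I_\p/I_\p^2 \to H^-_\p/I_\p H^-_\p$.

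Next, I would establish two facts: (a) $\bar\alpha$ is surjective, and (b) $\ell_\Lamf(H^-_\p/I_\p H^-_\p) \ge r$. For (a), Ohta's Theorem \cite[Thm.\ 3.3.8]{ohta2005}, as used in the implication $(1) \Rightarrow (2)$ of Proposition \ref{prop: both weak gorenstein}, gives that $\zinfdm$ generates $\HDM^-_\p$ as an $\fH_\p$-module, whence $I_\p \zinfdm = H^-_\p$ and $\alpha$ (and so $\bar\alpha$) is surjective. For (b), I would use the isomorphism $H^-_\p \cong \h_\p^\vee := \Hom_\Lamf(\h_\p, \Lamf)$ from Corollary \ref{cor:H-structure}, reducing the claim to $\ell_\Lamf(\h_\p^\vee/I_\p \h_\p^\vee) \ge r$. $\Lamf$-dualizing the short exact sequence $0 \to I_\p \to \h_\p \to \Lamf/(f_\chi^r) \to 0$ and using $\Ext^1_\Lamf(\Lamf/(f_\chi^r), \Lamf) \cong \Lamf/(f_\chi^r)$ yields the four-term exact sequence $0 \to \h_\p^\vee \to I_\p^\vee \to \Lamf/(f_\chi^r) \to 0$, from which a diagram chase after tensoring with $\h_\p/I_\p$ over $\h_\p$ extracts the desired length bound.

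The main obstacle is the length bound in (b): while $\ell_\Lamf(\h_\p^\vee/I_\p \h_\p^\vee) = r$ when $\h_\p$ is Gorenstein (since then $\h_\p^\vee \simeq \h_\p$) and the strict inequality holds in computed non-Gorenstein examples, a clean general proof requires a careful analysis of the interplay between $I_\p$ and the diamond subring $\Lamf \hookrightarrow \h_\p$, perhaps leveraging the global hypothesis $X_{\theta,(f_\chi)} = 0$ in force throughout \S\ref{subsec: pseudomod} together with the Iwasawa cohomology computations of \S\ref{sec:galois_cohom}.
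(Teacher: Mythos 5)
Your proposal takes a genuinely different route from the paper, and it contains two gaps, one of which you acknowledge yourself.

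The paper's proof is a short commutative-algebra argument that avoids $H$ entirely. It uses Fitting ideals: from Lemma \ref{lem:H_pullback} one has $\Ann_\fH(\I) = \ker(\fH \to \h)$, and combined with $\I \cap \ker(\fH\to\h) = 0$ (equation \eqref{eq: ann eis}) this gives $\Ann_{\h}(I)=0$, whence $\Fitt_{\h_\p}(I_\p) \subset \Ann_{\h_\p}(I_\p) = 0$. Base-changing Fitting ideals along $\h_\p \onto \h_\p/I_\p \cong \Lamf/(f_\chi^r)$ then yields $\Fitt_{\Lamf}(I_\p/I_\p^2) \subset (f_\chi^r)$, and the identity $\Fitt_\Lamf(M) = (f_\chi^{\ell(M)})$ for finite-length $\Lamf$-modules finishes the proof. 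No Drinfeld--Manin element, no cohomology groups, no duality.

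In your approach, step (a) has a genuine problem. The implication $(1) \Rightarrow (2)$ of Proposition \ref{prop: both weak gorenstein}, which you cite, asserts that the map $I_\p \to H^-_\p$, $T \mapsto T\zinfdm$, is an isomorphism \emph{under the hypothesis that $\fH_\p$ is Gorenstein}. That hypothesis is exactly what \S\ref{subsec: pseudomod} sets out to prove; using it here is circular. Nothing in the paper establishes the unconditional surjectivity of $\alpha$, and there is no evident reason the image $I_\p \zinfdm$ should fill up $H^-_\p$ when $\fH_\p$ fails to be Gorenstein. Step (b) you concede is incomplete: the proposed $\Lamf$-dualization of $0 \to I_\p \to \h_\p \to \Lamf/(f_\chi^r) \to 0$ and a ``diagram chase after tensoring'' produce a surjection onto $\Lamf/(f_\chi^r)$ from $I_\p^\vee/I_\p I_\p^\vee$, not from $\h_\p^\vee/I_\p \h_\p^\vee$, so the desired length bound is not extracted. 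Moreover, you suggest invoking $X_{\theta,(f_\chi)}=0$ to rescue (b); but this lemma is deliberately unconditional in the paper's architecture (it feeds into Proposition \ref{prop: X=J=I}, which is the step where the conditional surjection from $X_{\chi,(f_\chi)}(1)$ meets the unconditional lower bound). Conditioning the lemma itself would muddy that logic. Incidentally, once you know $\Ann_{\h_\p}(H^-_\p)=0$ (which holds because $H^-_\p \cong \h_\p^\vee$ is faithful), the same Fitting-ideal argument the paper applies to $I_\p$ would also give (b) --- but at that point the detour through $H^-$ and the cusp class is superfluous and only adds the unresolved surjectivity question of (a).
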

\begin{proof}
The proof uses Fitting ideals; for the definition and basic information on Fitting ideals, see \cite[\S1]{lci}.

First note that $\Fitt_{\h_p}(I_\p) \subset \Ann_{\h_\p}(I_\p)$ \cite[Prop.\ 1.1(i)]{lci}. Note that, by Lemma \ref{lem:H_pullback}, we have $\ker(\fH \onto \h)=\Ann_\fH(\I)$. Then, since $I$ is the image of $\I$ in $\h=\fH/\Ann_\fH(\I)$, we have $\Ann_\h(I)=0$. It follows that $\Fitt_{\h_p}(I_\p)=0$.

Then, applying \cite[Prop.\ 1.1(ii)]{lci}, we have
\[
0 = \Fitt_{\h_p}(I_\p) \cdot \h_\p/I_\p = \Fitt_{\h_p/I_\p}(I_\p/I_\p^2) = \Fitt_{\Lamf/\xi_\chi \Lamf}(I_\p/I_\p^2).
\]
This implies $\Fitt_{\Lamf}(I_\p/I_\p^2) \subset \xi_\chi \Lamf = f_\chi^r \Lamf$. 

Now, it follows from the structure theorem for modules over a PID that, for a finite length $\Lamf$-module $M$, we have $\Fitt_{\Lamf}(M)=f_\chi^{\ell(M)} \Lamf$. Hence we have $f_\chi^{\ell(I_\p/I_\p^2)}\Lamf \subset f_\chi^r \Lamf$, and so $\ell(I_\p/I_\p^2) \ge r$.
\end{proof}

\begin{prop}
\label{prop: X=J=I}
Assume that $X_{\theta,(f_\chi)}=0$. Then there are isomorphisms $X_{\chi, (f_\chi)}(1) \simeq \cJ_\m/\cJ_\m^2 \cong\I_\p/\I_\p^2 \cong I_\p/I_\p^2$, and these all have length $r$ as $\Lamf$-modules.
\end{prop}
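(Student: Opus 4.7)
The plan is to exhibit a chain of surjections from $X_{\chi,(f_\chi)}(1)$ to $I_\p/I_\p^2$, compute that the source has length $r$, and observe that the target has length at least $r$ (already done in Lemma \ref{lem: length of I/I2}). A pigeonhole argument on $\Lamf$-modules of finite length will then collapse the entire chain into isomorphisms.

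First I would assemble the chain. Corollary \ref{cor:JmodJsquared} gives a (non-canonical) surjection
\[
X_{\chi,(f_\chi)}(1) \twoheadrightarrow \cJ_\m/\cJ_\m^2.
\]
Next, the modular map $\varphi : R^\ord_\m \twoheadrightarrow \hat\fH_\p$ of \eqref{eq:EI_surj} is surjective and, by Corollary \ref{cor:H-psrep}(3), sends the reducibility ideal $\cJ_\m$ of the universal ordinary pseudodeformation onto the reducibility ideal $\I_\p$ of $\tilde D_\p$. Hence $\varphi$ induces a surjection $\cJ_\m/\cJ_\m^2 \twoheadrightarrow \I_\p/\I_\p^2$. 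Finally, Lemma \ref{I=I cor} gives the canonical isomorphism $\I_\p/\I_\p^2 \cong I_\p/I_\p^2$ of $\fH$-modules. Composing, I obtain a surjection
\[
X_{\chi,(f_\chi)}(1) \twoheadrightarrow \cJ_\m/\cJ_\m^2 \twoheadrightarrow \I_\p/\I_\p^2 \cong I_\p/I_\p^2.
\]

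Now I would pin down the lengths. By the Iwasawa Main Conjecture (recalled in \S\ref{sec: iwasawa}), $\Ch(X_\chi(1)) = (\xi_\chi)$; localizing at the height one prime $(f_\chi)$ of the DVR $\Lamf$ and using the structure theorem, the length of $X_{\chi,(f_\chi)}(1)$ as a $\Lamf$-module equals the multiplicity of $f_\chi$ in $\xi_\chi$, which is $r$ by the definition of $r$. On the other hand, Lemma \ref{lem: length of I/I2} gives $\ell(I_\p/I_\p^2) \ge r$. Any surjection between finite length $\Lamf$-modules $M \twoheadrightarrow N$ satisfies $\ell(M) \ge \ell(N)$, with equality iff the map is an isomorphism. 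Applying this to the composition above, $r = \ell(X_{\chi,(f_\chi)}(1)) \ge \ell(I_\p/I_\p^2) \ge r$, so all inequalities are equalities and each arrow in the chain is an isomorphism.

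This gives $X_{\chi,(f_\chi)}(1) \simeq \cJ_\m/\cJ_\m^2 \cong \I_\p/\I_\p^2 \cong I_\p/I_\p^2$ with common $\Lamf$-length $r$, as claimed. I do not anticipate a serious obstacle here: the hard work has already been done in Corollary \ref{cor:JmodJsquared} (whose proof requires the Galois cohomology input and the hypothesis $X_{\theta,(f_\chi)} = 0$) and in Lemma \ref{lem: length of I/I2} (whose proof uses the Fitting ideal argument). The one point to be careful about is that $\varphi$ really does send $\cJ_\m$ onto $\I_\p$, not merely into it — this is exactly Corollary \ref{cor:H-psrep}(3) together with the surjectivity of $\varphi$ (Lemma \ref{lem:R-H_surj}, completed and localized).
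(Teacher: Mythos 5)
Your proposal is correct and follows the paper's own proof essentially step for step: the same chain of surjections from Corollary \ref{cor:JmodJsquared}, Corollary \ref{cor:H-psrep}, and Lemma \ref{I=I cor}, the same length count $\ell(X_{\chi,(f_\chi)}(1))=r$ from the Main Conjecture, and the same squeeze against Lemma \ref{lem: length of I/I2}.
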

\begin{proof}
By Corollary \ref{cor:JmodJsquared}, there is a surjection
\[
X_{\chi, (f_\chi)}(1) \onto \cJ_\m/\cJ_\m^2.
\]
By Corollary \ref{cor:H-psrep}, $\I \subset \fH$ is the reducibility ideal of $\tilde{D}$, and so the map $\varphi : R^\ord_\m \onto \hat\fH_\p$ induces a surjection $\cJ_\m \onto \I_\p$. By Lemma \ref{I=I cor}, there is an isomorphism $\I_\p \cong I_\p$. Hence we have
\begin{equation}
\label{eq:X to I/I2}
X_{\chi, (f_\chi)}(1) \onto \cJ_\m/\cJ_\m^2 \onto \I_\p/\I_\p^2 \lrisom I_\p/I_\p^2.
\end{equation}
Since $X_\chi(1)$ has characteristic power series $\xi_\chi$, we have $\ell(X_{\chi, (f_\chi)}(1))=r$, and hence $\ell(I_\p/I_\p^2) \le r$, since it is a quotient of $X_{\chi, (f_\chi)}(1)$. On the other hand, Lemma \ref{lem: length of I/I2} says that $\ell(I_\p/I_\p^2) \ge r$ and so $\ell(I_\p/I_\p^2) = r$. 

Now \eqref{eq:X to I/I2} is a sequence of surjections of finite length $\Lamf$-modules where the first term and last term have the same length. This implies that all the surjections are isomorphisms, and the proposition follows.
\end{proof}

By Proposition \ref{prop: lci criterion}, this completes the proof of Theorem \ref{thm: psmod}.

\section{Sharifi's conjecture and other applications}
\label{sec:sharifi}

In this section, we discuss applications of our main theorem. These include new results on Sharifi's conjecture, the geometry of the ordinary eigencurve, and a noncommutative enhancement of Theorem \ref{thm: psmod}'s pseudo-modularity result. 

\subsection{Sharifi's conjecture}
\label{subsec: sharifi's conj}
In this subsection, we discuss Sharifi's conjecture and the known results on it \cite{sharifi2011, FK2012, FKS2014}.

Sharifi has constructed a homomorphism of $\Lambda$-modules
$$
\U: X_\chi(1) \lra H^-/IH^-
$$
that he conjectures to be an isomorphism \cite[Conj.\ 4.12]{sharifi2011}. In our notation, it may be described as follows. The $(1,2)$-coordinate of the Cayley-Hamilton representation $\rho_H \otimes_\h \h/I$ is a map
$$
G_{\bQ,S} \lra \Hom_{\h/I}(H^+/IH^+,H^-/IH^-).
$$
Composing with the canonical isomorphism $\h/I \cong H^+/IH^+$, we get a map
$$
b: G_{\bQ,S} \lra H^-/IH^-.
$$
When we restrict to $G_{\Q_\infty,S}$, then $b$ becomes a homomorphism, and moreover, since $\rho_H$ is ordinary, it factors through $X_\chi(1)$. This is the map $\U$.

Sharifi also constructed a map
$$
\tilde \varpi: H^- \lra X_\chi(1)
$$
that he conjectured to factor through $H^-/IH^-$, and that the resulting map $\varpi$ will be the inverse of $\U$ \cite[Conj.\ 5.8 and the remark following]{sharifi2011}. Fukaya and Kato have proven that $\tilde \varpi$ factors through $H^-/IH^-$ \cite[\S\S5.1-5.2]{FK2012} and have partial results toward Sharifi's conjecture. Their main result is the following.

\begin{thm}[{\cite[Thm.\ 7.2.3]{FK2012}}]
\label{thm:f-k}
The map
$$
\U \circ \varpi: H^-/IH^- \otimes \Q_p \lra H^-/IH^- \otimes \Q_p
$$
satisfies $\xi_\chi' (\U \circ \varpi) = \xi_\chi'$, where $\xi_\chi' \in \Lambda$ is the derivative of $\xi_\chi$. 
\end{thm}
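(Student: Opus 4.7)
The plan is to prove this identity by relating both $\Upsilon$ and $\varpi$ to explicit cohomological constructions using Beilinson--Kato zeta elements and comparing the composition to a derivative of the two-variable $p$-adic $L$-function. First, I would unwind the construction of $\varpi$ (following Fukaya--Kato's formulation) in terms of Beilinson--Kato elements: these live in the $K_2$ of modular curves, and their images under the étale regulator (or Chern class maps) land in the étale cohomology of $X_1(Np^r)$, interpolating into an element of $H^-$ after taking ordinary projections and passing to the limit. The map $\tilde\varpi: H^- \to X_\chi(1)$ factors through $H^-/IH^-$ via a Soul\'e-type map to Iwasawa cohomology of $\Z[\zeta_{Np^\infty}, 1/Np]$, composed with the isomorphism to $X_\chi(1)$ coming from Poitou--Tate duality.

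Second, I would analyze $\Upsilon$ using the generalized matrix algebra structure developed in \S\ref{subsec:GMA}. Specifically, on the Cayley-Hamilton representation $\rho_H \otimes_\h \h/I$ having reducibility ideal $I$, the off-diagonal $(1,2)$-entry produces a cocycle valued in $H^-/IH^-$ whose restriction to $G_{\Q_\infty,S}$ defines $\Upsilon$. The key conceptual input is that $\Upsilon$ measures the first-order deformation of the semisimple representation $\psi(\kcyc^{-1} \oplus \langle - \rangle^{-1})$ into the modular family, which is governed infinitesimally by $\xi_\chi$.

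Third, to compute $\Upsilon \circ \varpi$, I would use an \emph{explicit reciprocity law} relating the image of the zeta element under $\Upsilon$ to cup products of cyclotomic units with themselves in $H^2(\Z[1/Np, \zeta_{Np^\infty}], \Z_p(2))$. Via Soul\'e's construction and Coleman's power series, these cup products are precisely what gets identified with $X_\chi(1)$ under $\varpi$. The composition $\Upsilon \circ \varpi$ should therefore act on $H^-/IH^-$ by a scalar given by the logarithmic derivative at the Eisenstein point. Since the Eisenstein intersection happens where $\xi_\chi$ vanishes, this logarithmic derivative naturally involves $\xi_\chi'$, and an explicit normalization computation (comparing the $a_1$-coefficient of the derivative of the $\Lambda$-adic Eisenstein series with the standard generator of $X_\chi(1)$ produced by $\varpi$) will yield the identity $\xi_\chi' (\Upsilon \circ \varpi) = \xi_\chi'$.

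The main obstacle is the explicit reciprocity computation in Step 3: tracking the precise constant relating the zeta element picture to the deformation-theoretic picture. One must carefully compute on both sides of the $(1,2)$-entry of the GMA, keeping track of cup product normalizations, Coleman power series trivializations, and the fact that the identity only holds after tensoring with $\Q_p$ — reflecting that the torsion ambiguities in $H^-/IH^-$ (c.f.\ the ``(tor)'' subgroup in Corollary C) are invisible to the derivative $\xi_\chi'$. A cleaner approach, conceptually, is to work locally at each height-one prime of $\Lambda$: away from multiple roots of $\xi_\chi$, Lemma \ref{multi roots and cyclicity lem} gives pseudocyclicity and $\xi_\chi'$ is a unit in $Q(\Lambda/(f))$, reducing the identity to showing $\Upsilon \circ \varpi = \mathrm{id}$ at that prime; the multiplicity-$> 1$ case requires the more delicate derivative argument.
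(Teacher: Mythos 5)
The paper does not prove this statement. It is quoted verbatim as \cite[Thm.\ 7.2.3]{FK2012}, and the authors explicitly remark immediately afterward that ``This result is very deep, and uses, among other things, a detailed analysis of Kato's Euler system.'' There is therefore no paper-internal proof to compare your proposal against; what you have written is an attempt to reconstruct the Fukaya--Kato argument itself.

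As such a reconstruction, your sketch has the right ingredients in broad outline --- Beilinson--Kato zeta elements, \'etale regulators, the description of $\Upsilon$ via the $(1,2)$-entry of the GMA structure modulo the Eisenstein ideal, Coleman power series, and a reciprocity law tying the zeta-element picture to the derivative of the $p$-adic $L$-function. But it is nowhere near a proof: the actual content of \cite[Thm.\ 7.2.3]{FK2012} is carried by the precise explicit reciprocity computation you describe as ``the main obstacle,'' which occupies a substantial portion of a very long and technical paper. Your Step 3 names this computation but does not carry any of it out, and the two concrete normalization issues you mention (cup product constants and the Coleman trivialization) are exactly where the hard work lies. Two further cautions: first, you describe $\varpi$ as being given by cup products of cyclotomic units, but in the Sharifi--Fukaya--Kato framework it is rather the map $\Upsilon$ that is formulated in terms of cup products/pairings (the map $\varpi$ is built from modular symbols and the zeta morphism on the Eisenstein quotient of cohomology); conflating the two obscures what the reciprocity law is actually comparing. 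Second, your closing reduction is circular: ``away from multiple roots, $\xi_\chi'$ is a unit, so reduce to showing $\Upsilon\circ\varpi = \mathrm{id}$ at that prime'' --- but $\Upsilon\circ\varpi = \mathrm{id}$ at simple primes is \emph{equivalent} to the derivative identity there, so nothing has been reduced; and the multiplicity-$>1$ case, which you wave off to ``the more delicate derivative argument,'' is precisely where the formulation with $\xi_\chi'$ does nontrivial work and where the proof cannot be avoided.
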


This result is very deep, and uses, among other things, a detailed analysis of Kato's Euler system. Under the assumption that $\xi_\chi'$ is a unit in $\Lam/(\xi_\chi)$ (which is equivalent to the assumption that $\xi_\chi$ has no prime factor of multiplicity greater than one), this result implies that $\U$ and $\varpi$ are isomorphisms modulo $p$-torsion. However, this assumption on $\xi_\chi$ implies that $X_\chi(1) \otimes \Q_p$ is cyclic (see Lemma \ref{multi roots and cyclicity lem}). In particular, to see that $X_\chi(1) \otimes \Q_p$ is described by modular symbols using this result, one has to first assume that $X_\chi(1) \otimes \Q_p$ is cyclic.

As a consequence of our main theorem, we have the following new result towards Sharifi's conjecture without assuming anything about $X_\chi(1) \otimes \Q_p$. 

\begin{cor}
\label{cor: Upsilon iso main} 
\ 
\begin{enumerate}[leftmargin=2em]
\item If $X_{\theta,(f_\chi)}=0$, then the localization of $\Upsilon$ at $(f_\chi)$ is an isomorphism. 
\item If $X_\theta \otimes_\Lambda (\Lambda/\xi_\chi)$ is finite, then $\U$ is injective and has finite cokernel.
\end{enumerate}
\end{cor}
\begin{proof}
(1) By Theorem \ref{thm: psmod}, we have that $\fH_\p$ is complete intersection, and hence Gorenstein. The proof of \cite[Thm.\ 1.3]{wake2} shows that if $\fH_\p$ is Gorenstein, then $\coker(\U_{(f_\chi)})=0$. Then we have
\[
\U_{(f_\chi)}: X_{\chi,(f_\chi)}(1) \onto H^-/IH^- \otimes_\Lam \Lamf.
\]
To see that it is an isomorphism, note that $\ell(X_{\chi,(f_\chi)}(1))= \ell(\Lamf/(\xi_\chi))$, and a similar argument to the proof of Lemma \ref{lem: length of I/I2} shows that $\ell(H^-/IH^- \otimes_\Lam \Lamf) \ge  \ell(\Lamf/(\xi_\chi))$. We see that $\U_{(f_\chi)}$ is a surjection of $\Lamf$-modules of the same length, so it must be an isomorphism.

(2) The assumption implies that $X_{\theta,(g)}=0$ for all prime elements $g \in \Lam$ dividing $\xi_\chi$. As in the proof of (1), this and Theorem \ref{thm: psmod} imply that $\fH_\p$ is Gorenstein for all $\p$. As $\U$ is a map of $\Lam/\xi_\chi$-modules, $\coker(\U)$ is supported at primes dividing $\xi_\chi$. But by (1), it is not supported at any such height $1$ primes, and so $\coker(\U)$ is only supported at the maximal ideal of $\Lam$. This implies that $\coker(\U)$ is finite, and $\U$ is therefore injective by \cite[Prop.\ 7.4]{wake2}.
\end{proof}

Note that our result says nothing about $\varpi$. However, combining our main theorem with Fukaya and Kato's Theorem \ref{thm:f-k}, we can prove new results about $\varpi$ as well.

\begin{cor}
\label{cor: corC stronger}
\ 
\begin{enumerate}[leftmargin=2em]
\item Assume that $X_{\theta,(f_\chi)}=0$ and that $X_{\chi,(f_\chi)}(1)$ is cyclic as a $\Lamf$-module. Then the localizations $\U_{(f_\chi)}$ and $\varpi_{(f_\chi)}$ are isomorphisms. 

\item Assume that that $X_\chi(1)[1/p]$ is cyclic as a $(\Lambda/\xi_\chi)[1/p]$-module and that $X_\theta \otimes_\Lambda (\Lambda/\xi_\chi)[1/p]=0$. Then the induced maps
$$
\Upsilon: X_\chi(1) \lra (H^-/I H^-)/\mathrm{(tor)}, \ \  \varpi: ( H^-/IH^-)/\mathrm{(tor)} \lra  X_\chi(1)
$$
are isomorphisms. Here $\mathrm{(tor)} \subset H^-/I H^-$ is the $p$-power-torsion subgroup.
\end{enumerate}
\end{cor}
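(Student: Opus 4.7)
The plan is to combine Corollary~\ref{cor: Upsilon iso} (which handles one direction of each assertion) with the Fukaya–Kato identity of Theorem~\ref{thm:f-k}, using the cyclicity hypothesis to promote the rational information into an integral statement.

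For Part (1), Corollary~\ref{cor: Upsilon iso}(1) gives that $\Upsilon_{(f_\chi)}$ is an isomorphism, and together with cyclicity of $X_{\chi,(f_\chi)}(1)$ over $\Lamf$, the module $(H^-/IH^-)_{(f_\chi)}$ is then cyclic over $\Lamf/f_\chi^r\Lamf$ (where $r$ is the multiplicity of $f_\chi$ in $\xi_\chi$). Since $p$ is already a unit in $\Lamf$, the Fukaya–Kato identity localizes integrally, expressing $\Upsilon_{(f_\chi)}\circ\varpi_{(f_\chi)}$ as multiplication by some $u\in\Lamf/f_\chi^r$ with $\xi_\chi'(u-1)=0$. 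Because $\xi_\chi$ is separable at $(f_\chi)$ in characteristic zero, $\xi_\chi'$ has $f_\chi$-adic valuation exactly $r-1$ in $\Lamf$, so its annihilator in $\Lamf/f_\chi^r$ is exactly $(f_\chi)/(f_\chi^r)$. Hence $u-1\in(f_\chi)$, making $u$ a unit in the local ring $\Lamf/f_\chi^r$, and $\varpi_{(f_\chi)}$ is an isomorphism.

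For Part (2), I would first check that the hypotheses pass to every prime divisor $(f)$ of $\xi_\chi$: the assumption $X_\theta\otimes_\Lam(\Lam/\xi_\chi)[1/p]=0$ is equivalent to $X_\theta\otimes_\Lam(\Lam/\xi_\chi)$ being finite (a finitely generated $\bZ_p$-module is $p$-power torsion iff finite), and hence to $X_{\theta,(f)}=0$ for each such $(f)$; meanwhile cyclicity of $X_\chi(1)[1/p]$ localizes to cyclicity of $X_{\chi,(f)}(1)$ over $\Lamf$. Part (1) then applies at every $(f)$, and Corollary~\ref{cor: Upsilon iso}(2) gives that $\Upsilon$ is injective with finite cokernel; injectivity of the induced $\bar\Upsilon:X_\chi(1)\to(H^-/IH^-)/\mathrm{(tor)}$ then follows because $X_\chi(1)$ is $p$-torsion-free by Ferrero–Washington. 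The substantive step is surjectivity of $\bar\Upsilon$.

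For this, the local isomorphisms combined with the CRT decomposition $(\Lam/\xi_\chi)[1/p]\cong\prod_i\Lam[1/p]/f_i^{r_i}$ show that $\Upsilon[1/p]$ is an isomorphism, so with cyclicity both $X_\chi(1)[1/p]$ and $(H^-/IH^-)[1/p]$ are identified with the Artinian ring $R:=(\Lam/\xi_\chi)[1/p]$. On $R$ the endomorphism $\Upsilon\circ\varpi-\mathrm{id}$ becomes multiplication by some $\epsilon\in R$ satisfying $\xi_\chi'\epsilon=0$; the factor-wise calculation from Part (1) places each component of $\epsilon$ in the maximal ideal of the corresponding local factor, so $\epsilon$ is nilpotent in $R$. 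Setting $N:=(H^-/IH^-)/\mathrm{(tor)}$, which is $p$-torsion-free and embeds into $R$, the integrality of $\Upsilon\circ\varpi(n)=n+\epsilon n\in N$ for every $n\in N$ forces $\epsilon\cdot N\subseteq N$. Nilpotency then makes $1+\epsilon$ an integral automorphism of $N$ with explicit inverse $\sum_{k\ge 0}(-\epsilon)^k$ (a finite sum), and therefore $N=(1+\epsilon)(N)=(\Upsilon\circ\varpi)(N)\subseteq\Upsilon(X_\chi(1))$, proving $\bar\Upsilon$ surjective. The symmetric argument applied to $\varpi\circ\Upsilon$ handles $\bar\varpi$.

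The principal obstacle is precisely this final integrality step: bridging the Fukaya–Kato identity, valid a priori only after $\otimes\bQ_p$, to an integral automorphism of the lattice $N$. The cyclicity hypothesis is essential because it identifies $\Upsilon\circ\varpi$ with scalar multiplication by $1+\epsilon$, giving access to the ``$1+\text{nilpotent}$'' structure that can be transported to $N$ by $p$-torsion-freeness; without cyclicity the endomorphism algebra of $(H^-/IH^-)[1/p]$ could be larger and this scalar description of $\Upsilon\circ\varpi$ would be lost.
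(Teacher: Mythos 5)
Your Part (1) is essentially the same argument as the paper's: localize, use Corollary~\ref{cor: Upsilon iso}(1) to make $\Upsilon_{(f_\chi)}$ an isomorphism, then use cyclicity to identify $\Upsilon_{(f_\chi)}\circ\varpi_{(f_\chi)}$ with multiplication by some $x\in\Lamf/(f_\chi^r)$ and use the Fukaya--Kato identity $\xi_\chi'(1-x)=0$ together with $v_{f_\chi}(\xi_\chi')=r-1$ to force $1-x$ into the maximal ideal. One small imprecision: you write that ``$\xi_\chi$ is separable at $(f_\chi)$,'' which is false when $r>1$; what you actually need is that the prime factor $f_\chi$ is separable (so $f_\chi'\notin(f_\chi)$), which is what yields $v_{f_\chi}(\xi_\chi')=r-1$.

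Your Part (2) takes a genuinely different route from the paper. The paper deduces from the local cyclicity that $(H^-/IH^-)_{(f)}$ is cyclic, uses the fact that $H^-$ is a dualizing module to conclude $\h_\p$ is Gorenstein at every height-one Eisenstein prime, and then outsources the endgame to \cite[Thm.~7.2.8]{FK2012}. You instead carry the argument out directly: the CRT decomposition and global cyclicity identify $(H^-/IH^-)[1/p]$ and $X_\chi(1)[1/p]$ with $R=(\Lam/\xi_\chi)[1/p]$, so $\Upsilon\circ\varpi$ becomes scalar multiplication by $1+\epsilon$, the componentwise annihilator computation makes $\epsilon$ nilpotent, the integrality of $\Upsilon\circ\varpi$ on $N=(H^-/IH^-)/(\mathrm{tor})\hookrightarrow R$ gives $\epsilon N\subseteq N$, and the finite geometric series makes $1+\epsilon$ an integral automorphism of $N$, forcing $\bar\Upsilon$ surjective. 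This is more self-contained than citing \cite{FK2012} and avoids passing through the Gorenstein property altogether; what it loses is the conceptual reformulation that the paper sets up (and which it reuses in the eigencurve section). One correction at the very end: there is no Fukaya--Kato identity for $\varpi\circ\Upsilon$, so there is no ``symmetric argument'' to apply; but you do not need one, since $\bar\varpi=\bar\Upsilon^{-1}\circ(1+\epsilon)$ is an isomorphism once $\bar\Upsilon$ and $\bar\Upsilon\circ\bar\varpi$ both are.
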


\begin{proof}
(1) By Corollary \ref{cor: Upsilon iso main}, we have that $\U_{(f_\chi)}$ is an isomorphism. To show that $\varpi_{(f_\chi)}$ is an isomorphism, it suffices to show that $\U_{(f_\chi)} \circ \varpi_{(f_\chi)}$ is an isomorphism. However, we assume that $X_{\chi,(f_\chi)}(1)$ is cyclic, and so $X_{\chi,(f_\chi)}(1) \simeq \Lamf/(f_\chi^r)$ for some $r \ge 1$; since $\xi_\chi$ is a characteristic power series for $X_\chi(1)$, we know that $f_\chi^r$ is the highest power of $f_\chi$ dividing $\xi_\chi$. 
We fix a generator of $X_{\chi,(f_\chi)}(1)$ and use the isomorphism $\U_{(f_\chi)}$ to give a generator of $(H^-/IH^-)_{(f_\chi)}$.
Using these generators, we have
\[
\U_{(f_\chi)} \circ \varpi_{(f_\chi)} \in \End_{\Lamf}(\Lamf/(f_\chi^r)) \cong \Lamf/(f_\chi^r). 
\]
Let $x \in \Lamf/(f_\chi^r)$ denote the corresponding element. We want to show that $x$ is a unit, and this follows from Theorem \ref{thm:f-k}. 
To see this, we follow the argument of \cite[\S10.2]{FK2012}.

By Theorem \ref{thm:f-k}, $(1-x) \xi_\chi' = 0$. Since the image of $\xi_\chi'$ in $\Lamf/(\xi_\chi)$ is a generator of the ideal $(f_\chi^{r-1})$, we see that $1-x$ is in the maximal ideal of $\Lamf/(f_\chi^r)$.  Hence $1-x$ is nilpotent, and so $x$ is a unit and $\U_{(f_\chi)} \circ \varpi_{(f_\chi)}$ is an isomorphism. 

(2) Notice that
$$
(\Lambda/\xi_\chi)[1/p] \cong \prod \Lambda_{(f)}/\xi_\chi \Lambda_{(f)}
$$
where the product is over prime divisors $f$ of $\xi_\chi$. Then the assumption implies that the conditions of part (1) hold for all $f$. As in the proof of (1), this implies that $(H^-/IH^-)_{(f)}$ is cyclic. Since $H^-$ is a dualizing module for $\h$, this implies that $\h_\p$ is Gorenstein for the prime $\p$ of $\h$ lying over $f$. Hence $\h_\p$ is Gorenstein for all height $1$ primes $\p$ of $\h$ containing $I$. Then the result follows from Theorem \ref{thm:f-k} as in \cite[Thm.\ 7.2.8]{FK2012}.
\end{proof}

\subsection{Geometry of the ordinary Eigencurve}
\label{subsec:eigencurve}
Our main result shows that the geometry of the eigencurve is closely related to the arithmetic of cyclotomic fields. Recall that $\p \in \Spec(\fH)$ denotes the height one prime lying over our fixed choice $f_\chi$ of prime divisor of $\xi_\chi$. Our first corollary follows from our main result and the main result of \cite{wake2}.

\begin{cor}
\label{cor: gor iff}
The ring $\fH_\p$ is Gorenstein if and only if $X_{\theta,(f_\chi)}=0$.
\end{cor}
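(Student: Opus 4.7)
The plan is to prove the two directions separately, combining our new pseudo-modularity theorem with a prior result of one of the authors.

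For the ``if'' direction, I would invoke Theorem \ref{thm: psmod} directly. Under the hypothesis $X_{\theta,(f_\chi)}=0$, that theorem asserts that the surjection $\varphi : R^\ord_\m \onto \hat\fH_\p$ is an isomorphism and both rings are complete intersection. Since complete intersection rings are Gorenstein (as recalled in \S\ref{subsec: comm alg}), the completion $\hat\fH_\p$ is Gorenstein. Finally, as noted in \S\ref{subsec: comm alg}, Gorensteinness descends from the completion back to the local ring, so $\fH_\p$ itself is Gorenstein.

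For the ``only if'' direction, I would simply cite the main result of \cite{wake2}, as recorded in Remark \ref{rem: Xtheta=0 is necessary}: the vanishing $X_{\theta,(f_\chi)}=0$ is a necessary condition for $\fH_\p$ to be Gorenstein. Since this input is already available in the literature, no additional argument is needed on our part for this direction.

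There is no real obstacle here, as the corollary is a direct synthesis of Theorem \ref{thm: psmod} (the main new result of this paper) with the preceding work \cite{wake2}. The only minor subtlety is the passage between $\fH_\p$ and its completion $\hat\fH_\p$, but this is handled by the standard fact that the Gorenstein property is preserved and reflected under $\m$-adic completion for Noetherian local rings.
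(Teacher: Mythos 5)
Your overall structure matches the paper's: the ``if'' direction via Theorem \ref{thm: psmod} plus the standard commutative-algebra facts, and the ``only if'' direction by appeal to \cite{wake2}. The ``if'' direction is handled correctly; your extra sentence about passing between $\fH_\p$ and $\hat\fH_\p$ is a harmless elaboration of what the paper leaves implicit when it writes ``complete intersection and hence Gorenstein.''

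The one place where you are slightly too quick is the ``only if'' direction. You propose to ``simply cite'' the result as recorded in Remark \ref{rem: Xtheta=0 is necessary}, but that remark is an informal summary; the statement literally proved in \cite[Thm.\ 1.3]{wake2} is \emph{global}: if $\fH$ is weakly Gorenstein (that is, $\fH_\p$ is Gorenstein for \emph{all} height-one primes $\p$ containing $I_\fH$), then $X_\theta/\xi_\chi X_\theta$ is finite. What Corollary \ref{cor: gor iff} needs is the prime-by-prime version: for a single $\p$ lying over $(f_\chi)$, if $\fH_\p$ is Gorenstein then $X_{\theta,(f_\chi)} = 0$. The paper's own proof acknowledges this gap explicitly (``we will need a modified version of \cite[Thm.\ 1.3]{wake2}\dots Analyzing the proof, we see that the same argument works to show the same result for each prime $\p$''). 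So you should not present this direction as a bare citation; you must at least note that the argument in \cite{wake2} localizes at each prime $\p$ individually, and that this is what delivers the local statement. It is a small modification, but it is a genuine one, and glossing over it leaves the ``only if'' direction unjustified as written.
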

\begin{proof}
If $X_{\theta,(f_\chi)}=0$, then Theorem \ref{thm: psmod} implies that $\fH_\p$ is complete intersection and hence Gorenstein. For the other implication, we will need a modified version of \cite[Thm.\ 1.3]{wake2}. That theorem states that, if $\fH$ is weakly Gorenstein, then $X_{\theta}/\xi_\chi X_{\theta}$ is finite. Analyzing the proof, we see that the same argument works to show the same result for each prime $\p$. The result is that, if $\fH_\p$ is Gorenstein, then $X_{\theta,(f_\chi)}=0$.
\end{proof}

\begin{rem}
Keep in mind the running assumption that $\xi_\chi$ is not a unit in $\Lambda$. Without this assumption, $\fH = \Lambda$ and only the ``if'' of Corollary \ref{cor: gor iff} is true. 
\end{rem}

We get finer results if we assume more conditions on class groups. The following theorem summarizes the relations. 

\begin{thm}
\label{thm: eigencurve equivalences}
The following conditions are equivalent:
\begin{list}{}{}
\item[\rm{(1)}] $X_{\chi,(f_\chi)}(1)$ is cyclic as a $\Lambda_{(f_\chi)}$-module and $X_{\theta,(f_\chi)}=0$.
\item[\rm{(2)}] The Eisenstein ideals $I_\p$ and $\I_\p$ are principal.
\item[\rm{(3)}] The ideal $I_\p$ is generated by a single non-zero divisor.
\item[\rm{(4)}] $\fH$ has a plane singularity at $\p$, i.e.~$\mathrm{embdim}(\fH_\p)=2$.
\item[\rm{(5)}] Both $\fH_\p$ and $\h_\p$ are complete intersection rings.
\item[\rm{(6)}] Both $\fH_\p$ and $\h_\p$ are Gorenstein.
\end{list}
Moreover, if any of {\em(1)-(6)} hold, then the following are equivalent:
\begin{list}{}{}
\item[\rm{(7)}] $f_\chi^2 \nmid \xi_\chi$, i.e.\ $\xi_\chi$ has a simple zero at $(f_\chi)$. 
\item[\rm{(8)}] $\h_\p$ is regular and the intersection of components of $\Spec \fH[1/p]$ at $\p$ is transverse.
\end{list}
\end{thm}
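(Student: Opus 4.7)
The equivalences (2)$\Leftrightarrow$(3)$\Leftrightarrow$(4)$\Leftrightarrow$(5)$\Leftrightarrow$(6) are already established in Theorem \ref{thm:simple_equiv}, so the new content is to tie (1) into this cluster and then to analyze when the additional condition (7) holds. The central inputs are Corollary \ref{cor: gor iff}, the pseudo-modularity Theorem \ref{thm: psmod}, Proposition \ref{prop: X=J=I}, and the pullback description of $\fH$ from Lemma \ref{lem:H_pullback}. The heavy lifting is done; what remains is essentially bookkeeping.

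For (6)$\Rightarrow$(1), I would assume that $\fH_\p$ and $\h_\p$ are Gorenstein. Then Corollary \ref{cor: gor iff} delivers $X_{\theta,(f_\chi)}=0$, and Theorem \ref{thm:simple_equiv} delivers (2), so $I_\p$ is principal. Proposition \ref{prop: X=J=I} then supplies an isomorphism $X_{\chi,(f_\chi)}(1)\cong I_\p/I_\p^2$ of $\Lamf$-modules, and the right-hand side is cyclic because $I_\p$ is. For the converse (1)$\Rightarrow$(2), granted $X_{\theta,(f_\chi)}=0$, Proposition \ref{prop: X=J=I} transports cyclicity of $X_{\chi,(f_\chi)}(1)$ to cyclicity of $I_\p/I_\p^2$. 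Since $I_\p\subseteq \m_{\h_\p}$, we have $I_\p^2 \subseteq \m_{\h_\p}I_\p$, so $I_\p/\m_{\h_\p} I_\p$ is also cyclic, and Nakayama's lemma then forces $I_\p$ to be principal.

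For the equivalence (7)$\Leftrightarrow$(8) under (1)--(6), I would exploit the pullback $\fH\cong \h\times_{\Lambda/\xi_\chi} \Lambda$ of Lemma \ref{lem:H_pullback}. The two irreducible components of $\Spec\fH[1/p]$ at $\p$ are cut out by $\I_\p$ (the Eisenstein family) and by $\frt_\p := \ker(\fH_\p \to \h_\p)$ (the cuspidal component). A direct computation in the pullback gives $\fH_\p/(\I_\p + \frt_\p) \cong \Lamf/\xi_\chi\Lamf$, so the scheme-theoretic intersection is reduced (i.e.\ the components meet transversely) if and only if $\Lamf/\xi_\chi\Lamf$ is a field, if and only if $f_\chi^2\nmid \xi_\chi$, which is precisely (7). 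Moreover, (7) forces $\h_\p/I_\p\cong \Lamf/\xi_\chi$ to be a field, so $I_\p$ is the maximal ideal of $\h_\p$; combined with (2), this maximal ideal is principal, so the one-dimensional local ring $\h_\p$ is regular. The transversality half of (8) by itself already delivers (7), closing the loop.

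The main obstacle I anticipate is making rigorous what it means for two components of $\Spec\fH[1/p]$ to meet transversely at $\p$ and verifying that this coincides with the reducedness of $\fH_\p/(\I_\p + \frt_\p)$; once this identification is in place, every other implication is mechanical. All other steps are direct applications of the machinery that has already been built up in the paper.
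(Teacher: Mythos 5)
Your proof is correct, and it mostly tracks the paper's argument, but your route for $(6)\Rightarrow(1)$ is genuinely different and more direct. The paper assumes $(6)$, gets $X_{\theta,(f_\chi)}=0$ from Corollary~\ref{cor: gor iff}, then invokes Corollary~\ref{cor: Upsilon iso}(1) to identify $X_{\chi,(f_\chi)}(1)$ with $H_\p^-/I_\p H_\p^-$, and finally uses the Gorenstein-ness of $\h_\p$ to know $H_\p^-\simeq\h_\p$ (dualizing module), landing on $X_{\chi,(f_\chi)}(1)\simeq\h_\p/I_\p\cong\Lamf/\xi_\chi$. You instead get $X_{\theta,(f_\chi)}=0$ the same way, pass to $(2)$ via Theorem~\ref{thm:simple_equiv} so that $I_\p$ is principal, and then read cyclicity of $X_{\chi,(f_\chi)}(1)$ directly off the isomorphism $X_{\chi,(f_\chi)}(1)\cong I_\p/I_\p^2$ from Proposition~\ref{prop: X=J=I}. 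This bypasses Sharifi's map $\Upsilon$ and the dualizing-module step entirely and is arguably cleaner; what the paper's route buys is the explicit module identification $X_{\chi,(f_\chi)}(1)\simeq\Lamf/\xi_\chi$, which is more precise information (pinning down the exact cyclic module) than mere cyclicity. Your $(1)\Rightarrow(2)$ and $(7)\Leftrightarrow(8)$ arguments match the paper's; for the latter, note that your $\fH_\p/(\I_\p+\frt_\p)$ is literally the paper's $\h_\p\otimes_{\fH_\p}(\fH_\p/\I_\p)$, and your ``maximal ideal principal in a one-dimensional local ring implies regular'' step is the content of the paper's citation of Theorem~\ref{thm:simple_equiv} $(2)\Rightarrow(1)$ in the multiplicity-one case, so these are the same argument with slightly different packaging.
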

\begin{proof}
Assume (1). Then, by Proposition \ref{prop: X=J=I}, we have $X_{\chi,(f_\chi)}(1) \simeq \I_\p/\I_\p^2 \simeq I_\p/I_\p^2$. Moreover, since we assume that $X_{\chi,(f_\chi)}(1)$ is cyclic as a $\Lambda_{(f_\chi)}$-module, we have that $\I_\p/\I_\p^2$ and $I_\p/I_\p^2$ are cyclic as $\Lambda_{(f_\chi)}$-modules, and hence as $\fH_\p$-modules. By Nakayama's lemma, this implies that $\I_\p$ and $I_\p$ are principal, implying (2).

The equivalence of (2)-(6) follows from Theorem \ref{thm:simple_equiv}.

Assume (6). Then, by Corollary \ref{cor: gor iff}, we have $X_{\theta,(f_\chi)}=0$. By Corollary \ref{cor: Upsilon iso main}, this implies
\[
X_{\chi,(f_\chi)}(1) \cong (H^-/IH^-)_{(f_\chi)} = H_\p^-/I_\p H_\p^-.
\]
Since $\h_\p$ is Gorenstein and $H_\p^-$ is a dualizing module for $\h_\p$, we have $H_\p^- \simeq \h_p$. Putting these together, we get
\[
X_{\chi,(f_\chi)}(1)  \simeq \h_\p/I_\p \cong \Lamf/(\xi_\chi)
\]
and so $X_{\chi,(f_\chi)}(1)$ is cyclic. This shows that (6) $\Rightarrow$ (1). This completes the proof of the equivalence of (1)-(6). 

In the statement of (8), the two components meeting at $\p$ are $\Spec \h_\p $ and $\Spec \fH_\p/\I_\p=\Spec \Lamf$. By definition, they meet transversely if and only if the ring $\h_\p \otimes_{\fH_\p} (\fH_\p/\I_\p)$ is a field. But, by Lemma \ref{lem:H_pullback}, we know that $\h_\p \otimes_{\fH_\p} (\fH_\p/\I_\p) = \Lamf/\xi_\chi\Lamf$, which is a field if and only if $f_\chi^2 \nmid \xi_\chi$. So we see that (8) $\Rightarrow$ (7).

Now assume (1)-(7), and we will show (8). As in the previous paragraph, the assumption that $f_\chi^2 \nmid \xi_\chi$ implies that the intersection of components of $\Spec \fH[1/p]$ at $\p$ is transverse. It remains to show that $\h_\p$ is regular, and this follows from (6) and (7) by Theorem \ref{thm:simple_equiv}.
\end{proof}

\subsection{Noncommutative modularity} 
Under the assumption of Greenberg's conjecture \ref{conj: greenberg}, we can prove a noncommutative version of the pseudo-modularity theorem \ref{thm: psmod} -- namely, that the universal ordinary cuspidal GMA is given by the linear closure of the image of Galois in the endomorphism ring of the cohomology of modular curves. 

From now on, assume that condition (1) of Theorem \ref{thm: eigencurve equivalences} is satisfied, which follows from Greenberg's conjecture. In particular, condition (1) implies that $\h_\p$ is Gorenstein, making $H_\p$ a free $\h_\p$-module and $\End_{\h_\p}(H_\p) \simeq M_2(\h_\p)$ a matrix algebra (Corollary \ref{cor:H-structure}). Let 
$$
E_{\h_\p} = \text{Image}(\Lamf[G_{\bQ,S}] \lra \End_{\h_\p}(H_\p)).
$$
Let us write
\begin{equation}
\label{eq: E_h GMA}
E_{\h_\p} = \ttmat{\h_\p}{B_{\h_\p}}{C_{\h_\p}}{\h_\p} \subset \End_{\h_\p}(H_\p) \simeq M_2(\h_\p)
\end{equation}
so that $B_{\h_\p}, C_{\h_\p} \subset \h_\p$ may be thought of as ideals. Even though $\End_{\h_\p}(H_\p) \simeq M_2(\h_\p)$, it is not clear that $B_{\h_\p}$ and $C_{\h_\p}$ are free of rank 1 over $\h_\p$. If we were to assume that $\h_\p$ is a DVR, then this would be clear, but in general we require the following argument.

\begin{lem}
\label{lem: B C are free}
Assume condition $(1)$ of Theorem \ref{thm: eigencurve equivalences} holds. Then the ideals $B_{\h_\p}, C_{\h_\p} \subset \h_\p$ are free of rank $1$ as $\h_\p$-modules.
\end{lem}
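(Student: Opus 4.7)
The plan is to combine the GMA decomposition of $E_{\h_\p}$ inside $M_2(\h_\p)$ with the universal property of the ordinary Cayley-Hamilton algebra $E^\ord_\m$ and the explicit computation of its reducible quotient from Proposition \ref{prop:ord_red}.

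First, I would unpack the standing hypothesis: condition (1) of Theorem \ref{thm: eigencurve equivalences} is in force, so $\h_\p$ is Gorenstein, the Eisenstein ideal $I_\p \subset \h_\p$ is generated by a single non-zero-divisor $\eta$, $X_{\theta,(f_\chi)} = 0$, and $X_{\chi,(f_\chi)}(1)$ is cyclic as a $\Lamf$-module. The Gorenstein property together with Corollary \ref{cor:H-structure} makes $H_\p^\pm$ each free of rank $1$ over $\h_\p$, which validates the identification $\End_{\h_\p}(H_\p) \cong M_2(\h_\p)$ in \eqref{eq: E_h GMA} and makes $B_{\h_\p}$ and $C_{\h_\p}$ into $\h_\p$-submodules of $\h_\p$. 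By Theorem \ref{thm:H_ord}(4) the reducibility ideal of $D_\p$ is $I_\p$, so Proposition \ref{prop:red_ideal} identifies the image of the GMA multiplication $B_{\h_\p} \otimes_{\h_\p} C_{\h_\p} \to \h_\p$ as $I_\p = (\eta)$.

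Next I would show that $B_{\h_\p}$ and $C_{\h_\p}$ are cyclic. The Cayley-Hamilton representation $\rho_H$ is ordinary with residual pseudorepresentation $\bar D_\p$, so the universal property (Proposition \ref{prop:univ_C-H}) provides a surjection $E^\ord_\m \twoheadrightarrow \hat E$ of Cayley-Hamilton algebras onto the completion $\hat E$ of $E_{\h_\p}$, compatible with the GMA structures. Passing to off-diagonal entries yields surjections $B^\ord \twoheadrightarrow \hat B$ and $C^\ord \twoheadrightarrow \hat C$, where $\hat B, \hat C$ denote the completions of $B_{\h_\p}, C_{\h_\p}$. Quotienting by $\cJ_\m$ on the source and $I_\p$ on the target, and invoking Proposition \ref{prop:ord_red_C-H}(4), these surjections factor through the universal reducible components: $B^\red \twoheadrightarrow \hat B/I_\p \hat B$ and $C^\red \twoheadrightarrow \hat C/I_\p \hat C$. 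But Proposition \ref{prop:ord_red} gives $B^\red \cong X_{\chi,(f_\chi)}(1)$ -- cyclic by assumption (1) -- and $C^\red$ free of rank $1$ over $\Lamf$. Hence $\hat B/I_\p\hat B$ and $\hat C/I_\p\hat C$ are cyclic as $\hat\h_\p/I_\p\hat\h_\p$-modules, and since $I_\p \subset \m_{\hat\h_\p}$, Nakayama's lemma yields generators $b_0 \in \hat B$, $c_0 \in \hat C$ as $\hat\h_\p$-modules.

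Finally, the product relation gives $b_0 c_0 \hat\h_\p = \hat B \hat C = (\eta)$, so $b_0 c_0$ is a unit multiple of the non-zero-divisor $\eta$; hence $b_0$ and $c_0$ are non-zero-divisors in $\hat\h_\p$. Multiplication by $b_0$ (resp.~$c_0$) then gives an isomorphism $\hat\h_\p \lrisom \hat B$ (resp.~$\hat\h_\p \lrisom \hat C$), and faithful flatness of $\h_\p \ra \hat\h_\p$ descends these to $\h_\p \lrisom B_{\h_\p}$ and $\h_\p \lrisom C_{\h_\p}$. The main bookkeeping hurdle is tracking the GMA structure through the universal surjection, the reducibility quotient, and the completion step, so that the identifications of $B^\red$ and $C^\red$ from Proposition \ref{prop:ord_red} cleanly produce cyclic generators for the modular quotients; once that is in place, the commutative algebra with the principal generator $\eta$ closes the argument.
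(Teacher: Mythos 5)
Your argument is correct but takes a genuinely different route from the paper. The paper proves the lemma directly on the Hecke side: Theorem \ref{thm:H/I_sub} shows that the submodule structure of $H/IH$ forces $C_{\h_\p} \subset I_\p$; combined with the reducibility-ideal identity $B_{\h_\p} \cdot C_{\h_\p} = I_\p$ and the fact that $B_{\h_\p}, C_{\h_\p}$ are ideals (so $I_\p = B_{\h_\p}C_{\h_\p} \subset C_{\h_\p}$), this pins down $C_{\h_\p} = I_\p$ and then $B_{\h_\p} = \h_\p$, both free of rank one because $I_\p$ is generated by a non-zero-divisor under condition~(1) of Theorem \ref{thm: eigencurve equivalences}. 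Your proof instead routes through the universal ordinary Cayley--Hamilton algebra: you surject $E^\ord_\m$ onto the completion of $E_{\h_\p}$, pass to the reducible quotient via Proposition \ref{prop:ord_red_C-H}(4), read off cyclicity of $B^\red$ and freeness of $C^\red$ from Proposition \ref{prop:ord_red} (using $X_{\theta,(f_\chi)}=0$ and the cyclicity of $X_{\chi,(f_\chi)}(1)$), lift cyclic generators of $\hat B$ and $\hat C$ by Nakayama, and finish with the non-zero-divisor argument from $\hat B\,\hat C = I_\p = (\eta)$. Both proofs consume condition~(1) --- yours through cyclicity of $X_{\chi,(f_\chi)}(1)$, the paper's through the equivalent principality of $I_\p$. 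The paper's version is shorter and avoids the completion/descent bookkeeping because it never leaves $\h_\p$; yours is in effect the first half of the proof of Theorem \ref{thm: GMA R=T} (where the paper does show $B^\ord$ and $C^\ord$ are cyclic by exactly your Nakayama argument), so while it is logically sound, it front-loads machinery that the paper deliberately defers until it is actually needed.
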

\begin{proof}
The map $\Lamf[G_{\bQ,S}] \to \End_{\h_\p}(H_\p)$ factors through $E^\ord_\m \otimes_{R_\m^\ord} \h_\p$ by Proposition \ref{prop:ord_C-H}.
By Theorem \ref{thm:H_ord}(4), we have that  $I_\p$ is the reducibility ideal of $E_{\h_\p}$. By Proposition \ref{prop:red_ideal}, we have $B_{\h_\p} \cdot C_{\h_\p}=I_\p$. On the other hand, by Theorem \ref{thm:H/I_sub}, we have that $(H^-/IH^-)_\p \subset (H/IH)_\p$ as $G_{\bQ,S}$-modules and so $C_{\h_\p} \subset I_\p$. Since $B_{\h_\p} \cdot C_{\h_\p}=I_\p$ and since $B_{\h_\p}$ and $C_{\h_\p}$ are ideals, this implies that $C_{\h_\p} = I_\p$ and $B_{\h_\p}=\h_\p$. In particular, since $I_\p \simeq \h_\p$ by Theorem \ref{thm: eigencurve equivalences}, $C_{\h_\p}$ and $B_{\h_\p}$ are both free of rank $1$ as $\h_\p$-modules. 
\end{proof}

Our assumption also implies that Theorem \ref{thm: psmod} holds and we have an isomorphism $R^\ord_\m \risom \hat\fH_p$. In the following statement of noncommutative modularity and its proof, every algebra and module needs to be completed at the maximal ideal of $\h_\p$. In particular, this statement will address the image of $\hat E^\ord_\m = E^\ord_\m \otimes_{\fH_\p} \hat\fH_\p$ in $\hat E_{\hat \h_\p} = E_{\h_\p} \otimes_{\h_\p} \hat\h_\p$.  We omit the many ``$\hat\ $'' for notational convenience. 

\begin{thm}
\label{thm: GMA R=T}
Assume condition $(1)$ of Theorem \ref{thm: eigencurve equivalences} holds. Then the natural surjective map $E^\ord_\m \otimes_{R_\m^\ord} \h_\p \onto E_{\h_\p}$ is an isomorphism.
\end{thm}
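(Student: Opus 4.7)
My plan is to reduce the claim to showing that the off-diagonal components of the GMA structure on $E^\ord_\m$ become free of rank one over $\h_\p$ after base change. First I would invoke Theorem \ref{thm: eigencurve equivalences} to deduce from hypothesis (1) that $I_\p$ is principal and generated by a non-zero-divisor and that $X_{\theta,(f_\chi)}=0$, which by Theorem \ref{thm: psmod} yields the identification $R^\ord_\m \cong \fH_\p$. Write
$$E^\ord_\m \cong \begin{pmatrix} R^\ord_\m & B^\ord \\ C^\ord & R^\ord_\m \end{pmatrix}, \qquad E_{\h_\p} \cong \begin{pmatrix} \h_\p & B_{\h_\p} \\ C_{\h_\p} & \h_\p \end{pmatrix}.$$
By Lemma \ref{lem: B C are free} and its proof, $B_{\h_\p} = \h_\p$ and $C_{\h_\p} = I_\p$, and the latter is free of rank one as an $\h_\p$-module because $I_\p$ is principal with non-zero-divisor generator. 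Since the diagonal coordinates of the map are just the identity $\h_\p \to \h_\p$, it suffices to show that $B^\ord \otimes_{R^\ord_\m} \h_\p \to \h_\p$ and $C^\ord \otimes_{R^\ord_\m} \h_\p \to I_\p$ are isomorphisms.

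Next I would establish that $B^\ord$ and $C^\ord$ are cyclic over $R^\ord_\m$. Reducing modulo the reducibility ideal $\cJ_\m$, Proposition \ref{prop:ord_red} gives that $C^\red = C^\ord/\cJ_\m C^\ord$ is free of rank one over $R^\ord_\m/\cJ_\m = \Lamf$, and that $B^\red = B^\ord/\cJ_\m B^\ord \simeq X_{\chi,(f_\chi)}(1)$, which is cyclic over $\Lamf$ by the cyclicity hypothesis in (1). By Proposition \ref{prop:univ_C-H}(2), $E_\Db$ is a finite $R_\Db$-module, so $B^\ord$ and $C^\ord$ are finitely generated over the complete local ring $R^\ord_\m$. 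Nakayama's lemma then upgrades the cyclicity of $B^\red$ and $C^\red$ over $\Lamf$ to cyclicity of $B^\ord$ and $C^\ord$ over $R^\ord_\m$.

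Finally, the base changes $B^\ord \otimes_{R^\ord_\m} \h_\p$ and $C^\ord \otimes_{R^\ord_\m} \h_\p$ are cyclic $\h_\p$-modules surjecting onto the free rank-one modules $\h_\p$ and $I_\p$ respectively. A surjection from a cyclic module onto a free rank-one module is necessarily an isomorphism: if $\h_\p/J \twoheadrightarrow \h_\p$, composing with the quotient $\h_\p \twoheadrightarrow \h_\p/J$ yields a surjective endomorphism of a Noetherian module, hence an isomorphism, which forces $J = 0$. This promotes both off-diagonal surjections to isomorphisms and gives $E^\ord_\m \otimes_{R^\ord_\m} \h_\p \risom E_{\h_\p}$.

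The main obstacle in this approach is securing the cyclicity of $B^\ord$ over $R^\ord_\m$, which relies essentially on the assumption that $X_{\chi,(f_\chi)}(1)$ is cyclic; absent this input, $B^\red$ would require more than one generator and Nakayama alone would not deliver the cyclic generation required to run the closing surjection-to-isomorphism argument.
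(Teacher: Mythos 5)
Your proof is correct and takes essentially the same route as the paper's: reduce to the off-diagonal GMA entries, deduce cyclicity of $B^\ord$ and $C^\ord$ over $R^\ord_\m$ via Proposition \ref{prop:ord_red}, the cyclicity hypothesis on $X_{\chi,(f_\chi)}(1)$, and Nakayama, then finish using the freeness of $B_{\h_\p}, C_{\h_\p}$ from Lemma \ref{lem: B C are free}. The only slip is attributional: the identifications $B^\red = B^\ord/\cJ_\m B^\ord$ and $C^\red = C^\ord/\cJ_\m C^\ord$ are not supplied by Proposition \ref{prop:ord_red} (which only computes $B^\red$ and $C^\red$); what you actually need and have is the surjection $E^\red_\m \twoheadrightarrow E^\ord_\m/\cJ_\m E^\ord_\m$ of Proposition \ref{prop:ord_red_C-H}(4), which suffices since a quotient of a cyclic module is cyclic, so your argument goes through with the citation corrected.
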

\begin{proof}
Let $B^\ord_{\h_\p} = B^\ord  \otimes_{R_\m^\ord} \h_\p$ and $C^\ord_{\h_\p} = C^\ord  \otimes_{R_\m^\ord} \h_\p $. By Theorem \ref{thm: psmod}, we have
$$
E^\ord_\m \otimes_{R_\m^\ord} \h_\p \cong \ttmat{\h_\p}{B^\ord_{\h_\p}}{C^\ord_{\h_\p}}{\h_\p}.
$$
The map $E^\ord_\m \otimes_{R_\m^\ord} \h_\p \onto E_{\h_\p}$ is the identity in the upper-left and lower-right coordinates, and so it remains to show that $B^\ord_{\h_\p} \onto B_{\h_\p}$ and $C^\ord_{\h_\p} \onto C_{\h_\p}$ are isomorphisms.

We know by Propositions \ref{prop:ord_red_CH} that there is a surjection
\[
B^\red \otimes_{\Lamf} C^\red \onto B^\ord \otimes_{R_\m^\ord} C^\ord \otimes_{R_\m^\ord} R_\m^\ord/\cJ_\m
\]
Moreover, we know by Proposition \ref{prop:ord_red} that $B^\red \otimes_{\Lamf} C^\red \simeq X_{\chi,(f_\chi)}(1)$ as a $\Lamf$-module. Since we assume $X_{\chi,(f_\chi)}(1)$ is cyclic as a $\Lamf$-module, we have that 
\[
B^\ord \otimes_{R_\m^\ord} C^\ord \otimes_{R_\m^\ord} R_\m^\ord/\cJ_\m
\]
is cyclic is a $\Lamf$-module. By Nakayama's lemma, this implies that $B^\ord$ and $C^\ord$ are cyclic as $R_\m^\ord$-modules. 

This implies that $B^\ord_{\h_\p}$ and $C^\ord_{\h_\p}$ are cyclic as $\h_\p$-modules. Since $C_{\h_\p}$ and $B_{\h_\p}$ are both free of rank $1$ as $\h_\p$-modules (Lemma \ref{lem: B C are free}), the surjections $B^\ord_{\h_\p} \onto B_{\h_\p}$ and $C^\ord_{\h_\p} \onto C_{\h_\p}$ must be isomorphisms.
\end{proof}


\bibliographystyle{alpha}
\bibliography{CWEbib-0515}

\end{document}